\documentclass[10pt]{article}
\usepackage[utf8]{inputenc}

\usepackage[left=1.4in,top=1.3in,right=1.3in,bottom=1.3in,footskip = 0.333in]{geometry}

\setlength{\parskip}{0.4em}

\usepackage{authblk, amsmath, amsthm, amsfonts, amssymb, amsbsy, bbm, bigstrut, graphicx, enumerate,  upref, longtable, comment, comment,xcolor,siunitx,mathrsfs}
\usepackage{float}
\usepackage{tabularx, booktabs, makecell, caption}
\usepackage{siunitx}
\usepackage{algorithm}

\usepackage{subcaption}
\usepackage[breaklinks]{hyperref}

\usepackage[numbers, sort&compress]{natbib} 
\usepackage{comment}
\usepackage[export]{adjustbox}

\newtheorem{thm}{Theorem}
\newtheorem{lemma}{Lemma}

\newtheorem{prop}{Proposition}
\newtheorem{defn}{Definition}

\theoremstyle{definition}
\newtheorem{remark}{Remark}

\newtheorem{ass}{Assumption}

\numberwithin{thm}{section}
\numberwithin{remark}{section}
\numberwithin{ex}{section}
\numberwithin{ass}{section}
\numberwithin{defn}{section}
\numberwithin{lemma}{section}
\numberwithin{corollary}{section}
\numberwithin{prop}{section}

\newcommand{\argmax}{\operatorname{argmax}}

\newcommand{\bx}{\mathbf{x}}
\newcommand{\bA}{\mathbf{A}}
\newcommand{\by}{\mathbf{y}}

{\tiny {\tiny }}

\newcommand{\ee}{\mathbb{E}}

\newcommand{\rr}{\mathbb{R}}

\allowdisplaybreaks




\newcommand{\bE}{ \mathbb{E} }

\newcommand{\bv}{ { v} }

\newcommand{\bX}{ { X} }

\newcommand{\bP}{\mathbb{P}}

\def \bX{\mathbf{X}}
\def \bY{\mathbf{Y}}
\def \bu{\mathbf{u}}
\def \bv{\mathbf{v}}

\def \bt{\mathbf{t}}

\def \bT{\mathbf{T}}

\DeclareMathOperator{\sech}{sech}
\def \btheta {\boldsymbol{\theta}}

\newcommand{\independent}{\perp\!\!\!\!\perp}

\sloppy

\makeatletter
\def\thickhline{%
	\noalign{\ifnum0=`}\fi\hrule \@height \thickarrayrulewidth \futurelet
	\reserved@a\@xthickhline}
\def\@xthickhline{\ifx\reserved@a\thickhline
	\vskip\doublerulesep
	\vskip-\thickarrayrulewidth
	\fi
	\ifnum0=`{\fi}}
\makeatother

\newlength{\thickarrayrulewidth}
\setlength{\thickarrayrulewidth}{2\arrayrulewidth}

\usepackage{algorithm}

\numberwithin{equation}{section}

\renewcommand{\hat}{\widehat}
\renewcommand{\tilde}{\widetilde}

\setcounter{tocdepth}{1}

\bibliographystyle{plainnat}

\begin{document}
\title{Causal effect estimation under network interference with mean-field methods}

\author[1]{Sohom Bhattacharya\thanks{bhattacharya.s@ufl.edu}}
\author[2]{Subhabrata Sen \thanks{subhabratasen@fas.harvard.edu}}
\affil[1]{Department of Statistics, University of Florida}
\affil[2]{Department of Statistics, Harvard University}
\makeatletter
\renewcommand\AB@affilsepx{\\ \protect\Affilfont}
\renewcommand\Authsep{, }
\renewcommand\Authand{, }
\renewcommand\Authands{, }
\makeatother

\maketitle 

\begin{abstract}
   We study causal effect estimation from observational data under interference. The interference pattern is captured by an observed network. We adopt the chain graph framework of \cite{tchetgen2021auto}, which allows (i) interaction among the outcomes of distinct study units connected along the graph and (ii) long range  interference, whereby the outcome of an unit may depend on the treatments assigned to distant units connected along the interference network. For ``mean-field" interaction networks, we develop a new scalable iterative algorithm to estimate the causal effects. For gaussian weighted networks, we introduce a novel causal effect estimation algorithm based on Approximate Message Passing (AMP). Our algorithms are provably consistent under a ``high-temperature" condition on the underlying model. 
   We estimate the (unknown) parameters of the model from data using maximum pseudo-likelihood and  
   establish $\sqrt{n}$-consistency of this estimator in all parameter regimes. 
   Finally, we prove that the downstream estimators obtained by plugging in estimated parameters into the aforementioned algorithms are consistent at high-temperature. Our methods can accommodate dense interactions among the study units---a setting beyond reach using existing techniques. Our algorithms originate from the study of variational inference approaches in high-dimensional statistics; overall, we demonstrate the usefulness of these ideas in the context of causal effect estimation under interference.

   
\end{abstract}

\section{Introduction}
Estimation of causal effects from observational data is of fundamental importance across the natural and social sciences. Estimation methods were traditionally developed under the Stable Unit Treatment Values Assumption (SUTVA) \cite{rubin1990formal}; formally, this assumes that the outcomes of an experimental unit are unaffected by the treatments assigned to the other study units. 

The SUTVA assumption is unrealistic in many modern applications e.g., in social networks, treatments assigned to a study unit are observed by the neighbors and this, in turn, directly influences their behavior. Similarly, in randomized clinical trials, treatments assigned to an individual often influence the outcomes of other individuals living in close geographic proximity. This has motivated an in-depth investigation into causal effect estimation from observational data under interference (see e.g. \cite{tchetgen2021auto,ogburn2024causal,emmenegger2022treatment,forastiere2021identification} and references therein). To ensure the success of the proposed methodology, the existing literature inevitably constrains the degree of interaction among the study units. Typically, this is formalized via strong constraints on the maximum degree of the interaction graph among the study units. Unfortunately, these assumptions are typically asymptotic, and thus impossible to verify in finite (but large) populations. To the best of our knowledge, consistent causal effect estimators are unknown under interference problems with dense interactions. 

This manuscript aims to address this critical gap in the existing literature. We develop causal effect estimators with rigorous guarantees under dense interference models, provided the overall interaction strengths are relatively \emph{weak}. We focus on the Neyman-Rubin potential outcomes framework, and consider a study with $n$ units. Throughout, we work with a binary treatment and denote the possible treatments as $\mathbf{t} := (t_1, \cdots, t_n) \in \{\pm 1\}^n$. We observe $\{(Y_i, T_i, \mathbf{X}_i) \in \mathbb{R} \times \{\pm 1\} \times [-1,1]^d : 1\leq i \leq n\}$, where $Y_i$, $T_i$ and $\mathbf{X}_i$ denote the response, assigned treatment and pre-treatment covariates of the $i^{th}$ study unit respectively. We set $\mathbf{Y} = (Y_1, \cdots, Y_n)$, $\mathbf{T} = (T_1, \cdots, T_n)$ and $\mathbf{X}^\top = (\mathbf{X}_1, \cdots, \mathbf{X}_n) \in \mathbb{R}^{d \times n}$. In our subsequent asymptotic analysis, we assume the number of study units $n \to \infty$ while the covariate dimension $d$ is fixed throughout. The potential outcomes are denoted as $\{\mathbf{Y}_i(\mathbf{t}) : \mathbf{t} \in \{\pm 1\}^n\}$. To accommodate possible interference, the potential outcomes $\mathbf{Y}_i(\cdot)$ are functions of the entire treatment profile $\mathbf{t}$, rather than the individual treatment $t_i$. Throughout, we will assume a network version of the traditional consistency assumption in causal inference i.e. $\mathbf{Y}(\mathbf{T}) = \mathbf{Y}$ a.e. For identifiability of the causal effects, we assume a network version of the conditional ignorability assumption 
\begin{align*}
    \mathbf{T} \independent \mathbf{Y}(\mathbf{t}) | \mathbf{X} \,\, \mathrm{for\, all } \,\, \mathbf{t} \in \{\pm 1\}^n. 
\end{align*}
Note that this assumption reduces to the standard no unmeasured confounding assumption in the case of no interference and i.i.d. study units. Finally, we will assume throughout that for all $n \geq 1$, there exists $\sigma_n >0$ such that $\mathbb{P}[\mathbf{T}= \mathbf{t} | \mathbf{X}] \geq \sigma_n >0$ for all $\mathbf{t} \in \{\pm 1\}^n$. This is the network version of the traditional positivity assumption in causal inference.

Next, we introduce the causal estimands of interest. Specifically, we study the \emph{direct} and the \emph{indirect/spillover} effect of the assigned treatments on the outcomes. 
To this end, we first introduce the average direct causal effect for unit $i$ upon
changing the unit’s treatment status from $t_i=-1$ to $t_i=1$:
\begin{equation}\label{eq:define_de_i}
    \text{DE}_{i}(\mathbf{t}_{-i}) := \mathbb{E}[\mathbf{Y}_i| \bt = (1, \bt_{-i})] - \mathbb{E}[\mathbf{Y}_i| \bt = (-1, \bt_{-i})]
\end{equation}
where $(c, \mathbf{t}_{-i})$, $c \in \{\pm 1\}$, denotes the binary vector where the $i^{th}$ entry is $c$ and the remaining entries are specified by $\mathbf{t}_{-i}$. 
Note that the direct effect $\mathrm{DE}_i(\cdot)$ is dependent on the treatment assignments of the other units $\mathbf{t}_i \in \{\pm 1\}^{n-1}$. Under the consistency, conditional ignorability and positivity assumptions introduced above, a network version of Robins's g-formula implies 
\begin{align*}
    \text{DE}_{i}(\mathbf{t}_{-i}) := \mathbb{E}_{\mathbf{X}} \big[\mathbb{E}[\mathbf{Y}_i| \mathbf{t}= (1,\mathbf{t}_{-i}),\mathbf{X}]-\mathbb{E}[\mathbf{Y}_i|\mathbf{t} = (-1,\mathbf{t}_{-i}),\mathbf{X}]\big],  
\end{align*}
and thus $\text{DE}_{i}(\mathbf{t}_{-i})$ can be expressed as a function of the observed data law. 
To define an averaged direct effect, following \cite{hudgens2008toward,tchetgen2012causal,tchetgen2021auto}, we average these effects over a hypothetical allocation probability measure $\pi$ on $\{\pm 1\}^{n-1}$:
\begin{equation}\label{eq:define_de}
    \text{DE}(\pi)=\frac{1}{n}\sum_{i=1}^{n}\sum_{\mathbf{t}_{-i}\in \{\pm 1\}^{n-1}} \pi(\mathbf{t}_{-i}) \text{DE}_i(\mathbf{t}_{-i}).
\end{equation}
Note that under the no interference setting i.e. if $\bY_i(\bt)= \bY_i(t_i)$, the direct effect $\mathrm{DE}(\pi)$ reduces to the traditional average treatment effect. 
Next, we define the average indirect or spillover causal effect experienced by unit $i$ if the unit’s treatment is set to be inactive, while changing the treatment of other units from inactive to $\mathbf{t}_{-i}$ :
\begin{equation}\label{eq:define_ie_i}
    \text{IE}_{i}(\mathbf{t}_{-i}) := \mathbb{E}[\bY_i| \bt = (-1, \bt_{-i})] - \mathbb{E}[\bY_i | \bt = -\mathbf{1}]. 
\end{equation}
Under the consistency, conditional ignorability and positivity assumptions, we have, 
\begin{align*}
    \text{IE}_{i}(\mathbf{t}_{-i}) := \mathbb{E}_\mathbf{X} \big[\mathbb{E}[\mathbf{Y}_i|\mathbf{t} = (-1, \mathbf{t}_{-i}), \mathbf{X}]-\mathbb{E}[\mathbf{Y}_i| \mathbf{t}= - \mathbf{1},\mathbf{X}]\big].  
\end{align*}
Thus the indirect effect $\text{IE}_{i}(\mathbf{t}_{-i})$ can also be expressed as a functional of the observed data law. 
Similar to direct effect, we average over the allocation $\pi$ to obtain  
\begin{equation}\label{eq:define_ie}
    \text{IE}(\pi)=\frac{1}{n}\sum_{i=1}^{n}\sum_{\mathbf{t}_{-i}\in \{\pm 1\}^{n-1}} \pi(\mathbf{t}_{-i}) \text{IE}_i(\mathbf{t}_{-i}).
\end{equation}
Observe that under no interference, i.e. if $\bY_i(\bt)= \bY_i(t_i)$, the indirect effect $\mathrm{IE}(\pi)=0$. 
In our subsequent discussion, we assume that the allocation measure $\pi$ appearing in \eqref{eq:define_de} and \eqref{eq:define_ie} is, in fact, the uniform distribution on $\{\pm 1\}^{n-1}$ i.e.,  $\pi(\mathbf{t}_{-i})=2^{-(n-1)}$ for all $\mathbf{t}_{-i} \in \{\pm 1\}^{n-1}$. Our arguments extend in a straightforward manner to any iid measure on $\{\pm 1\}^{n-1}$---we describe this extension in Remark~\ref{rmk:general_p}.   For notational simplicity, we suppress the dependence on $\pi$, and  write $\text{DE}$ and $\text{IE}$ in our subsequent discussion. Under the network versions of consistency, conditional ignorability and positivity introduced above, the network version of Robins's g-formula implies that the causal estimands $\mathrm{DE}$ and $\mathrm{IE}$ are functions of the observed data law (we refer the interested reader to 
\cite[Section 2.2]{tchetgen2021auto} for an in-depth discussion of this point). However, to ensure identifiability of these causal estimands, one needs additional structure on the observed data law.


Here we follow the Markov Random Field  (MRF) based framework introduced in \cite{tchetgen2021auto} and subsequently explored by \cite{bhattacharya2020causal,sherman2018identification}. Given covariates $\mathbf{x}^{\top} = (\mathbf{x}_1, \cdots, \mathbf{x}_n)$, $\mathbf{x}_i \in [-1,1]^d$ and a treatment assignment $\mathbf{t} \in \{\pm 1\}^n$, the potential outcomes are given by the joint density 
 \begin{equation}\label{eq:define_gibbs}
    f(\mathbf{y}|\mathbf{t},\mathbf{x}) = \frac{1}{Z_n(\mathbf{t},\mathbf{x})} \exp\Big(\frac{1}{2}\,\mathbf{y}^\top \mathbf{A}_n \mathbf{y}+ \mathbf{y}^\top (\tau_0 \mathbf{t}+ \mathbf{x} \btheta_0) \Big) \prod_{i=1}^{n}d\mu (y_i),
\end{equation}
where $\mu$ is a compactly supported probability measure on $[-1,1]$ and 
\begin{equation}\label{eq:define_Z}
    Z_n(\mathbf{t},\mathbf{x}) = \int _{[-1,1]^n}\exp\Big(\frac{1}{2}\,\mathbf{y}^\top \mathbf{A}_n \mathbf{y}+ \mathbf{y}^\top (\tau_0 \mathbf{t}+  \mathbf{x} \btheta_0) \Big) \prod_{i=1}^{n}d\mu (y_i)
\end{equation}
is the normalizing constant. We will assume throughout that the measure $\mu$ is non-degenerate i.e., $\mu$ has at least two points in its support. The matrix 
$\mathbf{A}_n = \bA_n^{\top} \in \mathbb{R}^{n \times n}$ captures the interaction among units which is assumed known throughout and $\tau_0\in \mathbb{R}$ , $\btheta_0 \in \rr^d$ represent unknown parameters.  In social network applications, the matrix $\mathbf{A}_n$ is usually a scaled version of the adjacency matrix of the observed network. Throughout, we make the following assumptions on the parameter space. 

\begin{ass}[Parameter space]\label{assn:parameter_space}
    $(\tau_0,\btheta_0)\in [-B,B]\times [-M,M]^d$ for some $B,M>0$.
\end{ass}

Given covariates $\mathbf{x} = (\mathbf{x}_1, \cdots, \mathbf{x}_n)$, 
 the treatment assignments $\mathbf{T} = (\mathbf{T}_1, \cdots, \mathbf{T}_n) \in \{ \pm 1\}^n$ follow a propensity score model 
\begin{equation}\label{eq:prop_score}
    \bP(\mathbf{T} = \mathbf{t} |\mathbf{x})= \frac{1}{Z_n'(\mathbf{x})} \exp \Big(\frac{1}{2}\mathbf{t}^\top \mathbf{M}_n \mathbf{t} + \sum_{i=1}^n t_i \mathbf{x}^\top_i \boldsymbol{\gamma}_0 \Big),
\end{equation}
with $\boldsymbol{\gamma_0} \in [-M,M]^d$ for $M>0$. $Z_n'(\mathbf{x})$ refers to the normalization constant in the above model. We assume that the interaction matrix $\mathbf{M}_n = \mathbf{M}_n^{\top}$ is known throughout, and the propensity score model is known up to the parameter $\boldsymbol{\gamma}_0$. Note that we do not necessarily assume that  $\mathbf{A}_n = \mathbf{M}_n$. 

Finally, we assume that the observed covariates $\bX_i \sim \mathbb{P}_X$ are i.i.d., where $\mathbb{P}_X$ is a probability distribution supported on $[-1,1]^d$. Assume that $\text{Var}(\bX_i)=\boldsymbol{\Sigma}$, where $\boldsymbol{\Sigma}$ is a $d \times d$ matrix with 
\begin{align}\label{eq:sigma_min_eval}
    C \ge \lambda_{\max}(\boldsymbol{\Sigma}) \ge \lambda_{\min}(\boldsymbol{\Sigma}) \ge c>0
\end{align}
for some $c,C>0.$

Before proceeding further, we collect some remarks about our modeling assumptions. First, if $\bA_n=0$ in \eqref{eq:define_gibbs} and $\mathbf{M}_n=0$ in \eqref{eq:prop_score}, we reduce back to the traditional no-interference setting in causal inference. 
Second, we note that in the special case $\mathbf{A}_n = \mathbf{M}_n$, our model is a special case of the general chain graph based formulation analyzed in \cite{tchetgen2021auto}. Chain graphs were introduced originally in the causal inference literature in \cite{lauritzen2002chain}. Although chain graphs are not directly compatible with DAG-based causal frameworks \cite{lauritzen2002chain}, they can be useful in specific situations \cite{ogburn2020causal}. In this work, we do not analyze the conceptual underpinnings of this MRF-based causal framework. Instead, we take this framework as given, and develop novel methodology for causal effect estimation within this scope. Third, we note that if $\mathbf{A}_n \neq \mathbf{M}_n$, this model is not necessarily compatible with the formulation of \cite{tchetgen2021auto}. However, our mathematical results are valid in this more general setup---to highlight this feature, we work under the more general setting throughout this paper. Fourth, we assume that the covariates $\mathbf{X}_i \in [-1,1]^d$ and $Y_i \in [-1,1]$---this is mainly for technical convenience. Our results generalize unchanged as long as the covariates and the outcomes are bounded. It should be possible to extend our results beyond the bounded setting with additional technical work; we defer this to future investigations. Finally, the outcome regression model \eqref{eq:define_gibbs} and the propensity score model \eqref{eq:prop_score} are arguably the simplest models with the framework of \cite{tchetgen2021auto}. We restrict to this simple setting as it already captures some of the central conceptual challenges. In addition, we believe that it should be possible to analyze more involved MRFs using the ideas introduced in this work---we defer a discussion of potential extensions to Sections~\ref{sec:discussion}.  

To see some of the challenges involved in estimating the direct effect $\mathrm{DE}$ and the indirect effect $\mathrm{IE}$ in this MRF-based framework \eqref{eq:define_gibbs}, \eqref{eq:prop_score}, we start with the following lemma.   

\begin{lemma}\label{lem:de_define}
Set $\pi(\mathbf{t}_{-i})=2^{-(n-1)}$ for all $\mathbf{t}_i \in \{\pm 1\}^{n-1}$. Using \eqref{eq:define_de}, we have,  
\begin{align}\label{eq:simplify_de}
    &\mathrm{DE}=\frac{2}{n}\sum_{i=1}^{n} \bE_{\bar \bT, \bar \bX} \bE( \bar T_i \bY_i)=: \frac{2}{n} \bE_{\bar \bT, \bar\bX} \Big[\sum_{i=1}^{n}\bar T_i \langle \bY_i \rangle \Big], \nonumber \\
    &\mathrm{IE}:= \frac{1}{n} \bE_{\bar \bT, \bar\bX} \Big[\sum_{i=1}^{n} \langle \bY_i \rangle \Big]- \frac{1}{n} \bE_{-\mathbf{1}, \bar\bX} \Big[\sum_{i=1}^{n} \langle \bY_i \rangle \Big]- \frac{1}{2}\mathrm{DE},
\end{align}
where $\langle \bY_i \rangle := \langle \mathbf{Y}_i \rangle_{\mathbf{t},\bx}= \bE(\mathbf{Y}_i|\bt,\bx)$ and the expectation is taken with respect to the density \eqref{eq:define_gibbs}. Note that in \eqref{eq:simplify_de} above, $(\bar \bT,\bar \bX)$ are independent, $\bar \bT \sim \mathrm{Unif}(\{\pm 1 \}^n)$ and $\bar \bX= (\bar \bX_1, \cdots, \bar \bX_n)$, $\bar \bX_i \sim \mathbb{P}_{X}$ are i.i.d. 

\end{lemma}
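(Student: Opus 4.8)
The plan is to unwind Definitions~\eqref{eq:define_de} and \eqref{eq:define_ie} under the choice $\pi(\mathbf{t}_{-i})=2^{-(n-1)}$, using the g-formula representations of $\mathrm{DE}_i(\mathbf{t}_{-i})$ and $\mathrm{IE}_i(\mathbf{t}_{-i})$ already recorded in the excerpt. The key observation is that averaging a quantity depending on $\mathbf{t}_{-i}$ against the uniform measure on $\{\pm 1\}^{n-1}$ is exactly marginalizing the off-$i$ coordinates of a vector $\bar{\mathbf{T}}\sim\mathrm{Unif}(\{\pm1\}^n)$, while the outer covariate average $\mathbb{E}_{\mathbf{X}}$ in the g-formula is, by the i.i.d.\ assumption on the $\mathbf{X}_j\iid\mathbb{P}_X$, the same as $\mathbb{E}_{\bar{\mathbf{X}}}$; the independence of $(\bar{\mathbf{T}},\bar{\mathbf{X}})$ then lets us merge the two averages. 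The remainder is two elementary identities on the hypercube; there is no analytic obstacle.

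For the direct effect I would start from $\mathrm{DE}_i(\mathbf{t}_{-i})=\mathbb{E}_{\mathbf{X}}\big[\langle\mathbf{Y}_i\rangle_{(1,\mathbf{t}_{-i}),\mathbf{X}}-\langle\mathbf{Y}_i\rangle_{(-1,\mathbf{t}_{-i}),\mathbf{X}}\big]$ and sum over $\mathbf{t}_{-i}$ with weight $2^{-(n-1)}$. Since for $c\in\{\pm1\}$ one has $c=\mathbbm{1}\{c=1\}-\mathbbm{1}\{c=-1\}$, the bracket equals $\sum_{c\in\{\pm1\}}c\,\langle\mathbf{Y}_i\rangle_{(c,\mathbf{t}_{-i}),\mathbf{X}}$, so summing over $\mathbf{t}_{-i}$ and $c$ jointly gives $2^{-(n-1)}\sum_{\mathbf{t}}t_i\langle\mathbf{Y}_i\rangle_{\mathbf{t},\mathbf{X}}=2\cdot 2^{-n}\sum_{\mathbf{t}}t_i\langle\mathbf{Y}_i\rangle_{\mathbf{t},\mathbf{X}}=2\,\mathbb{E}_{\bar{\mathbf{T}}}\big[\bar T_i\langle\mathbf{Y}_i\rangle_{\bar{\mathbf{T}},\mathbf{X}}\big]$. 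Applying $\mathbb{E}_{\mathbf{X}}=\mathbb{E}_{\bar{\mathbf{X}}}$ and averaging over $i$ then yields the first line of \eqref{eq:simplify_de}.

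For the indirect effect I would argue analogously from $\mathrm{IE}_i(\mathbf{t}_{-i})=\mathbb{E}_{\mathbf{X}}\big[\langle\mathbf{Y}_i\rangle_{(-1,\mathbf{t}_{-i}),\mathbf{X}}-\langle\mathbf{Y}_i\rangle_{-\mathbf{1},\mathbf{X}}\big]$. The term $\langle\mathbf{Y}_i\rangle_{-\mathbf{1},\mathbf{X}}$ does not depend on $\mathbf{t}_{-i}$, so its $\pi$-average is unchanged and produces the $\mathbb{E}_{-\mathbf{1},\bar{\mathbf{X}}}$ term. For the other term, $2^{-(n-1)}\sum_{\mathbf{t}_{-i}}\langle\mathbf{Y}_i\rangle_{(-1,\mathbf{t}_{-i}),\mathbf{X}}=2\,\mathbb{E}_{\bar{\mathbf{T}}}\big[\mathbbm{1}\{\bar T_i=-1\}\langle\mathbf{Y}_i\rangle_{\bar{\mathbf{T}},\mathbf{X}}\big]$, and substituting $\mathbbm{1}\{\bar T_i=-1\}=(1-\bar T_i)/2$ rewrites this as $\mathbb{E}_{\bar{\mathbf{T}}}\big[\langle\mathbf{Y}_i\rangle\big]-\mathbb{E}_{\bar{\mathbf{T}}}\big[\bar T_i\langle\mathbf{Y}_i\rangle\big]$. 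Averaging over $\mathbf{X}=\bar{\mathbf{X}}$ and over $i$, the first piece is $\tfrac1n\mathbb{E}_{\bar{\mathbf{T}},\bar{\mathbf{X}}}[\sum_i\langle\mathbf{Y}_i\rangle]$ and the second piece is $\tfrac1n\sum_i\mathbb{E}_{\bar{\mathbf{T}},\bar{\mathbf{X}}}[\bar T_i\langle\mathbf{Y}_i\rangle]=\tfrac12\mathrm{DE}$ by the formula just established, which delivers the second line of \eqref{eq:simplify_de}.

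The only point that needs attention is purely bookkeeping: the bracket $\langle\mathbf{Y}_i\rangle$ abbreviates $\langle\mathbf{Y}_i\rangle_{\mathbf{t},\mathbf{x}}=\mathbb{E}(\mathbf{Y}_i\mid\mathbf{t},\mathbf{x})$, so at each step one must track which arguments are being averaged (over $\bar{\mathbf{T}}$, over $\bar{\mathbf{X}}$, or held fixed at $-\mathbf{1}$) and confirm that the covariate expectation $\mathbb{E}_{\mathbf{X}}$ appearing in the g-formula is the same as $\mathbb{E}_{\bar{\mathbf{X}}}$ with $\bar{\mathbf{X}}_i\iid\mathbb{P}_X$. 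No concentration, mixing, or high-temperature input is required—the lemma is an exact algebraic identity.
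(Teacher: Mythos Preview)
Your proposal is correct and follows essentially the same route as the paper: for $\mathrm{DE}$ you both rewrite the difference $\langle\bY_i\rangle_{(1,\bt_{-i}),\bX}-\langle\bY_i\rangle_{(-1,\bt_{-i}),\bX}$ as $\sum_{c\in\{\pm1\}}c\,\langle\bY_i\rangle_{(c,\bt_{-i}),\bX}$ and then recognize the full sum over $\bt$ as $2\,\bE_{\bar\bT}[\bar T_i\langle\bY_i\rangle]$; for $\mathrm{IE}$ the paper introduces $\alpha_{in},\gamma_{in}$ and uses $\gamma_{in}=\tfrac12(\alpha_{in}+\gamma_{in})-\tfrac12(\alpha_{in}-\gamma_{in})$, which is exactly your indicator identity $\mathbbm{1}\{\bar T_i=-1\}=(1-\bar T_i)/2$ in disguise. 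No substantive difference.
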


\begin{remark}{(general product measures)}\label{rmk:general_p}
    Although we have stated our results for uniform hypothetical allocation probability $\pi$, i.e., $\pi(\mathbf{t}_{-i})=2^{-(n-1)}$ for all $\mathbf{t}_i \in \{\pm 1\}^{n-1}$, they can be easily extended for general product measures on $\{\pm 1\}^{n-1}$ with $\bP(t_i=1)=p$. For example, one can rewrite  the direct effect $\text{DE}$ in \eqref{eq:simplify_de} as $\mathrm{DE}= \frac{1}{n}\sum_{i=1}^{n}\bE_{\bar \bT, \bar\bX} [(a+b \bar T_i)\langle \bY_i \rangle_{\bar \bT, \bar\bX}]$, where $a,b \in \rr$ satisfy $a+b= \frac{1}{p}$, $a-b= \frac{1}{1-p}$. This provides a natural Horvitz-Thompson type representation for the direct effect. The extension for $\mathrm{IE}$ is analogous. 
\end{remark}

At this point, we see the first challenge in evaluating the average direct effect---the conditional expectations $\langle \mathbf{Y}_i \rangle$ with respect to the density \eqref{eq:define_gibbs} are difficult to evaluate in general. In their seminal work, \citet{tchetgen2021auto} suggest evaluating these marginal expectations using Gibbs sampling. Unfortunately, mixing properties of Markov chains for MRFs of the form \eqref{eq:define_gibbs} are unknown in general. In fact, if $\tau_0=0$ and $\theta_0=0$, one expects slow mixing if the model \eqref{eq:define_gibbs} is in a low-temperature regime \cite{friedli2017statistical}. In addition, the parameters $\tau_0$, $\theta_0$ in \eqref{eq:define_gibbs} are unknown, and must be estimated from data. In earlier work, \citet{tchetgen2021auto} estimate the unknown parameters using the maximum pseudo-likelihood estimator or the coding estimator \cite{besag1975statistical}. To establish $\sqrt{n}$-consistency of these estimates, \cite{tchetgen2021auto} assumes the presence of an independent set of diverging size. This assumption can be easily violated e.g. if the underlying graph is a sparse Erd\H{o}s-R\'{e}nyi random graph on $n$ vertices with probability growing faster than $(\log n)/n$. Similar restrictions on the underlying graph are also common in alternative  DAG-based frameworks\cite{emmenegger2022treatment,ogburn2024causal}. Our paper takes a step in overcoming these limitations.

\noindent
\textbf{Contributions:} We now turn to our main contributions.  
\begin{itemize}
    \item[(i)] We consider mean-field matrices $\mathbf{A}_n$ such that $\sup_n \|\mathbf{A}_n\| < \infty$ and $\mathrm{Tr}(\mathbf{A}_n^2) = o(n)$ in the outcome regression model \eqref{eq:define_gibbs}. We show in Lemma \ref{lemma:mean_field_examples} that many networks naturally satisfy this mean-field condition. We introduce an iterative algorithm (Algorithm \ref{algo:de_method}) which yields provably consistent estimators of the $\mathrm{DE}$ and $\mathrm{IE}$ under an appropriate ``high-temperature" condition. Conceptually, our algorithm directly estimates the one-dimensional mean vectors $\langle \mathbf{Y}_i \rangle_{\mathbf{t},\mathbf{x}}$ arising in \eqref{eq:simplify_de}, and thus avoids Monte Carlo methods. Our algorithm is numerically efficient and highly scalable. In addition, our algorithm also facilitates direct uncertainty quantification for the treatment effects. Our approach has roots in Variational Inference (VI) methods for MRFs, which have attracted significant attention recently for their computational efficiency~\cite{blei2017variational,wainwright2008graphical}. 

    \item[(ii)] As a second example, we consider $\mathbf{A}_n$ such that $\{\mathbf{A}_n(i,j): i < j\}$ are i.i.d. $\mathcal{N}(0, \beta^2/n)$ for some $\beta>0$, $\mathbf{A}_n(i,i)=0$ for $1\leq i \leq n$ and $\mathbf{A}_n(i,j) = \mathbf{A}_n(j,i)$ if $j<i$. To interpret the corresponding outcome regression model \eqref{eq:define_gibbs}, assume that the study units interact over a complete graph on $n$ vertices. The edge weights $\mathbf{A}_n(i,j)$ represent the strength of these interactions. If $\mathbf{A}_n(i,j)>0$, the vertices have a positive affinity and vice versa if $\mathbf{A}_n(i,j)<0$. This model is well-studied in statistical physics and probability as the Sherrington-Kirkpatrick model~\cite{talagrand2010mean}. Classical results from random matrix theory imply that $\mathrm{Tr}(\mathbf{A}_n^2)=\Theta(n)$~\cite{bai2010spectral}; consequently, this model is outside the class of ``mean-field" matrices introduced above.  We introduce an iterative algorithm based on Approximate Message Passing (AMP) to estimate the treatment effects $\mathrm{DE}$ and $\mathrm{IE}$ (Algorithm \ref{alg:amp}). At sufficiently high temperature (i.e. $\beta>0$ sufficiently small), the algorithm is consistent for the treatment effect. Further, by repeating this algorithm, we can construct a confidence interval for the treatment effects. Formally, we establish that the AMP algorithm  approximates the one-dimensional mean vector $\langle \mathbf{Y}_i \rangle_{\mathbf{t},\mathbf{x}}$ at high-temperature in this model. 

    \item[(iii)] The algorithms introduced above assume oracle knowledge of the model parameters $\tau_0$ and $\btheta_0$ in \eqref{eq:define_gibbs}. In practice, these parameters have to be estimated from the observed data. We use the maximum pseudo-likelihood estimator, as proposed in \cite{tchetgen2021auto}. Note that the interaction matrices introduced above do not necessarily satisfy the assumptions in \cite{tchetgen2021auto}. Consequently, the consistency guarantees derived in prior work are not applicable to our setting. We leverage recent advances in the analysis of pseudo-likelihood estimators~\cite{daskalakis2019regression,ghosal2020joint} to establish $\sqrt{n}$-consistency in our setup. 

    \item[(iv)] Finally, we establish that once the estimated model parameters $\hat{\tau}_0$, $\hat{\btheta}_0$ are plugged into either Algorithm \ref{algo:de_method} or \ref{alg:amp} introduced above, the estimators obtained are consistent for the target causal estimands.  
\end{itemize}

Our results provide a principled approach to causal effect estimation in MRF based models. 

\subsection{Prior Work}

Classical causal inference methods rely on the SUTVA or no-interference  assumption---formally, this assumes that individuals are independently affected by treatments or exposures. However, there has been increasing interest lately in applications where treatments ‘spill
over’ from the treated individual to others~\cite{aronow2017estimating,athey2018exact,eckles2016design,hong2006evaluating,rosenbaum2007interference,sobel2006randomized,vanderweele2010direct}. The key challenge in causal effect estimation under interference stems from the intrinsic high-dimensionality of the problem---
if the study units can interfere with each another arbitrarily, one has to track a potential outcome for each possible treatment profile,  resulting in an exponential number of parameters. To overcome this barrier, one usually imposes some additional structure on the interference among the study units. 


Early works on interference posited that spillover happens through specific structural models~\cite{bramoulle2009identification,graham2008identifying,lee2007identification,manski1993identification} and are hence often criticized for their restrictive nature~\cite{angrist2014perils,goldsmith2013social}. Partial interference presents the next step in relaxing these rigid structural assumptions on the inference structure. Under partial interference, 
%
one assumes that interference is contained within known and disjoint groups of units~\cite{basse2019randomization,ferracci2014evidence,hayes2017cluster,hong2006evaluating,hudgens2008toward,kang2016peer,liu2014large,lundin2014estimation,park2023assumption,tchetgen2012causal}. This assumption has been largely relaxed in the past decade to setups  where interference is not restricted to specific groups. Here,  one typically assumes that the study units interact via general networks, and interference is specified through an ``exposure mapping"~\cite{aronow2017estimating,forastiere2021identification,jagadeesan2020designs,li2022random,manski2013identification,toulis2013estimation,ugander2013graph}. The related issue of adjusting for interference in experimental design has been a recent subject of in-depth study ~\cite{chin2019regression,eichhorn2024low,leung2022rate,viviano2023causal}. Finally, there have been several works where the interference structure is assumed to be unknown~\cite{cortez2023exploiting,savje2021average,yu2022estimating}. However, one typically assumes that interference is imposed through a ``sparse" network 
%
%
(e.g., bounded maximum degree). 
In contrast, our work develops computationally feasible estimators for causal estimands under relatively dense interference structures.

This paper is closest to the literature on treatment effect estimation from observational network data (see e.g. ~\cite{tchetgen2021auto} and references therein). In \cite{tchetgen2021auto}, the authors develop a network version of Robins g-formula~\cite{robins1986new}, assuming that the data can be represented by a specific family of graphical models known as a chain graph~\cite{lauritzen2002chain}. Follow up work investigates Causal chain graphs~\cite{bhattacharya2020causal,sherman2018identification,shpitser2017modeling}.  Our work contributes scalable algorithms (in contrast to MCMC methods proposed by \cite{tchetgen2021auto}) with theoretical guarantees to this literature. A related line of work studies interference via structural equation models~\cite{emmenegger2022treatment,ogburn2024causal,van2014causal,sofrygin2017semi}, which involves estimating causal parameters via targeted maximum likelihood (TMLE)~\cite{van2006targeted,van2014causal} or Augmented Inverse Probability Weighting (AIPW)~\cite{robins1992recovery,robins1994estimation}. However, TMLE requires density estimation which is challenging in high-dimension and existing works \cite{emmenegger2022treatment,ogburn2024causal,tchetgen2021auto} heavily constrain the degree of interference among the study units.

Our work considers interference arising from two different classes of networks. First, we consider ``mean-field" graphs~\cite{basak2017universality} which includes the complete graph, dense regular graphs and graphon models (See Lemma \ref{lemma:mean_field_examples}). 
We borrow tools from high-dimensional statistics~\cite{mukherjee2022variational}, statistical physics~\cite{bhattacharya2021sharp,mukherjee2021high,chen2021convergence,talagrand2010mean}, and nonlinear large deviations~\cite{chatterjee2016nonlinear, eldan2018gaussian, bhattacharya2022ldp,bhattacharya2023gibbs,yan2020nonlinear} to develop our estimator for direct and indirect effects. Moving beyond the mean-field setup, we consider gaussian interaction matrices, and introduce an AMP based algorithm for causal effect estimation. AMP based approaches have been used recently in causal inference under no interference~\cite{jiang2022new} or for multi-period experiments~\cite{shirani2023causal}. However, our work is the first to formulate message passing algorithms for undirected graphical models in the context of causal inference. Our algorithms are rooted in the study of variational inference \cite{blei2017variational,wainwright2008graphical,mezard2009information} for high-dimensional probability distributions. We demonstrate that these ideas are valuable in the context of causal effect estimation under interference.

\noindent
\textbf{Organization:} The rest of the paper is structured as follows: we present our results in Section~\ref{sec:results}. In Section~\ref{sec:numerics}, we explore the finite sample performance of our algorithms on simulated data. Finally we discuss our main assumptions, and collect some directions for future inquiry in Section~\ref{sec:discussion}. We prove our results in Section~\ref{sec:proofs}, and defer some technical proofs to the Appendix.

\noindent
\textbf{Acknowledgments:} SB and SS thank Justin Ko for references on the soft-spin SK model. 
SS thankfully acknowledges support from NSF (DMS CAREER 2239234), ONR (N00014-23-1-2489) and AFOSR (FA9950-23-1-0429).

\noindent
\textbf{Notation:} Given any $n \times n$ symmetric matrix $ \mathbf B_n$, we denote its operator norm by $\|\mathbf B_n\|$ and its trace by $\mathrm{Tr}(\mathbf B_n)$. Define its largest and smallest eigenvalues by $\lambda_{\max}(\mathbf B_n)$ and $\lambda_{\min}(\mathbf B_n)$ respectively. Denote by $\mathbf{I}_n$ the $n\times n$ identity matrix. For two matrices $\bA$ and $\mathbf B$, define $A \succeq B$ if $A-B$ is non-negative definite, and $A \succ B$ if $A-B$ is positive definite. A function $f: \rr^d \mapsto \rr$ is called $C^m$ if it is $m$-times continuously differentiable. For any $C^2$ function $f$, we denote by $\nabla f$ and $\nabla^2 f$ its gradient and Hessian respectively. For any random variable $Z \sim \mu$, we denote interchangeably by $\ee_Z$ or $\ee_\mu$ the expectation w.r.t $Z$. For any two sequences of random variables $Z_n, W_n$ and $W_n>0$, we define by $Z_n= O_P(W_n)$ if $Z_n/W_n$ is tight. Given $x \in \rr$, $\delta_x$ denotes Dirac delta measure at $x$. For two sequence of real numbers $a_n$ and $b_n$, $a_n=O(b_n)$ will denote that $\limsup_{n \rightarrow \infty} \ a_n / b_n = C$ for some $C \in [0,\infty)$ and $a_n=o(b_n)$ will denote that $\lim_{n \rightarrow \infty} a_n/b_n=0$. We call $a_n \gg b_n$ if $a_n/b_n \rightarrow \infty$ and $a_n \lesssim b_n$ if there exists $C>0$ such that $a_n \le C b_n$. For $\mathbf{a}, \mathbf{b} \in \rr^m$, we denote its inner product by $\langle \mathbf{a}, \mathbf{b} \rangle$. The $\ell^2$ and $\ell^\infty$ norm of $\mathbf{a}$ is denoted by $\|\mathbf{a}\|$ and $\|\mathbf{a}\|_\infty$ respectively.  Denote by $\mathbf{1}$ an $n$-length vector of all $1$s. For $n \in \mathbb{N}$, define $[n]= \{1,\ldots, n\}$. Throughout, we use $C,c>0$ to denote absolute constants which can change across displays.

\section{Results}
\label{sec:results}

We present our results in this section. In Section~\ref{sec:computation_estimands}, we present our algorithmic results on computing the causal estimands of interest. In Section~\ref{sec:parameter_estimation} we examine parameter estimation in these models, and the performance of the plug-in estimator. We can strengthen our results under an additional Log-Sobolev assumption on the base measure $\mu$ in \eqref{eq:define_gibbs}---we collect these results in Section~\ref{sec:lsi_statement}. Finally, we describe our main technical contributions in Section~\ref{sec:technical}.

\subsection{Efficient computation of the causal estimands} 
\label{sec:computation_estimands}

We first introduce the notion of quadratic exponential tilts, which will be valuable for our future discussion. 

\begin{defn}\label{def:exp_tilt}
    For any probability measure $\mu$ on $[-1,1]$ and $\lambda:= (\lambda_1, \lambda_2) \in \mathbb{R} \times (0,\infty)$, define the exponential tilt as 
    $$\frac{d\mu_\lambda}{d \mu}(x) := \exp\Big(\lambda_1 x + \frac{\lambda_2}{2}x^2 - \alpha (\lambda) \Big), \qquad \text{where } \alpha (\lambda):= \log \int e^{\lambda_1 x + \frac{\lambda_2}{2} x^2} d \mu(x).$$
For any $\lambda_2>0$, the function $\alpha(\cdot, \lambda_2)$ is infinitely differentiable, with $$\alpha'(\lambda_1, \lambda_2)=\mathbb{E}_{\mu_\lambda}(X),\quad \alpha''(\lambda_1,\lambda_2)=\mathrm{Var}_{\mu_\lambda}(X)>0.$$
Throughout the article, we will use the notation $\alpha'$ and $\alpha''$ to denote first and second derivatives of $\alpha$ w.r.t the first argument. Without loss of generality, we will assume throughout that $\mathrm{Supp}(\mu)= [-1,1]$. Fix $\lambda_2>0$. Consequently, for $m\in (-1,1)$, there exists $\lambda_1=\lambda_1(m)\in \mathbb{R}$ such that $\bE_{\mu_\lambda}(y)=m$. Define $I(m)=D(\mu_\lambda|\mu)$, where $D(\cdot|\cdot)$ denotes the Kullback-Leibler divergence. Finally, define $I(1) = D(\delta_1 | \mu)$ and $I(-1) = D(\delta_{-1}|\mu)$.
\end{defn}

\begin{ass}[Operator norm]
\label{ass:operator_norm}
    We assume throughout that the matrices $\bA_n$ and $\mathbf{M}_n$ satisfy $\sup_n \| \bA_n \| < \infty$, $\sup_n \| \mathbf{M}_n \| < \infty$. 
\end{ass}

\begin{algorithm}[!ht]
  \vspace{0.1in}
  \begin{flushleft}
   i. Generate $\bar \bT= (\bar T_1,\ldots, \bar T_n)$ from the uniform probability distribution on $\{\pm 1\}^n$. Generate $\bar \bX  = (\bar \bX_1,\cdots, \bar \bX_n) $ i.i.d. $\mathbb{P}_X$ independent of $\bar \bT$.\\

   ii. Fix $M \geq 1$. Initialize $\mathbf{u}^{(0)} =\mathbf{0}$ and run the iteration 
   $\mathbf{u}^{(\ell+1)} = \alpha'\Big( \mathbf{A}_n \mathbf{u}^{(\ell)} + \tau_0 \bar \bT + \bar \bX \btheta_0, 0 \Big)$. \\
  
  iii. Initialize $\tilde{\mathbf{u}}^{(0)}=\mathbf{0}$ and iterate $\tilde{\mathbf{u}}^{(\ell+1)} = \alpha'\Big( \mathbf{A}_n \tilde{\mathbf{u}}^{(\ell)} - \tau_0 \mathbf{1} + \bar \bX \btheta_0 , 0\Big)$. \\

  iv. Obtain the estimate of direct effect $\hat{\mathrm{DE}}_M= \frac{2}{n} \sum_{i=1}^{n} \bar{T}_i u_i^{(M)}$.\\
  v. Obtain the estimate of indirect effect $\widehat{\text{IE}}_M= \frac{1}{n}(\sum_{i=1}^{n}u_i^{(M)}- \sum_{i=1}^{n} \tilde u^{(M)}_i)-  \frac{1}{2}\hat{\mathrm{DE}}_M$.
  \end{flushleft}
\caption{}
\label{algo:de_method}
\end{algorithm} 

Our first result establishes statistical guarantees for Algorithm~\ref{algo:de_method} on a class of ``mean-field matrices" $\bA_n$.  
\begin{thm}\label{thm:de_mean_field}
Assume $\bA_n$ satisfies Assumption~\ref{ass:operator_norm}. 
    Assume, in addition, that $\mathrm{Tr}(\mathbf{A}_n^2)=o(n)$. Run Algorithm~\ref{algo:de_method} to compute estimates $\hat{\mathrm{DE}}_M$ and $\hat{\mathrm{IE}}_M$. Then there exists a universal constant $C_0>0$ such that if $\limsup_{n \to \infty}\|\mathbf{A}_n \| \leq C_0$ then  
    \begin{align}
        \lim_{M \to \infty} \lim_{n \to \infty}  \Big| \ee (\hat{\mathrm{DE}}_M) - \mathrm{DE} \Big| =0, \,\,\, \nonumber\\
        \lim_{M \to \infty} \lim_{n \to \infty}  \Big| \ee (\hat{\mathrm{IE}}_M) - \mathrm{IE} \Big|=0.\,\,\,  \nonumber 
    \end{align}
    In the above display, $\mathbb{E}[\cdot]$ denotes expectation with respect to $(\bar \bT, \bar \bX)$. 
\end{thm}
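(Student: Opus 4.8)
The plan is to show that the iteration in Algorithm~\ref{algo:de_method} is a contraction whose fixed point agrees (up to $o(1)$ error) with the true conditional mean vector $\langle \bY_i \rangle_{\bt,\bx}$, and then to invoke Lemma~\ref{lem:de_define} to pass from this mean vector back to $\mathrm{DE}$ and $\mathrm{IE}$. Fix a realization of $(\bar\bT,\bar\bX)$ and write $\bh := \tau_0 \bar\bT + \bar\bX\btheta_0$ for the external field, so that the Gibbs measure \eqref{eq:define_gibbs} has density proportional to $\exp(\tfrac12 \by^\top \bA_n \by + \by^\top \bh) \prod d\mu(y_i)$. The first step is to establish the \emph{mean-field fixed-point equation}: I would show that the true magnetization vector $\bm^\star := (\langle Y_i\rangle)_{i=1}^n$ satisfies $\bm^\star = \alpha'(\bA_n \bm^\star + \bh, 0) + \bdelta_n$ where $\|\bdelta_n\|_\infty = o(1)$ under the condition $\mathrm{Tr}(\bA_n^2) = o(n)$. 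Heuristically, $\langle Y_i \rangle = \ee_{\mu_{\lambda_i}}(Y_i)$ where $\lambda_i = (\bA_n \bY + \bh)_i$ is the local field seen by site $i$, so $\langle Y_i\rangle = \ee\langle \alpha'((\bA_n\bY)_i + h_i, 0)\rangle$; the content is that $\langle\bY\rangle$ concentrates, i.e. $\bY \approx \bm^\star$ in the appropriate sense, so that $\langle\alpha'((\bA_n\bY)_i + h_i,0)\rangle \approx \alpha'((\bA_n\bm^\star)_i + h_i, 0)$. This is exactly the type of statement that the cited mean-field / nonlinear large deviation machinery (\cite{basak2017universality,mukherjee2022variational,chatterjee2016nonlinear,eldan2018gaussian}) is designed to deliver: the error in replacing the quenched free energy by its naive mean-field value is controlled by $\mathrm{Tr}(\bA_n^2)$, and differentiating in the external field transfers this to an $\ell^2$ (hence, after averaging, $\ell^\infty$-on-average) bound on $\bm^\star - \alpha'(\bA_n\bm^\star + \bh, 0)$. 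I expect this concentration/approximation step to be the main obstacle, and the place where $\mathrm{Tr}(\bA_n^2) = o(n)$ is genuinely used; all the heavy lifting (cavity/Gaussian-interpolation or large-deviation estimates) lives here.

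The second step is the \emph{contraction argument}. Because $\mathrm{Supp}(\mu) = [-1,1]$ and $\mu$ is nondegenerate, $\alpha''(\cdot, 0) = \mathrm{Var}_{\mu_\lambda}(X)$ is bounded: there is a constant $L_0 < \infty$ with $\|\alpha'(\cdot,0)\|_{\mathrm{Lip}} \le L_0$ uniformly over the relevant range of arguments (the arguments stay in a compact set since $\|\bA_n\|$, $\|\bh\|_\infty \le B + M\sqrt d$ are bounded). Hence the map $\Phi(\bu) := \alpha'(\bA_n \bu + \bh, 0)$ satisfies $\|\Phi(\bu) - \Phi(\bv)\| \le L_0 \|\bA_n\|\,\|\bu - \bv\|$. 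Choosing $C_0 := 1/L_0$ (shrunk slightly if needed to get a strict contraction), the hypothesis $\limsup_n \|\bA_n\| \le C_0$ forces $L_0 \|\bA_n\| \le \rho < 1$ for all large $n$, so $\Phi$ is a $\rho$-contraction on $(\rr^n, \|\cdot\|)$. Therefore $\bu^{(\ell)} \to \bu^{(\infty)}$, the unique fixed point, geometrically: $\|\bu^{(M)} - \bu^{(\infty)}\| \le \rho^M \|\bu^{(\infty)} - \bu^{(0)}\| \le \rho^M \sqrt n$ (since all coordinates lie in $[-1,1]$). Combining with step one — where $\bm^\star$ is shown to be an \emph{approximate} fixed point of the same $\rho$-contraction — a standard perturbation bound for contractions gives $\|\bu^{(\infty)} - \bm^\star\| \le (1-\rho)^{-1}\|\bdelta_n\|$, and hence
\[
\frac{1}{\sqrt n}\,\|\bu^{(M)} - \bm^\star\| \;\le\; \rho^M + \frac{1}{1-\rho}\cdot\frac{\|\bdelta_n\|}{\sqrt n} \;=\; \rho^M + o(1).
\]
The same argument applies verbatim to $\tilde\bu^{(\ell)}$ with external field $-\tau_0\mathbf 1 + \bar\bX\btheta_0$, approximating the magnetization vector of the measure \eqref{eq:define_gibbs} at $\bt = -\mathbf 1$.

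The final step assembles these estimates. By Lemma~\ref{lem:de_define}, $\mathrm{DE} = \frac{2}{n}\ee_{\bar\bT,\bar\bX}\big[\sum_i \bar T_i \langle Y_i\rangle\big]$ and $\hat{\mathrm{DE}}_M = \frac{2}{n}\sum_i \bar T_i u_i^{(M)}$, so by Cauchy–Schwarz
\[
\big|\ee(\hat{\mathrm{DE}}_M) - \mathrm{DE}\big| \;\le\; \frac{2}{n}\,\ee\Big[\sum_i |\bar T_i|\,|u_i^{(M)} - m_i^\star|\Big] \;\le\; \frac{2}{\sqrt n}\,\ee\big\|\bu^{(M)} - \bm^\star\big\| \;\le\; 2\rho^M + o(1),
\]
using $\bdelta_n \to 0$ together with boundedness (so the expectation of the $o(1)$ term is $o(1)$, by dominated convergence since everything is bounded by a constant). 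Letting $n\to\infty$ first kills the $o(1)$, then $M\to\infty$ kills $\rho^M$, giving the claimed double limit for $\mathrm{DE}$. For $\mathrm{IE}$, the representation in \eqref{eq:simplify_de} is a fixed linear combination of $\frac1n\sum_i\langle Y_i\rangle$ (approximated by $\frac1n\sum_i u_i^{(M)}$), the analogous quantity at $\bt=-\mathbf 1$ (approximated by $\frac1n\sum_i \tilde u_i^{(M)}$), and $\mathrm{DE}$; applying the triangle inequality and the $\mathrm{DE}$ bound, plus the parallel bound $\frac1{\sqrt n}\ee\|\tilde\bu^{(M)} - \tilde\bm^\star\| \le \rho^M + o(1)$, yields $\lim_{M}\lim_{n}|\ee(\hat{\mathrm{IE}}_M) - \mathrm{IE}| = 0$. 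One should double-check that the constant $C_0$ produced here is universal: $L_0$ depends only on the fact that the support is $[-1,1]$ (a crude bound $\alpha''(\lambda,0) = \mathrm{Var}_{\mu_\lambda} \le 1$ since the variable is bounded by $1$ in absolute value suffices, giving $L_0 \le 1$ and $C_0 = 1$), so it does not depend on $\mu$, $\bA_n$, or the parameters — consistent with the statement of the theorem.
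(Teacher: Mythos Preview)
Your overall architecture---show $\langle\by\rangle$ is an approximate fixed point of $\Phi(\bu)=\alpha'(\bA_n\bu+\bh,0)$, show $\Phi$ is a contraction under $\|\bA_n\|\le C_0$, then combine---matches the paper exactly, and your final assembly via Cauchy--Schwarz and Lemma~\ref{lem:de_define} is the same. The one place you diverge is the execution of Step~1. Your sketch (``differentiate the free-energy approximation in the external field'') is plausible but vague; the paper instead introduces the variational functional $M_n(\bv)=\tfrac12\bv^\top\bA_n\bv+\bv^\top\bh-\sum_i I(v_i)$, proves it is strongly concave on $(-1,1)^n$ once $\|\bA_n\|<1$, and invokes \cite[Theorem~1(ii)]{mukherjee2022variational} to show that the conditional-mean vector $\mathbf b(\by)=(\ee_f[y_i\mid\by_{-i}])_i$ is an asymptotic maximizer of $M_n$. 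Strong concavity then forces $\tfrac1n\|\mathbf b(\by)-\bv^\star\|^2\to 0$ where $\bv^\star=\argmax M_n$ is the \emph{exact} fixed point of $\Phi$; Jensen gives $\tfrac1n\|\langle\by\rangle-\bv^\star\|^2\to 0$, and one more Lipschitz step yields your $\bdelta_n$. Two small corrections: the machinery only delivers the normalized $\ell^2$ bound $\|\bdelta_n\|^2/n\to 0$, not $\|\bdelta_n\|_\infty\to 0$ as you first wrote (your later displays already use the correct $\|\bdelta_n\|/\sqrt n$, so this is cosmetic); and the paper's contraction inequality picks up a factor of~$3$ from a triangle-inequality split, so they take $C_0=1/4$ rather than your cleaner $C_0<1$.
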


\begin{remark}
    We state Theorem~\ref{thm:de_mean_field} for deterministic interaction matrices $\bA_n$. For random interaction matrices $\bA_n$, our results go through unchanged as long as the matrices $\mathbf{A}_n$ satisfy the conditions of Theorem~\ref{thm:de_mean_field} almost surely.    
\end{remark}

\begin{remark}
\label{remark:mf_slln}
    It is particularly instructive to specialize Algorithm~\ref{algo:de_method} to the case of no interference i.e., $\bA_n=0$. In this case, the iterations for $\mathbf{u}^{(\ell)}$ and $\tilde{\mathbf{u}}^{(\ell)}$ converge in one step. At convergence, $\mathbf{u}_i^{(1)} = \mathbb{E}[\mathbf{Y}_i(t_i)|t_i = \bar{T}_i, \bar \bX_i]$ and $\tilde{\mathbf{u}}_i^{(1)} = \mathbb{E}[\mathbf{Y}_i(t_i)|t_i = - 1, \bar \bX_i]$. In this case, 
    \begin{align*}
        \widehat{\mathrm{DE}}_1 = \frac{2}{n} \sum_{i=1}^{n} \bar{T}_i \mathbb{E}[\mathbf{Y}_i(t_i)|t_i = \bar{T}_i, \bar \bX_i] \to \mathrm{DE} \,\,\, \mathrm{a.s.},
    \end{align*}
    where the last convergence follows by the Strong Law of Large Numbers. In the same vein, we have, 
    \begin{align*}
        \widehat{\mathrm{IE}}_1 = \frac{1}{n}\Big(\sum_{i=1}^{n}u_i^{(M)}- \sum_{i=1}^{n} \tilde u^{(M)}_i \Big) -\frac{1}{2}\hat{\mathrm{DE}}_1 \to 0 \,\,\, \mathrm{a.s.}
    \end{align*}
    We note that the conclusion in this special case is in fact stronger than  Theorem~\ref{thm:de_mean_field}, since we do not need to evaluate the expectation with respect to $(\bar \bT, \bar \bX)$. 
\end{remark}

Before proceeding further, we discuss the class of mean-field matrices arising in Theorem~\ref{thm:de_mean_field}. 
Assumption~\ref{ass:operator_norm} is a convenient normalization, which ensures that $\log Z_n(\bt , \bx) = O(n)$ for all $ \bt \in \{ \pm 1\}^n$, $\bx \in [-1,1]^n$. Under this normalization, $\bA_n \in \mathbb{R}^n$ is a symmetric matrix with $n$ real eigenvalues which are $O(1)$. In general, this implies $\mathrm{Tr}(\bA_n^2)= O(n)$. In this light, we see that the trace assumption in Theorem~\ref{thm:de_mean_field} imposes  that $\bA_n$ does not have too many non-trivial eigenvalues. This may be interpreted as an approximate ``low-rank" condition on the matrix $\bA_n$. We present some canonical examples of matrices $\bA_n$ which satisfy the assumptions of Theorem~\ref{thm:de_mean_field}.

\begin{lemma}
    \label{lemma:mean_field_examples}
    The following sequences $\{\bA_n : n \geq 1\}$ satisfy $\|\bA_n \| = O(1)$ and $\mathrm{Tr}(\bA_n^2) = o(n)$. 
    \begin{itemize}
        \item[(i)] Complete graph: For $\beta>0$, let $\bA_n = \frac{\beta}{n} (\mathbf{1}\mathbf{1}^{\top} - \mathbf{I}_n)$ be the scaled adjacency matrix of a complete graph on $n$-vertices.  
        \item[(ii)] Regular graphs: Let $\mathscr{G}_n$ be a sequence of $d_n$-regular graphs on $n$ vertices with $d_n \to \infty$. Let $\bA_n(i,j) = \frac{\beta}{d_n} \mathbf{1}(i \sim j)$, where $i \sim j$ if $i$ and $j$ are connected in $\mathscr{G}_n$. 
        \item[(iii)] Erd\H{o}s-R\'{e}nyi graphs: Let $\mathscr{G}(n,p_n)$ be the Erd\H{o}s-R\'{e}nyi random graph on $n$ vertices with edge probability $p_n \in [0,1]$. We assume that $n p_n \gg \sqrt{\frac{\log n}{\log \log n}}$. For $\beta>0$, set $\bA_n(i,j) = \frac{\beta}{n p_n} \mathbf{1}(i \sim j)$. 
        \item[(iv)] Graphon models:  
        Let $W: [0,1]^2 \mapsto [0,1]$ be a symmetric real-valued function. Given $W$, consider a graph $\mathscr{G}_n$ on $n$ vertices with edges $E_{ij}$ sampled as follows: Let $U_i \stackrel{\text{i.i.d.}}{\sim} U(0,1)$ and $E_{ij} \sim \text{Bern}(\rho_n W(U_i,U_j))$ independently for $i<j$. For $\beta>0$, set $\bA_n(i,j)= \frac{\beta}{n \rho_n} E_{ij}$ with $ n \rho_n \gg \sqrt{\frac{\log n}{\log \log n}}$.

    \end{itemize}
\end{lemma}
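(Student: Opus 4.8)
The plan is to verify the two bounds $\|\bA_n\| = O(1)$ and $\mathrm{Tr}(\bA_n^2) = o(n)$ separately for each of the four families, exploiting the common structure: in every case $\bA_n = \frac{\beta}{\Delta_n}(E_n - \text{diag terms})$ where $E_n$ is an adjacency-type matrix and $\Delta_n$ the appropriate normalizing degree, so that $\mathrm{Tr}(\bA_n^2) = \frac{\beta^2}{\Delta_n^2}\sum_{i,j} E_n(i,j)^2 + O(1)$, which counts (twice) the number of edges divided by $\Delta_n^2$. Since the number of edges is $\Theta(n\Delta_n)$ in all four models (deterministically in (i)–(ii), with high probability in (iii)–(iv)), this immediately gives $\mathrm{Tr}(\bA_n^2) = \Theta(n/\Delta_n) = o(n)$ as soon as $\Delta_n \to \infty$ — which holds in every case, and for (iii)–(iv) is exactly what the hypotheses $np_n \gg \sqrt{\log n/\log\log n}$ and $n\rho_n \gg \sqrt{\log n/\log\log n}$ guarantee (indeed they give much more). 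So the trace bound is the easy half.

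\textbf{Operator norm, deterministic cases.} For (i), $\bA_n = \frac{\beta}{n}(\mathbf{1}\mathbf{1}^\top - \mathbf{I}_n)$ has eigenvalues $\beta(n-1)/n \to \beta$ (once, eigenvector $\mathbf{1}$) and $-\beta/n \to 0$ (multiplicity $n-1$), so $\|\bA_n\| \to \beta$. For (ii), the $d_n$-regular graph has adjacency matrix with top eigenvalue exactly $d_n$ (Perron eigenvector $\mathbf{1}$), so $\frac{\beta}{d_n}\mathbf{1}(i\sim j)$ has top eigenvalue $\beta$; the remaining eigenvalues are bounded by $\beta$ in absolute value since $\|\text{adjacency}\| = d_n$ for a $d_n$-regular graph. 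Hence $\|\bA_n\| = \beta$.

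\textbf{Operator norm, random cases.} For (iii) and (iv) I would write $\bA_n = \frac{\beta}{np_n}\ee[E_n] + \frac{\beta}{np_n}(E_n - \ee[E_n])$. The first term is (up to the removed diagonal) $\frac{\beta}{n}\mathbf{1}\mathbf{1}^\top$ in case (iii), with norm $\to \beta$; in case (iv) it is $\frac{\beta}{n}$ times the $\{W(U_i,U_j)\}$ matrix, whose norm is $O(1)$ since $W \le 1$ (it is bounded by $\frac{\beta}{n}\|\mathbf{1}\mathbf{1}^\top\| = \beta$ entrywise domination, or via $\|M\| \le n \max|M_{ij}|$). The centered part is the hard piece: it is a symmetric random matrix with independent mean-zero entries of variance $O(p_n/(np_n)^2) = O(1/(n^2 p_n))$, so by standard results on the spectral norm of inhomogeneous random matrices (e.g.\ \cite{bandeira2016sharp} or the bounds used in \cite{basak2017universality}), $\|\frac{\beta}{np_n}(E_n - \ee E_n)\| = O\big(\sqrt{n \cdot \frac{1}{n^2 p_n}}\big) + o(1) = O(1/\sqrt{np_n}) + o(1) \to 0$ precisely because $np_n \gg \sqrt{\log n/\log\log n} \to \infty$ (the sub-logarithmic threshold is exactly what controls the fluctuation/concentration term in these norm bounds). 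The same argument applies verbatim to (iv) with $p_n$ replaced by $\rho_n$, conditioning on $(U_i)$ and using $\rho_n W(U_i,U_j) \le \rho_n$ for the entrywise variance. Combining, $\|\bA_n\| \le \beta + o(1) = O(1)$ with high probability.

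\textbf{Main obstacle.} The only nontrivial input is the concentration of the spectral norm of the centered adjacency matrix in the sparse regimes (iii)–(iv); everything else is eigenvalue bookkeeping or a direct edge count. In particular one must check that the stated degree threshold $np_n \gg \sqrt{\log n/\log\log n}$ is strong enough for the random-matrix norm bound to yield $o(1)$ fluctuations — this is where the precise form of the hypothesis is used, and I would cite the sharp operator-norm estimates for inhomogeneous Wigner-type matrices rather than reprove them. I would also remark that removing the diagonal of $E_n$ changes $\bA_n$ by a matrix of norm $O(1/\Delta_n) = o(1)$ and changes $\mathrm{Tr}(\bA_n^2)$ by $O(1) = o(n)$, so it is harmless throughout.
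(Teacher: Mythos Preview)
Your trace argument and the norm bounds for (i)--(ii) are correct and essentially match the paper (the paper bounds the norm in (i)--(ii) via the row-sum estimate $\|\bA_n\|\le \max_j \sum_i |\bA_n(i,j)|$, which for a $d_n$-regular graph gives $\beta$ immediately; your eigenvalue computation is equivalent).

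The gap is in (iii)--(iv). Your mean-plus-fluctuation decomposition requires $\|E_n-\ee E_n\|/(np_n)=O(1)$, and you claim this is $O(1/\sqrt{np_n})+o(1)$ via Bandeira--van Handel. But that result actually yields
\[
\frac{1}{np_n}\|E_n-\ee E_n\| \;\lesssim\; \frac{1}{\sqrt{np_n}}+\frac{\sqrt{\log n}}{np_n},
\]
and the second term is \emph{not} $o(1)$ --- it is not even $O(1)$ --- in the regime $\sqrt{\log n/\log\log n}\ll np_n\ll \sqrt{\log n}$, which the hypothesis explicitly allows. (Take $np_n=(\log n)^{0.4}$: the second term is $(\log n)^{0.1}\to\infty$.) In this sparsity range the centered adjacency matrix is genuinely large because of high-degree vertices, so no standard concentration inequality for centered matrices will close the gap. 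The threshold $np_n\gg\sqrt{\log n/\log\log n}$ is \emph{not} the concentration threshold for $E_n-\ee E_n$; it is the Krivelevich--Sudakov threshold at which the top eigenvalue of the \emph{uncentered} adjacency matrix satisfies $\lambda_1(E_n)=(1+o(1))np_n$ rather than $(1+o(1))\sqrt{\Delta_{\max}}$.

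This is exactly how the paper proceeds: it bounds $\|\bA_n\|=\beta\lambda_1(E_n)/(np_n)$ directly by citing Krivelevich--Sudakov for (iii) and the analogous inhomogeneous result (their reference ``bbk'') for (iv), avoiding any centering. If you want to keep the decomposition, you would need to invoke Krivelevich--Sudakov anyway to bound $\|E_n-\ee E_n\|\le \|E_n\|+np_n=O(np_n)$, at which point the decomposition is redundant.
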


We note that the assumptions of Theorem~\ref{thm:de_mean_field} are satisfied for many matrices $\bA_n$ obtained from canonical random graph models. We emphasize that these matrices are distinct from those analyzed in prior work e.g. the complete graph is connected, and does not have the growing independent set assumed in \cite{tchetgen2021auto}. Similarly, for $p_n =O(1)$, the graph degrees are $O(n)$, which violates the assumptions in \cite{emmenegger2022treatment}. In these examples, the operator norm of $\bA_n$ is governed by the scalar $\beta>0$. The bound $\|\bA_n \| \leq C_0$ in Theorem~\ref{thm:de_mean_field} translates to a bound on $\beta$. In statistical physics, the parameter $\beta>0$ represents the inverse temperature. Thus a condition of the form $\beta < C_0'$ corresponds to a ``high-temperature" phase for physical models. We adopt this terminology in our subsequent discussions. 

We note that one run of Algorithm~\ref{algo:de_method} yields the estimate $\widehat{\text{DE}}_M$ and $\widehat{\text{IE}}_M$. To obtain consistent estimates for the causal estimands $\text{DE}$ and $\text{IE}$, we seek their expectations with respect to $(\bar \bT, \bar \bX)$. In practice, we use Monte Carlo sampling to approximate the expectation. Formally, 
\begin{itemize}
    \item[(i)] For $k \geq 1$, let $(\bar \bT^{(1)}, \bar \bX^{(1)}), \cdots,( \bar \bT^{(k)} , \bar \bX^{(k)})$ be i.i.d., and let the corresponding estimates be denoted as $(\widehat{\text{DE}}_M^{(1)}, \widehat{\text{IE}}_M^{(1)}), \cdots (\widehat{\text{DE}}_M^{(k)}, \widehat{\text{IE}}_M^{(k)})$. Form the final estimates 
    \begin{align*}
        {\widehat{\text{DE}}_M}^{\text{avg}} = \frac{1}{k} \sum_{j=1}^{k} \widehat{\text{DE}}_M^{(j)},\,\,\,\,\,\,\, {\widehat{\text{IE}}_M}^{\text{avg}} = \frac{1}{k} \sum_{j=1}^{k} \widehat{\text{IE}}_M^{(j)}. 
    \end{align*}

    \item[(ii)] To facilitate uncertainty quantification for the treatment effect estimates, we proceed as follows: let $\zeta \in [0,1]$ denote the desired confidence level. Let ${\widehat{\text{DE}}_M}^{\zeta/2}$ and ${\widehat{\text{DE}}_M}^{1-\zeta/2}$ denote the $\zeta/2$ and the $(1-\zeta/2)$-quantile of the set $\{\widehat{\text{DE}}_M^{(1)}, \cdots, \widehat{\text{DE}}_M^{(k)}\}$. Similarly, let ${\widehat{\text{IE}}_M}^{\zeta/2}$ and ${\widehat{\text{IE}}_M}^{1-\zeta/2}$ denote the $\zeta/2$ and the $(1-\zeta/2)$-quantile of the set $\{\widehat{\text{IE}}_M^{(1)}, \cdots, \widehat{\text{IE}}_M^{(k)}\}$. We will use the confidence sets 
\begin{equation}\label{eq:conf_set_define}
    \text{CI}_{\text{DE},\zeta}= \Big[{\widehat{\text{DE}}_M}^{\zeta/2},{\widehat{\text{DE}}_M}^{1-\zeta/2}\Big], \quad  \text{CI}_{\text{IE},\zeta}= \Big[{\widehat{\text{IE}}_M}^{\zeta/2},{\widehat{\text{IE}}_M}^{1-\zeta/2}\Big]
\end{equation}
for the causal effects $\text{DE}$ and $\text{IE}$ respectively.    
\end{itemize}

Note that the computation of the  replicated estimates $(\widehat{\text{DE}}_M^{(1)}, \widehat{\text{IE}}_M^{(1)}), \cdots (\widehat{\text{DE}}_M^{(k)}, \widehat{\text{IE}}_M^{(k)})$ can be completely parallelized, and does not affect the run-time of the overall procedure. Our next result establishes the fidelity of the point estimates  $ {\widehat{\text{DE}}_M}^{\text{avg}}$ and $ {\widehat{\text{DE}}_M}^{\text{avg}}$  introduced above. To provide theoretical guarantees for the uncertainty quantification procedure described above, we require an additional Log-Sobolev Inequality for the base measure $\mu$ in \eqref{eq:define_gibbs} (see Section \ref{sec:lsi_statement}). The precise statement is given by Lemma \ref{lem:MC_CI}

\begin{lemma}
\label{lemma:data_driven}
For any $\varepsilon>0$, 
    \begin{align*}
        &\lim_{k \to \infty} \lim_{M \to \infty} \lim_{n \to \infty} \mathbb{P}\Big[|{\widehat{\mathrm{DE}}_M}^{\mathrm{avg}} - \mathrm{DE}| > \varepsilon \Big] =0, \\
        &\lim_{k \to \infty} \lim_{M \to \infty} \lim_{n \to \infty} \mathbb{P}\Big[|{\widehat{\mathrm{IE}}_M}^{\mathrm{avg}} - \mathrm{IE}| > \varepsilon \Big]  =0. 
    \end{align*}

\end{lemma}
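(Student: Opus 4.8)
The plan is a bias--variance split in which the bias is supplied verbatim by Theorem~\ref{thm:de_mean_field} and the variance is killed by the Monte Carlo averaging, uniformly in $n$ and $M$. First I would record a crude but crucial uniform bound: by Definition~\ref{def:exp_tilt}, $\alpha'(\lambda_1,\lambda_2)=\mathbb{E}_{\mu_\lambda}(X)\in(-1,1)$ because $\mathrm{Supp}(\mu)\subseteq[-1,1]$, so every coordinate of the iterates $\mathbf{u}^{(M)}$ and $\tilde{\mathbf{u}}^{(M)}$ lies in $(-1,1)$; hence $|\widehat{\mathrm{DE}}_M|\le 2$ and $|\widehat{\mathrm{IE}}_M|\le 3$ pointwise, for every $n$, $M$, and every realization of $(\bar\bT,\bar\bX)$. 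In particular $\mathrm{Var}(\widehat{\mathrm{DE}}_M)\le 4$ and $\mathrm{Var}(\widehat{\mathrm{IE}}_M)\le 9$, and since $\widehat{\mathrm{DE}}_M^{\mathrm{avg}}$ and $\widehat{\mathrm{IE}}_M^{\mathrm{avg}}$ are averages of $k$ i.i.d.\ copies arising from the independent draws $(\bar\bT^{(j)},\bar\bX^{(j)})$,
\begin{align*}
\mathrm{Var}\big(\widehat{\mathrm{DE}}_M^{\mathrm{avg}}\big)\le \frac{4}{k},\qquad \mathrm{Var}\big(\widehat{\mathrm{IE}}_M^{\mathrm{avg}}\big)\le \frac{9}{k},
\end{align*}
uniformly in $n$ and $M$, while $\mathbb{E}\widehat{\mathrm{DE}}_M^{\mathrm{avg}}=\mathbb{E}\widehat{\mathrm{DE}}_M$ and $\mathbb{E}\widehat{\mathrm{IE}}_M^{\mathrm{avg}}=\mathbb{E}\widehat{\mathrm{IE}}_M$, the expectations being over $(\bar\bT,\bar\bX)$.

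Second, I would fix $\varepsilon>0$ and bound, for the direct effect,
\begin{align*}
\mathbb{P}\big[|\widehat{\mathrm{DE}}_M^{\mathrm{avg}}-\mathrm{DE}|>\varepsilon\big]
\le \mathbb{P}\big[|\widehat{\mathrm{DE}}_M^{\mathrm{avg}}-\mathbb{E}\widehat{\mathrm{DE}}_M|>\tfrac{\varepsilon}{2}\big]
+\mathbf{1}\big\{|\mathbb{E}\widehat{\mathrm{DE}}_M-\mathrm{DE}|>\tfrac{\varepsilon}{2}\big\}.
\end{align*}
Chebyshev's inequality together with the variance bound controls the first term by $16/(k\varepsilon^2)$, uniformly in $n$ and $M$. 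For the second (non-random) term, Theorem~\ref{thm:de_mean_field} provides $a_M:=\lim_{n\to\infty}|\mathbb{E}\widehat{\mathrm{DE}}_M-\mathrm{DE}|$ with $a_M\to 0$ as $M\to\infty$, so for $M$ beyond some $M_0$ and $n$ beyond some $n_0(M)$ the indicator vanishes. Hence, for $M\ge M_0$,
\begin{align*}
\limsup_{n\to\infty}\mathbb{P}\big[|\widehat{\mathrm{DE}}_M^{\mathrm{avg}}-\mathrm{DE}|>\varepsilon\big]\le \frac{16}{k\varepsilon^2},
\end{align*}
and letting $M\to\infty$ and then $k\to\infty$ sends the right side to $0$; since probabilities are non-negative, this is the asserted iterated limit for $\widehat{\mathrm{DE}}_M^{\mathrm{avg}}$. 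Repeating the display verbatim with $|\widehat{\mathrm{IE}}_M|\le 3$, the variance bound $9/k$, and the $\mathrm{IE}$-part of Theorem~\ref{thm:de_mean_field} handles $\widehat{\mathrm{IE}}_M^{\mathrm{avg}}$.

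The argument has no genuine obstacle; the only matters needing care are (i) that $\widehat{\mathrm{DE}}_M$ and $\widehat{\mathrm{IE}}_M$ are bounded by \emph{absolute} constants, so that the Monte Carlo variance is $O(1/k)$ with a constant free of $n$ and $M$ --- which is what makes the three limits commute in the required order, and is exactly where the range $(-1,1)$ of $\alpha'$ enters --- and (ii) the bookkeeping of the nested limits, namely that the Chebyshev step must come first so that Theorem~\ref{thm:de_mean_field} (whose conclusion is itself a $\lim_M\lim_n$) is applied only to the already-decoupled deterministic bias term. One cosmetic point: the inner passages to the limit in $n$ are really $\limsup$'s, which is harmless since the dominating quantity $16/(k\varepsilon^2)$ does not depend on $n$ or $M$.
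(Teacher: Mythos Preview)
Your proposal is correct and is essentially the paper's own argument: the paper also notes that the $\widehat{\mathrm{DE}}_M^{(j)}$ are i.i.d.\ and uniformly bounded (so the Monte Carlo average concentrates at rate $O_P(1/\sqrt{k})$ uniformly in $n,M$), and then invokes Theorem~\ref{thm:de_mean_field} for the bias. You have simply written out the Chebyshev step and the bookkeeping of the nested limits more explicitly than the paper does.
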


Lemma~\ref{lemma:data_driven} establishes that if the number of replicates $k$ is large enough, the estimates ${\widehat{\mathrm{DE}}_M}^{\mathrm{avg}}$ and ${\widehat{\mathrm{IE}}_M}^{\mathrm{avg}}$ are consistent for $\mathrm{DE}$ and $\mathrm{IE}$ respectively. We defer the proof of Theorem~\ref{thm:de_mean_field}, Lemma~\ref{lemma:mean_field_examples} and Lemma~\ref{lemma:data_driven} to Section~\ref{sec:proof_mf}.    


The previous result focuses on ``mean-field" interaction matrices $\mathbf{A}_n$ satisfying $\mathrm{Tr}(\mathbf{A}_n^2) = o(n)$. We next turn to gaussian interaction matrices $\bA_n$. Classical results from Random Matrix Theory imply that $\mathrm{Tr}(\bA_n^2)= O(n)$ \cite{bai2010spectral} and thus these matrices go beyond the class of interactions covered by Theorem~\ref{thm:de_mean_field}. To this end, we first introduce some objects which will be relevant for our subsequent discussion. 

\begin{lemma}
\label{lemma:uniqueness}
    Let $T \sim \mathrm{Unif}(\pm 1)$, $H \stackrel{d}{=}  \mathbf{X}_1^{\top} \theta_0 $, $\bX \sim \mathbb{P}_{X}$ and $G \sim \mathcal{N}(0,1)$ be independent. Then there exists $\beta_0>0$ such that for all $\beta< \beta_0$, the fixed point system  
    \begin{align}
        q_* &= \mathbb{E}[(\alpha'(\tau_0 \,T + H + \beta \sqrt{q_*}\,G, \beta^2 \sigma_{1,*}^2))^2], \nonumber \\
        \sigma_{1,*}^2 &= \mathbb{E}[\alpha''(\tau_0 T + H + \beta \sqrt{q_*}\, G, \beta^2 \sigma_{1,*}^2)]\label{eq:q_defn} 
    \end{align}
    has a unique solution. Similarly, for $\beta< \beta_0$, the fixed point system
    \begin{align}
        r_* = \mathbb{E}[(\alpha'(-\tau_0  + H + \beta \sqrt{r_*}\,G, \beta^2 \sigma_{2,*}^2))^2], 
        \nonumber \\
        \sigma_{2,*}^2 = \mathbb{E}[\alpha''(-\tau_0 + H + \beta \sqrt{r_*}\,G, \beta^2 \sigma_{2,*}^2)]        
        \label{eq:r_defn}
    \end{align}
    has a unique solution. 
\end{lemma}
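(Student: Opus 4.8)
The plan is to realize each fixed point system as the fixed point equation of an explicit map on the compact convex set $K:=[0,1]^2$, to check that this map sends $K$ into $K$, and then to show that it is a contraction once $\beta$ is small; the Banach fixed point theorem then yields existence and uniqueness simultaneously. I describe the argument for \eqref{eq:q_defn}; the system \eqref{eq:r_defn} is handled verbatim after replacing the random term $\tau_0 T$ by the constant $-\tau_0$.

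For $(q,\sigma^2)\in K$ set
\begin{align*}
F_1(q,\sigma^2) &:= \mathbb{E}\Big[\big(\alpha'(\tau_0 T + H + \beta\sqrt{q}\,G,\, \beta^2\sigma^2)\big)^2\Big],\\
F_2(q,\sigma^2) &:= \mathbb{E}\Big[\alpha''(\tau_0 T + H + \beta\sqrt{q}\,G,\, \beta^2\sigma^2)\Big],
\end{align*}
and $F:=(F_1,F_2)$, where $T,H,G$ are as in the statement. Because $\mu$ is compactly supported, $\alpha(\lambda_1,\lambda_2)=\log\int e^{\lambda_1 x+\lambda_2 x^2/2}\,d\mu(x)$ is finite and real-analytic on all of $\mathbb{R}^2$, so $F$ is well defined (even at $\lambda_2=0$), continuous, and $C^\infty$ on the interior by differentiation under the integral sign. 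Moreover every tilted law $\mu_\lambda$ is again supported on $[-1,1]$, so $\alpha'(\lambda)=\mathbb{E}_{\mu_\lambda}(X)\in[-1,1]$ and $\alpha''(\lambda)=\mathrm{Var}_{\mu_\lambda}(X)\in[0,1]$ for all $\lambda$; hence $F_1,F_2\in[0,1]$, i.e. $F(K)\subseteq K$.

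The heart of the argument is a uniform bound $\sup_{K}\|DF\|\le C\beta^2$ with $C$ a universal constant. Writing $g_v$ for the $\mathcal{N}(0,v)$ density and $\phi(\cdot,\lambda_2):=(\alpha'(\cdot,\lambda_2))^2$, we have $F_1(q,\sigma^2)=\mathbb{E}_{T,H}\big[(g_{\beta^2 q}\ast\phi(\cdot,\beta^2\sigma^2))(\tau_0 T+H)\big]$, which depends on $q$ only through the variance $v=\beta^2 q$. The heat equation $\partial_v(g_v\ast\psi)=\tfrac12\,g_v\ast\psi''$ for the Gaussian semigroup then gives $\partial_q F_1=\tfrac{\beta^2}{2}\,\mathbb{E}_{T,H}\big[(g_{\beta^2 q}\ast\partial_{\lambda_1}^2\phi)(\tau_0 T+H)\big]$, and this is the device that removes the spurious $q^{-1/2}$ produced by differentiating $\sqrt q$ directly. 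Similarly $\partial_{\sigma^2}F_1=\beta^2\,\mathbb{E}[\partial_{\lambda_2}\phi(\cdots)]$, and $\partial_q F_2,\partial_{\sigma^2}F_2$ carry the same $\beta^2$ prefactor. One then observes that $\partial_{\lambda_1}^2\phi=2(\alpha'')^2+2\alpha'\alpha'''$, $\partial_{\lambda_2}\phi=2\alpha'\,\partial_{\lambda_2}\alpha'$, $\partial_{\lambda_1}^2\alpha''=\alpha^{(4)}$, $\partial_{\lambda_2}\alpha''$, and so on, are each a fixed polynomial in the central moments of $\mu_\lambda$ (and covariances of powers of $X$), all of which are bounded by absolute constants since $|X|\le1$ under $\mu_\lambda$ for every $\lambda$. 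This produces $\sup_{K}\|DF\|\le C\beta^2$, so by the mean value theorem (the partials extending continuously to $\partial K$) $F$ is $C\beta^2$-Lipschitz on the convex set $K$.

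Fix $\beta_0\in(0,1]$ with $C\beta_0^2<1$. For every $\beta<\beta_0$ the map $F\colon K\to K$ is a contraction, so the Banach fixed point theorem produces a unique fixed point $(q_*,\sigma_{1,*}^2)\in K$. Finally, any solution of \eqref{eq:q_defn} has $q_*\ge0$ and $\sigma_{1,*}^2=\mathbb{E}[\alpha'']\ge0$, and then the defining equations together with $|\alpha'|\le1$, $0\le\alpha''\le1$ force $q_*\le1$ and $\sigma_{1,*}^2\le1$; thus every solution lies in $K$, and the fixed point just found is the unique one. The same computation with $-\tau_0$ in place of $\tau_0 T$ settles \eqref{eq:r_defn}. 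The main obstacle is the uniform derivative estimate of the third paragraph: one must both handle the $\sqrt q$-dependence via the heat-semigroup identity and verify that the cumulant-type derivatives of $\alpha$ stay bounded while $\lambda_1$ ranges over all of $\mathbb{R}$ — precisely the point where compact support of $\mu$ is used.
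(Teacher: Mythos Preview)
Your proposal is correct and follows essentially the same route as the paper: define the map $(q,\sigma^2)\mapsto(\mathbb{E}[(\alpha')^2],\mathbb{E}[\alpha''])$, show its Jacobian is $O(\beta^2)$ uniformly, and invoke Banach's fixed point theorem. The only cosmetic difference is that where you invoke the heat-semigroup identity $\partial_v(g_v\ast\psi)=\tfrac12 g_v\ast\psi''$ to remove the $q^{-1/2}$ factor, the paper writes $\partial_q\mathbb{E}[\alpha'']=\frac{\beta}{2\sqrt q}\mathbb{E}[Z\,\alpha^{(3)}]$ and then applies Gaussian integration by parts $\mathbb{E}[Zf(Z)]=\mathbb{E}[f'(Z)]$ to reach $\frac{\beta^2}{2}\mathbb{E}[\alpha^{(4)}]$; these two devices are of course the same identity in different guises. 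Your explicit verification that $F(K)\subseteq K$ and that every solution is forced into $K$ a priori is a small completeness bonus the paper leaves implicit.
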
 

The constants $(q_*,\sigma_{1,*}^2)$, $(r_*, \sigma_{2,*}^2)$ are intrinsically related to the MRF $f$ \eqref{eq:define_gibbs}. Indeed, assume that $\bA_n$ in \eqref{eq:define_gibbs} is a symmetric gaussian matrix with $\mathcal{N}(0, \beta^2/n)$ i.i.d. entries above the diagonal, and let $t_i = \bar{T}_i \sim \mathrm{Unif}(\pm 1)$ i.i.d and $\bx_i = \bar \bX_i \sim \mathbb{P}_X$ i.i.d. be independent. Let $\by^1$ and $\by^2$ be two i.i.d. samples from the distribution \eqref{eq:define_gibbs}. We prove in Theorem~\ref{thm:overlap_conc} and \ref{thm:overlap_higher_moment} that at high temperature (i.e. $\beta>0$ sufficiently small), $\frac{1}{n} \| \by^1 \|^2 \approx \frac{1}{n} \| \by^2 \|^2 \approx (\sigma_{1,*}^2 + q_*)$, while $\frac{1}{n} (\by^1)^{\top} \by^2 \approx q_*$ with high probability under $f$. The constants $(r_*, \sigma_{2,*}^2)$ have similar interpretations when $t_i=-1$ and $\bx_i = \bar \bX_i \sim \mathbb{P}_X$ are i.i.d. The concentration of the norm and pair-wise inner product for the SK model at high-temperature was originally established in the seminal work of M. Talagrand \cite{talagrand2010mean}; here we extend these results to the outcome regression model $f$ in \eqref{eq:define_gibbs}. In spin glass theory, the distribution $f$ corresponds to a ``soft spin SK model with random external fields". Note that the density $f$ and the constant $q_*$ depend on the parameter $\beta>0$---we make this dependence explicit in the following definition. 

\begin{defn}[Local uniform overlap concentration]
For $\beta>0$, we say that the overlap concentrates uniformly at $\beta$ if there exists $\delta>0$ such that 
\begin{align}
\label{eq:unif_concentration}
    \lim_{n \to \infty} \sup_{\beta-\delta \leq \beta' \leq \beta } \mathbb{E}\Big[\mathbb{E}_{f,\beta'}\Big(\frac{1}{n} (\by^1)^{\top}\by^2 - q_*(\beta') \Big)^2 \Big] =0,  
\end{align}
  where $\mathbb{E}_{f,\beta'}[\cdot]$ denotes expectation with respect to the density $f$ with parameter $\beta'$ and $\mathbb{E}[\cdot]$ denotes expectation with respect to $(\bA_n, \bar \bT, \bar \bX)$.   
\end{defn}

\begin{remark}
    The concentration of the normalized inner product (or `overlap') is a canonical property of mean-field spin glass models at high-temperature. We refer to  Theorem~\ref{thm:overlap_conc} for a version of this property in our setting. We note that the uniform local convergence property \eqref{eq:unif_concentration} is stronger than the consequence of Theorem~\ref{thm:overlap_conc}. This stronger property is expected to hold for high-temperature spin glasses, and has been invoked in the prior literature \cite{chen2021convergence}. We will also assume this property for our theoretical results. 
\end{remark}

Before proceeding further, we note that if $\bA_n$ is random, the causal estimands $\text{DE}$ and $\text{IE}$ are themselves random, being functions of the matrix $\bA_n$. We will develop an Algorithm to compute these causal estimands. Our algorithms will work on ``typical" realizations of the interaction matrix $\bA_n$. To formalize this notion, we  introduce the following definition.

\begin{defn}
    Fix $n \geq 1$. Let $\{X_{n,M} : M \geq 1\}$ be a sequence of random variables measurable with respect to $\mathbf{A}_n$. We say that $X_{n,M} \stackrel{\mathscr{P}_{n,M}}{\longrightarrow} 0$ if there exists a deterministic sequence $\{ \varepsilon_{n,M} : M \geq 1\}$ satisfying  $\varepsilon_{n,M} \geq 0$, 
    \begin{align*}
        \lim_{M \to \infty} \lim_{n \to \infty} \varepsilon_{n,M} = 0 
    \end{align*}
    such that 
    \begin{align*}
        \lim_{M \to \infty} \lim_{n \to \infty} \mathbb{P}[|X_{n,M}| > \varepsilon_{n,M}] = 0. 
    \end{align*}
    In the display above, $\mathbb{P}[\cdot]$ refers to the randomness with respect to $\mathbf{A}_n$. 
\end{defn}



\begin{algorithm}[!ht]
\begin{flushleft}

    i. Generate $\bar \bT= (\bar T_1,\cdots, \bar T_n)$ from the uniform distribution on $\{\pm 1\}^n$. Generate $\bar \bX = (\bar \bX_1, \cdots, \bar \bX_n)$ i.i.d. from $\mathbb{P}_X$ independent of $\bar \bT$. \\

    ii. Set $\beta_0$ as in Lemma \ref{lemma:uniqueness}. For $\beta<\beta_0$, let $(q_*, \sigma_{1,*}^2)$ and $(r_*, \sigma_{2,*}^2)$ be the unique solutions to \eqref{eq:q_defn} and \eqref{eq:r_defn} respectively. 

    iii. \textbf{Initialization:} Set $\bu^{[0]}=0$, $\bu^{[1]}=0$, $u^{[2]}_i = \sum_{j=1}^{n} \mathbf{G}_n(i,j) \sqrt{q_*} $. 

    iv. \textbf{Iteration $k \geq 2$:} For $k \geq 2$, 
    \begin{align*}
        u^{[k+1]}_i &= \sum_{l=1}^{n} \mathbf{G}_n(i,l) \alpha'(\beta u_l^{[k]} + \tau_0 \bar{T}_l + \bar{\bX}_l^{\top} \boldsymbol{\theta}_0, \beta^2 \sigma_{1,*}^2) - d_k \alpha'(\beta u^{[k-1]}_i + \tau_0 \bar{T}_i + \bar{\bX}_i^{\top} \boldsymbol{\theta}_0 , \beta^2 \sigma_{1,*}^2) \nonumber, \\
        d_{k} &= \frac{\beta}{n} \sum_{j=1}^{n} \alpha''(\beta u_j^{[k]} + \tau_0 \bar{T}_j + \bar{\bX}_j^{\top} \boldsymbol{\theta}_0 , \beta^2 \sigma_{1,*}^2). 
    \end{align*}

    v. \textbf{Initialization:} Set $\tilde{\bu}^{[0]}=0$, $\tilde{\bu}^{[1]}=0$, $\tilde{\bu}^{[2]}_i =  \sum_{i=1}^{n} \mathbf{G}_n(i,j) \sqrt{r_*} $. 

    vi. \textbf{Iteration $k \geq 2$:} For $k \geq 2$, 
    \begin{align*}
        \tilde{u}^{[k+1]}_i &= \sum_{l=1}^{n} \mathbf{G}_n(i,l) \alpha'(\beta \tilde{u}_l^{[k]} - \tau_0  + \bar{\bX}_l^{\top} \boldsymbol{\theta}_0, \beta^2 \sigma_{2,*}^2) - \tilde{d}_k \alpha'(\beta \tilde{u}^{[k-1]}_i - \tau_0 + \bar{\bX}_i^{\top} \boldsymbol{\theta}_0 , \beta^2 \sigma_{2,*}^2) \nonumber, \\
        \tilde{d}_{k} &= \frac{\beta}{n} \sum_{j=1}^{n} \alpha''(\beta \tilde{u}_j^{[k]} - \tau_0  +\bar{\bX}_j^{\top} \boldsymbol{\theta}_0, \beta^2 \sigma_{2,*}^2). 
    \end{align*}

    vii. \textbf{Mean estimate:} Fix $M\geq 1$. Set 
    \begin{align*}
    \mathbf{m}^{[M]} &= \alpha'(\beta \mathbf{u}^{[M]} + \tau_0 \bar \bT + \bar \bX \boldsymbol{\theta}_0, \beta^2 \sigma_{1,*}^2), \\
    {\tilde{\mathbf{m}}}^{[M]} &= \alpha'(\beta \mathbf{u}^{[M]} - \tau_0 \mathbf{1} + \bar \bX \boldsymbol{\theta}_0, \beta^2 \sigma_{2,*}^2).
    \end{align*}


    vii. Obtain the estimate of the direct effect $\hat{\mathrm{DE}}_M= \frac{2}{n} \sum_{i=1}^{n} \bar{T}_i m_i^{[M]}$. 

    viii. Obtain the estimate of the indirect effect $\hat{\mathrm{IE}}_M = \frac{1}{n} ( \sum_{i=1}^{n} m_i^{[M]} - \sum_{i=1}^{n} \tilde{m}_i^{[M]})- \frac{1}{2}\hat{\text{DE}}_M$.

    \end{flushleft}
    \caption{}
    \label{alg:amp}
\end{algorithm}

\noindent 
Armed with this notion of convergence, we turn to our main result for gaussian interaction matrices $\bA_n$. 
\begin{thm}\label{thm:amp}
    Set $\mathbf{A}_n= \mathbf{A}_n^{\top}$, $\mathbf{A}_n(i,j) \sim \mathcal{N}(0, \beta^2/n)$ i.i.d. for $i<j$ for some $\beta >0$, $\mathbf{A}_n(i,i)=0$ for $i \in [n]$. Set $\mathbf{A}_n(i,j) = \beta \mathbf{G}_n(i,j)$. Recall $\beta_0$ from \eqref{lemma:uniqueness}. Run Algorithm~\ref{alg:amp} to compute estimates $\hat{\mathrm{DE}}_M$ and $\hat{\mathrm{IE}}_M$. There exists $0<\bar{\beta}_0 \leq \beta_0$ such that for $\beta < \bar{\beta}_0$, if the overlap concentrates uniformly at $\beta$, 
    %
    \begin{align*}
       \Big| \ee (\hat{\mathrm{DE}}_M) - \mathrm{DE} \Big| \stackrel{\mathscr{P}_{n,M}}{\longrightarrow} 0,\\
       \Big| \ee(\hat{\mathrm{IE}}_M) - \mathrm{IE} \Big|  \stackrel{\mathscr{P}_{n,M}}{\longrightarrow} 0. \\
    \end{align*}
    In the above display, $\mathbb{E}[\cdot]$ denotes expectation with respect to $(\bar \bT,\bar \bX)$.
\end{thm}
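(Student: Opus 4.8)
Throughout, the plan is to exhibit a single deterministic limit to which $\ee_{\bar\bT,\bar\bX}(\hat{\mathrm{DE}}_M)$ and $\mathrm{DE}$ both converge in probability over $\bA_n$, and similarly for $\mathrm{IE}$. By Lemma~\ref{lem:de_define}, $\mathrm{DE}=\frac2n\,\ee_{\bar\bT,\bar\bX}\big[\sum_i\bar T_i\langle\bY_i\rangle_{\bar\bT,\bar\bX}\big]$ and $\mathrm{IE}=\frac1n\ee_{\bar\bT,\bar\bX}[\sum_i\langle\bY_i\rangle_{\bar\bT,\bar\bX}]-\frac1n\ee_{-\mathbf1,\bar\bX}[\sum_i\langle\bY_i\rangle_{-\mathbf1,\bar\bX}]-\frac12\mathrm{DE}$, so the whole statement reduces to the AMP-side limits
\begin{align*}
\tfrac1n\sum_i\bar T_i m_i^{[M]}\ \stackrel{\mathscr{P}_{n,M}}{\longrightarrow}\ \Phi_1,\qquad
\tfrac1n\sum_i m_i^{[M]}\ \stackrel{\mathscr{P}_{n,M}}{\longrightarrow}\ \Psi_1,\qquad
\tfrac1n\sum_i\tilde m_i^{[M]}\ \stackrel{\mathscr{P}_{n,M}}{\longrightarrow}\ \Psi_2,
\end{align*}
together with the Gibbs-side limits $\tfrac1n\sum_i\bar T_i\langle\bY_i\rangle_{\bar\bT,\bar\bX}\to\Phi_1$, $\tfrac1n\sum_i\langle\bY_i\rangle_{\bar\bT,\bar\bX}\to\Psi_1$, $\tfrac1n\sum_i\langle\bY_i\rangle_{-\mathbf1,\bar\bX}\to\Psi_2$ in probability over $(\bA_n,\bar\bT,\bar\bX)$, where $\Phi_1=\ee[T\alpha'(\tau_0T+H+\beta\sqrt{q_*}G,\beta^2\sigma_{1,*}^2)]$, $\Psi_1=\ee[\alpha'(\tau_0T+H+\beta\sqrt{q_*}G,\beta^2\sigma_{1,*}^2)]$, $\Psi_2=\ee[\alpha'(-\tau_0+H+\beta\sqrt{r_*}G,\beta^2\sigma_{2,*}^2)]$. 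Passing from these sample-level limits to the corresponding $\ee_{\bar\bT,\bar\bX}(\cdot)$ statements is a routine concentration step: every quantity is bounded by $1$, and both $m_i^{[M]}$ and $\langle\bY_i\rangle$ depend on $(\bar\bT,\bar\bX)$ in a bounded-differences fashion, so the empirical averages concentrate around their $(\bar\bT,\bar\bX)$-means at rate $n^{-1/2}$.

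\emph{Step 1 (AMP side, via state evolution).} The recursion in steps iii--iv of Algorithm~\ref{alg:amp} is a bona fide AMP iteration: the denoiser $u\mapsto\alpha'(\beta u+\tau_0\bar T_i+\bar\bX_i^\top\btheta_0,\beta^2\sigma_{1,*}^2)$ is Lipschitz since $\alpha'',\alpha'''$ are bounded (because $\mathrm{Supp}(\mu)\subseteq[-1,1]$), and $d_k=\frac\beta n\sum_j\alpha''(\cdots)$ is exactly the associated Onsager coefficient. I would invoke the state-evolution theorem for AMP with coordinate-wise side information $(\bar T_i,\bar\bX_i)$: for each fixed $M$ the empirical law of $(u_i^{[M]},\bar T_i,\bar\bX_i)$ converges, in probability over $\bA_n$, to that of $(\sigma_M G,T,\bX)$ with $G\sim\mathcal N(0,1)$ independent of $(T,\bX)$ and $\sigma_{k+1}^2=\ee[\alpha'(\beta\sigma_k G+\tau_0T+H,\beta^2\sigma_{1,*}^2)^2]$. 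The role of the initialization $u_i^{[2]}=\sum_j\mathbf G_n(i,j)\sqrt{q_*}$ (Gaussian, variance $\tfrac{n-1}{n}q_*$, independent of $(\bar\bT,\bar\bX)$) is that $\sigma_2^2=q_*$; the first line of \eqref{eq:q_defn} then forces $\sigma_k^2=q_*$ for all $k\ge2$, so state evolution sits at its fixed point and $\tfrac1n\sum_i\bar T_i m_i^{[M]}\to\ee[T\alpha'(\beta\sqrt{q_*}G+\tau_0T+H,\beta^2\sigma_{1,*}^2)]=\Phi_1$, $\tfrac1n\sum_i m_i^{[M]}\to\Psi_1$ for every $M\ge2$. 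The $\tilde\bu$-recursion is identical via \eqref{eq:r_defn}, giving $\tfrac1n\sum_i\tilde m_i^{[M]}\to\Psi_2$. Only $\beta<\beta_0$ is used here, so that Lemma~\ref{lemma:uniqueness} supplies $(q_*,\sigma_{1,*}^2),(r_*,\sigma_{2,*}^2)$.

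\emph{Step 2 (Gibbs side, via the cavity method).} The heart of the argument is to show that the true single-site means obey a replica-symmetric (TAP) fixed-point identity: in probability,
\begin{align*}
\frac1n\Big\|\langle\bY\rangle_{\bar\bT,\bar\bX}-\alpha'\big(\tau_0\bar\bT+\bar\bX\btheta_0+\beta\sqrt{q_*}\,\mathbf G,\,\beta^2\sigma_{1,*}^2\big)\Big\|^2\ \longrightarrow\ 0,
\end{align*}
with $\mathbf G=(G_1,\dots,G_n)$ and $(G_i,\bar T_i,\bar\bX_i)$ asymptotically distributed as the $(G,T,\bX)$ of Step 1. This is the classical one-coordinate cavity computation for the soft-spin SK model with random external field: deleting site $i$, the conditional law of $\bY_i$ given $\bY_{-i}$ is the linear exponential tilt of $\mu$ with parameter $\sum_{j\ne i}\mathbf A_n(i,j)\bY_j+\tau_0\bar T_i+\bar\bX_i^\top\btheta_0$; splitting this cavity field into (a) the component common to all cavity configurations, which a CLT over the fresh independent Gaussians $\{\mathbf A_n(i,j)\}_j$ renders asymptotically $\beta\sqrt{q_*}\,G_i$ (the variance being identified through $\tfrac1n\|\langle\bY\rangle\|^2=\ee_f[\tfrac1n(\by^1)^\top\by^2]\to q_*$, from Theorem~\ref{thm:overlap_conc}), and (b) the fluctuating component, whose integration against $y_i$ produces precisely the quadratic tilt $e^{\frac12\beta^2\sigma_{1,*}^2 y_i^2}$ of Definition~\ref{def:exp_tilt}, with $\sigma_{1,*}^2=\ee_f[\tfrac1n\|\by\|^2]-\ee_f[\tfrac1n(\by^1)^\top\by^2]\to\sigma_{1,*}^2$ by Theorem~\ref{thm:overlap_higher_moment}; this yields the displayed identity and, simultaneously, the self-consistency of $(q_*,\sigma_{1,*}^2)$ with \eqref{eq:q_defn}. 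Cauchy--Schwarz (using $|\bar T_i|=1$) together with the joint convergence of $(G_i,\bar T_i,\bar\bX_i)$ then gives $\tfrac1n\sum_i\bar T_i\langle\bY_i\rangle\to\Phi_1$ and $\tfrac1n\sum_i\langle\bY_i\rangle\to\Psi_1$, and the identical computation with $t_i\equiv-1$, $(r_*,\sigma_{2,*}^2)$ gives $\tfrac1n\sum_i\langle\bY_i\rangle_{-\mathbf1,\bar\bX}\to\Psi_2$.

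\emph{Assembly and main obstacle.} Combining the two steps, $\ee_{\bar\bT,\bar\bX}(\hat{\mathrm{DE}}_M)\to2\Phi_1$ and $\mathrm{DE}\to2\Phi_1$, $\ee_{\bar\bT,\bar\bX}(\hat{\mathrm{IE}}_M)\to\Psi_1-\Psi_2-\Phi_1$ and $\mathrm{IE}\to\Psi_1-\Psi_2-\Phi_1$, all in probability over $\bA_n$; subtracting yields the theorem in the $\mathscr{P}_{n,M}$ sense, with $\bar{\beta}_0$ the smaller of $\beta_0$ and the threshold produced by Step 2. The genuinely hard part is Step 2: making the soft-spin cavity expansion rigorous --- in particular, showing that the common part of the cavity field is asymptotically Gaussian with variance \emph{exactly} $\beta^2q_*$ (not merely $O(1)$) and that the TAP identity holds with normalized $\ell^2$-error $o_P(1)$ uniformly over all sites. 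This is precisely where the local uniform overlap-concentration hypothesis is indispensable: it pins the constants $q_*,\sigma_{1,*}^2$, it supplies the weak correlation-decay between far-apart spins needed for the cavity CLT and for controlling $\langle\bY_j\rangle^{(i)}-\langle\bY_j\rangle$, and the uniformity in $\beta$ transfers overlap concentration to the perturbed measures appearing along the interpolation; this forces $\beta<\bar{\beta}_0\le\beta_0$, in general strictly smaller than the range where $q_*$ merely exists, whereas Step 1 works for all $\beta<\beta_0$. A more analytic alternative for Step 2 exploits $\tfrac1n\sum_i\bar T_i\langle\bY_i\rangle=\tfrac1n\partial_{\tau_0}\log Z_n(\bar\bT,\bar\bX)$ and convexity of $\tfrac1n\log Z_n$ in the external field: it then suffices to establish the replica-symmetric free-energy limit $\tfrac1n\log Z_n\to F_{\mathrm{RS}}$ (via Guerra interpolation and the Aizenman--Sims--Starr scheme, valid at high temperature) and read off $\tfrac1n\partial_{\tau_0}\log Z_n\to\partial_{\tau_0}F_{\mathrm{RS}}=\Phi_1$ by convexity, with $\Phi_1,\Psi_1,\Psi_2$ all obtained as free-energy derivatives; either way, the crux is the high-temperature analysis of this soft-spin Gibbs measure with external fields.
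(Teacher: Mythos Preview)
Your approach is broadly correct but takes a genuinely different route from the paper.

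The paper's proof is much more compressed and rests on a single strong lemma, Theorem~\ref{thm:amp_good}: it establishes
\[
\lim_{M\to\infty}\lim_{n\to\infty}\frac{1}{n}\,\ee_{\bA_n,\bar\bT,\bar\bX}\big\|\langle\by\rangle-\mathbf m^{[M]}\big\|^2=0,
\]
i.e.\ the AMP output approximates the true magnetization \emph{vector} in normalized $\ell^2$. The theorem then follows in one line by Cauchy--Schwarz, $|\ee_{\bar\bT,\bar\bX}(\hat{\mathrm{DE}}_M)-\mathrm{DE}|\le \frac{2}{\sqrt n}\,\ee_{\bar\bT,\bar\bX}\|\mathbf m^{[M]}-\langle\by\rangle\|$, after which taking the $\bA_n$-expectation finishes. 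No deterministic limit is identified, and no separate ``AMP side'' vs.\ ``Gibbs side'' analysis appears at the level of the theorem; everything is packaged into Theorem~\ref{thm:amp_good}, whose proof (Appendices~\ref{appendix: sk_model}--\ref{appendix: amp+cavity}) shows that AMP tracks a cavity iteration which in turn converges to $\langle\by\rangle$.

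Your route --- showing separately that $\tfrac1n\sum_i\bar T_i m_i^{[M]}\to\Phi_1$ (via state evolution) and $\tfrac1n\sum_i\bar T_i\langle\bY_i\rangle\to\Phi_1$ (via cavity or RS free energy), and similarly for $\Psi_1,\Psi_2$ --- is more modular and has the bonus of yielding an explicit closed-form limit for $\mathrm{DE}$ and $\mathrm{IE}$, which the paper never states. Both routes ultimately sit on the same high-temperature spin-glass machinery (overlap concentration, cavity expansion), but the paper's $\ell^2$ result is strictly stronger than what you need and correspondingly harder: it requires proving that AMP tracks the cavity iteration \emph{coordinate-wise}, not just in empirical law. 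Your Step~1 needs only standard state evolution, which is weaker; your Step~2 is essentially the paper's Lemma~\ref{lem:avg_magnetization}, extended to allow the $\bar T_i$ weights.

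One correction to your concentration step: the bounded-differences claim for $\langle\bY_i\rangle$ as a function of $(\bar T_j,\bar\bX_j)$ would require correlation control (e.g.\ via LSI, as in Proposition~\ref{prop:lsi_concentration}), which the theorem does not assume. The correct and simpler argument is that all quantities are bounded by $1$, so convergence in probability over $(\bA_n,\bar\bT,\bar\bX)$ upgrades to $L^1$ by dominated convergence, and Jensen then transfers this to the $(\bar\bT,\bar\bX)$-averaged quantities in probability over $\bA_n$. No bounded-differences input is needed.
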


\begin{remark}
    Classical results from Random Matrix Theory imply that $\| \mathbf{G}_n \| \to 2$ a.s. \cite[Theorem 5.1]{bai2010spectral}.  Consequently, $\beta < \bar{\beta}_0$ implies a high-temperature condition $\| \bA_n \| \leq C_0$ for some $C_0>0$. This condition is comparable to the high-temperature condition in Theorem~\ref{thm:de_mean_field}.  
\end{remark}

\begin{remark}
\label{remark:amp_slln}
    It is again helpful to specialize to the case of no interference i.e. $\beta=0$. In this case, $q_*= \mathbb{E}[(\alpha'(\tau_0 T + H,0))^2]$, $\sigma_{1,*}^2 = \mathbb{E}[\alpha''(\tau_0 T + H,0)]$, where $H \stackrel{d}{=} \mathbf{X}_1^{\top} \boldsymbol{\theta}_0$. Similarly, $r_* = \mathbb{E}[(\alpha'(-\tau_0 + H,0))^2]$, $\sigma_{2,*}^2 = \mathbb{E}[\alpha''(-\tau_0 + H , 0)]$. Note that Algorithm~\ref{alg:amp} iterates converge for all $k \geq 3$ and $\mathbf{m}^{[k]} = \alpha'(\tau_0 \bar \bT + \bar \bX \boldsymbol{\theta}_0,0)$, $\tilde{\mathbf{m}}^{[k]} = \alpha'(- \tau_0 \mathbf{1} + \bar \bX \boldsymbol{\theta}_0,0)$. In particular, $m_i^{[k]} = \mathbb{E}[\mathbf{Y}_i(t_i)| t_i = \bar{T}_i, \bX_i]$ and $\tilde{m}_i^{[k]} = \mathbb{E}[\mathbf{Y}_i(t_i) | t_i = -1, \bX_i]$. We thus reduce back to the setting of Remark~\ref{remark:mf_slln}.   
\end{remark}
We note that similar to Theorem~\ref{thm:de_mean_field}, the estimates $\hat{\text{DE}}_M$ and $\hat{\text{IE}}_M$ are consistent for the corresponding estimands in expectation over $(\bar \bT, \bar \bX)$. In practice, we simulate several sets of treatment-covariate configurations $\{(\bar \bT^{(1)}, \bar \bX^{(1)}), \cdots, (\bar \bT^{(k)}, \bar \bX^{(k)})\}$, run parallel AMP algorithms on the distinct configurations, and finally average the estimates obtained from the individual runs. These estimates enjoy similar guarantees as derived in Lemma~\ref{lemma:data_driven}. We omit a formal statement to this end to avoid repetition. We prove Lemma~\ref{lemma:uniqueness} and  Theorem~\ref{thm:amp} in Section~\ref{sec:proof_amp}.


\subsection{Parameter Estimation}
\label{sec:parameter_estimation}
We have so far assumed oracle knowledge of the model parameters $\tau_0$ and $\boldsymbol{\theta}_0$. In practice, these parameters are unknown and must be estimated from data.  We now turn to the estimation of these unknown parameters. 
To this end, we utilize an estimator based on pseudo-likelihood \cite{besag1975statistical}. Formally, we define this as follows.

\begin{defn}[Maximum pseudo-likelihood estimator]
Given $(\bY,\bT,\bX)$, we define the maximum pseudo-likelihood estimator of the parameters $(\tau_0,\btheta_0)$ as
\begin{align}
    (\hat \tau_{\text{MPL}}, \hat {\boldsymbol{\theta}}_{\text{MPL}}) &=\argmax_{\tau,\boldsymbol{\theta}} \prod_{i=1}^{n} f(\bY_i| \bY_{-i},\bT,\bX). \label{eq:mple_outcome}  
\end{align}
as long as the maximizers in the above display are unique. 
The conditional probabilities in \eqref{eq:mple_outcome} are computed from the joint distribution \eqref{eq:define_gibbs}. 
\end{defn}

\begin{remark}
    Parameter estimation in the MRF \eqref{eq:define_gibbs} is challenging due to the intractable normalization constant $Z_n(\bt, \bx)$. The pseudo-likelihood is free from the  normalization constant, and thus computationally efficient. This has motivated the wide-spread use of  maximum pseudo-likelihood estimates in MRFs~\cite{besag1975statistical,daskalakis2019regression,mukherjee2021high}. 
\end{remark}

\begin{remark}[Coding estimators] 
    As an alternative to the maximum pseudo-likelihood estimator, \citet{tchetgen2021auto} also consider the ``coding estimator". Formally, if $\bA_n$ represents the scaled adjacency matrix of an underlying graph, the coding estimator for parameters $(\tau_0,\btheta_0)$ is defined as 
\[(\hat \tau_{\text{MPL}}, \hat {\boldsymbol{\theta}}_{\text{MPL}}) =\argmax_{\tau,\boldsymbol{\theta}} \prod_{i\in S} f(\mathbf{Y}_i| \mathbf{Y}_{S^c},\bt,\bx),\]
where $S$ denotes any maximal independent set of the underlying graph. In this work, we focus on relatively dense networks where the set $S$ is usually very small  e.g. if the underlying network is a complete graph, the set $S$ is a singleton. Therefore, we will only consider the maximum pseudo-likelihood estimator for the subsequent discussion.
\end{remark}


Our next result establishes the $\sqrt{n}$-consistency of the pseudo-likelihood estimator. 

\begin{thm}\label{thm:param_estimate}
    Assume $(\bA_n, \mathbf{M}_n)$ satisfy Assumption~\ref{ass:operator_norm}.  Assume that the parameters $(\tau_0, \btheta_0)$ satisfy Assumption \ref{assn:parameter_space} and belong to the interior of the parameter space. Then the pseudo-likelihood estimator $(\hat \tau_{\text{MPL}}, \hat {\boldsymbol{\theta}}_{\text{MPL}})$ exists and satisfies
    \begin{equation*}
        \sqrt{n} ((\hat \tau_{\text{MPL}}, \hat {\boldsymbol{\theta}}_{\text{MPL}})-(\tau_0,\boldsymbol{\theta}_0))= O_{P}(1).
    \end{equation*}
\end{thm}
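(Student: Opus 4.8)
First I would make the pseudo-likelihood in \eqref{eq:mple_outcome} explicit. Under \eqref{eq:define_gibbs} the conditional law of $Y_i$ given $(\bY_{-i},\bT,\bX)$ is the exponential tilt $\mu_\lambda$ of Definition~\ref{def:exp_tilt} with $\lambda=(\lambda_{1,i}(\tau,\boldsymbol{\theta}),\mathbf{A}_n(i,i))$, where $\lambda_{1,i}(\tau,\boldsymbol{\theta}):=m_i+\tau T_i+\bX_i^{\top}\boldsymbol{\theta}$ and $m_i:=\sum_{j\neq i}\mathbf{A}_n(i,j)Y_j$. Writing $\bW_i:=(T_i,\bX_i^{\top})^{\top}$ and dropping the additive term free of $(\tau,\boldsymbol{\theta})$, the pseudo-log-likelihood is $\ell_n(\tau,\boldsymbol{\theta})=\sum_{i=1}^n\bigl[\lambda_{1,i}(\tau,\boldsymbol{\theta})\,Y_i-\alpha(\lambda_{1,i}(\tau,\boldsymbol{\theta}),\mathbf{A}_n(i,i))\bigr]$. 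Since $\alpha(\cdot,\lambda_2)$ is strictly convex ($\alpha''=\mathrm{Var}_{\mu_\lambda}(X)>0$ because $\mu$ is non-degenerate), $\ell_n$ is concave, with $\nabla\ell_n(\tau,\boldsymbol{\theta})=\sum_i\bW_i\bigl(Y_i-\alpha'(\lambda_{1,i}(\tau,\boldsymbol{\theta}),\mathbf{A}_n(i,i))\bigr)$ and $-\nabla^2\ell_n(\tau,\boldsymbol{\theta})=\sum_i\alpha''(\lambda_{1,i}(\tau,\boldsymbol{\theta}),\mathbf{A}_n(i,i))\,\bW_i\bW_i^{\top}\succeq 0$. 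At the truth $\alpha'(\lambda_{1,i}(\tau_0,\boldsymbol{\theta}_0),\mathbf{A}_n(i,i))=\ee[Y_i\mid\bY_{-i},\bT,\bX]$, so $D_i:=Y_i-\alpha'(\lambda_{1,i}(\tau_0,\boldsymbol{\theta}_0),\mathbf{A}_n(i,i))$ is conditionally centered given $(\bY_{-i},\bT,\bX)$.

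I would then reduce the claim to: (A) $\tfrac1{\sqrt n}\|\nabla\ell_n(\tau_0,\boldsymbol{\theta}_0)\|=O_P(1)$; and (B) there exist $\rho,c>0$ such that $-\tfrac1n\nabla^2\ell_n(\tau,\boldsymbol{\theta})\succeq c\,\mathbf{I}_{d+1}$ for all $(\tau,\boldsymbol{\theta})$ in the radius-$\rho$ ball about $(\tau_0,\boldsymbol{\theta}_0)$, with probability tending to one. On the intersection of these events $\ell_n$ is $cn$-strongly concave on that ball while $\|\nabla\ell_n(\tau_0,\boldsymbol{\theta}_0)\|=o(n)$; hence, for $n$ large, the maximum of $\ell_n$ over the closed ball is attained in its interior, so by concavity it is the unique global maximizer. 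This gives existence of the estimator \eqref{eq:mple_outcome} with probability $\to1$ together with $\|(\hat\tau_{\text{MPL}},\hat{\boldsymbol{\theta}}_{\text{MPL}})-(\tau_0,\boldsymbol{\theta}_0)\|\le\tfrac{2}{cn}\|\nabla\ell_n(\tau_0,\boldsymbol{\theta}_0)\|=O_P(1/\sqrt n)$, which is the assertion of the theorem.

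For (A) I would bound $\ee\|\nabla\ell_n(\tau_0,\boldsymbol{\theta}_0)\|^2=\sum_{i,j}\ee[D_iD_j\,\bW_i^{\top}\bW_j]$, noting that the weights $\bW_i^{\top}\bW_j$ are bounded (by $d+1$) and measurable with respect to $(\bY_{-i},\bT,\bX)$. The diagonal part $\sum_i\ee[D_i^2\|\bW_i\|^2]\le(d+1)\sum_i\ee[\alpha''(\lambda_{1,i}(\tau_0,\boldsymbol{\theta}_0),\mathbf{A}_n(i,i))]=O(n)$ since $\mathrm{Supp}(\mu)\subseteq[-1,1]$. For $i\neq j$, conditioning on $(\bY_{-i},\bT,\bX)$ and using that the $j$-th summand depends on $Y_i$ only through the shift $\mathbf{A}_n(i,j)Y_i$ inside $\alpha'$ yields $\ee[D_iD_j\,\bW_i^{\top}\bW_j]=-\ee\bigl[\bW_i^{\top}\bW_j\,\mathrm{Cov}\bigl(Y_i,\alpha'(\lambda_{1,j}(\tau_0,\boldsymbol{\theta}_0),\mathbf{A}_n(j,j))\mid\bY_{-i},\bT,\bX\bigr)\bigr]$, bounded in absolute value by $(d+1)|\mathbf{A}_n(i,j)|$. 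This crude estimate only gives $\ee\|\nabla\ell_n\|^2=O\bigl(n+\sum_{i\neq j}|\mathbf{A}_n(i,j)|\bigr)$, which is $O(n^{3/2})$ for, e.g., the gaussian ensemble of Theorem~\ref{thm:amp} and hence too weak. The decisive step is a one-step conditional Taylor expansion of the $j$-th summand in $Y_i$ about $\ee[Y_i\mid\bY_{-i},\bT,\bX]$: the leading part of $\ee[D_iD_j]$ is $-\mathbf{A}_n(i,j)$ times a uniformly bounded random quantity, with an $O(\mathbf{A}_n(i,j)^2)$ error. The errors sum to $O\bigl(\sum_{i\neq j}\mathbf{A}_n(i,j)^2\bigr)=O(\mathrm{Tr}(\mathbf{A}_n^2))=O(n)$ under Assumption~\ref{ass:operator_norm}, while the leading parts reassemble into quadratic forms $\bu^{\top}\mathbf{A}_n\bv$ with $\|\bu\|,\|\bv\|=O(\sqrt n)$, hence bounded by $\|\mathbf{A}_n\|\cdot O(n)=O(n)$. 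This adapts to the soft-spin model \eqref{eq:define_gibbs} the pseudo-likelihood score-variance bounds developed for Ising models in \cite{ghosal2020joint,daskalakis2019regression}. Thus $\ee\|\nabla\ell_n(\tau_0,\boldsymbol{\theta}_0)\|^2=O(n)$, and (A) follows by Markov's inequality.

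For (B), the weights $\alpha''(\lambda_{1,i}(\tau,\boldsymbol{\theta}),\mathbf{A}_n(i,i))$ are positive but not uniformly bounded below, because $\alpha''(\lambda_1,\lambda_2)\to0$ as $|\lambda_1|\to\infty$. I would get around this using $\sum_im_i^2\lesssim\|\mathbf{A}_n\|^2\|\bY\|^2=O(n)$ (as $\bY\in[-1,1]^n$): for any $K$ the number of indices with $|\lambda_{1,i}(\tau,\boldsymbol{\theta})|>K$ is $O(n/K^2)$, uniformly over the bounded parameter space, while on the remaining indices $\alpha''\ge\underline c(K)>0$ by non-degeneracy of $\mu$. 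Hence $-\tfrac1n\nabla^2\ell_n(\tau,\boldsymbol{\theta})\succeq\underline c(K)\bigl(\tfrac1n\sum_{i=1}^n\bW_i\bW_i^{\top}-O(K^{-2})\,\mathbf{I}_{d+1}\bigr)$, and $\tfrac1n\sum_i\bW_i\bW_i^{\top}$ concentrates (boundedness; $\bX_i$ i.i.d.) around a positive-definite limit, using \eqref{eq:sigma_min_eval} and the fact that under \eqref{eq:prop_score} $T_i$ is not asymptotically a deterministic affine function of $\bX_i$; taking $K$ large absorbs the $O(K^{-2})$ term and gives (B). The main obstacle is step (A): without any sparsity of $\mathbf{A}_n$ the naive covariance estimate loses a factor $\sqrt n$, and reaching the sharp $O(n)$ bound on $\ee\|\nabla\ell_n\|^2$ requires exhibiting the cancellation that converts the dominant off-diagonal contributions into operator-norm-controlled quadratic forms in $\mathbf{A}_n$.
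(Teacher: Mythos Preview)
Your two–step reduction \((\mathrm{A})\)+\((\mathrm{B})\) is exactly the paper's strategy, and your treatment of \((\mathrm{A})\) matches the paper's: the paper packages the variance bound on $\nabla\ell_n(\tau_0,\boldsymbol{\theta}_0)$ as a separate lemma (invoking \cite[Lemma~8]{mukherjee2022variational}), but the mechanism is precisely the one you sketch---a one-step expansion of the cross-terms that turns the leading off-diagonal part into an operator-norm-controlled quadratic form in $\bA_n$, plus an $O(\mathrm{Tr}(\bA_n^2))=O(n)$ remainder.

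The gap is in the second half of \((\mathrm{B})\). Your good/bad-index split via $\sum_i m_i^2\le\|\bA_n\|^2\|\bY\|^2=O(n)$ is fine and coincides with the paper's. But to conclude you need $\lambda_{\min}\bigl(\tfrac1n\sum_{i}\bW_i\bW_i^{\top}\bigr)\ge c>0$ with probability $\to1$, and your justification---that this matrix ``concentrates around a positive-definite limit'' by ``boundedness; $\bX_i$ i.i.d.'' and because ``$T_i$ is not asymptotically a deterministic affine function of $\bX_i$''---does not go through. The $T_i$ are \emph{not} i.i.d.: they come from the Ising-type propensity model \eqref{eq:prop_score} with interactions $\mathbf{M}_n$, so there is no obvious law of large numbers for the cross-block $\tfrac1n\sum_i T_i\bX_i$ (nor any reason a limit should exist for an arbitrary sequence $\mathbf{M}_n$ satisfying only Assumption~\ref{ass:operator_norm}). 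Even granting some limit, positive definiteness would still require bounding the Schur complement $1-\tfrac1n\bT^{\top}\bX(\bX^{\top}\bX)^{-1}\bX^{\top}\bT$ away from zero, which is exactly the nontrivial step.

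The paper's fix is to work directly with this Schur complement. Since $\mathbf{P}=\bX(\bX^{\top}\bX)^{-1}\bX^{\top}$ has rank $\le d$, one writes $\mathbf{P}=\sum_{l=1}^d \bv_l\bv_l^{\top}$ and applies the \emph{same} score-concentration lemma as in \((\mathrm{A})\), but now to the treatment model \eqref{eq:prop_score}: for each fixed (unit-norm) $\bv_l$, $\bv_l^{\top}(\bT-\phi(\bT))=O_P(1)$ where $\phi_i(\bT)=\tanh((\mathbf{M}_n\bT)_i+\boldsymbol{\gamma}_0^{\top}\bX_i)$. This yields $\tfrac1n\bT^{\top}\mathbf{P}\bT=\tfrac1n\phi(\bT)^{\top}\mathbf{P}\phi(\bT)+o_P(1)\le\tfrac1n\|\phi(\bT)\|^2+o_P(1)$, and finally $1-\tfrac1n\|\phi(\bT)\|^2=\tfrac1n\sum_i\mathrm{sech}^2((\mathbf{M}_n\bT)_i+\boldsymbol{\gamma}_0^{\top}\bX_i)\ge c>0$ by the very same good/bad-index argument applied to $\mathbf{M}_n$. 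So the missing ingredient in your plan is a second invocation of the score-variance bound---this time on $\bT$ under \eqref{eq:prop_score}---to control the dependence of the treatments; without it, the Hessian lower bound does not close.
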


Next, we turn to the implication of Theorem \ref{thm:param_estimate} to the estimation of the target causal estimands. Algorithms \ref{algo:de_method} and \ref{alg:amp} involve the true parameters $\tau_0,\btheta_0$ which are unknown in practice. We use the estimates $(\hat \tau_{\text{MPL}}, \hat {\boldsymbol{\theta}}_{\text{MPL}})$ to construct the natural plug-in estimators for the target estimands. The following result establishes the stability of our algorithms under this plug-in procedure. For mathematical clarity, we use $\hat{\mathrm{DE}}_M(\tau, \btheta)$ and $\hat{\mathrm{IE}}_M(\tau,\btheta)$ in the following result to explicitly track the dependence of these estimates on the underlying parameters.   

\begin{lemma}\label{lem:algo_stable}
    Assume that $(\bA_n, \mathbf{M}_n)$ satisfies Assumption~\ref{ass:operator_norm}. Let $(\widehat{\mathrm{DE}}(\tau_0, \btheta_0),\widehat{\mathrm{IE}}(\tau_0, \btheta_0))$ denote the causal effect estimates obtained from either Algorithm~\ref{algo:de_method} or \ref{alg:amp} using the true parameters $\tau_0, \btheta_0$. Similarly, let $(\widehat{\mathrm{DE}}(\hat{\tau}_{\text{MPL}}, \hat{\btheta}_{\text{MPL}}),\hat{\mathrm{IE}}(\hat{\tau}_{\text{MPL}}, \widehat{\btheta}_{\text{MPL}}))$ denote the corresponding plug-in estimators obtained from either Algorithm~\ref{algo:de_method} or \ref{alg:amp} using the estimators $\hat{\tau}_{\text{MPL}}, \hat{\btheta}_{\text{MPL}}$.    
    There exists $C_0>0$ such that, if $\limsup_n \|\bA_n\| \le C_0$, we have, for any $\varepsilon >0$,
    \begin{align*}
       & \lim\limits_{n \rightarrow \infty} \bP \left(\left|\hat{\mathrm{DE}}_M(\tau_0, \btheta_0) -\hat{\mathrm{DE}}_M(\hat \tau_{\text{MPL}}, \hat {\boldsymbol{\theta}}_{\text{MPL}})\right| > \varepsilon \right)=0, \\
        &\lim\limits_{n \rightarrow \infty} \bP \left(\left|\hat{\mathrm{IE}}_M(\tau_0, \btheta_0) -\hat{\mathrm{IE}}_M(\hat \tau_{\text{MPL}}, \hat {\boldsymbol{\theta}}_{\text{MPL}})\right| > \varepsilon \right)=0.
    \end{align*}
\end{lemma}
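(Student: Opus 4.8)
The plan is to establish that the causal effect estimators produced by Algorithms~\ref{algo:de_method} and \ref{alg:amp} are \emph{Lipschitz-stable} in the parameters $(\tau,\btheta)$, uniformly in $n$ (on the relevant high-temperature event), and then to combine this with the $\sqrt n$-consistency of the pseudo-likelihood estimator from Theorem~\ref{thm:param_estimate} via a standard sandwich argument. Concretely, I would first fix the Monte-Carlo configuration $(\bar\bT,\bar\bX)$ and view $(\tau,\btheta)\mapsto \mathbf{u}^{(M)}(\tau,\btheta)$ (respectively $\mathbf{u}^{[M]}(\tau,\btheta)$, together with $q_*,\sigma_{1,*}^2$ etc.) as a deterministic map. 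Since $\hat{\mathrm{DE}}_M=\frac2n\sum_i \bar T_i u_i^{(M)}$ and similarly for $\hat{\mathrm{IE}}_M$, it suffices to bound $\frac1n\sum_i |u_i^{(M)}(\tau_1,\btheta_1)-u_i^{(M)}(\tau_2,\btheta_2)|$, i.e.\ the $\ell^1$-Lipschitz constant (equivalently, via Cauchy--Schwarz, an $\ell^2$ bound $\frac1{\sqrt n}\|\mathbf{u}^{(M)}(\tau_1,\btheta_1)-\mathbf{u}^{(M)}(\tau_2,\btheta_2)\|$).

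The key analytic input is that $\alpha'(\cdot,\lambda_2)$ and $\alpha''(\cdot,\lambda_2)$ are Lipschitz and bounded on the compact parameter range (this follows from $\mathrm{Supp}(\mu)\subseteq[-1,1]$: $\alpha'=\ee_{\mu_\lambda}(X)\in[-1,1]$, $\alpha''=\mathrm{Var}_{\mu_\lambda}(X)\le1$, and one differentiates once more to control derivatives, all uniformly over $\lambda_2$ in a bounded set). For Algorithm~\ref{algo:de_method} I would then run an induction on the iteration index $\ell$: writing $\mathbf{v}^{(\ell)}:=\mathbf{u}^{(\ell)}(\tau_1,\btheta_1)-\mathbf{u}^{(\ell)}(\tau_2,\btheta_2)$, the recursion gives $\|\mathbf{v}^{(\ell+1)}\|\le L_{\alpha'}\big(\|\bA_n\|\,\|\mathbf{v}^{(\ell)}\| + |\tau_1-\tau_2|\,\|\bar\bT\| + \|\bar\bX\|_{\mathrm{op}}\,\|\btheta_1-\btheta_2\|\big)$; since $\|\bar\bT\|=\sqrt n$ and $\|\bar\bX\|_{\mathrm{op}}=O(\sqrt n)$ (entries bounded, $d$ fixed), dividing by $\sqrt n$ yields $\tfrac1{\sqrt n}\|\mathbf{v}^{(\ell+1)}\|\le L_{\alpha'}\|\bA_n\|\cdot\tfrac1{\sqrt n}\|\mathbf{v}^{(\ell)}\| + C(|\tau_1-\tau_2|+\|\btheta_1-\btheta_2\|)$. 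If $L_{\alpha'}\limsup_n\|\bA_n\|<1$, i.e.\ on the high-temperature event $\limsup_n\|\bA_n\|\le C_0$ with $C_0$ chosen small enough, the geometric series sums and we get, for \emph{all} $M$ simultaneously, $\tfrac1{\sqrt n}\|\mathbf{u}^{(M)}(\tau_1,\btheta_1)-\mathbf{u}^{(M)}(\tau_2,\btheta_2)\|\le C'\,(|\tau_1-\tau_2|+\|\btheta_1-\btheta_2\|)$ with $C'$ independent of $n$ and $M$. Taking $(\tau_2,\btheta_2)=(\hat\tau_{\mathrm{MPL}},\hat\btheta_{\mathrm{MPL}})$, $(\tau_1,\btheta_1)=(\tau_0,\btheta_0)$ and invoking Theorem~\ref{thm:param_estimate} ($|\hat\tau_{\mathrm{MPL}}-\tau_0|+\|\hat\btheta_{\mathrm{MPL}}-\btheta_0\|=O_P(1/\sqrt n)\to0$) gives the claim for $\hat{\mathrm{DE}}_M$; the bound for $\hat{\mathrm{IE}}_M$ follows because $\hat{\mathrm{IE}}_M$ is a fixed linear combination of the same quantities (the $\tilde{\mathbf{u}}^{(M)}$ iteration is handled identically). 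Finally, since the final estimates are Monte-Carlo averages over $k$ i.i.d.\ configurations, one applies the per-configuration bound and a union bound, or simply notes the Lipschitz constant is deterministic in $(\bar\bT,\bar\bX)$ once the bounds $\|\bar\bT\|=\sqrt n$, $\|\bar\bX\|_{\mathrm{op}}=O(\sqrt n)$ hold (the latter with high probability, handled by a concentration/operator-norm estimate for bounded-entry matrices).

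For Algorithm~\ref{alg:amp} the structure is the same but there is an extra layer: the AMP iteration uses the constants $(q_*,\sigma_{1,*}^2)$ and $(r_*,\sigma_{2,*}^2)$, which themselves depend on $(\tau_0,\btheta_0)$ through the fixed-point systems \eqref{eq:q_defn}--\eqref{eq:r_defn}. So I would first show these fixed points are Lipschitz in $(\tau,\btheta)$: this follows from Lemma~\ref{lemma:uniqueness} plus an implicit-function-theorem / contraction argument for $\beta<\bar\beta_0$ small (the same contraction that gives uniqueness gives Lipschitz dependence on the external parameters, since the map defining the fixed point is smooth in $(\tau,\btheta,q,\sigma^2)$ with Lipschitz constants controlled by $\beta$). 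Given that, the Onsager-corrected recursion for $u^{[k]}$ is again a contraction in $\tfrac1{\sqrt n}\|\cdot\|$ for $\beta<\bar\beta_0$ — here one uses $\|\mathbf{G}_n\|\to2$ a.s.\ so $\|\bA_n\|=\beta\|\mathbf{G}_n\|\le C_0$ on a high-probability event — and the perturbations of $q_*,\sigma_{1,*}^2,\tau_0,\btheta_0$ feed in additively through the Lipschitz/bounded properties of $\alpha',\alpha''$ in \emph{both} arguments; the Onsager term $d_k$ is a smooth function of the iterates and inherits the same Lipschitz control. Summing the geometric series over $k=2,\dots,M$ gives the $M$- and $n$-uniform Lipschitz bound, and one concludes exactly as before, with the $\mathscr{P}_{n,M}$-mode of convergence absorbing the random-matrix event $\{\|\mathbf{G}_n\|\le C_0/\beta\}$.

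The main obstacle I anticipate is the \emph{uniformity in $M$}: the naive bound on the $\ell$-step difference grows like $(L_{\alpha'}\|\bA_n\|)^\ell$ times the per-step perturbation, so without the strict contraction $L_{\alpha'}\limsup_n\|\bA_n\|<1$ the Lipschitz constant would blow up as $M\to\infty$ and the argument would fail — this is precisely why the hypothesis $\limsup_n\|\bA_n\|\le C_0$ (high temperature) is unavoidable and why $C_0$ must be taken small enough (specifically $C_0 < 1/L_{\alpha'}$, and even smaller in the AMP case to also control the fixed-point map and the Onsager correction). A secondary technical point is verifying that $L_{\alpha'}$, the Lipschitz constant of $\alpha'$, and the analogous constant for $\alpha''$, can be bounded by an absolute constant uniformly over the compact ranges of $\lambda_1$ (which lives in an interval of size $O(B+M\sqrt d \cdot \text{(operator norm bound)})$, still compact for fixed $d$) and $\lambda_2\in[0,\beta^2\sup\sigma^2]$; this is a routine but slightly delicate computation with the tilted measure $\mu_\lambda$, using non-degeneracy of $\mu$ only to ensure $\alpha''>0$ (which is not even needed here — only the \emph{upper} bounds matter for stability).
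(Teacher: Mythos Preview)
Your proposal is correct and follows essentially the same route as the paper: an induction on the iteration index using the Lipschitz property of $\alpha'$ (and boundedness of $\alpha''$) together with $\|\bA_n\|\le C_0$ for Algorithm~\ref{algo:de_method}, and for Algorithm~\ref{alg:amp} the additional stability of the fixed points $(q_*,\sigma_*^2)$ in the external-field parameter (which is exactly Lemma~\ref{lem:fixed_point_h_stability} in the paper) followed by an analogous induction on the AMP iterates. One minor point: your ``main obstacle'' of uniformity in $M$ is not actually needed---the lemma is stated for a \emph{fixed} $M$, so the paper's proof simply runs a finite induction and absorbs the (possibly $>1$) per-step constant into an $M$-dependent constant, without ever requiring $L_{\alpha'}\|\bA_n\|<1$ for Algorithm~\ref{algo:de_method}; the high-temperature condition $C_0$ small enters only through the fixed-point stability for Algorithm~\ref{alg:amp}.
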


Our result justifies the use of the plug-in procedure in combination with Algorithms \ref{algo:de_method} and \ref{alg:amp} for the estimation of treatment effects. We prove Theorem~\ref{thm:param_estimate} and Lemma~\ref{lem:algo_stable} in Section~\ref{sec:proof_pseudolikelihood}.   


\subsection{Additional concentration under Log-Sobolev inequality}\label{sec:lsi_statement}
Recall that Algorithms ~\ref{algo:de_method} and Algorithm~\ref{alg:amp} yield one replicate of the estimates $\hat{\text{DE}}$ and $\hat{\text{IE}}$. Theorems \ref{thm:de_mean_field} and \ref{thm:amp} establish consistency for these estimates, once we evaluate the average over $(\bar \bT, \bar \bX)$.  As discussed in Lemma~\ref{lemma:data_driven}, in practice we sample multiple configurations of $(\bar \bT, \bar \bX)$, and average the separate estimators to obtain our final estimate. On the other hand, Remarks~\ref{remark:mf_slln} and \ref{remark:amp_slln} show that under the no-interference setting, this last Monte Carlo step is unnecessary, due to the Law of Large Numbers. One might wonder if this concentration-property persists, under the general interference model. Here we establish concentration of the individual estimates $\widehat{\text{DE}}$ and $\widehat{\text{IE}}$ around the respective target causal estimands if the base measure $\mu$ in \eqref{eq:define_gibbs} satisfies an additional property. In particular, we will require the exponential tilts of the base measure $\mu$ to satisfy a Log Sobolev Inequality (LSI)~\cite{bauerschmidt2019very}. This property will also allow us to derive rigorous guarantees for the CI introduced in \eqref{eq:conf_set_define}.


\begin{defn}[Log Sobolev Inequality]
    For any measure $\tilde \mu$ supported on $\rr^m$, define its entropy as $\mathrm{Ent}_{\tilde \mu}(f)= \ee f(X) \log f(X) - \ee f(X) \ee \log f(X)$, where $X \sim \tilde \mu$. A measure $\tilde \mu$ satisfies LSI for some constant $C>0$ if for any $C^1$ function $f: \mathbb R^m \mapsto \mathbb R$, we have
\begin{equation}\label{eq:define_lsi}    \mathrm{Ent}_{\tilde \mu}(f^2) \le C \int |\nabla f(x)|^2 d\tilde \mu(x).
\end{equation}
\end{defn}

\begin{remark}
    Let us mention two examples of $\tilde \mu$ which satisfy the LSI defined above. First, if $\tilde{\mu}= p \delta_{1}+ (1-p)\delta_{-1}$ is a two-point measure on $\{-1,1\}$, $p \in (0,1)$, then LSI holds for $\tilde \mu$~\cite[Remark 1 and Remark 2]{latala2000between}. This includes the uniform distribution on $\{\pm 1\}$, which corresponds to $p = 0.5$. Second, if $\tilde \mu$ is the uniform measure on a compact interval, it satisfies LSI~\cite[Theorem 2.2]{ghang2014sharp}.
\end{remark}

\begin{defn}\label{defn:tilted-LSI}
    A measure $\tilde \mu$ satisfies tilted-LSI property if for any $\lambda:= (\lambda_1, \lambda_2) \in \mathbb{R} \times (0,\infty)$, the measure $\tilde \mu_\lambda$ defined by Definition \ref{def:exp_tilt} satisfies LSI \eqref{eq:define_lsi}, with a uniformly bounded constant over $\lambda \in \mathbb{R} \times (0, \infty)$.
\end{defn}

As an example, note that if $\tilde \mu$ is supported on $\{-1,1\}$ then $\tilde \mu_\lambda$ is also  supported on $\{-1,1\}$ for any $\lambda:= (\lambda_1, \lambda_2) \in \mathbb{R} \times (0,\infty)$. Hence, by the discussion above, $\tilde \mu$ satisfies tilted-LSI property.
Now, we are in a position to state our main concentration result of the section.

\begin{prop}\label{prop:lsi_concentration}
    Suppose the base measure $\mu$ in \eqref{eq:define_gibbs} satisfies the tilted LSI  property. 
    \begin{itemize}
        \item[(i)] Assume that $\bA_n$ satisfies Assumption~\ref{ass:operator_norm} and $\mathrm{Tr}(\bA_n^2) = o(n)$. Run Algorithm~\ref{algo:de_method} to compute estimates $\hat{\mathrm{DE}}_M$ and $\hat{\mathrm{IE}}_M$. Then, there exists $C_0>0$ such that for if  $\limsup_n\|\bA_n\| < C_0$, we have 
%
     \begin{align}
        \lim_{M \to \infty} \lim_{n \to \infty}  \Big|\hat{\mathrm{DE}}_M - \mathrm{DE} \Big| =0 \,\,\, \mathrm{a.s.}, \nonumber\\
        \lim_{M \to \infty} \lim_{n \to \infty}  \Big| \hat{\mathrm{IE}}_M - \mathrm{IE} \Big|=0\,\,\,  \mathrm{a.s.} \nonumber 
    \end{align}
    The convergence above is almost sure with respect to the randomness of $(\bar \bT, \bar \bX)$.

    \item[(ii)] Assume $\mathbf{A}_n= \mathbf{A}_n^{\top}$, $\mathbf{A}_n(i,j) \sim \mathcal{N}(0, \beta^2/n)$ i.i.d. for $i<j$ for some $\beta >0$, $\mathbf{A}_n(i,i)=0$ for $i \in [n]$. Set $\mathbf{A}_n(i,j) = \beta \mathbf{G}_n(i,j)$. Recall $\beta_0$ from \eqref{lemma:uniqueness}. Run Algorithm~\ref{alg:amp} to compute estimates $\hat{\mathrm{DE}}_M$ and $\hat{\mathrm{IE}}_M$. There exists $0<\bar{\beta}_0 < \beta_0$ such that for all $\beta < \beta_0$ and any $\varepsilon>0$, 
    \begin{align}
        \lim_{M \to \infty} \lim_{n \to \infty}  \mathbb{P}\Big[\Big|\hat{\mathrm{DE}}_M - \mathrm{DE} \Big| > \varepsilon \Big]=0, \,\,\,  \nonumber\\
        \lim_{M \to \infty} \lim_{n \to \infty}  \mathbb{P}\Big[\Big| \hat{\mathrm{IE}}_M - \mathrm{IE} \Big|> \varepsilon \Big]= 0.\,\,\,   \nonumber 
    \end{align} 
    In the display above, $\mathbb{P}[\cdot]$ denotes the probability under the joint distribution of $(\bA_n, \bar \bT, \bar \bX)$. 
     \end{itemize}
     \end{prop}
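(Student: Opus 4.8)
For each fixed $M$, the outputs $\hat{\mathrm{DE}}_M$ and $\hat{\mathrm{IE}}_M$ of Algorithm~\ref{algo:de_method} (resp.\ Algorithm~\ref{alg:amp}) are explicit deterministic functions of the fresh randomness $(\bar\bT,\bar\bX)$ (and, in part (ii), of $\mathbf{A}_n$). Theorem~\ref{thm:de_mean_field} (resp.\ Theorem~\ref{thm:amp}) already gives $\ee_{\bar\bT,\bar\bX}\hat{\mathrm{DE}}_M\to\mathrm{DE}$ and $\ee_{\bar\bT,\bar\bX}\hat{\mathrm{IE}}_M\to\mathrm{IE}$ in the iterated limit $\lim_M\lim_n$ (resp.\ in the $\mathscr{P}_{n,M}$ sense). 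Hence it suffices to prove that $\hat{\mathrm{DE}}_M$ and $\hat{\mathrm{IE}}_M$ concentrate around their $(\bar\bT,\bar\bX)$-expectations --- almost surely in part (i), where $\mathbf{A}_n$ is deterministic and $\mathrm{DE},\mathrm{IE}$ are constants, and in probability in part (ii), where $\mathrm{DE},\mathrm{IE}$ are themselves $\mathbf{A}_n$-measurable. Since $\hat{\mathrm{IE}}_M$ is an explicit affine combination of $\hat{\mathrm{DE}}_M$ and the averages $\tfrac1n\langle\mathbf{1},\mathbf{u}^{(M)}\rangle,\ \tfrac1n\langle\mathbf{1},\tilde{\mathbf{u}}^{(M)}\rangle$ (with the AMP analogues $\tfrac1n\langle\mathbf{1},\mathbf{m}^{[M]}\rangle,\ \tfrac1n\langle\mathbf{1},\tilde{\mathbf{m}}^{[M]}\rangle$), it is enough to treat $\hat{\mathrm{DE}}_M$ and each such average, and combine by the triangle inequality.

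\textbf{The key Lipschitz estimate.} The plan is to show that, for fixed $M$, the map $(\bar\bT,\bar\bX)\mapsto\hat{\mathrm{DE}}_M$ is Lipschitz w.r.t.\ the $\ell^2$ metric on $\{\pm1\}^n\times([-1,1]^d)^n$ with constant $O(1/\sqrt n)$, uniformly over configurations. Since the base measure lives on $[-1,1]$, $\alpha''(\cdot)=\mathrm{Var}_{\mu_\lambda}(X)\in[0,1]$, so each step $\mathbf{u}\mapsto\alpha'(\mathbf{A}_n\mathbf{u}+(\text{field}),\cdot)$ of Algorithm~\ref{algo:de_method} has Jacobian in $\mathbf{u}$ of operator norm $\le\|\mathbf{A}_n\|\le C_0<1$ once $C_0$ is taken small; iterating, the field-to-$\mathbf{u}^{(M)}$ map has Jacobian of operator norm $\le(1-C_0)^{-1}$ for every $M$. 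For Algorithm~\ref{alg:amp} the same holds for $\beta<\bar\beta_0$ small: the Onsager scalar $d_k$ has size $O(\beta)$, so it perturbs the two-step recursion only by $O(\beta)$ in operator norm, and $\beta\|\mathbf{G}_n\|<1$ (using $\|\mathbf{G}_n\|\to2$) again forces a uniform $O(1)$ bound on the Jacobian of the field-to-$\mathbf{u}^{[M]}$ map, hence of field-to-$\mathbf{m}^{[M]}$. Writing $\hat{\mathrm{DE}}_M=\tfrac2n\langle\bar\bT,\mathbf{u}^{(M)}\rangle$ with $\|\mathbf{u}^{(M)}\|_\infty\le1$, the chain rule gives $\|\nabla\hat{\mathrm{DE}}_M\|_2\le\tfrac2n\big(\|\mathbf{u}^{(M)}\|_2+\|\mathrm{Jac}\|_{\mathrm{op}}\|\bar\bT\|_2\big)=O(1/\sqrt n)$, and likewise for the averages above and thus for $\hat{\mathrm{IE}}_M$. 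The tilted-LSI hypothesis on $\mu$ enters here and below to keep all Poincar\'e/LSI constants $n$-independent --- in particular for the two-point instance $\mu=\mathrm{Unif}\{\pm1\}$, and for the tilted quantities governing the comparison with the Gibbs conditional means $\langle\bY_i\rangle_{\bar\bT,\bar\bX}$ through the fluctuation--dissipation identity $\partial_{h_j}\langle\bY_i\rangle=\mathrm{Cov}_f(\bY_i,\bY_j)$, whose operator norm is controlled at high temperature under LSI.

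\textbf{Concentration and conclusion.} Split the randomness as $\bar\bT\sim\mathrm{Unif}(\{\pm1\}^n)$ and $\bar\bX\sim\mathbb{P}_X^{\otimes n}$. The measure $\mathrm{Unif}\{\pm1\}$ is the $p=\tfrac12$ two-point measure and satisfies LSI; tensorization gives LSI for $\mathrm{Unif}(\{\pm1\}^n)$ with an absolute constant, and the discrete-gradient squared of $\hat{\mathrm{DE}}_M$ is $O(1/n)$ uniformly --- the crucial point being that one bounds $\sum_i\langle\bar\bT,J^{(M)}_{:,i}\rangle^2=\|(J^{(M)})^\top\bar\bT\|_2^2\le\|J^{(M)}\|_{\mathrm{op}}^2 n$ collectively (with $J^{(M)}$ the field-to-$\mathbf{u}^{(M)}$ Jacobian), rather than coordinate by coordinate --- so the Herbst argument yields $\mathbb{P}\big(|\hat{\mathrm{DE}}_M-\ee_{\bar\bT}[\hat{\mathrm{DE}}_M\mid\bar\bX]|>t\mid\bar\bX\big)\le2e^{-cnt^2}$. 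For the residual fluctuation of $g(\bar\bX):=\ee_{\bar\bT}[\hat{\mathrm{DE}}_M\mid\bar\bX]$ over the bounded i.i.d.\ covariates, the analogous bounded-difference/entropy argument --- again using the same collective operator-norm control on the block oscillations --- gives $\mathbb{P}(|g-\ee g|>t)\le2e^{-c'nt^2}$. Combining, $|\hat{\mathrm{DE}}_M-\ee_{\bar\bT,\bar\bX}\hat{\mathrm{DE}}_M|>\varepsilon$ has probability $\le e^{-c''n\varepsilon^2}$. For part (i) this is summable in $n$ at fixed $M$, so Borel--Cantelli gives $\lim_n\hat{\mathrm{DE}}_M=\lim_n\ee\hat{\mathrm{DE}}_M$ a.s.; intersecting the countably many a.s.\ events over $M$ and invoking Theorem~\ref{thm:de_mean_field} yields the a.s.\ statement (and likewise for $\hat{\mathrm{IE}}_M$). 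For part (ii) one runs the identical argument on the high-probability event $\{\|\mathbf{G}_n\|\le2.01\}$ where the Jacobian bounds hold, and chains the conditional concentration with Theorem~\ref{thm:amp} (which supplies $\ee_{\bar\bT,\bar\bX}\hat{\mathrm{DE}}_M\to\mathrm{DE}$ in the $\mathscr{P}_{n,M}$ sense, under the uniform overlap concentration hypothesis) to obtain the stated in-probability convergence.

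\textbf{Main obstacle.} The crux is obtaining concentration at a scale vanishing in $n$ rather than the trivial $O(1)$ scale that a generic $O(1/\sqrt n)$-Lipschitz function of $n$ bounded coordinates would exhibit (witness $n^{-1/2}\sum_i X_i$). This forces one to exploit the collective operator-norm control on the derivative maps of the iteration --- equivalently, the ``effectively low rank''/high-temperature nature of $\mathbf{A}_n$, and for the Gibbs comparison the covariance control furnished by the tilted-LSI assumption --- and, on the covariate coordinates where no LSI for $\mathbb{P}_X$ is available, to use the refined bounded-difference bookkeeping above instead of worst-case per-coordinate oscillations. A secondary difficulty is carrying this through the Onsager-corrected two-step AMP recursion of Algorithm~\ref{alg:amp} (a uniform bound on its state-evolution Jacobian for $\beta<\bar\beta_0$) and correctly threading the iterated limits $\lim_M\lim_n$ --- and the $\mathscr{P}_{n,M}$ mode in part (ii) --- so that the upgrade to almost-sure / in-probability convergence is legitimate.
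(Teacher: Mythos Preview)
Your route is genuinely different from the paper's, and the difference matters because your argument has a gap at exactly the point the paper's approach was designed to handle.

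The paper does \emph{not} try to show that the algorithm output $\hat{\mathrm{DE}}_M$ concentrates as a function of $(\bar\bT,\bar\bX)$. Instead it first uses the proof of Theorem~\ref{thm:de_mean_field} to replace $\mathbf u^{(M)}$ by the true Gibbs mean $\langle\by\rangle$, so that it suffices to show $\tfrac2n\sum_i\bar T_i\langle y_i\rangle$ concentrates around $\mathrm{DE}$. Writing $F_n(\tau)=\tfrac1n\log Z_n(\bar\bT,\bar\bX)$ this is $2F_n'(\tau_0)$. The paper then (a) shows $\mathrm{Var}_{\bar\bT,\bar\bX}F_n(\tau)\le c/n$ by a one--line Efron--Stein/interpolation bound (the perturbation of $\log Z_n$ under a single coordinate flip is $O(1)$ deterministically), (b) uses convexity of $\tau\mapsto F_n(\tau)$ to sandwich $F_n'(\tau_0)$ between difference quotients, and (c) bounds the second derivative $F_n''(\tilde\tau)=\tfrac1{n^2}\bar\bT^\top\mathrm{Cov}_f(\by)\bar\bT\le\tfrac1n\|\mathrm{Cov}_f(\by)\|$. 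The tilted--LSI hypothesis enters \emph{only} here, via the Bauerschmidt--Bodineau covariance bound $\|\mathrm{Cov}_f(\by)\|=O(1)$ uniformly in $\bt,\bx$ at high temperature. Combining (a)--(c) with $h\to0$ yields the result.

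Your argument, by contrast, claims sub-gaussian concentration $\mathbb P(|g(\bar\bX)-\ee g|>t)\le2e^{-c'nt^2}$ for $g(\bar\bX)=\ee_{\bar\bT}[\hat{\mathrm{DE}}_M\mid\bar\bX]$ ``by the analogous bounded-difference/entropy argument''. This step does not go through. Bounded differences (McDiarmid) requires per-coordinate oscillations $c_i=\sup_{\bar\bX_{-i}}|\Delta_i g|$, and the best uniform bound you have is $c_i=O(n^{-1/2})$ (change one field coordinate by $O(1)$, propagate through a contraction to $\|\Delta\mathbf u^{(M)}\|=O(1)$, then $|\Delta g|\le\tfrac2n\|\bar\bT\|\,\|\Delta\mathbf u^{(M)}\|=O(n^{-1/2})$); this gives $\sum_ic_i^2=O(1)$, hence only $\mathbb P(|g-\ee g|>t)\le e^{-ct^2}$, a nonvanishing constant. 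Your ``collective operator-norm control'' $\sum_i\langle\bar\bT,J^{(M)}_{:,i}\rangle^2=\|(J^{(M)})^\top\bar\bT\|^2\le\|J^{(M)}\|_{\mathrm{op}}^2\,n$ bounds $\|\nabla f\|^2$ at a \emph{single} configuration; it does not bound $\sum_ic_i^2$ because each $c_i$ is a supremum over a different segment, so the factorization through a single $\bar\bT$ is lost. An entropy/Herbst argument would need LSI for $\mathbb P_X^{\otimes n}$, which is not assumed (the paper's remark right after the proposition explicitly flags that the proof would simplify under such an assumption). At best your route yields $\mathrm{Var}_{\bar\bX}(g)=O(1/n)$ via Efron--Stein---enough for convergence in probability, but $1/n$ is not summable, so Borel--Cantelli does not give the a.s.\ conclusion in part~(i).

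Relatedly, your account of where the tilted--LSI on $\mu$ is used is misplaced: it has nothing to do with the LSI for $\mathrm{Unif}\{\pm1\}$ governing $\bar\bT$, and your algorithm-level Lipschitz bounds use only $\|\alpha''\|_\infty\le1$, which holds for any $\mu$ on $[-1,1]$. In the paper's proof the tilted--LSI is used solely to control $\|\mathrm{Cov}_f(\by)\|$, which is what makes the convexity sandwich close with an $O(h/n)$ error rather than $O(h)$.
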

 We prove Proposition~\ref{prop:lsi_concentration} in Section~\ref{sec:proof_conc_logconcavity}. 

 \begin{remark}
     We emphasize that the guarantees in Proposition~\ref{prop:lsi_concentration} are valid for any covariate distribution $\mathbb{P}_X$ supported on $[-1,1]^d$. The proofs of these assertions simplify significantly if one assumes additional properties on $\mathbb{P}_X$ (e.g. $\mathbb{P}_X$ satisfies a Log Sobolev Inequality). Our proof requires new ideas, and is one of our main technical contributions.  
 \end{remark}

Our next result provides theoretical guarantees for Monte-Carlo based confidence intervals $\text{CI}_{\text{DE},\zeta}$ and  $\text{CI}_{\text{IE},\zeta}$ defined by \eqref{eq:conf_set_define}.

\begin{lemma}\label{lem:MC_CI}
    Suppose the base measure $\mu$ in \eqref{eq:define_gibbs} satisfies the tilted LSI  property. For any $\varepsilon>0$, 
    \begin{align*}
         &\lim_{k \to \infty} \lim_{M \to \infty} \lim_{n \to \infty} \mathbb{P}\Big[ \mathrm{DE} \in [{\widehat{\mathrm{DE}}_M}^{\zeta/2} - \varepsilon,{\widehat{\mathrm{DE}}_M}^{1-\zeta/2} + \varepsilon] \Big] \geq 1 - \zeta, \\
        &\lim_{k \to \infty} \lim_{M \to \infty} \lim_{n \to \infty} \mathbb{P}\Big[ \mathrm{IE} \in [{\widehat{\mathrm{IE}}_M}^{\zeta/2} - \varepsilon,{\widehat{\mathrm{IE}}_M}^{1-\zeta/2} + \varepsilon] \Big] \geq 1 - \zeta. 
    \end{align*}
\end{lemma}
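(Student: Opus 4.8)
The plan is to combine the distributional convergence of a single replicate $\widehat{\mathrm{DE}}_M^{(1)}$ toward the deterministic limit $\mathrm{DE}$ (which is the content of Proposition~\ref{prop:lsi_concentration} under the tilted-LSI hypothesis) with a standard empirical-quantile argument. First I would fix $M$ and $n$ and recall that the replicates $\{\widehat{\mathrm{DE}}_M^{(j)}\}_{j=1}^k$ are i.i.d.\ (conditionally on $\bA_n$, in the gaussian case), being obtained from i.i.d.\ draws $(\bar\bT^{(j)},\bar\bX^{(j)})$ fed into the same deterministic map. By Proposition~\ref{prop:lsi_concentration}, for each $\varepsilon>0$ the probability (over one draw of $(\bar\bT,\bar\bX)$, and also over $\bA_n$ in the AMP case) that $|\widehat{\mathrm{DE}}_M^{(1)}-\mathrm{DE}|>\varepsilon$ tends to $0$ as first $n\to\infty$ then $M\to\infty$; equivalently $\widehat{\mathrm{DE}}_M^{(1)} \Rightarrow \delta_{\mathrm{DE}}$ in that iterated limit. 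I would phrase the target event in terms of this: since the $\zeta/2$ and $(1-\zeta/2)$ empirical quantiles of $k$ i.i.d.\ samples of a random variable $V$ sandwich, with probability $\to 1$ as $k\to\infty$, any fixed point strictly inside the interquantile range of the law of $V$, and since here the law of $V=\widehat{\mathrm{DE}}_M^{(1)}$ concentrates at the single point $\mathrm{DE}$, both empirical quantiles converge (in probability) to within $\varepsilon$ of $\mathrm{DE}$.

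Concretely, the key steps in order: (1) Show that $\widehat{\mathrm{DE}}_M^{\zeta/2}\le \mathrm{DE}+\varepsilon$ with probability $\to1$ in the iterated limit: the event $\widehat{\mathrm{DE}}_M^{\zeta/2}>\mathrm{DE}+\varepsilon$ forces at least a $(1-\zeta/2)$-fraction of the $k$ replicates to exceed $\mathrm{DE}+\varepsilon$; since the per-replicate exceedance probability $p_{n,M}:=\mathbb{P}[\widehat{\mathrm{DE}}_M^{(1)}>\mathrm{DE}+\varepsilon]$ is $\le$ some $\delta_{n,M}\to0$, a binomial tail (Hoeffding) bound gives that this event has probability $\to0$ once $\delta_{n,M}<1-\zeta/2$, uniformly in $k$. (2) Symmetrically, $\widehat{\mathrm{DE}}_M^{1-\zeta/2}\ge \mathrm{DE}-\varepsilon$ with probability $\to1$, because $\widehat{\mathrm{DE}}_M^{1-\zeta/2}<\mathrm{DE}-\varepsilon$ forces more than a $(1-\zeta/2)$-fraction of replicates to lie below $\mathrm{DE}-\varepsilon$. (3) Intersect: on the intersection of these two events, $\mathrm{DE}\in[\widehat{\mathrm{DE}}_M^{\zeta/2}-\varepsilon,\ \widehat{\mathrm{DE}}_M^{1-\zeta/2}+\varepsilon]$. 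Taking $n\to\infty$, then $M\to\infty$, then $k\to\infty$ drives the failure probability to $0$, which in fact yields the stated bound with the right-hand side $1-\zeta$ replaced by $1$. The $1-\zeta$ in the statement is the (weaker, hence still valid) conclusion — so it suffices to prove convergence to $1$. The argument for $\mathrm{IE}$ is identical, using the $\mathrm{IE}$ part of Proposition~\ref{prop:lsi_concentration} and noting $\widehat{\mathrm{IE}}_M$ is again a deterministic function of $(\bar\bT,\bar\bX)$ (and $\bA_n$).

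One subtlety I would be careful about is the order of limits and the role of the replicate count $k$: the per-replicate error probability $\delta_{n,M}$ from Proposition~\ref{prop:lsi_concentration} only becomes small after sending $n\to\infty$ and $M\to\infty$, so I must take the limits in the order $n\to\infty$, $M\to\infty$, $k\to\infty$ — matching the statement — and ensure the binomial-tail bound in steps (1)–(2) is uniform in $k$ (Hoeffding gives $\mathbb{P}[\mathrm{Bin}(k,\delta_{n,M})\ge (1-\zeta/2)k]\le e^{-2k(1-\zeta/2-\delta_{n,M})^2}\le 1$ trivially, but more to the point $\le$ something that $\to 0$ as $n,M\to\infty$ for every fixed $k$, and then one lets $k\to\infty$ last — actually the cleanest route is: for fixed $k$, $\lim_M\lim_n \mathbb{P}[\text{bad}]=0$ because $\delta_{n,M}\to0$ makes even a union bound over the $\le k$ offending replicates vanish). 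In the gaussian/AMP case there is the extra layer that Proposition~\ref{prop:lsi_concentration}(ii) and the definition of $\mathrm{DE}$ both carry randomness in $\bA_n$; here I would condition on $\bA_n$, run the i.i.d.\ replicate argument conditionally, and then integrate out $\bA_n$ using dominated convergence (all quantities are bounded since $Y_i\in[-1,1]$). I do not anticipate a genuine obstacle here — the main content has already been extracted into Proposition~\ref{prop:lsi_concentration}; the remaining work is the routine order-statistics / concentration packaging, and the only thing demanding care is bookkeeping the iterated limits so they agree with the statement.
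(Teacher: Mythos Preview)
Your proposal is correct and follows essentially the same approach as the paper: invoke Proposition~\ref{prop:lsi_concentration} to get that each replicate $\widehat{\mathrm{DE}}_M^{(j)}$ concentrates at $\mathrm{DE}$ in the iterated limit, then deduce that the fattened empirical-quantile interval covers $\mathrm{DE}$ with high probability. The paper's own proof is a two-line sketch of exactly this idea, whereas you have spelled out the order-statistics/binomial packaging and the order-of-limits bookkeeping; your observation that the limiting coverage is in fact $1$ (so the stated bound $1-\zeta$ is slack) is correct and worth noting.
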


The result establishes that the proposed Confidence Intervals, after some fattening, have the desired coverage properties, provided the number of replicates $k$ is sufficiently large. We establish this result in Section~\ref{sec:proof_conc_logconcavity} and explore the non-asymptotic performance of these confidence intervals using numerical experiments in Section~\ref{sec:numerics}.


\subsection{Technical Contributions:}
\label{sec:technical}
In this section, we emphasize our technical contributions, concentrating on Theorems~\ref{thm:de_mean_field}, \ref{thm:amp} and \ref{thm:param_estimate}. In turn, this will  help us motivate Algorithms~\ref{algo:de_method} and \ref{alg:amp}. 

\begin{itemize}
    \item[(i)] We start with the proof of Theorem~\ref{thm:de_mean_field}. We focus on the direct effect $\text{DE}$; our discussions easily extend to the indirect effect $\text{IE}$. Recall the representation of the direct effect $\text{DE}$ from \eqref{eq:simplify_de}. The main computational challenge stems from the vector of one-dimensional means $\langle \bY \rangle$---a direct computation is intractable due to the inherent high-dimensionality in the problem as the graph size $n \to \infty$. The usual recourse is to use MCMC. Here, we adopt a different approach which is motivated from \emph{variational inference} \cite{wainwright2008graphical,blei2017variational}. We show first that at high-temperature, the vector $\langle \bY \rangle$ approximately satisfies the fixed point equation 
    \begin{align*}
        \langle \bY \rangle \approx \alpha'\Big( \bA_n \langle \bY \rangle + \tau_0 \bar \bT + \bar\bX \boldsymbol{\theta}_0,0 \Big).  
    \end{align*}
    Algorithm~\ref{algo:de_method} is a natural iterative scheme to solve for this fixed point. Finally, we establish that at high-temperature, the iteration is contractive, and converges to the right fixed point. To formalize this argument, we utilize recent progress in non-linear large deviations \cite{chatterjee2016nonlinear,yan2020nonlinear,eldan2018gaussian} and the study of Ising models \cite{basak2017universality,eldan2020taming,jain2019mean,jain2018mean} under this framework.

    \item[(ii)] The proof of Theorem~\ref{thm:amp} follows a similar template. In this case, ideas from spin-glass theory suggest that at high-temperature, the mean vector $\langle \bY \rangle$ satisfies the Thouless-Anderson-Palmer (TAP) equations 
    \begin{align*}
        \langle \bY \rangle \approx \alpha'\Big( \bA_n \langle \bY \rangle + \tau_0 \bar{\bT} + \bar{\bX}\boldsymbol{\theta}_0 - \beta \sigma_{1,*}^2 \langle \bY \rangle , \beta^2 \sigma_{1,*}^2 \Big)
    \end{align*}
    The AMP Algorithm in Algorithm~\ref{alg:amp} attempts to solve this fixed point system. Indeed, it is easy to see that if the iterations converge, the fixed point $\mathbf{m}^{[\infty]}$ will satisfy the TAP equations. We prove that these iterations indeed converge to the true mean vector $\langle \bY \rangle$ asymptotically. To this end, we extend the work of \cite{chen2021convergence}, which focuses exclusively on the Sherrington-Kirkpatrick (SK) model. In particular, the SK model is supported on the hypercube $\{\pm 1\}^n$, and does not involve the random external fields appearing in \eqref{eq:define_gibbs}. In the language of spin glass theory, we prove convergence of the AMP algorithm for the soft-spin SK model with i.i.d. external fields. Moving from Ising spin to soft spins introduces new challenges, as the norm of a sample (also called the ``self-overlap") is not constant, but must be tracked separately. In addition, due to the presence of the i.i.d. external fields, the coordinates of a sample are not exchangeable, but only exchangeable in expectation. We extend key results from \cite{talagrand2010mean} on the soft spin SK model to overcome these challenges. We consider this to be one of the main technical contributions of our work. 

    \item[(iii)] Finally, we turn to Theorem~\ref{thm:param_estimate}. Theorem \ref{thm:param_estimate} shows $\sqrt{n}$-consistency of pseudo-likelihood estimator as long as the operator norm of $\bA_n, \mathbf{M}_n$ are bounded. The main difficulty of the proof arises because the external field $\tau_0 T_i+ \bX^\top_i \btheta_0$ are not independent and further $\bT$ and $\bX$ are dependent. We overcome this difficulty by invoking Lemma \ref{Lem:grad_concentration}. Note that, our assumption is weaker than \cite[(1.2), (1.3)]{ghosal2020joint}, thus subsuming \cite[Proposition 1.6]{ghosal2020joint} which shows consistency under deterministic external field. Further, since our external field is not i.i.d., our results extends \cite[Theorem 3.1]{daskalakis2019regression}. Finally, we believe that for specific examples of $\bA_n,\mathbf{M}_n$, one can establish asymptotic normality of $(\hat \tau_{\text{MPL}}, \hat {\boldsymbol{\theta}}_{\text{MPL}})$. We leave such explorations for future work.
\end{itemize}

\section{Numerical Experiments}
\label{sec:numerics}
In this section, we study the finite sample behavior of our proposed method. As our first example, summarized in Table \ref{tab:curie_weiss}, we consider the interference induced by the complete graph (i.e., $\bA_n= \frac{\beta}{n}(\mathbf{1}\mathbf{1}^\top- I_n)$) on $n=200,400,800$ vertices respectively. The covariates are i.i.d. with $\bX_i \sim \text{Unif}(-1,1)$. The treatment $\bt$ is generated by \eqref{eq:prop_score} with $\mathbf{M}_n=\bA_n$ and the outcome $\by \in \{\pm 1\}^n$ given $\bt, \bx$ is simulated from \eqref{eq:define_gibbs}. Here, we choose 
\begin{align}\label{eq:sims_param}
    \beta=0.3, \quad \mu= \frac{1}{2} (\delta_1+\delta_{-1}), \quad (\tau_0,\theta_0,\gamma_0)=(0.5,2,0).
\end{align}
As stated in Lemma \ref{lemma:mean_field_examples}, $\bA_n$ satisfies $\mathrm{Tr}(\mathbf{A}_n^2)=O(1)= o(n)$. Here, we use Algorithm \ref{algo:de_method} to compute our causal estimands --- the direct and indirect effect. We use maximum pseudo-likelihood method~\ref{eq:mple_outcome} to compute $(\hat \tau,\hat \btheta)$. Then, we use Algorithm \ref{algo:de_method} with $M=500$ iterations to obtain our estimators $\widehat{\text{IE}}_M, \hat{\mathrm{DE}}_M$. We repeat this experiment $100$ times to compute a $95\%$ confidence interval for the direct and indirect effect. To understand the efficacy of the said interval estimator, we use the true $(\tau_0,\theta_0)$ to compute the oracle causal effect using $M=500$ iterations of Algorithm \ref{algo:de_method}. This is reported as ``truth" in Table \ref{tab:curie_weiss}. In all $3$ cases noted in Table \ref{tab:curie_weiss}, the oracle value is contained within the confidence interval. Finally, we report the run-time of Algorithm \ref{algo:de_method} which is $6.4$ seconds for a network of size $200$ and only $1.4$ min as we increase the number of vertices to $800$. 

Our second example exhibits how the density of the network affects our estimators. To this end, we consider Erd\H{o}s-R\'{e}nyi graph on $n=200$ vertices with $p=0.5,0.01,0.001$ with the parameter choices same as previous example. As shown by Lemma \ref{lemma:mean_field_examples}, such networks fall under the purview of Algorithm \ref{algo:de_method}. We applied Algorithm \ref{algo:de_method} with $M=500$ iterations and replicated the experiment $100$ times to construct  confidence intervals for the direct and indirect effects. The results are summarized in Table \ref{tab:erdos_renyi};  the reported  intervals contain the true direct and indirect effects in all three cases. As before, we note the run-time to compute our estimator, which is $\sim 6$ seconds.

Next, we consider a Gaussian interaction matrix $\bA_n(i,j) \sim N(0, \beta^2 /n)$, with parameters same as \eqref{eq:sims_param} and the covariates $\bX_i \sim \text{Unif}(-1,1)$ i.i.d.  As described by Theorem \ref{thm:amp}, we invoke Algorithm \ref{alg:amp} to compute the direct and indirect effect. We again use the maximum pseudo-likelihood method to compute $(\hat \tau,\hat \btheta)$. Next we solve the fixed point equation \ref{eq:fixed_pt} using Monte Carlo  with sample size $1000$. In Table \ref{tab:AMP}. we report our estimators of direct and indirect effect, $\widehat{\text{IE}}_M, \hat{\mathrm{DE}}_M$, based on $M=500$ iterations of Algorithm \ref{alg:amp} across sample sizes $n= 200, 400, 800$. We construct confidence intervals based on $100$ iterations of our algorithm. Our interval estimator contains the true direct and indirect effects. The run-time of the Algorithm is modest, with $~14$ seconds for network of size $200$ to $1.81$ minutes for a large network of size $800$.

Finally, we note that all run-times are based on Macbook Air $(2020)$ with $M1$ chip and $8$-core CPU.

	\begin{table}[ht]
 
 \begin{center}
  \caption { Estimated value of causal estimands (direct and indirect effect) for complete graph across different sample sizes along with the run-time required to compute those estimators.  The estimators are computed using Algorithm \ref{algo:de_method}.} \label{tab:curie_weiss}
	\begin{small}
		\begin{tabular}{ p{2cm} p{2cm} p{4cm}  p{3 cm} }
			\toprule
			Sample size& {\small Truth} & {\small Confidence Interval} & {\small Run-time (sec)}\\
			\midrule
200   &    &    & 6.4\\
   
			Direct   & 0.26   &   [0.21,0.36] & \\
   Indirect   & 0.17   &  [0.11,0.24]  &  \\

400   &    &    & 21.9\\
   
			Direct   & 0.2   &   [0.18,0.3] & \\
   Indirect   & 0.15   &   [0.11,0.21] & \\

   800   &    &    & 84\\
   
			Direct   & 0.24   &   [0.21,0.31] & \\
   Indirect   & 0.16   &   [0.14, 0.21] & \\

			\bottomrule
		\end{tabular}
	\end{small}
  \end{center}
\end{table}

\begin{table}[ht]
\caption { Estimated value of causal estimands for Erd\H{o}s-R\'{e}nyi graph on $n=200$ vertices across different network densities ($p=0.5,0.01,0.001$).} \label{tab:erdos_renyi}
 \begin{center}
	\begin{small}
		\begin{tabular}{ p{2cm} p{2cm} p{4cm}  p{3 cm} }
			\toprule
			p& {\small Truth} & {\small Confidence Interval} & {\small Run-time (sec)}\\
			\midrule
0.5   &    &    & 6.31\\
   
			Direct   & 0.20   &   [0.02,0.24] & \\
   Indirect   & 0.15   &  [0.04,0.17]  &  \\

0.01   &    &    & 6.5\\
   
			Direct   & 0.21   &   [0.15,0.35] & \\
   Indirect   & 0.15   &   [0.12,0.26] & \\

   0.001   &    &    & 6.33\\
   
			Direct   & 0.29   &   [0.09,0.32] & \\
   Indirect   & 0.09   &   [0.06,0.2] & \\

			\bottomrule
		\end{tabular}
	\end{small}
  \end{center}
 
\end{table}

\begin{table}[ht]
 
 \begin{center}
  \caption {Estimated value of causal estimands for Gaussian interaction matrix across different sample sizes. The estimators are computed using Algorithm \ref{alg:amp}.} \label{tab:AMP}
	\begin{small}
		\begin{tabular}{ p{2cm} p{2cm} p{4cm}  p{3 cm} }
			\toprule
			Sample size& {\small Truth} & {\small Confidence Interval} & {\small Run-time (sec)}\\
			\midrule
200   &    &    & 13.92\\
   
			Direct   & 0.28   &   [0.22,0.38] & \\
   Indirect   & 0.12   &  [0.08,0.21]  &  \\

400   &    &    & 34.3\\
   
			Direct   & 0.23   &   [0.16,0.28] & \\
   Indirect   & 0.1   &   [0.07,0.14] & \\

   800   &    &    & 108.6\\
   
			Direct   & 0.25   &   [0.15,0.26] & \\
   Indirect   & 0.12   &   [0.07,0.13] & \\

			\bottomrule
		\end{tabular}
	\end{small}
  \end{center}
\end{table}

\section{Discussion}
\label{sec:discussion}
    We discuss follow up questions arising from our results, and collect primary thoughts regarding their resolution.

    \begin{enumerate}
        \item[(i)] Beyond the ``high temperature" condition: Theorem~\ref{thm:de_mean_field} involves the condition $\|\bA_n \| < C_0$ while Theorem~\ref{thm:amp} invokes the condition $\beta < \bar{\beta}_0$. These conditions correspond to ``high-temperature" conditions in Ising Models. It is of natural interest to explore causal effect estimation under Ising models at low temperature. At it's heart, Algorithms~\ref{algo:de_method} and ~\ref{alg:amp} directly target the one-dimensional means $\langle \bY \rangle$. A recent line of work provides rigorous evidence that sampling from these models should be intractable at low temperature \cite{kunisky2024optimality,galanis2024sampling}. In light of these results, we believe that our strategy should also fail for low temperature Ising models. We emphasize that this does not suggest that the estimation of causal effects is necessarily intractable at low temperature. We leave a systematic exploration of this issue to future work. 
        
        Additionally, we note that we have not tried to optimize the high-temperature constants $C_0$ and $\bar \beta_0$ appearing in our proofs. While it is possible to track these constants explicitly from our arguments, we do not believe they are optimal. Consequently, we leave these parameters implicit in our discussion.


        \item[(ii)] Beyond quadratic interaction: A natural follow up direction concerns more general MRFs for the outcome regression model \eqref{eq:define_gibbs} and the propensity score model \eqref{eq:prop_score}. The tensor Ising model  \cite{akiyama2019phase,sasakura2014ising} provides a natural starting point in this direction. The main challenge in analyzing this model lies in consistent parameter estimation.  
        While parameter estimation in the tensor Ising model has gained some recent attention~\cite{liu2024tensor,mukherjee2022estimation}, it is relatively under-explored. It would be interesting to extend the existing  estimation guarantees to the setting of causal inference.


        \item[(iii)] General covariate distributions: We assume that the covariates $\bX_i \in \mathbb R^d$ are i.i.d. and compactly supported. Our arguments should extend to settings with MRF dependence on $\{\bX_i: 1\leq i \leq n\}$, provided this dependence is sufficiently `weak'. ``Correlation-decay" properties for MRFs might be crucial in this regard \cite{gamarnik2013correlation}. Our proofs are involved even under the stylized covariate distribution assumed herein. In this light, we defer generalizations in this direction to future works.

        Additionally, we assume throughout that the the covariate distribution $\mathbb{P}_X$ is known. This is not a strong assumption. If $\mathbb{P}_X$ is unknown, we can estimate it (non-parametrically) from the data. We can then use the estimated distribution $\hat{\mathbb{P}}_X$ for downstream causal effect estimation---our methods are stable under this perturbation, and the guarantees go through unchanged.

        \item[(iv)] Beyond bounded responses: We assume througout that the outcome density $f$ in \eqref{eq:define_gibbs} is compactly supported. We believe that under appropriate tail-decay conditions on $\mu$, the results in this paper should generalize. Going beyond the bounded support assumption requires extending the theory of parameter estimation (Theorem \ref{thm:param_estimate}) to probability measures on unbounded spaces. This is beyond the scope of the current paper. 

        \item[(v)] Universality: Algorithm~\ref{alg:amp} provides an algorithm based on Approximate Message Passing (AMP) to estimate the causal effects. There has been substantial recent progress regarding the \emph{universality} of AMP algorithms to the distribution of the matrix $\bA_n$ \cite{bayati2015universality,chen2021universality,dudeja2023universality,wang2022universality}. In this light, one might expect that the same results should go through as long as $\bA_n(i,j)$ are i.i.d mean $0$, variance $1/n$ with sufficiently light tails (e.g. i.i.d. sub-gaussian). While it is certainly true that the AMP algorithm (Algorithm~\ref{alg:amp}) has universal statistics over the distribution of $\bA_n$, here we establish a much stronger property: we establish that the algorithm asymptotically converges to the vector of one-dimensional means $\langle \bY \rangle$. We utilize gaussianity crucially in our proofs. One would expect this property to also be universal (particularly at high-temperature), but this remains beyond the reach of existing techniques. 

        \item[(vi)] DAG: We adopt the MRF based framework introduced in \cite{tchetgen2021auto} in this work. As discussed in the introduction, MRF based models are not necessarily compatible with the traditional Directed Acyclic Graph (DAG) based formalism in causal inference. An independent research thread extends DAG based ideas to study the interference problem (see e.g. \cite{emmenegger2022treatment,ogburn2024causal,van2014causal} and references therein). It would be interesting to explore if the ideas introduced in this work can also yield new insights under the DAG-based framework. We leave this for future work. 

        \item[(vii)] Other causal estimands e.g. GATE: We focus exclusively on the Direct and Indirect effects in this paper. Alternative causal estimands are also popular in the interference literature e.g. the Global Average Treatment Effect (GATE). Formally, the GATE is defined as $\frac{1}{n} \sum_{i=1}^{n} \mathbb{E}[\bY_i(\mathbf{1})] - \mathbb{E}[\bY_i(-\mathbf{1})]$. This estimand is particularly well-suited to the partial interference setting, since one can naturally deploy cluster randomized experiments under the partial interference assumption. This estimand is less natural under the MRF framework under consideration. However, we believe it would be interesting to explore estimation of other causal estimands within our framework. 
    \end{enumerate}

\section{Proofs}
\label{sec:proofs}

We present our proofs in this section. We start with the proof of Lemma~\ref{lem:de_define}. We prove Theorems~\ref{thm:de_mean_field} and \ref{thm:amp} in Sections~\ref{sec:proof_mf} and \ref{sec:proof_amp} respectively. We establish the consistency of pseudo-likelihood estimates in Section~\ref{sec:proof_pseudolikelihood}. Finally, we prove concentration under log-concavity in Section~\ref{sec:proof_conc_logconcavity}.

\begin{proof}[Proof of Lemma \ref{lem:de_define}]
    Recall that
    \begin{align*}
        \text{DE}_{i}(\mathbf{t}_{-i}) &= \mathbb{E}_{\bar \bX} \big[\mathbb{E}[\mathbf{Y}_i| T_i=1,\mathbf{T}_{-i}=\mathbf{t}_{-i},\bar \bX]-\mathbb{E}[\mathbf{Y}_i|T_i=-1,\mathbf{T}_{-i}=\mathbf{t}_{-i},\bar \bX]\big]\\
        &= \mathbb{E}_{\bar \bX} \big[\mathbb{E}[T_i \mathbf{Y}_i| T_i=1,\mathbf{T}_{-i}=\mathbf{t}_{-i},\bar \bX]+\mathbb{E}[T_i\mathbf{Y}_i|T_i=-1,\mathbf{T}_{-i}=\mathbf{t}_{-i},\bar \bX]\big]\\
        &= 2\mathbb{E}_{\bar \bX \bar T_i}[\bar T_i \mathbb{E}[\bY_i| \mathbf{T}=(\bar T_i, \bt_{-i}), \bar \bX]].
    \end{align*}
    Therefore we have, by definition of direct effect,
    \begin{align*}
        \text{DE}&=\frac{1}{n}\sum_{i=1}^{n}\sum_{\mathbf{t}_{-i}\in \{\pm 1\}^{n-1}} 2^{-(n-1)} \text{DE}_i(\mathbf{t}_{-i})\\
        &=\frac{2}{n} \sum_{i=1}^{n} \ee_{\bar \bT_{-i}} \ee_{\bar \bX, \bar T_i} \bar{T}_i \langle \mathbf{Y}_i \rangle\\
        &=\frac{2}{n} \sum_{i=1}^{n} \ee_{\bar \bT, \bar \bX} \bar{T}_i \langle \mathbf{Y}_i \rangle.
    \end{align*}
Next, recall from \eqref{eq:define_ie_i} that,
 $\text{IE}_{i}(\bt_{-i}):= \mathbb{E}_{\bar \bX} \big[\mathbb{E}[\bY_i|T_i=-1,\bT_{-i}=\bt_{-i},\bar\bX]-\mathbb{E}[\bY_i|T_i=-1,\bT_{-i}=-\mathbf{1},\bar\bX]\big]$ and hence,  
 \begin{align}\label{eq:ie_split}
     \text{IE}&=\frac{1}{n}\sum_{i=1}^{n}\sum_{t_{-i}\in \{\pm 1\}^{n-1}} 2^{-(n-1)}\text{IE}_i(\bt_{-i}) \nonumber\\
     &= \frac{1}{n}\sum_{i=1}^{n}\sum_{\bt_{-i}\in \{\pm 1\}^{n-1}} \frac{1}{2^{n-1}} \left(\mathbb{E}_{ \bar\bX} \big[\mathbb{E}[\bY_i|T_i=-1,\bT_{-i}=\bt_{-i},\bar\bX]-\mathbb{E}[\bY_i|T_i=-1,\bT_{-i}=-\mathbf{1}, \bar\bX]\big]\right)\nonumber\\
     &:= \mathcal{H}_n - \frac{1}{n}\sum_{i=1}^{n}\mathbb{E}_{ \bar\bX}\mathbb{E}[\bY_i|\bT=-\mathbf{1},\bar\bX]= \mathcal{H}_n - \mathbb{E}_{\bar \bX}\mathbb{E}[\bar \bY|\bT=-\mathbf{1},\bar\bX],
 \end{align}
where we denote the first summand by $\mathcal{H}_n$. Setting $\alpha_{in}= \mathbb{E}_{\bar \bX} \big[\mathbb{E}[\bY_i|T_i=1,\bT_{-i}=\bt_{-i},\bar \bX]\big]$ , $\gamma_{in}= \mathbb{E}_{ \bar \bX} \big[\mathbb{E}[\bY_i|T_i=-1,\bT_{-i}=\bt_{-i}, \bar \bX] \big]$, note that,
\begin{align}\label{eq:hn}
    \mathcal{H}_n &= \frac{1}{n}\sum_{i=1}^{n} \bE_{\bar \bT_{-i}} \gamma_{in}=  \frac{1}{2n}\sum_{i=1}^{n} \bE_{\bar \bT_{-i}} (\alpha_{in}+ \gamma_{in}) -  \frac{1}{2n}\sum_{i=1}^{n} \bE_{\bar \bT_{-i}} (\alpha_{in}- \gamma_{in}) \nonumber\\
    &=  \frac{1}{2n}\sum_{i=1}^{n} \bE_{\bar \bT_{-i}} (\alpha_{in}+ \gamma_{in}) - \frac{1}{2}\text{DE} \nonumber\\
    &= \bE_{\bar \bT, \bar \bX} \langle \bar \bY \rangle- \frac{1}{2}\text{DE}.
\end{align}
 This completes the proof.
\end{proof}

\subsection{Proof of Theorem~\ref{thm:de_mean_field} and associated results} 
\label{sec:proof_mf}

\begin{proof}[Proof of Theorem \ref{thm:de_mean_field}]
Assume that $\|\bA_n\|< \frac{1}{4}$ for all sufficiently large $n$. For any $\bv\in [-1,1]^n$, 
define the function 
\begin{equation}\label{eq:def_mn}
M_n(\bv):= \frac{1}{2}\bv^\top \bA_n \bv + \sum_{i=1}^{n} v_i(\tau_0 t_i+ \btheta^\top_0 \bx_i)- \sum_{i=1}^{n} I(v_i),   
\end{equation}
where $I(v_i) = D(\mu_{(\lambda_1(v_i),0)}|\mu)$ is defined by Definition \ref{def:exp_tilt}. We note that for $\bv \in (-1,1)^n$,  
\begin{equation*}
    \nabla M_n(\bv)= \bA_n \bv + (\tau_0 \bt+ \bX \btheta_0)- \lambda_1(\bv),
\end{equation*}
where $\alpha'(\lambda_1(\bv),0)= \bv$. 
Differentiating once more, we have,  
\begin{equation*}
    \nabla^2 M_n(\bv)= \bA_n- (\alpha''(\lambda_1(\bv), \mathbf{0}))^{-1}, \quad \text{where  } \alpha''(\lambda_1(\bv), \mathbf{0})=\text{diag}(\alpha''(\lambda_1(v_i), 0)).
\end{equation*}
Since $\alpha''(\lambda_1(v_i),0)$ is the variance of a random variable supported on $[-1,1]$, we have $\min_{\bv}\|(\alpha''(\lambda_1(\bv),0))^{-1}\| \ge 1$. Hence, if $\|\bA_n\|<1$, we have $- \nabla^2 M_n(\bv) \succ 0$ for all $\bv \in (-1,1)^n$, yielding the strong concavity  of $M_n$ on $(-1,1)^n$. 
In addition, noting that $\lambda_1(w) \to \pm \infty$ if $w \to \pm 1$ implies that any global maximizer of $M_n$ lies in $(-1,1)^n$.

This implies the following strong separation property of the global  maximizer $\bv^\star$ of the function $M_n$: for any $\varepsilon>0$, there exists $\delta=\delta(\varepsilon)>0$ such that,
\begin{equation}\label{eq:mn_concave}
    \frac{1}{n} \sup_{\bv: \frac{1}{n}\|\bv-\bv^\star\|^2 > \varepsilon} M_n(\bv) < \frac{1}{n} \sup_{\bv} M_n(\bv) -\delta.
\end{equation}
Moreover,  the global maximizer $\bv^\star$ satisfies $\nabla M_n(\bv^\star)=0$ which can be rewritten as 
\begin{equation*}
    \bv^\star= \alpha'(\bA_n \bv^\star+ \tau_0 \bt+ \bX \btheta_0,0).
\end{equation*}

Fix $\bt,\bX$ for the rest of the proof. Suppose $\by \sim f$ defined by \eqref{eq:define_gibbs}.  Define the vector of conditional probabilities 
\begin{equation}\label{eq:define_b}
    \mathbf{b}(\by):= \bE_f(\by_i| \by_{-i}).
\end{equation}
Note that by \cite[Theorem 1 (ii)]{mukherjee2022variational}, we have
\begin{equation*}
    \frac{1}{n} \Big(M_n(\mathbf{b}(\by))-M_n(\bv^\star)\Big) \rightarrow 0 \implies  \frac{1}{n} \|\mathbf{b}(\by)- \bv^\star\|^2 \rightarrow 0,
\end{equation*}
using \eqref{eq:mn_concave}. This further yields, using Jensen's inequality and DCT that,

\begin{equation*}
    \frac{1}{n}\|\langle \by \rangle -\bv^\star\|^2 \le \frac{1}{n} \ee_f[\|\mathbf{b}(\by)- \bv^\star\|^2] \rightarrow 0.
\end{equation*}
In addition,
\begin{align*}
    \varepsilon_n:=\frac{1}{n}\|\langle \by \rangle- \alpha'(\bA_n \langle \by \rangle+ \tau_0 \bt+ \bX \btheta_0,0)\|^2 \le \frac{2}{n}\|\langle \by \rangle -\bv^\star\|^2 + \frac{2}{n} \|\alpha''\|_{\infty} \|\bA_n\| \|\langle \by \rangle -\bv^\star\|^2 \rightarrow 0,
\end{align*}
since $\|\bA_n\| <1 $ and $\|\alpha''\|_{\infty} \le 1$. Finally, regarding the iterates $\mathbf{u}^{(l)}$, we have
\begin{align*}
    D_{n,l+1}:=\frac{1}{n}\|\langle \by \rangle- \mathbf{u}^{(l+1)}\|^2 &\le 3 \varepsilon_n + \frac{3}{n} \|\alpha''\|_{\infty} \|\bA_n\| \|\langle \by \rangle -\mathbf{u}^{(l)}\|^2\\
    &= 3 \varepsilon_n+  3\|\bA_n\| D_{n,l}
\end{align*}
 If $\|\bA_n\|< 1/4$, we obtain $D_{n,l+1} 
\le 3\varepsilon_n+ C D_{n,l}$ for some $C<1$. Repeating this iteration, we see that 
\[ \lim\limits_{l \rightarrow \infty}\lim\limits_{n \rightarrow \infty} D_{n,l}=0.\]
This implies, by Cauchy-Schwarz inequality,
\[ \lim\limits_{l \rightarrow \infty}\lim\limits_{n \rightarrow \infty} \frac{1}{n}\sum_{i=1}^{n}t_i (\langle y_i \rangle- u^{(l)}_i) \le \lim\limits_{l \rightarrow \infty}\lim\limits_{n \rightarrow \infty} \sqrt{ D_{n,l}} = 0.\]
Finally, we conclude the proof of the result taking expectation over $\bt,\bX$ and applying DCT. The proof for $\widehat{\text{IE}}$ is analogous, and thus omitted.  
\end{proof}

\begin{proof}[Proof of Lemma \ref{lemma:mean_field_examples}]
    Note that by \cite[Corollary 1.2]{basak2017universality}, we have $\text{Tr}(\bA^2_n)=o(n)$ if $\frac{n}{|E_n|}= o(1)$, where recall that, $|E_n|$ is the number of edges of the graph. This shows $\text{Tr}(\bA^2_n)=o(n)$ for complete and regular graphs. Further, $\max_{j} \sum_{i=1}^{n} |\bA_n(i,j)|= O(1)$ for both these cases, yielding $\|\bA_n\|=O(1)$.

    For Erd\H{o}s-R\'{e}nyi graphs, total number of edges, $|E_n| \sim \text{Bin} (\binom{n}{2}, p_n)$, implying $\frac{2|E_n|}{n^2 p_n} \rightarrow 1$, as long as $n^2p_n \rightarrow \infty$. This shows if $np_n \rightarrow \infty$, we have $\text{Tr}(\bA^2_n)=o(n)$. Also, \cite[Theorem 1.1 and Lemma 2.1]{krivelevich2003largest} implies $\|\bA_n\|= O(1)$ proving our claim for Erd\H{o}s-R\'{e}nyi graphs.

For the graphon model, note that, 
\begin{align*}
    \ee \text{Tr}(\bA^2_n)= \frac{1}{n^2 \rho^2_n} \sum_{i,j} \rho_n W(U_i,U_j) = (1+o(1)) \frac{1}{\rho_n} \iint_{[0,1]^2}W(x,y)dx dy = o(n),
\end{align*}
if $n \rho_n \rightarrow \infty$. One can similarly compute $ \text{Var} (\text{Tr}(\bA^2_n))$  to obtain $\text{Tr}(\bA^2_n)=o(n)$, with high probability. Finally, invoking \cite[Corollary 1.3]{bbk}, we obtain $\|\bA_n\|= O(1)$ with high probability, completing our proof. 
\end{proof}

\begin{proof}[Proof of Lemma \ref{lemma:data_driven}]
    Note that, since $(\bar \bT^{(1)}, \bar \bX^{(1)}), \ldots, (\bar \bT^{(k)}, \bar \bX^{(k)})$ are i.i.d., and $|\widehat{\text{DE}}^{(j)}| \leq 1$ for $1\leq j \leq k$, uniformly over $n$, $M$, we have, 
 $$ \left|{\widehat{\mathrm{DE}}_M}^{\mathrm{avg}} - \ee (\widehat{\mathrm{DE}}_M) \right| = O_P\Big(\frac{1}{k} \Big). $$
    This, along with Theorem \ref{thm:de_mean_field} yields,
    $\lim_{k \to \infty} \lim_{M \to \infty} \lim_{n \to \infty} \mathbb{P}\Big[|{\widehat{\mathrm{DE}}_M}^{\mathrm{avg}} - \mathrm{DE}| > \varepsilon \Big] =0$. The same argument holds for $\widehat{\mathrm{IE}}_M$. 
\end{proof}

\subsection{Proof of Theorem~\ref{thm:amp} and Lemma~\ref{lemma:uniqueness}} 
\label{sec:proof_amp}
\begin{proof}[Proof of Theorem \ref{thm:amp}]
The proof follows from the results derived in Appendix \ref{appendix: sk_model} and Appendix \ref{appendix: amp+cavity}. We provide a outline of the proof below.

The distribution $f$ given by \eqref{eq:define_gibbs} is equivalent to \eqref{eq:SK_model} with $h_i= \tau_0 \bar T_i+ \btheta^\top_0 \bar \bX_i$. Hence, step iv. of Algorithm \ref{alg:amp} is equivalent to the AMP algorithm described by \eqref{app_B: AMP_description}. Therefore, invoking Theorem \ref{thm:amp_good}, we obtain 

\begin{equation*}
        \lim\limits_{M \rightarrow \infty} \lim\limits_{n \rightarrow \infty} \ee \|\langle \by \rangle - \mathbf{m}^{[M]} \|^2=0,
    \end{equation*}
where $\mathbf{m}^{[M]}$ is defined via Algorithm \ref{alg:amp}. In the display above, $\mathbb{E}[\cdot]$ denotes the expectation with respect to $(\bA_n, \bar \bT, \bar \bX)$. So, we have 
\begin{align*}
   \mathbb{E}\Big| \hat{\mathrm{DE}}_M - \mathrm{DE} \Big| = \frac{1}{n} \ee \Big| \bar \bT^\top (\langle \by \rangle - \mathbf{m}^{[M]})\Big| \le \frac{1}{\sqrt{n}}\ee \|\langle \by \rangle - \mathbf{m}^{[M]} \| \rightarrow 0,
\end{align*}
as $n \rightarrow \infty$ followed by $M \rightarrow \infty$, where the last step follows from Cauchy-Schwarz inequality.

The proof for indirect effect follows the same way after setting $\bar T_i=-1$, $i \in [n]$. This completes the proof of the Theorem.
\end{proof}

\begin{proof}[Proof of Lemma \ref{lemma:uniqueness}]
    The proof of Lemma \ref{lemma:uniqueness} follows immediately from Lemma \ref{lem:fixed_point_unique}, and we omit the details for brevity.
\end{proof}

\subsection{Proof of Theorem~\ref{thm:param_estimate} and Lemma~\ref{lem:algo_stable}}
\label{sec:proof_pseudolikelihood}

\begin{proof}[Proof of Theorem \ref{thm:param_estimate}]

We will assume, w.l.o.g., that $ \|\mathbf{A}_n\| \le 1$, $\|\mathbf{M}_n\| \le 1$ for large $n$ by appropriate scaling. Also assume, $|\tau_0| \le 1$, i.e., $B=1$ in Assumption \ref{assn:parameter_space} by scaling.

    The proof has two components: (i) to show that the Hessian of the log-pseudo-likelihood has minimum eigenvalue bounded away from $0$, and (ii) to prove that the gradient of log-pseudo-likelihood concentrates at a rate $\frac{1}{\sqrt{n}}$. 

Defining $m_i(\bY)=\sum_{j=1}^{n}\mathbf{A}_{n}(i,j)Y_j$, the log-pseudo-likelihood is given by 
\begin{equation}\label{eq:define_log_pl}
    l(\tau,\boldsymbol{\theta})= \frac{1}{n}\sum_{i=1}^{n}[Y_i m_i(\bY)+ Y_i(\tau T_i + \boldsymbol{\theta}^\top \bX_i) - \log B_i],
\end{equation}
where $B_i= \alpha (m_i(\bY)+ \tau T_i + \boldsymbol{\theta}^\top \bX_i, 0)$. Define $\tilde \bX_i= (\bX_i, T_i)$. The gradient of pseudo-likelihood is given by 
\begin{equation}\label{eq:define_gradient}
    \nabla l(\tau, \boldsymbol{\theta})= \frac{1}{n}\sum_{i=1}^n \tilde \bX_i (Y_i - \alpha' (m_i(\bY)+ \tau T_i + \boldsymbol{\theta}^\top \bX_i,0)),
\end{equation}
and the Hessian has the form:
\begin{equation}\label{eq:define_hessian}
    -H_{\tau,\boldsymbol{\theta}}= \frac{1}{n} \sum_{i=1}^{n} \tilde \bX_i\tilde \bX^\top_i \alpha'' (m_i(\bY)+ \tau T_i + \boldsymbol{\theta}^\top \bX_i,0).
    \end{equation}

We first show tightness of the gradient \eqref{eq:define_gradient}. Lemma \ref{Lem:grad_concentration} shows for any $k=1,2, \ldots, d+1$
$$\bE(\nabla l(\tau_0,\boldsymbol{\theta}_0)^2_k) = \frac{1}{n^2} \bE \left( \sum_{i=1}^{n} \tilde \bX_{i,k}(Y_i - \alpha' (m_i(\bY)+ \tau_0 T_i + \boldsymbol{\theta}^\top_0 \bX_i,0))\right)^2 \lesssim\frac{1}{n},$$
which implies, there exist $c= c(d,M)>0$ such that for any $\eta>0$
$$\bP\Big(\|\nabla l(\tau_0,\boldsymbol{\theta}_0)\|^2 \le \frac{c}{n \eta} \Big) \ge 1- \eta.$$

Next, we turn to  proving that the minimum eigenvalue of $-H_{\tau_0,\boldsymbol{\theta}_0}$, henceforth denoted by $\lambda_{\min}(-H_{\tau_0,\boldsymbol{\theta}_0})$, is bounded away from zero with high probability.

Fix $C>0$ to be chosen later. Define $S_C:= \{i: |m_i(\bY)|> C\}$. Then, using $m_i(\bY)=\sum_{j=1}^{n}\mathbf{A}_{n}(i,j)Y_j$, 
\begin{equation}\label{eq:mi_bound}
     |S_C|= \Big|\{i: |m_i(\bY)| > C \}\Big| \le \frac{1}{C^2} \sum_{i=1}^n m^2_i(\bY) \le \frac{n}{C^2} \|\mathbf{A}^2_n\|\le \frac{n}{C^2}.
\end{equation}
If $i \notin S_C$, then $|m_i(\bY)+ \tau T_i + \boldsymbol{\theta}^\top \bX_i| \le C_1$ for some $C_1>0$ and for all $(\tau,\boldsymbol{\theta})\in [-1,1]\times [-M,M]^d$. This implies, since $\alpha''>0$ and continuous, that there exists $\delta>0$ such that, for all $i \notin S_C$, $\max_{(\tau,\boldsymbol{\theta})}\alpha'' (m_i(\bY)+ \tau T_i + \boldsymbol{\theta}^\top \bX_i,0) >\delta$.
Therefore, we have 
\begin{align*}
    -H_{\tau,\boldsymbol{\theta}} &\succeq \frac{\delta}{n} \sum_{i=1}^n \tilde \bX_i\tilde \bX^\top_i 1_{i \in S^c_{C}}\\
    &= \frac{\delta}{n} \Big( \sum_{i=1}^n \tilde \bX_i\tilde \bX^\top_i - \sum_{i \in S_C} \tilde \bX_i\tilde \bX^\top_i\Big),
\end{align*}
which implies
\begin{align*}
    \lambda_{\min}(-H_{\tau_0,\boldsymbol{\theta}_0}) &\ge \delta \left( \lambda_{\min}\Big(\frac{1}{n}\sum_{i=1}^n \tilde \bX_i\tilde \bX^\top_i\Big) - \frac{1}{n} \left\| \sum_{i \in S_C} \tilde \bX_i\tilde \bX^\top_i \right\| \right)\\
    & \ge  \delta \left( \lambda_{\min}\Big(\frac{1}{n}\sum_{i=1}^n \tilde \bX_i\tilde \bX^\top_i\Big) - \frac{1}{C^2} (dM+1) \right),
\end{align*}
where the last inequality follows from $\| \sum_{i \in S_C} \tilde \bX_i\tilde \bX^\top_i\| \le |S_C| \max_{i} \|\tilde \bX_i\|^2_2 \le \frac{n}{C^2} (dM+1)$.
The rest of the proof shows there exists $c>0$ such that 
\begin{equation}\label{eq:hess_mid}
    \lambda_{\min}\Big(\frac{1}{n}\sum_{i=1}^n \tilde \bX_i\tilde \bX^\top_i\Big)>c
\end{equation}
with probability $1-o(1)$. Afterwards, one can choose $C$ such that $c- \frac{1}{C^2} (dM+1) >0$, completing the proof. For notational convenience, we set $G_n:= \frac{1}{n}\sum_{i=1}^n \tilde \bX_i\tilde \bX^\top_i$ for the rest of the proof.

Turning to prove \eqref{eq:hess_mid}, note that $G_n$ can be rewritten as 
$$G_n= \frac{1}{n}\begin{bmatrix}
\sum_{i=1}^n \bX_i \bX^\top_i&  \sum_{i=1}^n T_i \bX_i \\
\sum_{i=1}^n T_i \bX^\top_i & n
\end{bmatrix}$$

Note that, $ \lambda_{\min} (\frac{1}{n}\sum_{i=1}^n \bX_i \bX^\top_i) \ge \frac{1}{2} \lambda_{\min}(\boldsymbol{\Sigma}) \ge C_1>0$ with probability $1-o(1)$. Hence, it is enough to show, 
\begin{align}\label{eq:schur}
    1- \frac{1}{n} \bT^\top \bX (\bX^\top \bX)^{-1} \bX^\top \bT > c >0
\end{align}
for some $c>0$, with probability $1-o(1)$. Since $\mathbf P := \bX (\bX^\top \bX)^{-1} \bX^\top$ is a projection matrix of rank at most $d$, we can have orthonormal vectors $\bv_1,\ldots, \bv_d$ such that $\mathbf P= \sum_{l=1}^d \bv_l \bv^\top_l$. Define $\phi_i(\bT)= \bE(T_i|\bT_{-i})= \tanh((\mathbf{M_n}\bT)_i+ \gamma^\top_0 \bX_i)$. Invoking Lemma \ref{Lem:grad_concentration}, we have $$\ee \Bigg(\sum_{i=1}^n (\bv_l)_i (T_i - \phi_i (\bT)) \Bigg)^2 \lesssim 1$$ for all $l \in [d]$, where $\phi(\bT)= (\phi_1(\bT), \ldots, \phi_n(\bT))$. Therefore, by Markov's inequality, $\bv^\top_l(\bT- \phi(\bT)) = O_{P}(1)$. Hence, using an union bound, we have 
\begin{align*}
    \frac{1}{n}\left(\bT^\top \mathbf P \bT - \phi(\bT)^\top \mathbf P \phi(\bT) \right) & =\frac{1}{n} \sum_{l=1}^{d}  ( \bv^\top_l (\bT- \phi(\bT))( \bv^\top_l (\bT+ \phi(\bT))\\
    & =O_P \Big(\frac{d}{\sqrt{n}}\Big)= o_P(1).
\end{align*}
Further, since $\|\mathbf{P}\|= 1$, we have $\phi(\bT)^\top \mathbf P \phi(\bT) \le  \|\phi(\bT)\|^2$. This implies that, with probability $1-o(1)$, 
\begin{align*}
     1- \frac{1}{n} \bT^\top \bX (\bX^\top \bX)^{-1} \bX^\top \ge 1- \frac{1}{n} \sum_{i=1}^n \phi^2_i(\bT) -\varepsilon
     \end{align*}
for some small $\varepsilon>0$. Finally, there exists $C_2>0$
\begin{align*}
    1- \frac{1}{n} \sum_{i=1}^n \phi^2_i (\bT)= \frac{1}{n} \sum_{i=1}^n\sech^2((\mathbf{M_n}\bT)_i+ \gamma^\top_0 \bX_i) \ge C_2,
\end{align*}
by the same argument as \eqref{eq:mi_bound}. This completes the proof of \eqref{eq:schur}, which in turn implies $\lambda_{\min}(-H_{\tau,\boldsymbol{\theta}}) \ge c$ for some $c>0$.

This shows, via \cite[(47)]{daskalakis2019regression}, that
\begin{align}
    \|(\hat \tau_{\text{MPL}}, \hat{ \boldsymbol{\theta}}_{\text{MPL}}) -(\tau_0,\boldsymbol{\theta}_0)\|_2 \le \frac{2\|\nabla l(\tau_0,\boldsymbol{\theta}_0)\|_2}{\min_{(\tau,\boldsymbol{\theta})\in \mathcal{A}}\lambda_n(-H_{\tau,\boldsymbol{\theta}})} =O_{\bP}\left(\frac{1}{\sqrt{n}}\right).
\end{align}
This concludes the proof.
\end{proof}

\begin{lemma}\label{Lem:grad_concentration}
    Suppose $\psi:[-1,1]^{d+1} \mapsto [-M,M]$ is a bounded measurable function, and $\|\mathbf{A}_n\| \le 1$. Then we have 
    \begin{align*}
        \bE\Big[\big(\sum_{i=1}^n \psi(\bX_i,T_i) (Y_i -\bE(Y_i| \bY_{-i},\bT,\bX)\big)^2|\bX,\mathbf{T}\Big] \le C_1 n,
    \end{align*}
    for some $C_1>0$, which depends on $M$ but not on the choice of $\psi$. Moreover, if $\mathbf c(\bX):= (c_1(\bX),\ldots, c_n(\bX))$ satisfies $\|c(\bX)\|_2 \le 1$, then we have

    \begin{equation*}
         \bE\Big[\big(\sum_{i=1}^n c_i(\bX) (T_i -\bE(T_i| \bT_{-i},\bX)\big)^2|\bX\Big] \le C_2,
    \end{equation*}
    where $C_2$ does not depend on the choice of $\mathbf{c}(\bX)$.
    
\end{lemma}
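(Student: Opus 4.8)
The plan is to condition on $(\bX,\bT)$ throughout, so that the external fields $h_i := \tau_0 T_i + \btheta_0^\top \bX_i$ and the weights $\psi_i := \psi(\bX_i,T_i)$ become deterministic, and to set $Z_i := Y_i - \bE[Y_i\mid \bY_{-i},\bT,\bX]$ and $W := \sum_{i=1}^n \psi_i Z_i$. For each fixed $i$ I would introduce the Gibbs-update coupling: draw $Y_i'$ conditionally on $\bY$ from $f(\cdot\mid\bY_{-i})$ and independent of $Y_i$, let $\bY^{(i)}$ denote $\bY$ with its $i$-th coordinate replaced by $Y_i'$, and put $\Delta_i := Y_i - Y_i'$. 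Since a single Gibbs update is reversible, $(\bY,\bY^{(i)})$ is an exchangeable pair, and since $Z_i = \bE[\Delta_i\mid\bY]$, the standard antisymmetrization identity yields $\bE[WZ_i\mid\bX,\bT] = \tfrac12\bE[(W(\bY)-W(\bY^{(i)}))\Delta_i\mid\bX,\bT]$, whence $\bE[W^2\mid\bX,\bT] = \tfrac12\sum_{i=1}^n \psi_i\,\bE[(W(\bY)-W(\bY^{(i)}))\Delta_i\mid\bX,\bT]$.

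Next I would compute the one-coordinate increment. Using $\bE[Y_j\mid\bY_{-j}] = \alpha'(m_j(\bY)+h_j,0)$ with $m_j(\bY) = \sum_k \bA_n(j,k)Y_k$, and $\bA_n(i,i)=0$ (so the conditional mean at site $i$ does not change when $Y_i\to Y_i'$), a mean-value expansion gives $W(\bY)-W(\bY^{(i)}) = \Delta_i\big(\psi_i - \sum_{j\neq i}\psi_j \bA_n(i,j)\beta_{ij}\big)$, where $\beta_{ij} = \alpha''(\xi_{ij},0)\in[0,1]$ (since $\alpha'' = \mathrm{Var}_{\mu_\lambda}(X)\le 1$ for a $[-1,1]$-valued $X$) for some $\xi_{ij}$ lying within $2|\bA_n(i,j)|$ of $m_j(\bY)+h_j$. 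Substituting, $\bE[W^2\mid\bX,\bT] = \tfrac12\sum_i \bE[\psi_i^2\Delta_i^2] - \tfrac12\bE[v^\top B\psi]$, where $v_i := \psi_i\Delta_i^2$ and $B$ is the Hadamard product of $\bA_n$ with the matrix $(\beta_{ij})$, i.e.\ $B_{ij} = \bA_n(i,j)\beta_{ij}$. The diagonal term is at most $M^2 n$ because $\bE[\Delta_i^2\mid\bY_{-i}] = 2\mathrm{Var}(Y_i\mid\bY_{-i})\le 2$.

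The crux is the bilinear term $\bE[v^\top B\psi]$: the naive bound $|v^\top B\psi|\le\|v\|\,\|B\|\,\|\psi\|$ is useless because $\|B\|$ is not controlled by $\|\bA_n\|$ (a Hadamard product can inflate the operator norm). The fix is to replace $\beta_{ij}$ by its leading value $\tilde\beta_j := \alpha''(m_j(\bY)+h_j,0)$, which depends on $j$ only: then $B = \bA_n\,\mathrm{diag}(\tilde\beta) + E$, with $|E_{ij}| = |\bA_n(i,j)|\,|\beta_{ij}-\tilde\beta_j|\le 2\|\alpha'''\|_\infty\,\bA_n(i,j)^2$, and $\|\alpha'''\|_\infty$ — the supremum over tilts of the third cumulant of a $[-1,1]$-supported law — is an absolute constant (recall $\alpha(\cdot,\lambda_2)$ is $C^\infty$ from Definition~\ref{def:exp_tilt}). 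For the main part, $|v^\top\bA_n\,\mathrm{diag}(\tilde\beta)\psi|\le\|v\|\,\|\bA_n\|\,\|\mathrm{diag}(\tilde\beta)\psi\|$; since $\|\bA_n\|\le 1$, $|\tilde\beta_j|\le 1$, $|\psi_j|\le M$ and $\bE\|v\|^2\le 4M^2\bE\sum_i\Delta_i^2\le 8M^2 n$, Cauchy--Schwarz gives $\bE|v^\top\bA_n\,\mathrm{diag}(\tilde\beta)\psi|\lesssim M^2 n$. For the error part, $|v^\top E\psi|\le 2\|\alpha'''\|_\infty M^2\sum_{i,j}\Delta_i^2\,\bA_n(i,j)^2\le 8\|\alpha'''\|_\infty M^2\,\mathrm{Tr}(\bA_n^2)\le 8\|\alpha'''\|_\infty M^2 n$, where I use only $\|\bA_n\|\le 1$ (hence $\mathrm{Tr}(\bA_n^2) = \sum_k\lambda_k(\bA_n)^2\le n\|\bA_n\|^2$). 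Collecting the three contributions proves $\bE[W^2\mid\bX,\bT]\le C_1 n$ with $C_1$ depending only on $M$.

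The second assertion is the same computation carried out on the propensity model \eqref{eq:prop_score} --- an Ising model in $\bT$ with interaction $\bM_n$ (of bounded operator norm by Assumption~\ref{ass:operator_norm}) and base measure $\tfrac12(\delta_1+\delta_{-1})$, for which $\alpha' = \tanh$, $\alpha'' = \sech^2\le 1$, and $\alpha'''$ is bounded; running the argument verbatim with $c_i(\bX)$ in place of $\psi_i$ and $\|c(\bX)\|^2\le 1$ in place of $\sum_i\psi_i^2\le M^2 n$ yields the bound $C_2$ with $C_2$ an absolute constant. The only genuinely delicate point is the Hadamard-product obstruction in the bilinear term described above; everything else is bookkeeping with $\mathrm{Var}_{\mu_\lambda}(X)\le 1$, $|\Delta_i|\le 2$, and $\|\bA_n\|,\|\bM_n\|\le 1$.
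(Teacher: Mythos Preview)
Your proof is correct and follows essentially the same exchangeable-pair/Gibbs-update argument that underlies the paper's proof (which the paper summarizes by invoking \cite[Lemma~8]{mukherjee2022variational}): the identical three-term decomposition emerges --- a diagonal contribution of order $n$, a bilinear term controlled through $\|\bA_n\|$, and a remainder of size $\mathrm{Tr}(\bA_n^2)$. Your explicit handling of the ``Hadamard-product obstruction'' via the split $\beta_{ij}=\tilde\beta_j+(\beta_{ij}-\tilde\beta_j)$ is precisely the second-order Taylor step that generates the $\sum_{i,j}\bA_n(i,j)^2$ term appearing in the paper's display; the paper simply records that bound as the output of the cited lemma rather than deriving it in place.
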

\begin{proof}
   Without loss of generality, set $M=1$. The proof follows the same path as \cite[Lemma 8]{mukherjee2022variational}, which shows, 
    \begin{align*}
        &\bE\Big[\big(\sum_{i=1}^n \psi(\bX_i,T_i) (Y_i -\bE(Y_i| \bY_{-i})\big)^2|\bX,\mathbf{T}\Big] \nonumber \\
        & \lesssim n +  \sup_{\mathbf{u} \in [-1,1]^n} \sum_{i=1}^n\Big|\sum_{j=1}^{n}\mathbf{A}_{n}(i,j)u_j \Big| + \sum_{i,j=1}^n \mathbf{A}^2_{n}(i,j)\\ &\le n+ 
        \sup_{\mathbf{u} \in [-1,1]^n} \sqrt{n} \|\mathbf{A}_n \bu\|_2 +\text{Tr}(\mathbf{A}^2_n)\\
        & \le C_1 n,
    \end{align*}
    where the second inequality uses Cauchy-Schwarz inequality, along with the fact that $\|\mathbf{A}_n\|\le 1$. Regarding the second inequality, we again invoke \cite[Lemma 8]{mukherjee2022variational} to obtain 
    \begin{align*}
        &\bE\Big[\big(\sum_{i=1}^n c_i(\bX) (T_i -\bE(T_i| \bT_{-i},\bX)\big)^2|\bX\Big] \nonumber \\
        &\lesssim  \| \mathbf{c}\|^2 + \left| \sum_{i \neq j} c_i c_j \bA_{n}(i,j) T_i (T_i- \bE(T_i| \bT_{-i},\bX)\right|+ \left|\sum_{i,j=1}^n c_i c_j \bA^2_{n}(i,j) T_i\right|\\
        & \le \|\mathbf{c}\|^2 +  \|\bA_n\| \|\mathbf{c}\|^2_2+ \|\bA^2_n\| \|\mathbf{c}\|^2_2\\
        &\lesssim 1.
    \end{align*}
This completes the proof.  
\end{proof}

\begin{proof}[Proof of Lemma \ref{lem:algo_stable}]
     To show the limit for Algorithm \ref{algo:de_method}, we will use induction hypothesis on $M$. Denote the iterates corresponding to $(\hat \tau_{\text{MPL}}, \hat {\boldsymbol{\theta}}_{\text{MPL}})$ by $\widehat{\mathbf{u}^{(M)}}$. By definition,
    \begin{align*}
        \mathbf{u}^{(M+1)} - \widehat{\mathbf{u}^{(M+1)}}= \alpha'\Big( \mathbf{A}_n \mathbf{u}^{(M)} + \tau_0 \bar\bT + \bar \bX \mathbf{\theta}_0, 0 \Big)- \alpha'\Big( \mathbf{A}_n \mathbf{u}^{(M)} + \hat \tau_{\text{MPL}} \bar\bT + \mathbf{X} \hat {\boldsymbol{\theta}}_{\text{MPL}}, 0 \Big),
    \end{align*}
    which implies, since $\alpha'$ is Lipschitz, and $\bar \bT, \bar \bX$ are bounded,
    \begin{align*}
        \frac{1}{n} \|\mathbf{u}^{(M+1)} - \widehat{\mathbf{u}^{(M+1)}}\|^2 \lesssim \frac{1}{n} \|\mathbf{u}^{(M)} - \widehat{\mathbf{u}^{(M)}}\|^2+ (\tau_0-\hat \tau_{\text{MPL}})^2+ \|\btheta_0-\hat {\boldsymbol{\theta}}_{\text{MPL}}\|^2,
    \end{align*}
    which converges to $0$ in probability using induction.

    Therefore, we obtain, 

    \begin{align*}
        \left|\hat{\mathrm{DE}}_M(\tau_0, \btheta_0) -\hat{\mathrm{DE}}_M(\hat \tau_{\text{MPL}}, \hat {\boldsymbol{\theta}}_{\text{MPL}})\right|& = \frac{2}{n} |\bar \bT^\top (\mathbf{u}^{(M)} - \widehat{\mathbf{u}^{(M)}})|\\
        & \lesssim \frac{1}{\sqrt{n}} \|\mathbf{u}^{(M)} - \widehat{\mathbf{u}^{(M)}})\| \rightarrow 0.
    \end{align*}
This concludes the proof for $\text{DE}$. The proof for $\text{IE}$ is analogous.  Finally, the conclusion holds for Algorithm \ref{alg:amp} using Lemma \ref{lem:fixed_point_h_stability}.
\end{proof}

\subsection{Proof of Proposition~\ref{prop:lsi_concentration} and Lemma \ref{lem:MC_CI}}
\label{sec:proof_conc_logconcavity}
 
\begin{proof}[Proof of Proposition \ref{prop:lsi_concentration}]
We start with the proof of part (i) and focus on the direct effect. The proof of indirect effect follows  the same steps and hence is omitted for brevity. Recall from the proof of Theorem \ref{thm:de_mean_field}, given $\bar \bT=(\bar T_1, \ldots, \bar T_n)$ and $\bar \bX= (\bar \bX_1,\ldots, \bar \bX_n)$, we have $\lim\limits_{M \rightarrow \infty}\lim\limits_{n \rightarrow \infty} \frac{1}{n}\| \bu^{(M)} -\langle \by \rangle \|^2=0$. Hence, it is enough to show, 
\begin{equation}
    \lim\limits_{n \rightarrow \infty} \left |\frac{2}{n} \sum_{i=1}^{n} \bar T_i \langle y_i \rangle -  \text{DE} \right| = 0
\end{equation}

To this end, note that, the averaged direct effect can be written in terms of the normalizing constant $Z_n$ defined in  \eqref{eq:define_Z}. Setting $F_n (\tau) := F_n(\tau, \btheta_0)= \frac{1}{n} \log Z_n (\bar \bT, \bar \bX)$, we have

\begin{equation}\label{eq:fn_derivative}
\frac{1}{n} \sum_{i=1}^{n} \bar{T}_i \langle y_i \rangle = F_n'(\tau_0), \,\,\,\,    \text{DE}=2 \bE_{\bar \bT, \bar \bX} F'_n(\tau_0).
\end{equation}

Further, since $F''_n(\tau_0)= \text{Var}_{f}(\frac{1}{n}\sum_{i=1}^{n} \bar T_i y_i) \ge 0$, $F_n$ is a convex function in the first coordinate. Hence, for any fixed $h>0$, we obtain
\begin{equation}\label{eq:fn_convex}
    \frac{F_n(\tau_0) - F_n(\tau_0-h)}{h} \le F'_n(\tau_0) \le \frac{F_n(\tau_0+h) - F_n(\tau_0)}{h}
\end{equation}

Next, we claim that \begin{equation}\label{eq:zn_concentrate}
    \text{Var}_{\bar \bT,\bar \bX}(F_n(\tau)) \le \frac{c}{n},
\end{equation} 
for some $c>0$ independent of $n$. We defer the proof of this claim to the end of the section. Given \eqref{eq:zn_concentrate}, we obtain, using \eqref{eq:fn_convex} that, with probability $1- o(1)$,
\begin{align*}
    \frac{\ee_{\bar\bT,\bar \bX} F_n(\tau_0) - \ee_{\bar\bT,\bar \bX} F_n(\tau_0-h)- \frac{1}{n^{1/4}}}{h}  \le F'_n(\tau_0) \le  \frac{\ee_{\bar \bT,\bar \bX} F_n(\tau_0+h) - \ee_{\bar \bT, \bar \bX} F_n(\tau_0)+ \frac{1}{n^{1/4}} }{h}
\end{align*}
Since $F'_n$ is bounded, the above bounds also hold for $\ee_{ \bar \bT, \bar \bX} F'_n(\tau_0)$. Hence we obtain, with probability $1-o(1)$, for some $\tilde \tau \in [\tau_0-h,\tau_0+h]$,

\begin{align*}
    F'_n(\tau_0) - \ee_{\bar \bT, \bar \bX} F'_n(\tau_0) &\le \frac{ \ee_{\bar \bT, \bar \bX} F_n(\tau_0+h) -2 \ee_{\bar \bT, \bar \bX} F_n(\tau_0)+ \ee_{\bar \bT, \bar \bX} F_n(\tau_0-h) + \frac{2}{n^{1/4}} }{h}\\
    &=  \frac{ \frac{h^2}{2} \ee_{\bar \bT, \bar \bX} F''_n(\tilde \tau) + \frac{2}{n^{1/4}} }{h} = \frac{h}{2} \ee_{\bar \bT,\bar \bX}\text{Var}\left(\frac{1}{n}\sum_{i=1}^{n} \bar T_i y_i \right) + \frac{2}{n^{1/4}h}\\
    &\le \frac{h}{2n^2}  \ee_{\bar \bT,  \bar \bX}[\|\text{Cov}(\by)\| \| \bar \bT\|^2]+ \frac{2}{n^{1/4} h}  \lesssim \frac{h}{n}  + \frac{1}{n^{1/4}h} \rightarrow 0,
\end{align*}
as $n \to \infty$. The first equality follows by mean-value theorem, the second inequality is implied by the definition of operator norm, and the Log-Sobolev inequality \cite[Theorem 1 and Remark (ii)]{bauerschmidt2019very} yields that $\|\text{Cov}(\by)\|= O(1)$ uniformly over $\bt,\bx$. Note that \cite[Theorem 1]{bauerschmidt2019very} applies here since $\mu$ satisfies the  tilted-LSI property.

Similarly, a lower bound can be obtained for $F'_n(\tau_0) - \ee F'_n(\tau_0)$, implying 

\begin{equation*}
    2F'_n(\tau_0)-\text{DE}=  2F'_n(\tau_0)-2 \ee_{\bar \bT, \bar \bX}F'_n(\tau_0) \xrightarrow{\mathbb P}0,
\end{equation*}
which concludes the proof once we have shown \eqref{eq:zn_concentrate}.

Turning to the proof of \eqref{eq:zn_concentrate}, using Efron-Stein inequality \cite{10.1093/acprof:oso/9780199535255.001.0001}, 
\begin{align}
    &\mathrm{Var}_{\bar \bT,\bar \bX}(F_n(\tau)) \nonumber \\ 
    &\leq \frac{1}{2n^2} \mathbb{E}_{\bar \bT, \bar \bX} \Big[ \sum_{i=1}^{n} (\log Z_n (\bar \bT, \bar \bX) - \log Z_n(\bar \bT^{(i)}, \bar \bX))^2 + \sum_{i=1}^{n} (\log Z_n (\bar \bT, \bar \bX) - \log Z_n(\bar \bT, \bar \bX^{(i)}))^2 \Big], \nonumber  
\end{align}
where $\bar \bT^{(i)} = (\bar T_1, \cdots, \bar T_{i-1}, \bar T_i', \bar T_{i+1}, \cdots, \bar T_n)$, and $\bar \bT' = (\bar T_1', \cdots, \bar T_n')$ are i.i.d. $\mathrm{Unif}(\{\pm 1\} )$ independent of $\bt$. Similarly, $\bar \bX^{(i)}=(\bar \bX_1, \cdots, \bar \bX_{i-1}, \bar \bX_i', \bar \bX_{i+1}, \cdots, \bar \bX_n)$, $\bar \bX'= (\bar \bX_1', \cdots, \bar \bX_n')$ are i.i.d. $\mathbb{P}_{X}$ independent of $\bar \bX$. The claim \eqref{eq:zn_concentrate} follows once we establish that each term in the display above is $O(n)$. Without loss of generality, we work with the first term. The bound for the second term is similar, and thus omitted. 

The proof is by interpolation. Fix $1\leq i \leq n$. For $\upsilon \in [0,1]$, set 
\begin{align}
    H(\upsilon) = \log \int_{[-1,1]^{n}} \exp\Big( \mathbf{y}^{\top} \mathbf{A}_n \mathbf{y} + \tau_0 \sum_{j\neq i} y_j \bar T_j + \tau_0 y_i ((1-\upsilon) \bar{T}_i + \upsilon \bar T_i') + \mathbf{y}^{\top} \bar \bX\mathbf{\theta_0} \Big) \prod_{j=1}^{n} \mathrm{d} \mu(y_j). \nonumber   
\end{align}

This implies, 
\begin{align}
    \log Z_n (\bar \bT,\bar \bX) - \log Z_n(\bar \bT^{(i)}, \bar \bX) = \int_{0}^{1} \frac{\partial}{\partial \upsilon} H(\upsilon) \mathrm{d}\upsilon. \nonumber   
\end{align}
By direct computation, we obtain that for any $\upsilon \in [0,1]$, $|\frac{\partial}{\partial \upsilon} H(\upsilon)| \leq \tau_0$. In turn, this directly implies 
\begin{align}
    \sum_{i=1}^{n} \mathbb{E}_{\bar\bT, \bar \bX}(\log Z_n (\bar \bT, \bar \bX) - \log Z_n(\bar \bT^{(i)} , \bar \bX))^2 \leq n \tau_0^2. 
\end{align}
This yields the conclusion for Algorithm \ref{algo:de_method}.
The proof for Part (ii) follows along similar lines---we only highlight the differences here. The proof of Theorem~\ref{sec:proof_amp} establishes that $\frac{1}{n} \mathbb{E}[\| \langle \by \rangle - m^{[M]} \|^2] \to 0$ as $n \to \infty$, where $\mathbb{E}[\cdot]$ denotes the expectation with respect to the joint distribution $(\bA_n, \bar \bT, \bar \bX)$. Thus it suffices to prove, for any $\varepsilon >0$, 
\begin{align*}
    \mathbb{P}\Big[  \Big| \frac{2}{n}\sum_{i=1}^{n} \bar T_i \langle y_i \rangle - \mathrm{DE} \Big| > \varepsilon  \Big] \to 0 
\end{align*}
as $n \to \infty$. The rest of the proof follows the same argument as Part (i) and uses that $\|\text{Cov}(\by)\|= O(1)$ uniformly over $\bt,\bX$ for $\bA_n= \beta \mathbf{G}_n$ by \cite[Theorem 1]{bauerschmidt2019very}. This completes the proof.




\end{proof}

\begin{proof}[Proof of Lemma \ref{lem:MC_CI}]
    $\mu$ satisfies the tilted LSI property; this implies,  by Proposition \ref{prop:lsi_concentration} that for all $j\in [k]$, $  \hat{\mathrm{DE}}^{(j)}_M - \mathrm{DE}  = o_{P}(1)$ as $M,n \rightarrow \infty$. This implies that a fattened version of the confidence interval contains the true treatment effect with high-probability. The proof for $\text{IE}$ follows by the same argument.  
\end{proof}

\bibliography{ref}

\appendix

\section{Results on SK model on soft spins with random external fields} \label{appendix: sk_model}

We collect here some results on the Sherrington-Kirkpatrick (SK) model with soft spins and random external fields. Formally, we fix a probability measure $\mu$ on $[-1,1]$ and consider the probability distribution 
\begin{align}
    \frac{\mathrm{d}\nu}{\mathrm{d}\mu^{\otimes n}}(\mathbf{y}) = \frac{1}{Z} \exp\Big( \frac{\beta}{\sqrt{n}}\sum_{i<j\leq n} g_{ij} y_i y_j + \sum_{i=1}^{n} h_i y_i \Big), \label{eq:SK_model} 
\end{align}
where $\{g_{ij}: i <j \} \sim \mathcal{N}(0,1)$ are i.i.d., $h_i$ are random external fields independent of $\{g_{ij}: i <j \}$ and $Z$ denotes the normalizing constant. Throughout, we use $h$ to denote the distribution of the external fields $h_i$. The traditional SK model is supported on $\{\pm 1\}^n$; in contrast, the spins $y_i \in [-1,1]$ in the model above. This setting is typically referred to as the ``soft spin" setting in spin-glass literature. In this appendix, we will extend some well-known results for the SK model at high-temperature to the SK model with soft spins and random external fields. We refer the interested reader to \cite[Chapter 1]{talagrand2010mean} for the original results on the SK model at high-temperature. 

To this end, we first introduce the following fixed-point system, which will be critical for the subsequent discussion. Recall the log-moment generating function $\alpha(\cdot)$ from Definition~\ref{def:exp_tilt}. 

\begin{defn}[Fixed points]
\label{defn:fixed_points_sk}
Let $(q, \sigma^2) \in [0,1] \times [0, \infty)$ be defined via the following fixed point system: 
\begin{align}
    \mathbb{E}[\alpha''(h + \beta \sqrt{q}Z, \beta^2 \sigma^2)] = \sigma^2,  \nonumber \\
    \mathbb{E}[(\alpha'(h+ \beta \sqrt{q} Z, \beta^2 \sigma^2))^2] = q.  \label{eq:fixed_pt}
\end{align}
The random variable $Z \sim \mathcal{N}(0,1)$ in the display above.    
\end{defn}

\begin{lemma}\label{lem:fixed_point_unique}
    There exists $\beta_0>0$ such that for $\beta < \beta_0$, the fixed point system \eqref{eq:fixed_pt} has a unique solution $(\sigma_*^2, q_*)$. 
\end{lemma}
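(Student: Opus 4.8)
The plan is to set up the fixed-point system \eqref{eq:fixed_pt} as a self-map of a compact convex set and show it becomes a contraction when $\beta$ is small. First I would introduce the map $\Phi_\beta(q,\sigma^2) = (\Phi_\beta^{(1)}(q,\sigma^2), \Phi_\beta^{(2)}(q,\sigma^2))$ with
\begin{align*}
\Phi_\beta^{(1)}(q,\sigma^2) &= \mathbb{E}[(\alpha'(h+\beta\sqrt q\,Z, \beta^2\sigma^2))^2], \\
\Phi_\beta^{(2)}(q,\sigma^2) &= \mathbb{E}[\alpha''(h+\beta\sqrt q\,Z, \beta^2\sigma^2)],
\end{align*}
where $Z\sim\mathcal N(0,1)$ is independent of $h$. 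Since $\mu$ is supported on $[-1,1]$, for every $\lambda=(\lambda_1,\lambda_2)$ the tilted measure $\mu_\lambda$ is supported on $[-1,1]$, so $|\alpha'(\lambda_1,\lambda_2)|\le 1$ and $0\le \alpha''(\lambda_1,\lambda_2)\le 1$. Hence $\Phi_\beta$ maps the compact convex set $K:=[0,1]\times[0,1]$ into itself. Existence of a fixed point is then immediate from Brouwer's theorem once I check continuity of $\Phi_\beta$ on $K$, which follows from dominated convergence together with the smoothness of $\alpha$ in both arguments (Definition \ref{def:exp_tilt}) and the fact that $\beta\sqrt q\,Z$ has Gaussian tails.

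For uniqueness, the key step is a quantitative Lipschitz bound: I claim there is an absolute constant $L$ (depending only on bounds for derivatives of $\alpha$ up to third order on compact sets, which are finite because $\alpha$ is $C^\infty$ and all the relevant tilts live on $[-1,1]$) such that for $\beta\le 1$,
\begin{align*}
\|\Phi_\beta(q,\sigma^2) - \Phi_\beta(q',\sigma'^2)\| \le L\beta\,\big(\|(q,\sigma^2)-(q',\sigma'^2)\|\big).
\end{align*}
To get this I would differentiate under the expectation. For the $q$-dependence, $\partial_q \Phi_\beta^{(1)}$ involves $\mathbb{E}[2\alpha'\cdot\alpha''\cdot \frac{\beta}{2\sqrt q}Z]$; the apparent $1/\sqrt q$ singularity is removed by Gaussian integration by parts (Stein's identity), which trades the factor $Z$ for a derivative in the external-field argument and leaves a clean bound $\lesssim \beta^2$ — this is the standard trick for such SK fixed-point maps. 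For the $\sigma^2$-dependence, $\partial_{\sigma^2}$ brings down an explicit factor $\beta^2$ from the second argument $\beta^2\sigma^2$ of $\alpha$, so those partials are $O(\beta^2)$ as well. Collecting terms, all partial derivatives of $\Phi_\beta$ on $K$ are $O(\beta)$ (in fact $O(\beta^2)$ for several of them), giving the displayed Lipschitz constant. Choosing $\beta_0$ so that $L\beta_0 < 1$, the Banach fixed-point theorem yields a unique solution $(q_*,\sigma_*^2)$ for all $\beta<\beta_0$, and one checks $\sigma_*^2\le 1$, $q_*\le 1$ as required; if one wants $\sigma_*^2$ in $[0,\infty)$ rather than $[0,1]$ the bound $\alpha''\le 1$ shows the fixed point automatically lands in $[0,1]$, so no generality is lost.

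The main obstacle I anticipate is handling the $\partial_q$ derivative rigorously: one must justify differentiating inside $\mathbb{E}$, control the $1/\sqrt q$ factor near $q=0$ via integration by parts, and verify that the resulting expressions are uniformly bounded in $h$ (which is fine since $h=\tau_0 T + \mathbf X_1^\top\theta_0$ or $h=-\tau_0+\mathbf X_1^\top\theta_0$ is bounded by Assumption \ref{assn:parameter_space} and the covariate support). A clean way to sidestep the singularity entirely is to reparametrize by $p:=\sqrt q$ and work with $\Psi_\beta(p,\sigma^2)$; then all derivatives are manifestly bounded and the contraction argument goes through verbatim, with $\beta_0$ adjusted accordingly. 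Once uniqueness of $(p_*,\sigma_*^2)$ is established, $q_*=p_*^2$ is the unique solution of the original system. This also immediately gives Lemma \ref{lemma:uniqueness} by specializing $h$ to the two relevant external-field distributions.
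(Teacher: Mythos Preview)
Your proposal is correct and follows essentially the same route as the paper: define the self-map on $[0,1]^2$, bound its partial derivatives by $O(\beta^2)$ using Gaussian integration by parts to remove the $1/\sqrt{q}$ factor in the $q$-derivative, and conclude via the Banach contraction principle. The only cosmetic differences are that the paper skips the separate Brouwer step (the contraction already gives existence) and does not introduce the $p=\sqrt{q}$ reparametrization you mention as a fallback.
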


\begin{proof}
   We will prove the following stronger claim: Define the function $$\psi:(\sigma^2,q) \mapsto (\mathbb{E}[\alpha''(h + \beta \sqrt{q}Z, \beta^2 \sigma^2)],\mathbb{E}[(\alpha'(h+ \beta \sqrt{q} Z, \beta^2 \sigma^2))^2].$$ There exists $\beta_0>0$ such that for $\beta < \beta_0$, there exists $C<1$ such that for any two pairs $(\sigma^2_0,q_0)$ and $(\sigma^2_1,q_1)$, we have 
\begin{equation}\label{eq:psi_lipschitz}
        \|\psi(\sigma^2_0,q_0)-\psi(\sigma^2_1,q_1)\|^2 \le C \|(\sigma^2_0,q_0)- (\sigma^2_1,q_1)\|^2.
    \end{equation}
The proof of existence and  uniqueness of the fixed point follows immediately from \eqref{eq:psi_lipschitz}. To show \eqref{eq:psi_lipschitz}, we introduce
\begin{align*}
    &\sigma^2(t)= t \sigma^2_1+ (1-t) \sigma^2_0; \quad q(t)= t q_1+ (1-t) q_0; \\
    &H(t):= \psi(\sigma^2(t),q(t))=:(\psi_1(\sigma^2(t),q(t)),\psi_2(\sigma^2(t),q(t))). 
\end{align*}
Note that $\sigma^2(i)= \sigma^2_i$, $q(i)=q_i$ for $i=0,1$. Since
\begin{equation*}
\|H(1)-H(0)\|^2 \le (\|\|\nabla \psi_1\|^2\|_{\infty} + \|\|\nabla \psi_2\|^2\|_{\infty})\|(\sigma^2_0,q_0)- (\sigma^2_1,q_1)\|^2,
\end{equation*}
by definition, we only need to provide upper bounds on $\|\|\nabla \psi_i\|^2\|_{\infty}$, $i=1,2$. To this end, note that
\begin{align*}
    \frac{\partial}{\partial q} \psi_1(\sigma^2,q) &= \frac{\beta}{2 \sqrt{q}} \ee  \Bigg[ Z  \alpha^{(3)}(h+\beta \sqrt{q}Z, \beta^2 \sigma^2) \Bigg]\\
    &= \frac{\beta^2}{2} \ee \Bigg[\alpha ^{(4)}(h+\beta \sqrt{q}Z, \beta^2 \sigma^2)\Bigg],
\end{align*}
where the second equality is by Gaussian integration by parts. Also,  
\begin{align*}
    \frac{\partial}{\partial \sigma^2} \psi_1(\sigma^2,q)= \beta^2 \ee \left[  \frac{\partial}{\partial y} \alpha''(x,y)\big|_{h+\beta \sqrt{q}Z, \beta^2 \sigma^2}\right]
\end{align*}
Since by the definition of exponential family, all the partial derivatives of $\alpha$ are bounded, we have by the above two displays that $\|\|\nabla \psi_1\|^2\|_{\infty} \lesssim \beta^2$. Similarly, one obtains that, 
\begin{align*}
    &\frac{\partial}{\partial q} \psi_2(\sigma^2,q) = \frac{\beta}{\sqrt{q}} \ee  \Bigg[ Z  \alpha'(h+\beta \sqrt{q}Z, \beta^2 \sigma^2)\alpha^{(2)}(h+\beta \sqrt{q}Z, \beta^2 \sigma^2) \Bigg]\\
    \implies &\left|\frac{\partial}{\partial q} \psi_2(\sigma^2,q)\right| \le \beta^2 \ee \Bigg[\|(\alpha^{(2)})^2\|_{\infty}+ \|\alpha' \alpha^{(3)}\|_\infty\Bigg]
\end{align*}
using Gaussian integration by parts. Also,
\begin{align*}
    \left|\frac{\partial}{\partial \sigma^2} \psi_2(\sigma^2,q)\right| \le 2 \beta^2 \ee \Bigg[ \|\alpha'\|_{\infty} \|\frac{\partial}{\partial y} \alpha'(x,y) \|_\infty \Bigg],
\end{align*}
implying again $\|\|\nabla \psi_2\|^2_2\|_{\infty} \lesssim \beta^2$. Hence there exists $\beta_0$ such that for $\beta< \beta_0$, one obtains \eqref{eq:psi_lipschitz}, completing the proof.
\end{proof}

The following result shows stability of the fixed points \eqref{eq:fixed_pt} w.r.to the parameter $h$.

\begin{lemma}\label{lem:fixed_point_h_stability}
    Let $(\sigma^2_i, q_i)$, $i=0,1$ be the unique solutions of the fixed point equations \eqref{eq:fixed_pt} for $h=h_0$ and $h_1$ respectively, where $|h_0-h_1| <\varepsilon$ for some deterministic $\varepsilon>0$. Then there exists $\beta_0$ such that for any $\beta < \beta_0$,
    \begin{equation*}
        \|(\sigma^2_1, q_1)- (\sigma^2_0, q_0)\|^2 \lesssim \varepsilon^2.
    \end{equation*}
\end{lemma}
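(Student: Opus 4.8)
The plan is to treat this as a routine perturbation-of-fixed-point argument, reusing the contraction estimate already obtained in the proof of Lemma~\ref{lem:fixed_point_unique}. First I would make the dependence on the external field explicit, writing
$$\psi_h(\sigma^2,q) := \Big(\mathbb{E}\big[\alpha''(h+\beta\sqrt{q}\,Z,\beta^2\sigma^2)\big],\; \mathbb{E}\big[(\alpha'(h+\beta\sqrt{q}\,Z,\beta^2\sigma^2))^2\big]\Big),$$
where the expectation is over $Z\sim\mathcal N(0,1)$ together with $h$, and we couple $h_0,h_1$ on a common space so that $|h_0-h_1|<\varepsilon$ almost surely (if $h_0,h_1$ are deterministic this is automatic). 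The key observation is that the bound \eqref{eq:psi_lipschitz} derived in Lemma~\ref{lem:fixed_point_unique} holds with a contraction constant $C<1$ that does \emph{not} depend on the law of $h$: the estimates $\|\,\|\nabla\psi_i\|^2\|_\infty\lesssim\beta^2$ there only invoked the uniform boundedness of the derivatives of $\alpha$ (a consequence of $\mathrm{Supp}(\mu)\subseteq[-1,1]$). Hence, for $\beta<\beta_0$ with the \emph{same} $\beta_0$ as in Lemma~\ref{lem:fixed_point_unique}, the map $\psi_h$ is a $\sqrt{C}$-contraction in the Euclidean norm on pairs $(\sigma^2,q)$, uniformly in $h$.

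Next I would use that $(\sigma_i^2,q_i)$ is the fixed point of $\psi_{h_i}$ and split
$$(\sigma_1^2,q_1)-(\sigma_0^2,q_0) = \big[\psi_{h_1}(\sigma_1^2,q_1)-\psi_{h_1}(\sigma_0^2,q_0)\big] + \big[\psi_{h_1}(\sigma_0^2,q_0)-\psi_{h_0}(\sigma_0^2,q_0)\big].$$
The first bracket has norm at most $\sqrt{C}\,\|(\sigma_1^2,q_1)-(\sigma_0^2,q_0)\|$ by the uniform contraction. For the second bracket, I would establish Lipschitz continuity of $\psi$ in its $h$-slot componentwise: by the mean value theorem and boundedness of $\alpha^{(3)}$,
$$\Big|\mathbb{E}\big[\alpha''(h_1+\beta\sqrt{q_0}Z,\beta^2\sigma_0^2)-\alpha''(h_0+\beta\sqrt{q_0}Z,\beta^2\sigma_0^2)\big]\Big| \le \|\alpha^{(3)}\|_\infty\,\mathbb{E}|h_1-h_0| \le \|\alpha^{(3)}\|_\infty\,\varepsilon,$$
and, using $|\alpha'|\le 1$ together with $|\alpha'(a)-\alpha'(b)|\le\|\alpha''\|_\infty|a-b|$,
$$\Big|\mathbb{E}\big[(\alpha'(h_1+\beta\sqrt{q_0}Z,\beta^2\sigma_0^2))^2-(\alpha'(h_0+\beta\sqrt{q_0}Z,\beta^2\sigma_0^2))^2\big]\Big| \le 2\|\alpha'\|_\infty\|\alpha''\|_\infty\,\varepsilon \le 2\varepsilon.$$
Thus $\|\psi_{h_1}(\sigma_0^2,q_0)-\psi_{h_0}(\sigma_0^2,q_0)\|\le c'\varepsilon$ for an absolute constant $c'$. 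Combining the two brackets gives $(1-\sqrt{C})\|(\sigma_1^2,q_1)-(\sigma_0^2,q_0)\|\le c'\varepsilon$, hence $\|(\sigma_1^2,q_1)-(\sigma_0^2,q_0)\|\le \tfrac{c'}{1-\sqrt{C}}\,\varepsilon$, and squaring yields $\|(\sigma_1^2,q_1)-(\sigma_0^2,q_0)\|^2\lesssim\varepsilon^2$.

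I do not anticipate a serious obstacle here; the argument is essentially a one-line Banach-fixed-point perturbation. The only two points needing a little care are (a) verifying that the contraction constant in \eqref{eq:psi_lipschitz} is genuinely uniform over the law of the external field (it is, since the bounds there depend only on sup-norms of derivatives of $\alpha$), and (b) the apparent singularity of $\sqrt{q}$ near $q=0$ — but this is harmless because in the $h$-Lipschitz step we only differentiate in the \emph{first} argument of $\alpha$, so no factor $1/\sqrt{q}$ ever appears, and the $\sqrt{q}$ merely shifts the argument of a uniformly bounded function.
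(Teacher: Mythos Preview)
Your argument is correct and uses exactly the same two ingredients as the paper's proof: the uniform-in-$h$ contraction of $\psi$ in the $(\sigma^2,q)$ variables from \eqref{eq:psi_lipschitz}, together with a Lipschitz bound for $\psi$ in its $h$-slot coming from the boundedness of $\alpha',\alpha'',\alpha^{(3)}$. The organization differs slightly. The paper does not work with the fixed points directly; instead it runs the two Picard iterations $(\sigma^2_{i,k+1},q_{i,k+1})=\psi^{i}(\sigma^2_{i,k},q_{i,k})$ from a common start $(1,0)$, derives the recursion $\Delta_{k+1}\le C\varepsilon^2+C\beta^2\Delta_k$ for the squared difference of the $k$th iterates, sums the geometric series, and passes to the limit $k\to\infty$. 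Your one-step Banach perturbation identity
\[
(\sigma_1^2,q_1)-(\sigma_0^2,q_0)=\big[\psi_{h_1}(\sigma_1^2,q_1)-\psi_{h_1}(\sigma_0^2,q_0)\big]+\big[\psi_{h_1}(\sigma_0^2,q_0)-\psi_{h_0}(\sigma_0^2,q_0)\big]
\]
is the cleaner and more standard route: it bypasses the iterates entirely and yields the bound in a single rearrangement. The paper's iterate-tracking buys nothing extra here, though it makes the parallel with the existence proof of Lemma~\ref{lem:fixed_point_unique} more visible. Your checks in (a) and (b) are to the point and correctly dispose of the only subtleties.
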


\begin{proof}
Define $(\sigma^2_{i,0}, q_{i,0})=(1,0)$ for $i=0,1$ and $\psi^{i}=(\psi^{i}_1,\psi^{i}_2)$, where $$\psi^{i}:(\sigma^2,q) \mapsto (\mathbb{E}[\alpha''(h_i + \beta \sqrt{q}Z, \beta^2 \sigma^2)],\mathbb{E}[(\alpha'(h_i+ \beta \sqrt{q} Z, \beta^2 \sigma^2))^2], \quad i=0,1$$ 

Define the iterates $(\sigma^2_{i,k+1}, q_{i,k+1})= \psi^i (\sigma^2_{i,k}, q_{i,k})$, $i=0,1$ which converges to the fixed points $(\sigma^2_0, q_0)$, $(\sigma^2_1, q_1)$ respectively. Our objective is to bound $\Delta_{k+1}:=\|(\sigma^2_{1,k+1}, q_{1,k+1}) - (\sigma^2_{0,k+1}, q_{0,k+1})\|^2$ recursively. To this end, by direct differentiation, we  note 

\begin{align*}
     |\psi^1_{1}(\sigma^2_{1,k}, q_{1,k})- \psi^0_{1}(\sigma^2_{0,k}, q_{0,k})|^2 &\lesssim \ee (h_0-h_1)^2 \\
    &+ \beta^2 \Bigg((\sigma^2_{1,k}- \sigma^2_{0,k})^2+ (q_{1,k}- q_{0,k})^2\bigg)\\
    &\leq \varepsilon^2 + \beta^2 \Delta_k,
\end{align*}
and a similar bound also  applies for $\psi^1_{2}- \psi^{0}_2$. Therefore we obtain, for some $C>0$,

\begin{align*}
    \Delta_{k+1} & \le C \varepsilon^2+ C \beta^2 \Delta_{k} \le C \varepsilon^2+ C \beta^2 ( C \varepsilon^2+ C \beta^2 \Delta_{k-1}) \leq  \ldots \\
    & \lesssim  \frac{\varepsilon^2}{1-\beta^2} \lesssim \varepsilon^2,
\end{align*}
if $\beta<1$. Since $\Delta_{k+1}$ converges to $\|(\sigma^2_1, q_1)- (\sigma^2_0, q_0)\|^2 $ as $k \rightarrow \infty$, this completes the proof.
\end{proof}

Our subsequent discussion will focus exclusively on a sufficiently high-temperature regime, and we will assume the existence and uniqueness of the parameters $(\sigma_*^2, q_*)$. For any sample $\mathbf{y} \sim \nu$, we use $R = \frac{1}{n} \sum_{i=1}^{n} y_i^2$ to denote the ``self-overlap". Similarly, for $y^{1}, y^{2}$ i.i.d. samples from $\nu$, we set $R_{1,2} = \frac{1}{n} \sum_{i=1}^{n} y^1_i y^2_i$ to denote the ``overlap". Our first lemma establishes the concentration of the overlap parameters at high-temperature. For any function $f: ([-1,1]^n)^m \to \mathbb{R}$, we use $\nu(f) := \int f(\mathbf{y}^1, \cdots, \mathbf{y}^m) \mathrm{d}\nu(\mathbf{y}^1) \cdots \mathrm{d}\nu  (\mathbf{y}^m)$. 

\begin{thm}
    \label{thm:overlap_conc}
    There exists $\beta_0>0$ such that for $\beta<\beta_0$, 
    \begin{align*}
        \mathbb{E}[\nu((R-(\sigma_*^2 + q_*))^2)] \lesssim \frac{1}{n}, \,\,\,\, \mathbb{E}[\nu((R_{1,2} - q_*)^2)] \lesssim \frac{1}{n}.  
    \end{align*}
    In the display above, $\mathbb{E}[\cdot]$ denotes the expectation with respect to the gaussian variables $\{g_{ij}: i <j\}$ and the random external fields $\{h_i: 1\leq i \leq n\}$. 
\end{thm}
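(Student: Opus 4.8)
The plan is to extend Talagrand's high-temperature analysis of the Sherrington--Kirkpatrick model (\cite[Chapter~1]{talagrand2010mean}) to the soft-spin model $\nu$ of \eqref{eq:SK_model} with i.i.d.\ random external fields. The new feature relative to the Ising case is that the self-overlap $R=\frac1n\sum_i y_i^2$ is no longer identically $1$ but genuinely fluctuates, so $\mathbb E[\nu((R-(\sigma_*^2+q_*))^2)]$ and $\mathbb E[\nu((R_{1,2}-q_*)^2)]$ must be estimated \emph{together}: the two quantities are coupled, and I would carry both through a single two-dimensional recursion. A simplifying feature, on the other hand, is that since $\mu$ is supported on $[-1,1]$, one has the deterministic bounds $R\in[0,1]$ and $R_{1,2}\in[-1,1]$, so all moments of the overlap deviations are uniformly bounded and the usual "need-all-moments" complication of the Ising proof disappears.

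I would run a Guerra-type interpolation. For $t\in[0,1]$ let $\nu_t$ be the Gibbs measure with Hamiltonian
\[
H_t(\mathbf y)=\sqrt t\,\frac{\beta}{\sqrt n}\sum_{i<j}g_{ij}y_iy_j+\sqrt{1-t}\,\beta\sqrt{q_*}\sum_i z_iy_i+\frac{(1-t)\beta^2\sigma_*^2}{2}\sum_i y_i^2+\sum_i h_iy_i,
\]
with $\{z_i\}$ fresh i.i.d.\ standard Gaussians; at $t=1$ this is $\nu$, and at $t=0$ the coordinates decouple into the effective single-site tilts $\mu_{(h_i+\beta\sqrt{q_*}z_i,\;\beta^2\sigma_*^2)}$ of Definition~\ref{def:exp_tilt}, whose second moment and variance average, over $(h_i,z_i)$, to $q_*$ and $\sigma_*^2$ precisely by the fixed-point equations \eqref{eq:fixed_pt}. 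Differentiating $\mathbb E\nu_t(f)$ in $t$ and integrating by parts in $\{g_{ij}\}$ and $\{z_i\}$ produces, for a test function $f$ of the replicas, a sum of terms $\beta^2\,\mathbb E\nu_t(f(R_{\ell,\ell'}-q_*))$ and $\beta^2\sigma_*^2\,\mathbb E\nu_t(f(R_\ell-\sigma_*^2-q_*))$ with the usual replica signs (this is the interpolation form of the cavity expansion used in \cite{talagrand2010mean}); the fixed-point relations are exactly what cancel the zeroth-order drift, so every surviving term carries a genuine overlap deviation.

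Specializing $f=(R_{1,2}-q_*)^2$ and $f=(R-\sigma_*^2-q_*)^2$, and bounding the cubic cross-terms crudely using $|R_{\ell,\ell'}-q_*|\le C$ and $|R_\ell-\sigma_*^2-q_*|\le C$, the interpolation identity becomes, after Gronwall, coupled inequalities of the form
\[
A_{1,2}\le \tfrac{C}{n}+C\beta^2\big(A_{1,2}+A_{\mathrm{self}}\big),\qquad A_{\mathrm{self}}\le \tfrac{C}{n}+C\beta^2\big(A_{1,2}+A_{\mathrm{self}}\big),
\]
where $A_{1,2}=\mathbb E[\nu((R_{1,2}-q_*)^2)]$, $A_{\mathrm{self}}=\mathbb E[\nu((R-\sigma_*^2-q_*)^2)]$, and the $\tfrac Cn$ comes from the decoupled endpoint $\nu_0$, under which $R_{1,2}$ and $R$ are averages of $n$ (across $i$) independent bounded quantities with means $q_*$ and $\sigma_*^2+q_*$, so their second moments around those means are $O(1/n)$. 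Taking $\beta$ small enough that $C\beta^2<\tfrac12$ — the same contraction regime as in Lemma~\ref{lem:fixed_point_unique} — lets me absorb the right-hand overlap terms into the left and conclude $A_{1,2}\lesssim\tfrac1n$ and $A_{\mathrm{self}}\lesssim\tfrac1n$, which is the theorem.

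I expect the main obstacle to be the bookkeeping that makes the coupled recursion actually contractive: one must check that $R$-fluctuations enter the $R_{1,2}$-equation (and vice versa) only through $O(\beta^2)$ terms — a sloppy grouping of the integration-by-parts would leave an $O(1)$ coupling and nothing would close — and one must handle the loss of exact exchangeability caused by the random external fields, phrasing every cavity/interpolation estimate for the $\{h_i\}$-averaged quantities $\mathbb E[\nu(\cdot)]$ rather than at frozen disorder. This extension of the soft-spin computations of \cite[Chapter~1]{talagrand2010mean} to the case with i.i.d.\ external fields is where the real work lies; a short preliminary run of the same interpolation with $f\equiv1$, combined with Gaussian concentration of $\frac1n\log Z$, gives the weaker statement that the overlaps are $o(1)$-close to $(q_*,\sigma_*^2+q_*)$ and suffices to get the argument started.
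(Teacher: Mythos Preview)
Your high-level picture (track $A_{1,2}$ and $A_{\mathrm{self}}$ jointly and close a coupled contraction for small $\beta$) is exactly right, and matches the paper. But the interpolation you wrote down does not produce the derivative formula you claim. With your \emph{global} Hamiltonian $H_t$ (all couplings $g_{ij}$ multiplied by $\sqrt t$), Gaussian integration by parts over $\binom{n}{2}$ variables yields
\[
\frac{d}{dt}\,\mathbb E\nu_t(f)\;=\;\frac{n\beta^2}{4}\sum_{\ell,\ell'}\pm\,\mathbb E\big[\nu_t\big(f\,(R_{\ell,\ell'}-q(\ell,\ell'))^{2}\big)\big]\;+\;O(1),
\]
i.e.\ the deviation enters \emph{quadratically} and, crucially, with an extra factor of $n$. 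Bounding one deviation by a constant and Gronwalling therefore gives $A(1)\le e^{Cn\beta^2}A(0)$, which is useless. The formula you quote, $\beta^2\mathbb E\nu_t(f(R_{\ell,\ell'}-q_*))$ with no $n$, is the derivative produced by the \emph{cavity} interpolation, where only the $n$-th spin's coupling is interpolated; that is the scheme Talagrand actually uses in \cite[Chapter~1]{talagrand2010mean}, and it is what the paper uses here.

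Concretely, the paper sets $g_t(\rho)=\sqrt t\,g(\rho)+\sqrt{1-t}\,Y$ with $Y\sim\mathcal N(0,\beta^2 q_*)$ acting only on $y_n$, plus a $(1-t)\tfrac{\beta^2\sigma_*^2}{2}y_n^2$ correction. Then (their \eqref{eq:derivative}) $\tfrac{d}{dt}\mathbb E\nu_t(f)$ is a finite sum of terms $\beta^2\mathbb E[\nu_t(f\,y_n^\ell y_n^{\ell'}(R^-_{\ell,\ell'}-q(\ell,\ell')))]$ with \emph{no} $n$-scaling. The $1/n$ at the endpoint does not come from ``$\nu_0$ is product'' (it is not: only the last spin decouples); rather, by site symmetry $A=\mathbb E\nu(f)$ with $f=(y_n^1y_n^2-q_*)(R_{1,2}-q_*)$, and at $t=0$ the factor $(y_n^1y_n^2-q_*)$ is independent of $R^-_{1,2}$ with mean zero, so $|\mathbb E\nu_0(f)|\lesssim 1/n$. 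Cauchy--Schwarz on the derivative then gives $A\le K/n+L\beta^2\sqrt{A}(\sqrt{A}+\sqrt{B})$ and symmetrically for $B$, which closes. If you switch your interpolation to the single-spin cavity scheme, the rest of your outline (including the joint treatment of $A_{1,2}$ and $A_{\mathrm{self}}$, which is indeed the new ingredient relative to Ising spins) goes through and coincides with the paper's proof.
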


\begin{proof}
    The proof is a straightforward adaptation of \cite[Theorem 1.12.1]{talagrand2010mean}. We sketch the proof, and refer the reader to \cite{talagrand2010mean} for detailed arguments. Let $\mathbf{\rho} = (y_1, \cdots, y_{n-1})$ denote the ``leave-one-out" system which removes the $n^{th}$ spin from the system. Define 
    \begin{align*}
        g(\mathbf{\rho}) = \frac{\beta}{\sqrt{n}} \sum_{i=1}^{n-1} g_{in} y_i, \,\,\,\, g_t(\mathbf{\rho}) = \sqrt{t} g(\mathbf{\rho}) + \sqrt{1-t} Y, 
    \end{align*}
    where $Y \sim \mathcal{N}(0, \beta^2 q_*)$ independent of $\{g_{ij}: i <j\}$ and $\{h_i : 1\leq i \leq n\}$. For $t \in [0,1]$, define the interpolating Hamiltonian 
    \begin{align*}
        H_t(\mathbf{y}) = \frac{\beta}{\sqrt{n}} \sum_{i<j \leq n-1} g_{ij} y_i y_j + y_n g_t(\mathbf{\rho}) + \frac{\beta^2 \sigma_*^2}{2} (1-t) y_n^2. 
    \end{align*}
For $t \in [0,1]$ let $\nu_t(\cdot)$ denote the probability distribution 
\begin{align}
    \frac{\mathrm{d}\nu_t}{\mathrm{d}\mu^{\otimes n}}(\mathbf{y}) = \frac{1}{Z_t} \exp\Big(  H_t(\mathbf{y}) + \sum_{i=1}^{n} h_i y_i\Big). 
\end{align}
Let $\mathbf{y}^{1}, \cdots, \mathbf{y}^{m}$ be i.i.d. replicas from $\nu_t(\cdot)$. For $\ell \neq \ell'$, set 
$R_{\ell, \ell'}^- = \frac{1}{n} \sum_{i=1}^{n-1} y^{\ell}_i y^{\ell'}_i$. Finally, set 
\begin{align}
    q(\ell, \ell') = \begin{cases}
        q_* &\textrm{if}\,\, \ell \neq \ell' , \\
        \sigma_*^2 + q_* &\textrm{o.w.} 
    \end{cases}\nonumber 
\end{align}
Let $f: ([-1,1]^n)^m \to \mathbb{R}$ be a real-valued function of $m$-replicas $\mathbf{y}^1, \cdots, \mathbf{y}^m$. 
Armed with this notation, we note that 
\begin{align}
    \frac{\mathrm{d}}{\mathrm{d}t} \mathbb{E}[\nu_t(f)] =& \frac{\beta^2}{2} \sum_{\ell, \ell' \leq m} \mathbb{E}[\nu_t(f y_n^{\ell} y_n^{\ell'} (R^-_{\ell,\ell'} - q(\ell, \ell') )] \nonumber \\
    &-m\beta^2 \sum_{\ell \leq m} \mathbb{E}[\nu_t(f y^{\ell}_n y^{m+1}_n (R^-_{\ell,m+1} - q(\ell, m+1)))] \nonumber \\
    & - \frac{m \beta^2}{2} \mathbb{E}[\nu_t(f(y_n^{m+1})^2 (R^-_{m+1, m+1} - q(m+1, m+1) )] \nonumber \\
    &+ \frac{m(m+1)}{2} \beta^2 \mathbb{E}[\nu_t(fy_n^{m+1} y_n^{m+2} (R^-_{m+1, m+2} - q(m+1, m+2) )]. \label{eq:derivative} 
\end{align}
The expression for the derivative follows by direct differentiation and a subsequent application of  Gaussian integration by parts. 

We now specialize to the case $m=2$. Note that by Cauchy-Schwarz inequality, 
\begin{align*}
    |\mathbb{E}[\nu_t(fy_n^{\ell} y_n^{\ell'} (R^-_{\ell,\ell'} - q(\ell, \ell'))]| \leq \sqrt{\mathbb{E}[\nu_t(f^2)] \mathbb{E}[\nu_t ((R^-_{\ell,\ell'}- q (\ell,\ell'))^2)]}.  
\end{align*}
Plugging this estimate back into \eqref{eq:derivative}, we have, for some universal constant $L>0$,
\begin{align}
    | \frac{\mathrm{d}}{\mathrm{d}t} \mathbb{E}[\nu_t(f)] | \leq L \beta^2 \sqrt{\mathbb{E}[\nu_t(f^2)]} \Big[ \sqrt{\mathbb{E}[\nu_t((R^-_{1,1} - (\sigma_*^2+q_*))^2)] } + \sqrt{\mathbb{E}[\nu_t((R^-_{1,2} - q_*)^2)]}\Big]. \label{eq:derivative_bound_1}
\end{align}
In addition, using Jensen inequality on \eqref{eq:derivative}, we also have, for some universal constant $L>0$, 
\begin{align}
    |\frac{\mathrm{d}}{\mathrm{d}t} \mathbb{E}[\nu_t(f)] | \leq L \beta^2 \mathbb{E}[\nu_t(|f|)]. \label{eq:derivative_bound_2}
\end{align}

We now turn to the concentration for the overlap and the self-overlap. Define 
\begin{align*}
    A = \mathbb{E}[\nu((R_{1,2}-q_*)^2)],\,\,\, B= \mathbb{E}[\nu((R-(\sigma_*^2 + q_*))^2)].  
\end{align*}
Recall that in our interpolation scheme, $\nu_1 = \nu$. We have, by site symmetry,  
\begin{align*}
    A = \mathbb{E}[\nu(f)], \,\,\, f= (y_n^1 y_n^2 - q_*) (R_{1,2} - q_*). 
\end{align*}
Thus $f^2 \leq 4 (R_{1,2} - q_*)^2$. Crucially, note that 
\begin{align*}
    \mathbb{E}[\nu_0( (y_n^1y_n^2 - q_*) ( R_{1,2}^- - q_*)) ] = 0.  
\end{align*}
In particular, this implies, $|\mathbb{E}[\nu_0(f)]| \lesssim \frac{1}{n}$. Now, we bound the derivative, 
\begin{align}
    \mathbb{E}[\nu(f)] \leq \mathbb{E}[\nu_0(f)] + \sup_{0\leq t \leq 1} | \frac{\mathrm{d}}{\mathrm{d}t} \mathbb{E}[\nu_t(f)] | \nonumber 
\end{align}
which yields the bound 
\begin{align}
    A \leq \frac{K}{n} + L \beta^2 \sqrt{A} (\sqrt{A} + \sqrt{B}) \nonumber 
\end{align}
for some universal constant $K>0$. Swapping the roles of $A$ and $B$ we obtain 
\begin{align}
    B \leq \frac{K}{n} + L \beta^2 \sqrt{B} (\sqrt{A} + \sqrt{B}). \nonumber
\end{align}
Combining, we obtain, 
\begin{align}
    (A+B) \leq \frac{K}{n} + L \beta^2 (A+B). \nonumber 
\end{align}
For $\beta$ small enough, we can invert this inequality to derive the required upper bound. 
\end{proof}

Our next result strengthens Theorem~\ref{thm:overlap_conc} by controlling the higher moments of the overlap and the self-overlap. 

\begin{thm}
\label{thm:overlap_higher_moment}
    There exists $\beta_0>0$ and a universal constant $C_0>0$ such that for all $\beta< \beta_0$ and for all $k \geq 1$, 
    \begin{align}
        \mathbb{E}[\nu((R_{1,2} - q_*)^{2k})] \leq \frac{(C_0 k)^k}{n^k},\,\,\,\, \mathbb{E}[\nu((R- (\sigma^2_* + q_*))^{2k})] \leq \frac{(C_0 k)^k}{n^k}. \nonumber 
    \end{align}
\end{thm}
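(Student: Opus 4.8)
The plan is to run the same interpolation scheme used in the proof of Theorem~\ref{thm:overlap_conc}, but now applied to the test function $f$ that produces the $2k$-th moment, and to set up a recursion in $k$ rather than a simple fixed-point inequality in two scalars. Concretely, by site symmetry, $\mathbb{E}[\nu((R_{1,2}-q_*)^{2k})] = \mathbb{E}[\nu(f)]$ with $f = (y_n^1 y_n^2 - q_*)(R_{1,2}-q_*)^{2k-1}$, and similarly for the self-overlap with $f = ((y_n^1)^2 - (\sigma_*^2+q_*))(R - (\sigma_*^2+q_*))^{2k-1}$. As in the base case, the key structural fact is that at $t=0$ the $n$-th spin decouples and is centered in the appropriate sense, so $|\mathbb{E}[\nu_0(f)]|$ is not merely $O(1)$ but $O(1/n)$ times a lower moment; more precisely one gets $|\mathbb{E}[\nu_0(f)]| \lesssim \tfrac{1}{n}\mathbb{E}[\nu((R_{1,2}-q_*)^{2k-2})]^{1/2}\cdot(\text{const})$, using that the decoupled $n$-th coordinate contributes a variance factor of order $1/n$ and Cauchy--Schwarz on the remaining replica-overlap factor of degree $2k-2$.

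Next I would bound the derivative $\frac{\mathrm d}{\mathrm dt}\mathbb{E}[\nu_t(f)]$ using the same differentiation/Gaussian-integration-by-parts identity \eqref{eq:derivative} (with $m$ chosen appropriately, here $m=2$ or $m=2k$ depending on how one organizes replicas), followed by Cauchy--Schwarz as in \eqref{eq:derivative_bound_1}. The point is that each term on the right of \eqref{eq:derivative} carries one extra overlap-minus-its-limit factor, so after Cauchy--Schwarz one obtains something like
\begin{align*}
\Big|\frac{\mathrm d}{\mathrm dt}\mathbb{E}[\nu_t(f)]\Big| \lesssim k\beta^2 \sqrt{\mathbb{E}[\nu_t(f^2)]}\,\Big(\sqrt{\mathbb{E}[\nu_t((R^-_{1,1}-(\sigma_*^2+q_*))^2)]} + \sqrt{\mathbb{E}[\nu_t((R^-_{1,2}-q_*)^2)]}\Big),
\end{align*}
where the factor $k$ comes from the number of replica pairs generated when $f$ itself involves a high power (the differentiation brings down powers of the overlap times products of spins, and there are $O(k)$ such contractions). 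Since $f^2 \lesssim (R_{1,2}-q_*)^{4k-2}\cdot(\text{bounded})$, and the self-overlap/overlap fluctuation factors can be pushed down to lower moments, this sets up a recursion of Hölder type: writing $m_{2k} := \sup_{t}\mathbb{E}[\nu_t((R_{1,2}-q_*)^{2k})]$ and likewise for the self-overlap, one gets an inequality of the shape $m_{2k} \le \tfrac{Ck}{n}\, m_{2k-2}^{1/2}\cdot m_{2k-1}^{1/2}\cdot(\cdots) + C k\beta^2 m_{2k}^{1/2}(\cdots)$, which after absorbing the $m_{2k}^{1/2}$ term (legitimate for $\beta$ small, uniformly in $k$ because the $k$ is balanced by the $1/n$ gains and by interpolating moments) and applying the moment-interpolation inequality $m_j \le m_{j-1}^{1/2} m_{j+1}^{1/2}$ yields $m_{2k} \le (C_0 k / n)^{k}$ by induction on $k$. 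The base case $k=1$ is exactly Theorem~\ref{thm:overlap_conc}.

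The main obstacle I anticipate is bookkeeping the combinatorial/$k$-dependent constants so that the recursion closes with the clean bound $(C_0 k)^k/n^k$ and with a single $\beta_0$ that works for \emph{all} $k$ simultaneously: one must ensure that the number of terms produced by differentiating a $(2k-1)$-st power of the overlap, together with the loss in each Cauchy--Schwarz step, contributes only a polynomial-in-$k$ factor per step (so that iterating $k$ times gives $k^k$, not worse), and that the ``bad'' self-interaction term $Ck\beta^2 m_{2k}$ really can be absorbed on the left for $\beta$ below a $k$-independent threshold. Handling this carefully requires either (i) an induction where one assumes $m_{2j}\le (C_0 j/n)^j$ for all $j<k$ and verifies the step, being careful that $m_{2k-1}\le m_{2k-2}^{1/2}m_{2k}^{1/2}$ feeds the self-term correctly, or (ii) following the corresponding argument in \cite[Chapter 1]{talagrand2010mean} for the hard-spin SK model and checking that the soft-spin/random-external-field modifications (bounded $\alpha$-derivatives replacing the $\pm1$ structure, and the extra self-overlap parameter) do not spoil the constants. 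I would adopt route (ii), citing Talagrand's moment bounds as the template and spelling out only the modifications, exactly as was done for Theorem~\ref{thm:overlap_conc}.
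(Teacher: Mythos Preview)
Your route (ii) --- follow Talagrand's higher-moment argument and check that the soft-spin and random-external-field modifications go through --- is exactly what the paper does, citing \cite[Proposition 1.6.7]{talagrand2010mean}. So at the level of overall strategy you are on target.

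Your concrete sketch under route (i), however, would not close as written. The issue is the derivative step: bounding $\tfrac{\mathrm{d}}{\mathrm{d}t}\mathbb{E}[\nu_t(f)]$ by Cauchy--Schwarz as in \eqref{eq:derivative_bound_1} produces $\sqrt{\mathbb{E}[\nu_t(f^2)]}$, and since $f^2 \lesssim (R_{1,2}-q_*)^{4k-2}$ this feeds a \emph{higher}-order moment into the bound for the $2k$-th moment, so the induction runs the wrong way. The paper instead applies H\"older with exponents $\tau_1 = (2k+2)/(2k+1)$ and $\tau_2 = 2k+2$, giving
\[
\Big|\tfrac{\mathrm{d}}{\mathrm{d}t}\mathbb{E}[\nu_t(f)]\Big| \leq L\beta^2\big(\mathbb{E}[\nu(A_m^{2k+2})]\big)^{\frac{2k+1}{2k+2}}\Big[\big(\mathbb{E}[\nu((R^-_{1,2}-q_*)^{2k+2})]\big)^{\frac{1}{2k+2}} + \cdots\Big],
\]
and then Young's inequality $xy \leq x^p + y^q$ linearizes the right side into $L\beta^2$ times moments of the \emph{same} order $2k+2$. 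Crucially $L$ is $k$-independent: since $f$ is a function of $m=2$ replicas, there are only $O(1)$ terms in \eqref{eq:derivative}, not $O(k)$ as you suggest, and this is precisely what delivers a $k$-independent $\beta_0$. The factor $k$ in $(C_0 k)^k$ enters instead through the $\nu_0$ term, via $|a^{2k+1}-b^{2k+1}| \lesssim k|a-b|(|a|^{2k}+|b|^{2k})$ with $|a-b|\leq 2/n$. Finally, to make the $R^-$-moments on the right match the left side, the paper introduces the auxiliary family $A_m = \tfrac{1}{n}\sum_{i=m}^n(y_i^1 y_i^2 - q_*)$ (and $B_m$ for the self-overlap) and proves the bound uniformly over $m$, so that $R^-_{1,2}-q_*$ is identified with $A_2$ by site symmetry; this extra index is absent from your sketch and is what makes the recursion close.
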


\begin{proof}
    We will build on the interpolation ideas from the proof of Theorem \ref{thm:overlap_conc}, and thus re-use the notation introduced therein. We proceed by induction, following the proof of \cite[Proposition 1.6.7]{talagrand2010mean}. To this end, define 
    \begin{align}
        A_m = \frac{1}{n} \sum_{i=m}^{n} (y_i^1 y_i^2 - q_*), \,\,\,\, B_m = \frac{1}{n} \sum_{i=m}^{n} \Big( (y_i)^2 - (\sigma_*^2 + q_*) \Big). \nonumber 
    \end{align}
    Note that $A_1 = R_{1,2}$ and $B_1= R$.

As induction hypothesis, we assume that for all $m \leq n$, 
\begin{align}
    \mathbb{E}[\nu(A_m^{2k})] + \mathbb{E}[\nu(B_m^{2k})] \leq  \frac{(C_0 k)^k}{n^k}. \nonumber 
\end{align}
The base case $k=1$ follows from the proof of Theorem \ref{thm:overlap_conc}, once we observe that the same proof applies for all $1 \leq m \leq n$. 

We next sketch the induction step. Note that for $m=n$, $\max\{|A_n|, |B_n|\} \leq 2/n$, and thus the induction hypothesis holds trivially for $C_0$ large enough. Thus we assume that $m<n$ in the subsequent argument. First, we observe that 
\begin{align}
    \mathbb{E}[\nu(A_m^{2k+2})] = \frac{1}{n} \sum_{i=m}^{n} \mathbb{E}[\nu((y^1_i y^2_i - q_*) A_m^{2k+1})] = \frac{n-m+1}{n} \mathbb{E}[\nu((y_n^1 y_n^2 - q_*) A_m^{2k+1})], \nonumber 
\end{align}
where the last display uses site symmetry. Setting $f= (y^1_n y^2_n - q_*) A_m^{2k+1}$, we have, $\mathbb{E}[\nu(A_m^{2k+2})] \leq |\mathbb{E}[\nu(f)]|$. To control, $\mathbb{E}[\nu(f)]$, we again use an interpolation argument. To this end, define 
\begin{align*}
    A'_{m} = \frac{1}{n} \sum_{i=m}^{n-1} (y_i^1 y_i^2 - q_*). \nonumber 
\end{align*}
In turn, this implies 
\begin{align}
    \mathbb{E}[\nu_0((y_n^1y_n^2 - q_*) (A'_m)^{2k+1})]=0. \nonumber 
\end{align}
This immediately implies 
\begin{align*}
    |\mathbb{E}[\nu_0(f)]| = |\mathbb{E}[\nu_0(f)] - \mathbb{E}[\nu_0((y_n^1y_n^2 - q_*) (A'_m)^{2k+1})]| \leq 2 \mathbb{E}[\nu_0(|A_m^{2k+1} - (A_m')^{2k+1}|)]. 
\end{align*}
We now observe that $|A_m - A_m'| \leq 2/n $ and thus there exists a universal constant $C>0$ such that 
\begin{align}
    |\mathbb{E}[\nu_0(f)]| \leq \frac{Ck}{n} [\mathbb{E}[\nu_0(A_m^{2k})] + \mathbb{E}[\nu_0((A'_m)^{2k})]]. \nonumber 
\end{align}
For $\beta < \beta_0$, using \eqref{eq:derivative_bound_2} we further have 
\begin{align*}
   |\mathbb{E}[\nu_0(f)]| \leq \frac{Ck}{n} [\mathbb{E}[\nu(A_m^{2k})] + \mathbb{E}[\nu((A'_m)^{2k})]],   
\end{align*}
where $C>0$ denotes a (possibly larger) universal constant. Further, by site symmetry, $\mathbb{E}[\nu((A'_m)^{2k})]= \mathbb{E}[\nu(A_{m+1}^{2k})]$. Thus invoking the induction hypothesis, 
\begin{align}
    |\mathbb{E}[\nu_0(f)]|  \leq \frac{2Ck}{n} . \frac{(C_0 k)^k}{n^k}. \label{eq:derivative_bound_int1}  
\end{align}
Next, we use the interpolation bound $|\mathbb{E}[\nu(f)]| \leq |\mathbb{E}[\nu_0(f)]| + \sup_{0\leq t \leq 1} | \mathbb{E}[\mathrm{d}\nu_t(f)/\mathrm{d}t]|$. To control the derivative, we use \eqref{eq:derivative} for the case of two replicas. Upon using Holder's inequality with $\tau_1 = (2k+2)/(2k+1)$ and $\tau_2 = (2k+2)$, we have, for some universal constant $L>0$, 
\begin{align}
    &|\frac{\mathrm{d}}{\mathrm{d}t} \mathbb{E}[\nu_t(f)]|  \nonumber \\
    &\leq L\beta^2 \Big(\mathbb{E}[\nu_t(A_m^{2k+2})]\Big)^{\frac{2k+1}{2k+2}} \Big[ \Big(\mathbb{E}[\nu_t((R^-_{1,2} - q_*)^{2k+2})]\Big)^{\frac{1}{2k+2}} + \Big(\mathbb{E}[\nu_t((R^- - ( \sigma_*^2+  q_*))^{2k+2})]\Big)^{\frac{1}{2k+2}}  \Big]. \nonumber 
\end{align}
Further, using \eqref{eq:derivative_bound_2}, for $\beta < \beta_0$ we have, for a larger universal constant $L>0$, 
\begin{align}
    &|\frac{\mathrm{d}}{\mathrm{d}t} \mathbb{E}[\nu_t(f)]|  \nonumber \\
    &\leq L\beta^2 \Big(\mathbb{E}[\nu(A_m^{2k+2})]\Big)^{\frac{2k+1}{2k+2}} \Big[ \Big(\mathbb{E}[\nu((R^-_{1,2} - q_*)^{2k+2})]\Big)^{\frac{1}{2k+2}} + \Big(\mathbb{E}[\nu((R^- - ( \sigma_*^2+  q_*))^{2k+2})]\Big)^{\frac{1}{2k+2}}  \Big]. \nonumber
\end{align}
This implies, 
\begin{align*}
    &\mathbb{E}[\nu(A_m^{2k+2})]\leq |\mathbb{E}[\nu_0(f)]| + \sup_{0\leq t \leq 1} |\frac{\mathrm{d}}{\mathrm{dt}} \mathbb{E}[\nu_t(f)]| \leq \frac{2Ck}{n} . \frac{(C_0 k)^k}{n^k} + \nonumber \\
    &L\beta^2 \Big(\mathbb{E}[\nu(A_m^{2k+2})]\Big)^{\frac{2k+1}{2k+2}} \Big[ \Big(\mathbb{E}[\nu((R^-_{1,2} - q_*)^{2k+2})]\Big)^{\frac{1}{2k+2}} + \Big(\mathbb{E}[\nu((R^- - ( \sigma_*^2+  q_*))^{2k+2})]\Big)^{\frac{1}{2k+2}}  \Big].
\end{align*}
Repeating the same argument for $B_m$, we obtain an analogous bound 
\begin{align*}
    &\mathbb{E}[\nu(B_m^{2k+2})] \leq  \frac{2Ck}{n} . \frac{(C_0 k)^k}{n^k} + \nonumber \\
    &L\beta^2 \Big(\mathbb{E}[\nu(B_m^{2k+2})]\Big)^{\frac{2k+1}{2k+2}} \Big[ \Big(\mathbb{E}[\nu((R^-_{1,2} - q_*)^{2k+2})]\Big)^{\frac{1}{2k+2}} + \Big(\mathbb{E}[\nu((R^- - ( \sigma_*^2+  q_*))^{2k+2})]\Big)^{\frac{1}{2k+2}}  \Big].
\end{align*}
Adding these bounds and using $xy \leq x^p + y^q$ for $1/p + 1/q = 1$, we obtain 
\begin{align}
    \Big( \mathbb{E}[\nu(A_m^{2k+2}] + \mathbb{E}[\nu(B_m^{2k+2})]  \Big) &\leq \frac{4Ck}{n}. \frac{(C_0 k)^k}{n^k} + 2L\beta^2 \Big( \mathbb{E}[\nu(A_m^{2k+2}] + \mathbb{E}[\nu(B_m^{2k+2})]  \Big) + \nonumber \\
    & 2L\beta^2 \Big(\mathbb{E}[\nu((R^-_{1,2} - q_*)^{2k+2})] + \mathbb{E}[\nu((R^- - ( \sigma_*^2+  q_*))^{2k+2})]  \Big). \nonumber  
\end{align}
Using site symmetry, $\mathbb{E}[\nu((R^-_{1,2} - q_*)^{2k+2})] = \mathbb{E}[\nu(A_2^{2k+2})]$ and $\mathbb{E}[\nu((R^- - ( \sigma_*^2+  q_*))^{2k+2})] = \mathbb{E}[\nu(B_2^{2k+2})]$. Thus for $m=2$, we obtain the desired induction bound upon choosing $C_0$ sufficiently large and $\beta_0$ small enough. In addition, the induction step follows for other $m$ once we plug in the bound for $m=2$ in the display above. This concludes the proof.  

\end{proof}

Throughout the remainder of the paper, we denote by $\alpha^{(m)}(x,y)$ the $m$-th partial derivative of $\alpha$ w.r.to $x$. We use $\alpha'$ and $\alpha''$ to denote $\alpha^{(m)}$, $m=1,2$ respectively. 

We also need the fact that the average magnetization,  i.e., $\frac{1}{n}\sum_{i=1}^n \langle y_i \rangle$ converges. This has been established for SK-model in \cite[Lemma 4]{chen2021convergence} and can be extended to our setup using Theorem \ref{thm:overlap_conc}. We state the result below and omit the proof for conciseness. 
\begin{lemma}\label{lem:avg_magnetization}
    Under the assumptions of Theorem \ref{thm:overlap_conc}, we have,
    \begin{align*}
        \lim\limits_{n \rightarrow \infty} \frac{1}{n} \sum_{i=1}^{n} \langle y_i \rangle = \ee \alpha'(h+ \beta \sqrt{q_*} Z, \beta^2 \sigma^2_*)
    \end{align*}
    in probability. 
\end{lemma}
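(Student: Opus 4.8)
The plan is to split the claim into a concentration step, showing that $\frac1n\sum_{i=1}^n\langle y_i\rangle$ is close to its expectation, and a cavity computation identifying that expectation with the right-hand side. For the concentration step, regard $(g_{ij})_{i<j}$ and $(h_i)_{i\le n}$ as the source of randomness. Direct differentiation gives $\partial_{g_{ij}}\bigl(\frac1n\sum_k\langle y_k\rangle\bigr) = \frac{\beta}{n^{3/2}}\sum_k\bigl(\langle y_ky_iy_j\rangle - \langle y_k\rangle\langle y_iy_j\rangle\bigr) = O(n^{-3/2})$, so the sum of squared partials over $i<j$ is $O(1/n)$, and the Gaussian Poincaré inequality yields $\mathrm{Var}_g\bigl(\frac1n\sum_k\langle y_k\rangle\bigr) = O(1/n)$. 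Likewise, replacing one field $h_i$ by $h_i'$ changes $\frac1n\sum_k\langle y_k\rangle$ by $O(1/n)$ because $|y_i|\le 1$, so the bounded-differences inequality over the i.i.d.\ fields contributes a further $O(1/n)$ to the variance. Hence $\frac1n\sum_i\langle y_i\rangle = \mathbb{E}\bigl[\frac1n\sum_i\langle y_i\rangle\bigr] + o_P(1)$, and since the sites are exchangeable (the $h_i$ are i.i.d.), $\mathbb{E}\bigl[\frac1n\sum_i\langle y_i\rangle\bigr] = \mathbb{E}[\langle y_n\rangle]$.

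It then remains to show $\mathbb{E}[\langle y_n\rangle] \to \mathbb{E}[\alpha'(h + \beta\sqrt{q_*}\,Z,\beta^2\sigma_*^2)]$. I would reuse the cavity interpolation $\{\nu_t\}_{t\in[0,1]}$ built in the proof of Theorem~\ref{thm:overlap_conc}, for which $\nu_1 = \nu$ and, at $t=0$, the last spin decouples from the leave-one-out system and carries the Hamiltonian contribution $y_n(Y + h_n) + \frac{\beta^2\sigma_*^2}{2}y_n^2$ with $Y\sim\mathcal{N}(0,\beta^2 q_*)$. Reading off the one-dimensional exponential tilt, $\langle y_n\rangle_{\nu_0} = \alpha'(Y + h_n,\beta^2\sigma_*^2)$, so $\mathbb{E}[\langle y_n\rangle_{\nu_0}] = \mathbb{E}[\alpha'(h + \beta\sqrt{q_*}\,Z,\beta^2\sigma_*^2)]$, which is precisely the asserted limit (the fixed-point equations \eqref{eq:fixed_pt} are exactly what make $(\sigma_*^2,q_*)$ the self-consistent choice here). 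To close the gap, apply the derivative identity \eqref{eq:derivative} with $f = y_n^1$ (one replica): each term on the right-hand side is a Gibbs average of a bounded quantity against one of $R^-_{1,1} - (\sigma_*^2 + q_*)$, $R^-_{1,2} - q_*$, so the same Cauchy--Schwarz bookkeeping as in the derivation of \eqref{eq:derivative_bound_1} gives $\bigl|\frac{d}{dt}\mathbb{E}[\nu_t(y_n^1)]\bigr| \lesssim \beta^2\bigl(\sqrt{\mathbb{E}[\nu_t((R^- - (\sigma_*^2+q_*))^2)]} + \sqrt{\mathbb{E}[\nu_t((R^-_{1,2}-q_*)^2)]}\bigr)$. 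The overlap fluctuations are $O(1/n)$ uniformly in $t$: this follows from Theorem~\ref{thm:overlap_conc} at $t=0$ — where $\nu_0$ restricted to the first $n-1$ spins is an SK model of the same type at inverse temperature $\beta\sqrt{(n-1)/n}$, so its overlaps concentrate at $q_*$ up to an $O(1/n)$ shift by the continuity of the fixed point (Lemma~\ref{lem:fixed_point_unique}) — together with the crude Grönwall bound \eqref{eq:derivative_bound_2}, which sandwiches $\mathbb{E}[\nu_t(\cdot)]$ of any fixed nonnegative functional between constant multiples of its value at $t=0$. Integrating over $t\in[0,1]$ gives $|\mathbb{E}[\langle y_n\rangle_{\nu_1}] - \mathbb{E}[\langle y_n\rangle_{\nu_0}]| = O(1/\sqrt n)$, and combining with the concentration step completes the proof of Lemma~\ref{lem:avg_magnetization}.

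The main obstacle is the passage from the overlap concentration stated for $\nu=\nu_1$ in Theorem~\ref{thm:overlap_conc} to its uniform-in-$t$ version along the interpolation path, together with the bookkeeping forced by the soft spins: unlike the Ising SK model treated in \cite{chen2021convergence}, the self-overlap $R$ is not pinned at $1$, so the quadratic self-interaction term $\frac{\beta^2\sigma_*^2}{2}(1-t)y_n^2$ must be tracked through the interpolation and the concentration of $R$ around $\sigma_*^2 + q_*$ (the second assertion of Theorem~\ref{thm:overlap_conc}) has to be used on the same footing as that of $R_{1,2}$. Everything else is a routine adaptation of the high-temperature cavity argument in \cite[Chapter~1]{talagrand2010mean}.
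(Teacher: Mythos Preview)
Your cavity computation for the mean $\mathbb{E}[\langle y_n\rangle]\to\mathbb{E}[\alpha'(h+\beta\sqrt{q_*}\,Z,\beta^2\sigma_*^2)]$ is correct and is the heart of the matter: the decoupling at $t=0$, the derivative identity \eqref{eq:derivative} with $f=y_n^1$, and the Gr\"onwall transfer via \eqref{eq:derivative_bound_2} are all sound. The Gaussian Poincar\'e step for the disorder $(g_{ij})$ is also fine.

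The gap is in your concentration step over the external fields. You claim that replacing $h_i$ by an independent copy changes $\frac1n\sum_k\langle y_k\rangle$ by $O(1/n)$ ``because $|y_i|\le 1$''. This does not follow: perturbing $h_i$ perturbs the entire Gibbs measure, so every $\langle y_k\rangle$ moves, and the total change is
\[
\partial_{h_i}\Bigl(\tfrac1n\sum_k\langle y_k\rangle\Bigr)=\mathrm{Cov}_\nu(y_i,\bar y),\qquad \bar y=\tfrac1n\sum_k y_k,
\]
which is not $O(1/n)$ merely from $|y_i|\le 1$. At high temperature $\mathrm{Cov}_\nu(y_i,\bar y)$ is indeed small, but establishing this requires a correlation-decay input (e.g.\ an operator-norm bound on $\mathrm{Cov}(\by)$), which is not among the hypotheses of Theorem~\ref{thm:overlap_conc} and is not what you invoke.

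The clean fix---and the one consistent with the paper's hint that the proof ``can be extended using Theorem~\ref{thm:overlap_conc}''---is to bypass bounded differences over $h$ and attack the second moment directly. By site exchangeability,
\[
\mathbb{E}\Bigl[\bigl(\tfrac1n\sum_i\langle y_i\rangle\bigr)^2\Bigr]=\tfrac{n-1}{n}\,\mathbb{E}[\langle y_1\rangle\langle y_2\rangle]+O(1/n),
\]
so it suffices to show $\mathbb{E}[\langle y_1\rangle\langle y_2\rangle]\to L^2$. A two-site cavity (remove spins $1$ and $2$; the unnamed lemma immediately after Theorem~\ref{thm:cavity_field_normality} already carries out the one-site version) does this: conditional on the leave-two-out system, the cavity fields $\frac{\beta}{\sqrt n}\sum_{k\ne 1,2}g_{1k}\nu(y_k)_{\{1,2\}}$ and $\frac{\beta}{\sqrt n}\sum_{k\ne 1,2}g_{2k}\nu(y_k)_{\{1,2\}}$ use independent Gaussian rows, hence are asymptotically independent $\mathcal N(0,\beta^2 q_*)$ by the overlap concentration of Theorem~\ref{thm:overlap_conc}. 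This yields $\mathbb{E}[\langle y_1\rangle\langle y_2\rangle]\to L^2$ and hence $\mathbb{E}[(\frac1n\sum_i\langle y_i\rangle-L)^2]\to 0$, with no separate concentration step over $h$ needed. The ``main obstacle'' you flagged (uniform-in-$t$ overlap control) you actually handled; the real missing piece is the one above.
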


The next result establishes gaussianity for the centered cavity field at high-temperature. 

\begin{thm}
\label{thm:cavity_field_normality}
    Let $U$ be an infinitely differentiable function on $\mathbb{R}$. For all $l,k\geq 1$, assume that $\mathbb{E}|U^{(l)}(Z)|^k < \infty$, where $U^{(l)}(\cdot)$ denotes the $l^{th}$ derivative of $U(\cdot)$ and $Z\sim \mathcal{N}(0,1)$. Let $G_i$ and $\xi$ be i.i.d $\mathcal{N}(0,1)$ random variables independent of $\nu(\cdot)$. Set $\dot{y}_i = y_i - \nu(y_i)$. There exists $\beta_0>0$ such that for all $\beta < \beta_0$ and for all $k \geq 1$, 
    \begin{align}
        \mathbb{E}\Big[ \Big( \nu\Big( U\Big( \frac{1}{\sqrt{n}} \sum_{i=1}^{n} G_i \dot{y}_i  \Big) \Big) - \mathbb{E}_{\xi}U(\sigma_* \xi)  \Big)^{2k} \Big] \leq \frac{K}{n^k}. \nonumber 
    \end{align}
    The constant $K$ depends on $k$, $U(\cdot)$, $\beta$, but is independent of $n$. 
\end{thm}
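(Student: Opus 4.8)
I read the outer $\mathbb{E}$ as averaging over the disorder $\{g_{ij}\}$, $\{h_i\}$ and the auxiliary Gaussians $\{G_i\}$, with $\nu(\cdot)$ the Gibbs average over a single configuration $\mathbf{y}$. The key elementary observation is that, \emph{conditionally on} $\mathbf{y}$, the cavity field $s:=n^{-1/2}\sum_i G_i\dot{y}_i$ is \emph{exactly} $\mathcal{N}(0,V_{\mathbf{y}})$ with $V_{\mathbf{y}}:=n^{-1}\sum_i\dot{y}_i^2=n^{-1}\|\mathbf{y}-\langle\mathbf{y}\rangle\|^2$, since the $G_i$ are genuine i.i.d.\ Gaussians independent of $\nu$. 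Hence $\mathbb{E}_G[U(s)\,|\,\mathbf{y}]=\Phi(V_{\mathbf{y}})$ where $\Phi(v):=\mathbb{E}_\eta[U(\sqrt{v}\,\eta)]$, $\eta\sim\mathcal{N}(0,1)$. A Gaussian integration by parts gives $\Phi'(v)=\tfrac12\mathbb{E}_\eta[U''(\sqrt{v}\,\eta)]$, so $\Phi$ is $C^\infty$ and Lipschitz on $[0,4]$ (note $0\le V_{\mathbf{y}}\le 4$); the moment hypotheses on $U^{(l)}$ transfer to $\mathcal{N}(0,v)$ for $v\le 4$ since $\sup_{v\le 4}\mathbb{E}_{\mathcal{N}(0,v)}|U^{(l)}|^k<\infty$ by Cauchy--Schwarz. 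I then write
$\nu(U(s))-\mathbb{E}_\xi U(\sigma_*\xi)=E_1+E_2$ with $E_1:=\nu\big(U(s)-\Phi(V_{\mathbf{y}})\big)$, which has $\mathbb{E}_G[E_1]=0$ for each fixed $\mathbf{y}$, and $E_2:=\nu(\Phi(V_{\mathbf{y}}))-\Phi(\sigma_*^2)$, which depends only on the disorder (recall $\Phi(\sigma_*^2)=\mathbb{E}_\xi U(\sigma_*\xi)$).

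\textbf{The term $E_2$.} Here $|E_2|\le \mathrm{Lip}(\Phi)\,\nu(|V_{\mathbf{y}}-\sigma_*^2|)$. Expanding $V_{\mathbf{y}}$ over two replicas, $V_{\mathbf{y}^1}=R_1-2\,\nu_{(2)}(R_{1,2})+\nu^{\otimes2}(R_{1,2})$, where $R_1=n^{-1}\|\mathbf{y}^1\|^2$ and $\nu_{(2)}$ averages $\mathbf{y}^2$ only; subtracting $\sigma_*^2=(\sigma_*^2+q_*)-2q_*+q_*$ term by term and applying Jensen's inequality reduces the $2k$-th $\nu$-moment of $V_{\mathbf{y}}-\sigma_*^2$ to overlap/self-overlap moments. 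Theorem~\ref{thm:overlap_higher_moment} then yields $\mathbb{E}[\nu((V_{\mathbf{y}}-\sigma_*^2)^{2k})]\lesssim (C_0 k)^k n^{-k}$, hence $\mathbb{E}[E_2^{2k}]\lesssim_k n^{-k}$.

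\textbf{The term $E_1$ (the heart).} Expanding the $2k$-th power over replicas, $\mathbb{E}_G[E_1^{2k}]=\nu^{\otimes2k}\big[\mathbb{E}_G\prod_{a=1}^{2k}(U(s^a)-\Phi(V_{aa}))\big]$, where $s^a=n^{-1/2}\sum_i G_i\dot{y}_i^a$ (the \emph{same} $G$ across replicas), $V_{ab}=n^{-1}\sum_i\dot{y}_i^a\dot{y}_i^b$, and conditionally on $\{\mathbf{y}^a\}$ one has $(s^1,\dots,s^{2k})\sim\mathcal{N}(\mathbf{0},(V_{ab}))$. I apply Gaussian interpolation deforming the off-diagonal entries $V_{ab}$ ($a\ne b$) down to $0$: at the endpoint the $s^a$ are independent $\mathcal{N}(0,V_{aa})$ and each factor $U(s^a)-\Phi(V_{aa})$ has zero mean, so the product vanishes; iterating the interpolation identity expands $\mathbb{E}_G\prod_a(\cdot)$ into a sum of terms, each a product of off-diagonal factors $V_{ab}$ times an integrated Gaussian expectation of a product of derivatives $U^{(\cdot)}(s^a)$. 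The minimal terms are those in which every index is touched exactly once, i.e.\ indexed by a perfect matching $\pi$ of $\{1,\dots,2k\}$; such a term equals $\prod_{(a,b)\in\pi}V_{ab}\cdot\prod_a\mathbb{E}_\eta[U'(\sqrt{V_{aa}}\,\eta)]$, and taking $\nu^{\otimes2k}$ (using replica independence and $\nu(\dot{y}_i)=0$) collapses it to $\big(n^{-1}\sum_i \mathrm{Cov}_\nu(\Psi(V_{\mathbf{y}}),y_i)^2\big)^k$ with $\Psi(v):=\mathbb{E}_\eta U'(\sqrt v\,\eta)$. Since $\Psi$ is Lipschitz on $[0,4]$, the per-term bound $n^{-1}\sum_i\mathrm{Cov}_\nu(\Psi(V_{\mathbf{y}}),y_i)^2\le\mathrm{Var}_\nu(\Psi(V_{\mathbf{y}}))\le\mathrm{Lip}(\Psi)^2\nu((V_{\mathbf{y}}-\sigma_*^2)^2)$ combined with Theorem~\ref{thm:overlap_higher_moment} gives $\mathbb{E}[\text{matched term}]\lesssim_k n^{-k}$. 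Every remaining term carries strictly more off-diagonal $V_{ab}$ factors; a careful accounting — structurally identical to the induction on moment order in the proof of Theorem~\ref{thm:overlap_higher_moment} — shows that, once the diagonal ($i=i'$) contributions of repeated overlap factors are extracted, each extra $V_{ab}$ factor produces a further $O(n^{-1})$, so these terms are $o(n^{-k})$. Using the moment bounds on $U^{(l)}$ against $\mathcal{N}(0,v)$, $v\le 4$, to control all the bounded factors, one gets $\mathbb{E}[E_1^{2k}]\lesssim_k n^{-k}$, and together with the $E_2$ bound this proves the claim; the threshold $\beta_0$ is the one from Theorems~\ref{thm:overlap_conc}--\ref{thm:overlap_higher_moment}.

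\textbf{Main obstacle.} The delicate point is precisely the remainder accounting in the $E_1$ step: a single naive interpolation pass only gives the suboptimal rate $n^{-k/2}$, and recovering the sharp $n^{-k}$ forces an induction on the moment order (as in the proof of Theorem~\ref{thm:overlap_higher_moment}), with careful tracking of how repeated overlap factors interact with overlap concentration. The features absent from the classical Ising SK model — the self-overlap $V_{\mathbf{y}}$ being random rather than identically $1$, and the external fields $h_i=\tau_0\bar T_i+\bar{\mathbf{X}}_i^\top\boldsymbol{\theta}_0$ being only exchangeable in expectation — are handled exactly as in the proofs of Theorems~\ref{thm:overlap_conc}--\ref{thm:overlap_higher_moment}, and the transfer of $U$'s growth hypotheses to bounded-variance Gaussians is routine.
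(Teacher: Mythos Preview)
Your approach is essentially the paper's: both rest on the moment bounds $\mathbb{E}[\nu(T_{\ell,\ell'}^{2k})]\lesssim n^{-k}$ for $T_{\ell,\ell'}:=n^{-1}\sum_i\dot y_i^\ell\dot y_i^{\ell'}-\sigma_*^2\mathbf{1}(\ell=\ell')$ (precisely your $V_{ab}$ and $V_{\mathbf{y}}-\sigma_*^2$), after which the paper simply invokes Talagrand's Theorem~1.7.11 for the Gaussian-interpolation step you spell out. Your $E_1/E_2$ split---interpolating the off-diagonal of $V$ to zero and handling the diagonal deviation separately via Lipschitzness of $\Phi$---is a cosmetic variant of Talagrand's single interpolation of the full replica covariance to $\sigma_*^2 I$, which treats both deviations at once through the $T_{\ell,\ell'}$ and makes the passage from $n^{-k/2}$ to $n^{-k}$ slightly more streamlined than your matching-plus-remainder expansion.
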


\begin{proof}
    The proof is a direct adaptation of \cite[ Theorem 1.7.11]{talagrand2010mean}. We thus highlight only the differences, and refer the reader to \cite{talagrand2010mean} for proof details. 

    Let $\mathbf{y}^1, \cdots, \mathbf{y}^{2k}$ denote replicas from $\nu(\cdot)$. Set $\dot{S}^{\ell} = \frac{1}{\sqrt{n}} \sum_{i=1}^{n} G_i \dot{y}_i^{\ell}$. 
    The proof introduces 
    \begin{align}
        T_{\ell,\ell'} = \mathbb{E}_0[\dot{S}_{\ell} \dot{S}_{\ell'}] - \sigma_*^2 \mathbf{1}(\ell= \ell') = \frac{1}{n} \sum_{i=1}^{n} \dot{y}^{\ell}_i \dot{y}^{\ell'}_i - \sigma_*^2 \mathbf{1}(\ell= \ell'),   \nonumber  
    \end{align}
    where $\mathbb{E}_0[\cdot]$ introduces expectation with respect to $\{G_i: 1\leq i \leq n\}$. 
    For $\ell \neq \ell'$, 
    \begin{align*}
        T_{\ell, \ell'} = \Big(\frac{1}{n} \sum_{i=1}^{\ell} y_i^{\ell} y_i^{\ell'} - q_* \Big) - \Big( \frac{1}{n} \sum_{i=1}^{n} y_i^{\ell} \nu(y_i^{\ell'}) - q_* \Big) - \Big( \frac{1}{n} \sum_{i=1}^{n} y_i^{\ell'} \nu(y_i^{\ell}) - q_* \Big) + \Big( \frac{1}{n} \sum_{i=1}^{n}(\nu(y_i))^2 - q_*\Big). \nonumber  
    \end{align*}
    Using Jensen's inequality and Theorem~\ref{thm:overlap_higher_moment}, we can establish that $\mathbb{E}[\nu(T_{\ell,\ell'}^{2k})] \leq K/n^{k}$. For $\ell=\ell'$, 
    \begin{align*}
        T_{\ell,\ell} = 
         \Big( \frac{1}{n} \sum_{i=1}^{n} (y_i^{\ell})^2 - (\sigma_*^2 + q_*)\Big) - 2 \Big( \frac{1}{n} \sum_{i=1}^{n} y_i^{\ell} \nu(y_i^{\ell}) - q_* \Big)  + \Big( \frac{1}{n} \sum_{i=1}^{n}(\nu(y_i))^2 - q_*\Big)
    \end{align*}
    This term can be controlled similarly to conclude $\mathbb{E}[\nu(T_{\ell,\ell}^{2k}) ] \leq K/n^{k}$. 

    The rest of the proof goes through unchanged, and is thus omitted. 
\end{proof}

For the next result, let $\nu(\cdot)_{-}$ denote the leave-one-out measure, i.e. the distribution $\nu$ on the $n-1$ variables $(y_1, \cdots, y_{n-1})$.  

\begin{lemma}
    There exists $\beta_0>0$ such that for all $\beta < \beta_0$, 
    \begin{align*}
        \mathbb{E}\Big[\Big( \nu(y_n) - \alpha'\Big(\frac{\beta}{\sqrt{n}} \sum_{i=1}^{n-1} g_{in} \nu(y_i)_{-} + h_n , \beta^2 \sigma_*^2 \Big)\Big)^2 \Big] \lesssim \frac{1}{n},\,\,\,\, 
        \mathbb{E}[(\nu(y_1) - \nu(y_1)_{-})^2] \lesssim \frac{1}{n}. \nonumber 
    \end{align*}
\end{lemma}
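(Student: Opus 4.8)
The plan is to use the standard one-spin cavity computation, following the template of Talagrand's treatment of the SK model \cite[Chapter 1]{talagrand2010mean}, adapted to soft spins with i.i.d.\ external fields. Write the Hamiltonian by isolating the $n$-th spin: conditionally on the leave-one-out system $\brho = (y_1,\dots,y_{n-1})$, the spin $y_n$ has conditional law proportional to $\exp\big(y_n\, b(\brho) + h_n y_n\big)\,\dd\mu(y_n)$, where $b(\brho) = \frac{\beta}{\sqrt n}\sum_{i=1}^{n-1} g_{in} y_i$. Thus $\nu(y_n) = \nu\big(\alpha'(b(\brho)+h_n, 0)\big)$ — but this is not yet the target, because the target has second argument $\beta^2\sigma_*^2$ and is evaluated at $\nu(y_i)_-$ rather than integrated over $y_i$. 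The first step is to replace $\alpha'(b(\brho)+h_n,0)$ (averaged against $\nu$) by $\alpha'\big(\frac{\beta}{\sqrt n}\sum_{i=1}^{n-1} g_{in}\nu(y_i)_- + h_n,\ \beta^2\sigma_*^2\big)$. Decompose $b(\brho) = \frac{\beta}{\sqrt n}\sum_i g_{in}\nu(y_i)_- + \frac{\beta}{\sqrt n}\sum_i g_{in}\dot y_i$, where $\dot y_i = y_i - \nu(y_i)_-$. The fluctuation term $\frac{\beta}{\sqrt n}\sum_i g_{in}\dot y_i$ is, conditionally on the disorder and the cavity measure, approximately Gaussian with variance $\beta^2 \cdot \frac1n\sum_i \dot y_i^2 \approx \beta^2\sigma_*^2$ by Theorem~\ref{thm:overlap_conc} (self-overlap concentration) and Theorem~\ref{thm:cavity_field_normality} (CLT for the cavity field). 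Here the weights $g_{in}$ play the role of the i.i.d.\ $\mathcal N(0,1)$'s in Theorem~\ref{thm:cavity_field_normality}; one applies that theorem with $U = \alpha'$ composed with a shift by the mean field $\frac{\beta}{\sqrt n}\sum g_{in}\nu(y_i)_- + h_n$. Averaging $\alpha'$ over a Gaussian of variance $\beta^2\sigma_*^2$ converts the second argument from $0$ to $\beta^2\sigma_*^2$ exactly (this is the defining identity of the exponential tilt: $\int \alpha'(x+\beta\sqrt{\cdot}\,z,0)\,\phi(z)\dd z$ versus $\alpha'(x,\beta^2\sigma_*^2)$ — one should check this is an exact identity for $\alpha$ of this quadratic-tilt form, or absorb the discrepancy as an $O(1/n)$ error from the overlap fluctuation). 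The $L^2$ errors in all these replacements are $O(1/n)$ by the quantitative concentration bounds, using Lipschitzness and boundedness of $\alpha'$ and its derivatives.

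For the second bound, $\ee[(\nu(y_1) - \nu(y_1)_-)^2] \lesssim 1/n$, the strategy is the standard cavity interpolation: interpolate between the full measure $\nu$ and the leave-one-out measure $\nu_-$ by turning on the coupling of spin $n$ continuously, i.e.\ consider $\nu_t$ with Hamiltonian containing $t \cdot y_n b(\brho)$ (plus the compensating $\frac{\beta^2\sigma_*^2}{2}(1-t)y_n^2$ term as in the proof of Theorem~\ref{thm:overlap_conc}), so $\nu_1 = \nu$ and $\nu_0 = \nu_- \otimes (\text{single-site law for } y_n)$. Then $\nu(y_1) - \nu(y_1)_- = \int_0^1 \frac{\dd}{\dd t}\nu_t(y_1)\,\dd t$, and differentiating produces terms of the form $\nu_t(y_1 y_n (\cdots)) - \nu_t(y_1)\nu_t(y_n(\cdots))$ — covariances between spin $1$ and spin $n$ — which are controlled by overlap concentration (Theorem~\ref{thm:overlap_conc}) and the fact that $g_{1n}/\sqrt n$ is small. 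Gaussian integration by parts in the $g_{in}$ variables reduces everything to moments of $R^-_{1,2} - q_*$ and $R^- - (\sigma_*^2+q_*)$, each $O(1/n)$ in $L^2$. This is exactly the mechanism by which cavity estimates are bootstrapped in \cite[Section 1.6--1.7]{talagrand2010mean}.

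The main obstacle I anticipate is verifying, with the correct $O(1/n)$ rate, that the Gaussian smoothing of $\alpha'$ in its first argument is faithfully captured by the shift to $\beta^2\sigma_*^2$ in its second argument — i.e.\ controlling the error between $\nu\big(\alpha'(b(\brho)+h_n,0)\big)$ and the averaged-Gaussian expression, accounting simultaneously for (a) the non-exact Gaussianity of $\frac1{\sqrt n}\sum g_{in}\dot y_i$ (handled by Theorem~\ref{thm:cavity_field_normality}, but that theorem is stated for deterministic $U$, whereas here $U$ depends on the random mean field — one must either condition and apply it pointwise, checking the moment hypotheses on $U^{(l)}$ are met uniformly, or re-run its interpolation proof with the extra shift), and (b) the fluctuation of the conditional variance $\frac1n\sum\dot y_i^2$ around $\sigma_*^2$, which feeds a further $O(1/\sqrt n)$ that must be squared and shown to be $O(1/n)$ after taking $\ee$. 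Careful bookkeeping of which expectations (over disorder $\{g_{ij}\}$, over external fields $\{h_i\}$, over the Gibbs measure $\nu$) are taken in which order, together with uniform boundedness of all derivatives of $\alpha$, should close the argument; the details are routine adaptations of \cite[Theorem~1.7.11 and its surrounding lemmas]{talagrand2010mean}.
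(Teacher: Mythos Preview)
Your plan for the first inequality contains a genuine gap at the key step. You write $\nu(y_n) = \nu\big(\alpha'(b(\brho)+h_n,0)\big)$ and then propose to replace the fluctuating argument by mean plus Gaussian, converting the second argument of $\alpha'$ from $0$ to $\beta^2\sigma_*^2$. But the identity you flag as needing checking,
\[
\int \alpha'(x+\beta\sigma_* z,0)\,\phi(z)\,\dd z \stackrel{?}{=} \alpha'(x,\beta^2\sigma_*^2),
\]
is \emph{false}: $\alpha'$ is a ratio, and averaging a ratio over a Gaussian is not the ratio of averaged numerator and denominator. What \emph{is} exact is $\ee_Z[e^{(x+\sigma Z)y}] = e^{xy+\sigma^2 y^2/2}$. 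The paper exploits this by writing $\nu(y_n)$ as a ratio,
\[
\nu(y_n) = \frac{\int y\, \nu_-\big(\exp(\beta y\,n^{-1/2}\sum_i g_{in} y_i + h_n y)\big)\,\dd\mu(y)}{\int \nu_-\big(\exp(\beta y\,n^{-1/2}\sum_i g_{in} y_i + h_n y)\big)\,\dd\mu(y)},
\]
pulls out the mean field $\frac{\beta}{\sqrt n}\sum g_{in}\nu(y_i)_-$, and applies Theorem~\ref{thm:cavity_field_normality} with $U(x)=e^{\beta y x}$ separately to numerator and denominator. The Gaussian expectation then gives $e^{\beta^2\sigma_*^2 y^2/2}$ exactly, and the resulting ratio is $\alpha'(\cdot,\beta^2\sigma_*^2)$ by definition. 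Your outer average is under the full-measure marginal on $\brho$ (which is $\nu_-$ tilted by $e^{\alpha(b(\brho)+h_n,0)}$), so you cannot directly invoke Theorem~\ref{thm:cavity_field_normality} on $\alpha'$; the ratio decomposition is what makes the CLT applicable in the right form.

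For the second inequality your interpolation route is plausible but both harder and less sharp than the paper's argument. Differentiating $\nu_t(y_1)$ produces $\frac{\beta}{\sqrt n}\sum_i g_{in}\,\mathrm{Cov}_{\nu_t}(y_1,y_n y_i)$, and bounding this by overlap concentration alone is not immediate---you need control on correlations between a \emph{fixed} spin and the cavity field, not just on global overlaps. The paper instead removes spins $1$ and $n$ simultaneously, couples the two systems (with and without spin $n$) through the leave-two-out measure $\nu_{-\{1,n\}}$, and observes that conditionally on $\by_{2:(n-1)}$ the only difference is the single interaction $\frac{\beta}{\sqrt n}g_{1n}y_1 y_n$. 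A one-line Lipschitz bound on the cumulant generating function $G(\theta_0,\theta_1,\theta_n)$ in $\theta_0$ then gives the pointwise estimate $|\nu(y_1)-\nu(y_1)_-| \le \frac{2\beta}{\sqrt n}|g_{1n}|$, which upon squaring yields the result with no appeal to overlap concentration at all.
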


\begin{proof}
    By direct computation, we obtain that 
    \begin{align}
        \nu(y_n) = \frac{\int y \nu\Big( \exp \Big( \frac{\beta y_n}{\sqrt{n}} \sum_{i=1}^{n-1} g_{in} y_i + h_n y_n  \Big) \Big| y_n = y \Big) \mathrm{d}\mu(y) }{\int \nu\Big( \exp \Big( \frac{\beta y_n}{\sqrt{n}} \sum_{i=1}^{n-1} g_{in} y_i + h_n y_n  \Big) \Big| y_n  = y\Big) \mathrm{d}\mu(y)}.\label{eq:marginal_mean_exp} 
    \end{align}
    Set $\beta_n = \beta \sqrt{ \frac{n-1}{n}}$. We start with the numerator. 

    \begin{align}
        &\int y \nu\Big( \exp \Big( \frac{\beta y_n}{\sqrt{n}} \sum_{i=1}^{n-1} g_{in} y_i + h_n y_n  \Big) \Big| y_n = y \Big) \mathrm{d}\mu(y)  \nonumber \\
        =& \int y \exp\Big(\frac{\beta y}{\sqrt{n}} \sum_{i=1}^{n-1} g_{in} \nu(y_i)_{-} + h_n y \Big) \nu \Big( \exp\Big( \frac{\beta_n y}{\sqrt{n-1}} \sum_{i=1}^{n-1} g_{in} \dot{y}_i \Big)  \Big) \mathrm{d}\mu(y).  
    \end{align}
    Using Theorem~\ref{thm:cavity_field_normality}, we have, 
    \begin{align*}
        \mathbb{E}\Big[ \Big(  \nu \Big( \exp\Big( \frac{\beta_n y}{\sqrt{n-1}} \sum_{i=1}^{n-1} g_{in} \dot{y}_i \Big) - \exp\Big( \frac{\beta_n^2 \sigma_*^2 y^2}{2}  \Big) \Big)^2  \Big] \lesssim \frac{1}{n}. 
    \end{align*}
    In turn, this implies 
    \begin{align*}
        &\mathbb{E}\Big[ \Big( \int y \nu\Big( \exp \Big( \frac{\beta y_n}{\sqrt{n}} \sum_{i=1}^{n-1} g_{in} y_i + h_n y_n  \Big) \Big| y_n = y \Big) \mathrm{d}\mu(y) \Big. \Big.  \nonumber \\
       & \Big. \Big. - \int y \exp\Big( \frac{\beta y }{\sqrt{n}} \sum_{i=1}^{n-1} g_{in} \nu(y_i)_{-} + h_n y + \frac{\beta^2 \sigma_*^2 y^2}{2}  \Big) \mathrm{d}\mu(y)   \Big)^2 \Big] \lesssim \frac{1}{n}. 
    \end{align*}
    The denominator in \eqref{eq:marginal_mean_exp} maybe analyzed the same way. Putting the two approximations together yields the first assertion.

    To prove the second assertion, we adopt a coupling based approach. Let $\nu_{-\{1,n\}}$ denote the leave-two-out cavity system, where the $\{1,n\}$-spins have been removed from the system. Let $\mathbf{y}_{2:(n-1)} = (y_2, \cdots, y_{n-1}) \sim \nu_{-\{1,n\}}$. Define $Z_1 = \frac{1}{\sqrt{n}} \sum_{i=2}^{n-1} g_{1i} y_i$ and $Z_n = \frac{1}{\sqrt{n}} \sum_{i=2}^{n-1} g_{ni} y_i$. Now consider two probability distributions: under the first measure, $(Y_1,Y_n) \in [-1,1]^2$ are conditionally sampled given $\mathbf{y}_{2:(n-1)}$ from the distribution 
    \begin{align}
        \frac{\mathrm{d}\mathcal{P}}{\mathrm{d}\mu^{\otimes 2}} (y_1, y_n) \propto \exp\Big( \frac{\beta}{\sqrt{n}} g_{1n} y_1 y_n  + \beta y_1 Z_1 + \beta y_n Z_n + h_1 y_1 + h_n y_n\Big). \nonumber  
    \end{align}
    Under the second distribution, $(Y_1, Y_n) \in [-1,1]^2$ are conditionally sampled given $\mathbf{y}_{2: (n-1)}$ from the distribution 
    \begin{align*}
        \frac{\mathrm{d} \mathcal{Q}}{\mathrm{d}\mu^{\otimes 2}} (y_1, y_n) \propto \exp\Big( \beta y_1 Z_1 + \beta y_n Z_n + h_1 y_1 + h_n y_n \Big).  
    \end{align*}
    To relate this back to the original distributions, note that if $(Y_1, Y_n) \sim \mathcal{P}$ given $\mathbf{y}_{2: (n-1)}$, then $(Y_1, \mathbf{y}_{2:(n-1)} , Y_n) \sim \nu$. Similarly, if $(Y_1, Y_n)$ is sampled from $\mathcal{Q}$ given $\mathbf{y}_{2:(n-1)}$, then $(Y_1, \mathbf{y}_{2:(n-1)}) \sim \nu_{-}$. Thus we can naturally couple the two distributions by an exact coupling of $\mathbf{y}_{2:(n-1)}$. For notational convenience, we also use $\mathcal{P}$ and $\mathcal{Q}$ to denote the joint distributions of $(Y_1, \mathbf{y}_{2:(n-1)},Y_n)$ in the respective cases. In particular, $\nu(y_1) - \nu(y_1)_{-} = \mathbb{E}_\mathcal{P}(Y_1) - \mathbb{E}_\mathcal{Q}(Y_1)$. 

    Define the cumulant generating function 
    \begin{align}
        G(\theta_0, \theta_1, \theta_n) = \log \int \int \exp\Big( \theta_0 y_1 y_n + \theta_1 y_1 + \theta_n y_n \Big) \mathrm{d}\mu(y_1) \mathrm{d}\mu (y_n). \nonumber 
    \end{align}
    We have, 
    \begin{align}
        \mathbb{E}_{\mathcal{P}}(Y_1 | \mathbf{y}_{2:(n-1)}) &= \frac{\partial G}{\partial \theta_1} \Big(\frac{\beta}{\sqrt{n}}g_{1n}, \beta Z_1 + h_1, \beta Z_n + h_n \Big), \nonumber \\
        \mathbb{E}_{\mathcal{Q}}(Y_1 | \mathbf{y}_{2:(n-1)}) &= \frac{\partial G}{\partial \theta_1} \Big(0, \beta Z_1 + h_1, \beta Z_n + h_n \Big). \nonumber  
    \end{align}

    In turn, this implies 
    \begin{align}
        &|\mathbb{E}_{\mathcal{P}}(Y_1) - \mathbb{E}_{\mathcal{Q}}(Y_1)| \leq \mathbb{E}[|\mathbb{E}_{\mathcal{P}}(Y_1| \mathbf{y}_{2:(n-1)}) -\mathbb{E}_{\mathcal{Q}}(Y_1| \mathbf{y}_{2:(n-1)}) |  ] \nonumber \\
        &\leq \mathbb{E}\Big[  \Big| \frac{\partial G}{\partial \theta_1} \Big(\frac{\beta}{\sqrt{n}}g_{1n}, \beta Z_1 + h_1, \beta Z_n + h_n \Big) - \frac{\partial G}{\partial \theta_1} \Big(0, \beta Z_1 + h_1, \beta Z_n + h_n \Big)  \Big| \Big] \nonumber \\
        &\leq \frac{\beta}{\sqrt{n}} |g_{1n}| \mathbb{E}\Big[ \Big| \frac{\partial^2 G}{\partial \theta_0 \theta_1} \Big(\theta_0^*, \beta Z_1 + h_1, \beta Z_n + h_n \Big)  \Big|\Big] . \label{eq:exp_bound_cavity}
    \end{align}

Consider the three-parameter exponential family, 
\begin{align}
    \frac{\mathrm{d}F_{\theta_0, \theta_1, \theta_n}}{\mathrm{d}\mu} (y_1, y_n) = \exp\Big( \theta_0 y_1 y_n + \theta_1 y_1 + \theta_n y_n  - G(\theta_0, \theta_1, \theta_n )\Big). \nonumber 
\end{align}

    We note that using well-known properties of exponential families \cite{wainwright2008graphical}, 
    \begin{align}
        \frac{\partial^2 G}{\partial \theta_0 \partial \theta_1} \Big(\theta_0, \theta_1, \theta_n \Big)  = \mathrm{cov}_{F_{\theta_0, \theta_1, \theta_n}} (Y_1, Y_1 Y_n) 
    \end{align}

    As $Y_1, Y_n \in [-1,1]$, $|\mathrm{cov}_{F_{\theta_0, \theta_1, \theta_n}} (Y_1, Y_1 Y_n)| \leq 2$. Plugging this bound back into \eqref{eq:exp_bound_cavity}, we have, 
    \begin{align}
        |\nu(y_1) - \nu(y_1)_{-}| \leq \frac{2 \beta}{\sqrt{n}} |g_{1n}|. \nonumber 
    \end{align}
We have the desired conclusion upon squaring and evaluating the expectation with respect to $g_{1n}$. This completes the proof.

\end{proof}

We now introduce the cavity iteration. Their convergence is a important step in analyzing Algorithm \ref{alg:amp}. For every $n \ge 1$, and $0 \le k \le n-1$, we define $[n]_k= \{S \subseteq [n] \,\text{s.t.}\, |S| \le n-(k+1)\}$. 
\begin{itemize}
    \item[(i)] \textbf{Initialization:} For $n \ge 1$ and any $S \in [n]_0$, define $w^{[0]}_S \in \mathbb R^{n \setminus S}$ by $w^{[0]}_{S,i}= 0$, $i \in [n]\setminus S$. Also, for any $S \in [n]_1$, define $w^{[0]}_S \in \mathbb R^{n \setminus S}$ by $w^{[1]}_{S,i}= 0$, $i \in [n]\setminus S$.

    \item[(ii)] \textbf{Iteration $k = 2$:} For $S \in [n]_2$, define $w^{[2]}_S \in \mathbb R^{n \setminus S}$ by 

\begin{equation*}
    w^{[2]}_{S,i}= \frac{1}{\sqrt{n}} \sum_{j \notin S \cup \{i\}} g_{ij} \sqrt{q_*}, \quad i \in [n] \setminus S. 
\end{equation*}

    \item[(iii)] \textbf{Iteration} $k \geq 2$: for any $2 \le k \le n-2$ and $S \in [n]_{k+1}$, define $w^{[k+1]}_S \in \mathbb R^{n \setminus S}$ by 

\begin{equation}
    w^{[k+1]}_{S,i}= \frac{1}{\sqrt{n}} \sum_{j \notin S \cup \{i\}} g_{ij} \alpha'(\beta w^{[k]}_{S \cup \{i\},j}+h_j, \beta^2 \sigma^2_{*}), \quad i \in [n] \setminus S. \label{eq:cavity_iteration}
\end{equation}
\item[(iv)]  We define $w^{[k]}_i= w^{[k]}_{\emptyset, i}$. 
\end{itemize}

Note that, this is an algorithm based on self-avoiding walk which has been proposed to analyze AMP algorithms \cite{chen2021convergence}, community detection of sparse stochastic block models \cite{hopkins2017efficient} among others.

We require the following $\ell_p$ moment estimate for $w^{[k]}_{S,i}$.

\begin{prop}\label{prop:tang_prop_2}
    For any $k \ge 0$ and $p \ge 1$, there exists a constant $C= C(k,p)>0$ such that for any $n \ge k+3$,
    \begin{align*}
        \sup \left\{ \Big(\mathbb E |w^{[k]}_{S,i}-w^{[k]}_{S\cup \{i'\},i}|^p\Big)^{1/p}\right\} \le \frac{C}{\sqrt{n}},
    \end{align*}
    where the supremum is taken over all $i,i' \in [n]$, $S \subset [n]$ with $ i \neq i'$, $i,i' \notin S$, $|S|\le n-k+2$.
\end{prop}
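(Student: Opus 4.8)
The plan is to induct on $k$. Since $L^p$-norms are monotone on a probability space, I would first reduce to $p \ge 2$, the range $1 \le p < 2$ following from $\|X\|_p \le \|X\|_2$. The base cases $k \in \{0,1,2\}$ are direct computations: $w^{[0]}_S \equiv w^{[1]}_S \equiv 0$, so the difference vanishes, while for $k=2$ the only summand that changes is $j = i'$, giving $w^{[2]}_{S,i} - w^{[2]}_{S\cup\{i'\},i} = n^{-1/2} g_{ii'}\sqrt{q_*}$, whose $p$-th moment is $q_*^{p/2}\,\mathbb E|g_{ii'}|^p\, n^{-p/2}$, of the required order.

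For the inductive step from level $k$ to level $k+1$, I would fix admissible $S,i,i'$ and split the difference, via the recursion \eqref{eq:cavity_iteration}, into a \emph{boundary} and a \emph{bulk} term:
$$
w^{[k+1]}_{S,i} - w^{[k+1]}_{S\cup\{i'\},i}
= \underbrace{\frac{g_{ii'}}{\sqrt n}\,\alpha'\!\big(\beta w^{[k]}_{S\cup\{i\},i'} + h_{i'},\beta^2\sigma_*^2\big)}_{E_1}
+ \underbrace{\frac{1}{\sqrt n}\sum_{j\notin S\cup\{i,i'\}} g_{ij}\,D_j}_{E_2},
$$
where $D_j := \alpha'(\beta w^{[k]}_{S\cup\{i\},j}+h_j,\beta^2\sigma_*^2) - \alpha'(\beta w^{[k]}_{S\cup\{i,i'\},j}+h_j,\beta^2\sigma_*^2)$. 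The boundary term is easy: $\alpha'$ is a mean of a $[-1,1]$-valued variable, so $\|\alpha'\|_\infty \le 1$, whence $|E_1| \le n^{-1/2}|g_{ii'}|$ and $\mathbb E|E_1|^p = O(n^{-p/2})$ with constant depending only on $p$.

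The heart of the argument — and the step I expect to be the main obstacle — is the bulk term $E_2$. By the inductive hypothesis and the Lipschitz bound $|D_j| \le \beta\,|w^{[k]}_{S\cup\{i\},j} - w^{[k]}_{S\cup\{i,i'\},j}|$ (from $\|\alpha''\|_\infty \le 1$), each summand is only $O(n^{-1/2})$ in $L^p$, so a naive triangle inequality over the $\Theta(n)$ terms would give the useless bound $O(n^{1/2})$. To recover the $O(n^{-1/2})$ scaling I would exploit the independence structure built into the self-avoiding-walk construction: unrolling \eqref{eq:cavity_iteration} shows that $w^{[k]}_{S\cup\{i\},j}$ and $w^{[k]}_{S\cup\{i,i'\},j}$ are measurable with respect to $\{g_{ab}: a,b\notin S\cup\{i\}\}$ and $\{h_a\}$ — the frozen index $i$ never appears as an edge endpoint — hence the whole vector $(D_j)_j$ is independent of $(g_{ij})_j$. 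Conditioning on $\mathcal F := \sigma(\{g_{ab}: i\notin\{a,b\}\},\{h_a\})$ makes $E_2$ a centered Gaussian with conditional variance $n^{-1}\sum_j D_j^2$, so Gaussian moment bounds give $\mathbb E[|E_2|^p \mid \mathcal F] \le c_p\, n^{-p/2}\big(\sum_j D_j^2\big)^{p/2}$. Since $p \ge 2$, the power-mean inequality gives $\big(\sum_j D_j^2\big)^{p/2} \le n^{p/2-1}\sum_j |D_j|^p$, and combining with the inductive bound $\mathbb E|D_j|^p \le \beta^p C(k,p)^p n^{-p/2}$ yields
$$
\mathbb E|E_2|^p \le c_p\, n^{-p/2}\cdot n^{p/2-1}\cdot n\cdot \beta^p C(k,p)^p\, n^{-p/2} = O(n^{-p/2}).
$$

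Finally I would combine $E_1$ and $E_2$ by Minkowski's inequality to obtain the level-$(k+1)$ bound with a constant $C(k+1,p)$ depending only on $k,p$ (and the fixed quantities $\beta,q_*$), checking en route that the constraint on the size of the frozen set $S$ is preserved so that the inductive hypothesis is always applicable. Apart from the cancellation argument for $E_2$, all the ingredients — boundedness and Lipschitz continuity of $\alpha'$, the Gaussian moment bound, and the combinatorial bookkeeping of frozen-set sizes — are routine.
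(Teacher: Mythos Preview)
Your proposal is correct and follows essentially the same route as the paper, which simply defers to \cite[Proposition~2]{chen2021convergence} after noting that $\alpha'(\beta\,\cdot + h,\beta^2\sigma_*^2)$ is bounded and Lipschitz uniformly in $h$. Your inductive decomposition into the boundary term $E_1$ and the bulk term $E_2$, together with the crucial observation that the frozen index $i$ never appears in the Gaussian edges defining $D_j$ (so that $(D_j)_j$ is independent of $(g_{ij})_j$), is exactly the mechanism underlying that argument.
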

The proof follows exact same as \cite[Proposition 2]{chen2021convergence} once we note $f(x)= \alpha'(\beta x +h, \beta^2 \sigma^2)$ is bounded Lipschitz function for any $\beta >0$, $h \in \mathbb R$, $\sigma>0$ with Lipschitz constant independent of $h$ since $\mu$ is compactly supported. The details of the proof is omitted.

A weak law of large number for the cavity vectors $(w^{[k]}, w^{[k-1]},\ldots, w^{[0]})$ follows from Proposition \ref{prop:tang_prop_2} and tabulated below. For the proof, we will use the following notation: Two sequence of random variables $X_n$ and $Y_n$ are called $X_n \asymp_p Y_n$, if $\bE |X_n-Y_n|^p \rightarrow 0$ as $n \rightarrow \infty$.

\begin{lemma}\label{prop:tang_thm_1}
    For $k \ge 0$ and any bounded Lipschitz function $\phi: \mathbb R^{k+1} \mapsto \mathbb R$, we have, in probability
    \[
    \lim\limits_{n\rightarrow \infty} \frac{1}{n} \sum_{i \in [n]} \phi(w^{[k]}_i,w^{[k-1]}_i, \ldots ,w^{[0]}_i)= \bE \phi(W_k, W_{k-1},\ldots, 0),
    \]
    where $(W_k, W_{k-1}, \ldots, W_1)$ are jointly centered gaussian with 
    \begin{equation*}
        \bE W_{i+1} W_{j+1}= \bE \alpha'(\beta W_{i}+h, \beta^2 \sigma_*^2)\alpha'(\beta W_{j}+h, \beta^2 \sigma_*^2),
    \end{equation*}
where $0 \le i,j \le k-1$ and the expectation on the RHS is taken w.r.to  the joint  distribution of $(W_i,W_j)$ and $h$.
\end{lemma}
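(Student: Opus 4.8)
The plan is to establish the weak law of large numbers for the cavity vectors $(w^{[k]}_i, \ldots, w^{[0]}_i)$ by induction on $k$, following the strategy of \cite[Theorem 1]{chen2021convergence}, using Proposition~\ref{prop:tang_prop_2} as the key quantitative input. The base cases $k=0$ and $k=1$ are trivial since $w^{[0]}_i = w^{[1]}_i = 0$ for all $i$, and the claimed limit is $\phi$ evaluated at the constant $0$. For $k=2$, the vectors $w^{[2]}_i = \frac{1}{\sqrt n}\sum_{j\neq i} g_{ij}\sqrt{q_*}$ are (jointly over $i$) approximately i.i.d. centered Gaussian with variance $q_* = \mathbb E W_2^2$, and the empirical average of $\phi(w^{[2]}_i, 0, 0)$ over $i$ concentrates by a standard second-moment computation: the diagonal terms contribute the expectation, and the off-diagonal covariances $\mathrm{Cov}(\phi(w^{[2]}_i,\dots),\phi(w^{[2]}_j,\dots))$ are $O(1/n)$ because $w^{[2]}_i$ and $w^{[2]}_j$ share only the single edge variable $g_{ij}$, which contributes $O(1/n)$ to each.

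For the inductive step, suppose the claim holds for index $k$; I want to prove it for $k+1$. The idea is the classical cavity decoupling: one writes
\begin{align*}
    w^{[k+1]}_i = \frac{1}{\sqrt n}\sum_{j\neq i} g_{ij}\,\alpha'(\beta w^{[k]}_{\{i\},j} + h_j, \beta^2\sigma_*^2),
\end{align*}
and the point of the self-avoiding construction is that $w^{[k]}_{\{i\},j}$ is (by Proposition~\ref{prop:tang_prop_2}) within $O(1/\sqrt n)$ in every $\ell_p$ of $w^{[k]}_j$, \emph{and} it is independent of the row $\{g_{ij}\}_{j\neq i}$ for the purposes of conditioning on that row. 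Conditionally on the variables defining $w^{[k]}_{\{i\},\cdot}$, the quantity $w^{[k+1]}_i$ is a linear combination of independent Gaussians $g_{ij}$, hence Gaussian with conditional variance $\frac 1n\sum_{j\neq i}\alpha'(\beta w^{[k]}_{\{i\},j}+h_j,\beta^2\sigma_*^2)^2$; by the induction hypothesis applied to the bounded function $x\mapsto \alpha'(\beta x + h, \beta^2\sigma_*^2)^2$ (together with the $O(1/\sqrt n)$ replacement of $w^{[k]}_{\{i\},j}$ by $w^{[k]}_j$), this conditional variance converges in probability to $\mathbb E\,\alpha'(\beta W_k + h,\beta^2\sigma_*^2)^2 = \mathbb E W_{k+1}^2$. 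More generally one tracks the joint conditional covariance structure of $(w^{[k+1]}_i, w^{[k]}_i,\ldots, w^{[0]}_i)$: the cross-covariances $\mathbb E[w^{[k+1]}_i w^{[\ell]}_i]$ for $\ell \le k$ are handled the same way via Gaussian integration by parts, reproducing the covariance recursion $\mathbb E W_{i+1}W_{j+1} = \mathbb E\,\alpha'(\beta W_i + h,\beta^2\sigma_*^2)\alpha'(\beta W_j + h,\beta^2\sigma_*^2)$ in the statement. Then a second-moment / Chebyshev argument on $\frac 1n\sum_i \phi(w^{[k+1]}_i,\ldots,w^{[0]}_i)$ finishes it: the mean converges to the Gaussian expectation by the conditional-Gaussianity just described, and the variance is $o(1)$ because for $i\neq i'$ the vectors $w^{[k+1]}_i$ and $w^{[k+1]}_{i'}$ depend on almost disjoint sets of underlying randomness (they share only the $O(1)$-many edges and fields indexed in the intersection of their local neighborhoods), so the pairwise covariance of the $\phi$'s is $O(1/\sqrt n)$.

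The main obstacle — and the reason the self-avoiding-walk construction exists — is keeping the dependence structure manageable through the recursion: a naive expansion of $w^{[k+1]}$ in terms of $w^{[k]}$ in terms of $w^{[k-1]}$, etc., reintroduces the same edge variables $g_{ij}$ at multiple levels and destroys the conditional independence. Proposition~\ref{prop:tang_prop_2} is precisely the tool that controls the error incurred by replacing the ``cavity'' vector $w^{[k]}_{S\cup\{i'\},\cdot}$ with $w^{[k]}_{S,\cdot}$, so the bookkeeping reduces to (i) invoking Proposition~\ref{prop:tang_prop_2} to swap in these $O(1/\sqrt n)$ approximations wherever needed, (ii) using the Lipschitz property of $\phi$ and of $x\mapsto \alpha'(\beta x + h,\beta^2\sigma_*^2)$ (with Lipschitz constant uniform in $h$, since $\mu$ is compactly supported) to propagate these errors, and (iii) a routine Gaussian conditioning plus Chebyshev estimate at each level. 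Since all of these steps are direct adaptations of \cite[Theorem 1]{chen2021convergence} — the only genuinely new feature being the presence of the external field $h_j$ inside $\alpha'$, which is harmless because it only shifts arguments and the relevant Lipschitz constants are $h$-uniform — I would present the argument by stating the induction, carrying out the $k=2$ base computation in a line or two, and then indicating the conditional-Gaussianity and second-moment estimates for the step, deferring the fully detailed error-tracking to the cited reference.
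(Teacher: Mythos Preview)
Your overall architecture---induction on $k$, cavity decoupling via Proposition~\ref{prop:tang_prop_2}, conditional Gaussianity, and a second-moment estimate---matches the paper's. But there is a real gap in the inductive step that you dismiss when you call the external field ``harmless because it only shifts arguments.''

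To identify the conditional variance (and cross-covariances) of $w^{[k+1]}_i$ you need the limit of
\[
\frac{1}{n}\sum_{j\neq i}\alpha'\big(\beta w^{[k]}_{\{i\},j}+h_j,\beta^2\sigma_*^2\big)^2
\]
and analogous averages. This is \emph{not} of the form $\frac{1}{n}\sum_j \phi(w^{[k]}_j,\ldots,w^{[0]}_j)$ for a fixed bounded Lipschitz $\phi$: the summand depends on $j$ through the random external field $h_j$. The induction hypothesis as stated in the lemma says nothing about such averages, and you cannot ``apply the induction hypothesis to the bounded function $x\mapsto \alpha'(\beta x+h,\beta^2\sigma_*^2)^2$'' because $h$ is not a fixed parameter but varies with the summation index. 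Uniformity of the Lipschitz constant in $h$ does not resolve this; what is missing is control of empirical averages that mix the cavity iterates with the field.

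The paper's fix is to carry a \emph{strengthened} induction hypothesis: for all $a,b\in\mathbb{R}$ and bounded Lipschitz $\phi$,
\[
\frac{1}{n}\sum_{i}\phi\big(aw^{[k]}_i+bh_i,\ldots,aw^{[0]}_i+bh_i\big)\ \stackrel{P}{\longrightarrow}\ \mathbb{E}\,\phi\big(aW_k+bh,\ldots,aW_0+bh\big),
\]
with the expectation on the right over both $(W_0,\ldots,W_k)$ and an independent copy of $h$. The conditional-covariance computation then becomes a direct application of this hypothesis with $a=\beta$, $b=1$. The paper completes the step by fixing two sites, using leave-two-out replacement (Proposition~\ref{prop:tang_prop_2}) to exhibit conditional independence of their cavity vectors given $\{h_i\}$ and the interior Gaussians, and then invoking site symmetry plus a second-moment argument. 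Once you insert this strengthened hypothesis and track $h_i$ alongside the cavity coordinates throughout, your outline goes through.
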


\begin{proof}
We complete the proof by induction on $k$. We start with a stronger induction hypothesis: we assume that for $k \geq 0$, any bounded Lipschitz function $\phi:\mathbb{R}^{k+1} \to \mathbb{R}$, $a, b \in \mathbb{R}$, we have, in probability,  
\begin{align*}
   \lim_{n \to \infty } \frac{1}{n} \sum_{i=1}^{n} \phi( a w_i^{[k]} + b h_i, \cdots, a w_i^{[0]} + b h_i) = \mathbb{E}[\phi(aW_k + b h, \cdots, a W_0 + b h)],  
\end{align*}
where $\mathbb{E}[\cdot]$ in the RHS refers to expectation w.r.to $(W_0, \cdots, W_k)$ and $h$. In addition, we assume that $(W_0, \cdots, W_k)$ have the desired joint multivariate Gaussian distribution. 

The base case can be verified directly using the definition of the iterates $w_i^{[0]}$ and $w_i^{[1]}$. Recall now from Proposition~\ref{prop:tang_prop_2} that for $1 \leq \ell \leq k$, 
\begin{align*}
    w_1^{[\ell+1]} &\asymp_1 w^{[\ell+1]}_{\{2\},1} = \frac{1}{\sqrt{n}} \sum_{j \neq 1,2} g_{1j} \alpha'(\beta w^{[\ell]}_{\{1,2\},j} + h_j, \beta^2 \sigma_*^2), \\
    w_2^{[\ell+1]} &\asymp_1 w^{[\ell+1]}_{\{1\},2} = \frac{1}{\sqrt{n}} \sum_{j \neq 1,2} g_{2j} \alpha'(\beta w^{[\ell]}_{\{1,2\},j} + h_j, \beta^2 \sigma_*^2). 
\end{align*}
Conditioning on $\{h_i : 1\leq i \leq n\}$, note that $\{w^{[\ell]}_{\{1,2\},j}: j \neq 1,2\}$ are functions of $\{g_{ij}: i,j \neq 1,2\}$. Thus conditioning on $\{h_i: 1\leq i \leq n\}$ and $\{g_{ij}: i,j \neq 1,2\}$, we see that 
\begin{align*}
    (w_{\{2\},1}^{[k+1]}, \cdots, w_{\{2\},1}^{[0]}) \,\,\, \textrm{and} \,\,\, (w_{\{1\},2}^{[k+1]}, \cdots, w_{\{1\},2}^{[0]})
\end{align*}
are independent mean-zero multivariate gaussian. To specify the covariance structure, note that for $1\leq a,b \leq k$ 
\begin{align}
    \mathbb{E}_{g_{1j}}[w^{[a+1]}_{\{2\},1} w^{[b+1]}_{\{2\},1}  ] &= \frac{1}{n} \sum_{j \neq 1,2} \alpha'(\beta w^{[a]}_{\{1,2\},j} + h_j , \beta^2 \sigma_*^2) \alpha'(\beta w^{[b]}_{\{1,2\},j} + h_j , \beta^2 \sigma_*^2) \nonumber \\
    &\asymp_1 \frac{1}{n} \sum_{j=1}^{n}  \alpha'(\beta w^{[a]}_{j} + h_j , \beta^2 \sigma_*^2) \alpha'(\beta w^{[b]}_{j} + h_j , \beta^2 \sigma_*^2)\nonumber \\
    &\stackrel{P}{\to} \mathbb{E}[\alpha'(\beta W_a +h, \beta^2 \sigma_*^2) \alpha'(\beta W_b + h, \beta^2 \sigma_*^2)], \nonumber 
\end{align}
where the last step follows by the induction hypothesis. Similarly, we have, 
\begin{align}
    \mathbb{E}_{g_{2j}}[w^{[a+1]}_{\{1\},2} w^{[b+1]}_{\{1\},2}  ] \stackrel{P}{\to} \mathbb{E}[\alpha'(\beta W_a +h, \beta^2 \sigma_*^2) \alpha'(\beta W_b + h, \beta^2 \sigma_*^2)]. \nonumber 
\end{align}

Now, we fix two bounded Lipschitz functions $\phi_1, \phi_2 : \mathbb{R}^{k+2} \to \mathbb{R}$ and constants $a, b \in \mathbb{R}$. Conditioning on $\{h_i : 1\leq i \leq n\}$ and evaluating the expectation with respect to $\{g_{ij}: i<j\}$, we have, 
\begin{align}
    &\mathbb{E}[\phi_1(a w_1^{[k+1]}+ b h_1, \cdots, a w_1^{[0]} + b h_1) \phi_2(a w_2^{[k+1]}+ b h_2, \cdots, a w_2^{[0]} + b h_2)] \nonumber \\
    =& \mathbb{E}[\phi_1(a w_{\{2\},1}^{[k+1]}+ b h_1, \cdots, a w_{\{2\}, 1}^{[0]} + b h_1) \phi_2(a w_{\{1\},2}^{[k+1]}+ b h_2, \cdots, a w_{\{1\},2}^{[0]} + b h_2)]  + o_P(1) \nonumber \\
    =& \mathbb{E}\Big[ \mathbb{E}_{g_{1j}}[\phi_1(a w_{\{2\},1}^{[k+1]}+ b h_1, \cdots, a w_{\{2\}, 1}^{[0]} + b h_1)] \mathbb{E}_{g_{2j}}[\phi_2(a w_{\{1\},2}^{[k+1]}+ b h_2, \cdots, a w_{\{1\},2}^{[0]} + b h_2)]\Big] + o_P(1) \nonumber \\
    &\stackrel{P}{\to} \mathbb{E}[\phi_1(aW_{k+1}+bh_1, \cdots, a W_{0}+b h_1)] \mathbb{E}[\phi_2(a W_{k+1} + bh_2, \cdots, a W_{0} + b h_2)],  \label{eq:conditional_limit} 
\end{align}
where the $\mathbb{E}[\cdot]$ in the final display denotes expectation with respect to $(W_{k+1}, \cdots, W_{0})$. Additionally, the entire argument above is conditional on $\{h_i: 1\leq i \leq n\}$, and thus the final convergence is in probability with respect to the randomness in $\{h_i : 1\leq i \leq n\}$. 

Now, evaluating the expectation with respect to both $\{g_{ij}: i<j\}$ and $\{h_i : 1\leq i \leq n\}$, we have, 
\begin{align}
    &\mathbb{E}\Big[ \Big( \frac{1}{n} \sum_{i=1}^{n} \phi_1(a w_i^{[k+1]} + b h_i, \cdots, a w_i^{[0]} + b h_i) \Big) \Big(\frac{1}{n} \sum_{j=1}^{n} \phi_1(a w_j^{[k+1]} + b h_j, \cdots, a w_j^{[0]} + b h_j) \Big) \Big)  \Big] \nonumber \\
    =& \mathbb{E}[\phi_1(a w_1^{[k+1]} + b h_1, \cdots, a w_1^{[0]} + b h_1) \phi_2(a w_2^{[k+1]} + b h_2, \cdots, a w_1^{[0]} + b h_2)] + o(1), \nonumber 
\end{align}
where the last step uses symmetry among the sites and that $\phi_1$, $\phi_2$ are bounded. Now, we use \eqref{eq:conditional_limit}, DCT and the independence of $h_1$ and $h_2$ to conclude 
\begin{align}
    &\lim_{n \to \infty} \mathbb{E}\Big[ \Big( \frac{1}{n} \sum_{i=1}^{n} \phi_1(a w_i^{[k+1]} + b h_i, \cdots, a w_i^{[0]} + b h_i) \Big) \Big(\frac{1}{n} \sum_{j=1}^{n} \phi_2(a w_j^{[k+1]} + b h_j, \cdots, a w_j^{[0]} + b h_j) \Big) \Big)  \Big] \nonumber \\
    &= \mathbb{E}[\phi_1(a W_{k+1} + b h, \cdots, a W_{0} + b h) ] \mathbb{E}[\phi_2(a W_{k+1} + b h, \cdots, a W_{0} + b h) ]. \label{eq:limit_expectation} 
\end{align}
To complete the induction step for $k+1$, we can now check that 
\begin{align}
    \lim_{n \to \infty} \mathbb{E}\Big[ \frac{1}{n} \sum_{i=1}^{n} \psi(a w_i^{[k+1]} + b h_i, \cdots, a w_i^{[0]} + b h_i) - \mathbb{E}[\psi(a W_{k+1} + b h, \cdots, a W_0 + b h)] \Big]^2=0. \nonumber 
\end{align}
In the display above, $\psi$ is a bounded Lipschitz function and $a,b \in \mathbb{R}$ are arbitrary constants. The final assertion follows by squaring and using \eqref{eq:limit_expectation}.

\end{proof}

\section{Approximate Message Passing tracks the cavity iteration}\label{appendix: amp+cavity}

Consider the SK model with random external field \eqref{eq:SK_model}. To approximate the mean-vector $\nu(\mathbf{y})$, we consider the following Approximate Message Passing (AMP) algorithm: 

\begin{defn}[AMP algorithm for soft spins]\label{app_B: AMP_description}
    \begin{itemize}
    \item[(i)] \textbf{Initialization:} Set $\mathbf{u}^{[0]}=0 \in \mathbb{R}^n$, $\mathbf{u}^{[1]}=0$. 
    \item[(ii)] \textbf{Iteration $k = 2$:} Set $\mathbf{u}^{[2]}= (u^{[2]}_i)$, $u^{[2]}_i = \frac{1}{\sqrt{n}}\sum_{l=1}^{n} g_{il} \sqrt{q_*}$. 
    \item[(iii)] \textbf{Iteration} $k \geq 2$: Set $\mathbf{u}^{[k]} = (u^{[k]}_i)$, and 
    \begin{align*}
        u^{[k+1]}_i = \frac{1}{\sqrt{n}} \sum_{l=1}^{n} g_{il} \alpha'(\beta u_l^{[k]} + h_l, \beta^2 \sigma_*^2) - \Big( \frac{\beta}{n} \sum_{j=1}^{n} \alpha''(\beta u_j^{[k]} + h_j, \beta^2 \sigma_*^2) \Big) \alpha'(\beta u^{[k-1]}_i + h_i , \beta^2 \sigma_*^2). \nonumber  
    \end{align*}
\end{itemize}
\end{defn}

Recall now the cavity iteration \eqref{eq:cavity_iteration}. The main result in this section states that the AMP algorithm tracks the cavity iteration \eqref{eq:cavity_iteration}. 

\begin{thm}
    For any $k \geq 0$, there exists a constant $C_k \geq 0$ (independent of $n$) such that 
    \begin{align*}
        \mathbb{E}[\| \mathbf{u}^{[k]} - \mathbf{w}^{[k]} \|^2 ] \leq C_k. \nonumber 
    \end{align*}
\end{thm}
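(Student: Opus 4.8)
The plan is to proceed by induction on $k$, comparing the AMP iterates $\mathbf{u}^{[k]}$ to the cavity vectors $\mathbf{w}^{[k]} = (w^{[k]}_{\emptyset,i})$. The base cases $k=0,1,2$ are immediate: $\mathbf{u}^{[0]} = \mathbf{w}^{[0]} = \mathbf{0}$, $\mathbf{u}^{[1]} = \mathbf{w}^{[1]} = \mathbf{0}$, and $u^{[2]}_i - w^{[2]}_{\emptyset,i} = \frac{1}{\sqrt n} g_{ii}\sqrt{q_*}$ is negligible (indeed zero if $g_{ii}=0$, or $O(1/\sqrt n)$ per coordinate so $O(1)$ in squared $\ell^2$ norm). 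For the induction step, I would write out $u^{[k+1]}_i - w^{[k+1]}_{\emptyset,i}$ and split it into three pieces: (a) the difference coming from replacing $w^{[k]}_{\{i\},l}$ (the argument appearing implicitly in the cavity recursion, after peeling) by $w^{[k]}_{\emptyset,l}$ or by $u^{[k]}_l$; (b) the "diagonal removal" term --- the cavity sum omits $j \in S\cup\{i\}$ whereas the AMP sum runs over all $l$; and (c) the Onsager correction term $-d_k \,\alpha'(\beta u^{[k-1]}_i + h_i, \beta^2\sigma_*^2)$ in AMP, which has no counterpart in the cavity iteration and must be shown to cancel, to leading order, the bias generated by the non-self-avoiding contributions in (b).

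Concretely, the key steps in order: (1) Use the self-avoiding-walk structure and Proposition~\ref{prop:tang_prop_2} to show $w^{[k]}_{\{i\},l} \asymp_2 w^{[k]}_{\emptyset,l}$ uniformly, so that replacing one by the other in the recursion for $w^{[k+1]}$ costs only $O(1)$ in $\mathbb{E}\|\cdot\|^2$; since $f(x) = \alpha'(\beta x + h, \beta^2\sigma_*^2)$ is Lipschitz with constant independent of $h$ (as $\mu$ is compactly supported), the error propagates controllably. (2) Expand the AMP update by isolating the term where the Gaussian weight $g_{il}$ is "reused." The classical AMP-vs-cavity computation (à la Bolthausen / Bayati-Montanari, as carried out in \cite{chen2021convergence}) shows that conditioning on the sigma-algebra generated by the previous iterates, the leading correction from the non-self-avoiding walks is exactly $\big(\frac{\beta}{n}\sum_j \alpha''(\beta u^{[k]}_j + h_j, \beta^2\sigma_*^2)\big)\,\alpha'(\beta u^{[k-1]}_i + h_i, \beta^2\sigma_*^2) + o(1)$ per coordinate, which is precisely the Onsager term subtracted in Definition~\ref{app_B: AMP_description}. (3) Feed in the induction hypothesis $\mathbb{E}\|\mathbf{u}^{[k]} - \mathbf{w}^{[k]}\|^2 \le C_k$ together with the $\ell^p$ moment bounds of Proposition~\ref{prop:tang_prop_2} and the weak law of Lemma~\ref{prop:tang_thm_1} (to identify the limiting value of $d_k$ and of the analogous cavity quantities), and collect terms to obtain $\mathbb{E}\|\mathbf{u}^{[k+1]} - \mathbf{w}^{[k+1]}\|^2 \le C_{k+1}$.

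The main obstacle is step (2): carefully carrying out the Gaussian conditioning argument so that the Onsager term emerges with the right constant, while tracking the error terms well enough to keep them $O(1)$ in $\mathbb{E}\|\cdot\|^2$ (not merely $o(n)$). This requires re-running the combinatorial self-avoiding-walk expansion of \cite{chen2021convergence}, but now with the additional complication that (i) the "spins" are soft, so the self-overlap parameter $\sigma_*^2$ enters the second argument of $\alpha'$ and must be held fixed at its fixed-point value from Lemma~\ref{lem:fixed_point_unique}, and (ii) the external fields $h_i$ are random and non-identical, so one conditions on $\{h_i\}$ throughout and uses the independence and boundedness of the $h_i$ only at the end. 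I would handle (i) by noting that $\alpha', \alpha''$ and all higher derivatives are bounded uniformly (exponential family on $[-1,1]$), so the soft-spin case is, for the purposes of these moment estimates, no worse than the Ising case; and (ii) by observing that all the relevant recursions and the function $f$ have Lipschitz constants independent of $h$, so the per-coordinate error bounds in Proposition~\ref{prop:tang_prop_2} survive the conditioning. Once these two points are in place, the induction closes exactly as in \cite[Section 3]{chen2021convergence}, with $C_k$ growing (at worst) geometrically in $k$ via the recursion $C_{k+1} \le A + B\, C_k$ for absolute constants $A, B$ depending on $\beta$ and the derivative bounds.
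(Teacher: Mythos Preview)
Your proposal is correct and follows essentially the same route as the paper: induction on $k$, a surrogate iterate replacing $u^{[k]}$ by $w^{[k]}$ in the AMP update, a Taylor expansion to extract and cancel the Onsager correction against the backtracking term, and then the self-avoiding-walk combinatorics of \cite{chen2021convergence} (via repeated Gaussian integration by parts rather than sigma-algebra conditioning) to bound the residual cross terms. The only minor refinement in the paper's execution is that the induction is carried at the per-coordinate level, $\max_i \mathbb{E}(u_i^{[k]}-w_i^{[k]})^2 \le C_k/n$, with site symmetry (exchangeability in expectation over $\{g_{ij}\}$ and $\{h_i\}$) used to pass between this and the $\ell^2$ statement.
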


\begin{proof}


    Our proof proceeds by induction on $k$. For two sequences $\{u_i: 1\leq i \leq n\}$ and $\{v_i: 1\leq i \leq n\}$ and $S \subseteq [n]$, we say that $u_i \asymp_2 v_i$ for all $i \in S$ if 
    \begin{align*}
        \max_{i \in S} \mathbb{E}(u_i - v_i)^2 \leq \frac{C}{n}
    \end{align*}
    for some constant $C>0$ independent of $n$. We say that $\mathbf{u} = (u_i) \asymp_2 \mathbf{v}= (v_i)$ if $u_i \asymp_2 v_i$ for all $i \in [n]$. 
    
    We assume that for all $k' \leq k$
      $\{u_i^{[k']} : i \in [n]\} \asymp_2 \{w_i^{[k']}: i \in [n]\}$. The base cases $k=0,1$ can be verified directly.
      Construct first the surrogate 
      \begin{align}
          \bar{\mathbf{u}}^{[k+1]} = \frac{1}{\sqrt{n}}\mathbf{G} \alpha'(\beta \mathbf{w}^{[k]} + \mathbf{h}, \beta^2 \sigma_*^2 ) - \Big( \frac{\beta}{n} \sum_{j=1}^{n} \alpha'' (\beta w_j^{[k]} + h_j , \beta^2 \sigma_*^2)\Big)  \alpha' ( \beta \mathbf{w}^{[k-1]} + \mathbf{h} , \beta^2 \sigma_*^2).  \label{eq:surrogate}
      \end{align}
 Since $\mathbf{G}/\sqrt{n}$ is square integrable, $\alpha'$ is bounded Lipschitz and $\alpha''$ is bounded, we have, 
      \begin{align*}
          \mathbb{E}[\| \mathbf{u}^{[k+1]} - \bar{\mathbf{u}}^{[k+1]} \|^2] \lesssim \mathbb{E}[\| \mathbf{u}^{[k]} - \mathbf{w}^{[k]}\|^2 ] + \mathbb{E}[\| \mathbf{u}^{[k-1]} - \mathbf{w}^{[k-1]}\|^2 ].  
      \end{align*}
      Using site symmetry, we obtain that $\mathbf{u}^{[k]} \asymp_2 \bar{\mathbf{u}}^{[k]}$. 
  As a second approximation, we check that 
  \begin{align}
      \bar{u}^{[k+1]}_i  \asymp_2  \frac{1}{\sqrt{n}} \sum_{j\neq i} g_{ij} \alpha'\Big( \frac{\beta}{\sqrt{n}} \sum_{\ell \neq i,j} g_{j\ell} \alpha'(\beta w^{[k-1]}_{\{j\},\ell} + h_{\ell}, \beta^2 \sigma_*^2) + h_j , \beta^2 \sigma_*^2 \Big).  \label{eq:approx_int1}
  \end{align}
  To this end, note that 
  \begin{align*}
      &\frac{1}{\sqrt{n}} \sum_{j \neq i} g_{ij} \alpha'(\beta w^{[k]}_j + h_j , \beta^2 \sigma_*^2) \nonumber \\
      &= \frac{1}{\sqrt{n}} \sum_{j \neq i} g_{ij} \alpha' \Big( \frac{\beta}{\sqrt{n}} \sum_{\ell \neq i,j} g_{j \ell} \alpha' ( \beta w^{[k-1]}_{\{j\},\ell} + h_{\ell}, \beta^2 \sigma_*^2) + \frac{\beta}{\sqrt{n}} g_{ij} \alpha'(\beta w^{[k-1]}_{\{j\},i} + h_i, \beta^2 \sigma_*^2) + h_j , \beta^2 \sigma_*^2 \Big) \\
      &= \frac{1}{\sqrt{n}} \sum_{j \neq i} g_{ij} \alpha' \Big( \frac{\beta}{\sqrt{n}} \sum_{\ell \neq i,j} g_{j \ell} \alpha' ( \beta w^{[k-1]}_{\{j\},\ell} + h_{\ell}, \beta^2 \sigma_*^2) + h_j , \beta^2 \sigma_*^2\Big) \nonumber \\
      &+ \frac{\beta}{n} \sum_{j \neq i} g_{ij}^2 \alpha'(\beta w^{[k-1]}_{\{j\},i} + h_i, \beta^2 \sigma_*^2)  \alpha'' \Big( \frac{\beta}{\sqrt{n}} \sum_{\ell \neq i,j} g_{j \ell} \alpha' ( \beta w^{[k-1]}_{\{j\},\ell} + h_{\ell}, \beta^2 \sigma_*^2) + h_j  , \beta^2 \sigma_*^2\Big) + O_P\Big(\frac{1}{\sqrt{n}} \Big). 
  \end{align*}

  Using \eqref{eq:surrogate}, we have, 
  \begin{align*}
      &\bar{u}^{[k+1]}_i = \frac{1}{\sqrt{n}} \sum_{j \neq i} g_{ij} \alpha'\Big( \frac{\beta}{\sqrt{n}} \sum_{\ell \neq i,j} g_{j\ell} \alpha'(\beta w^{[k-1]}_{\{j\},\ell} + h_\ell , \beta^2 \sigma_*^2) +h_j , \beta^2 \sigma_*^2 \Big)  \\
      & + \frac{\beta}{n} \sum_{j \neq i} g_{ij}^2 B_{ij} D_{ij} - \frac{\beta}{n} \sum_{j=1}^{n} B_j D_i + O_P\Big(\frac{1}{\sqrt{n}} \Big), 
  \end{align*}
  where 
  \begin{align*}
      B_{ij} &= \alpha''\Big(  \frac{\beta}{\sqrt{n}} \sum_{\ell \neq i,j} g_{j, \ell} \alpha'(\beta w^{[k-1]}_{\{j\},\ell} + h_{\ell} , \beta^2 \sigma_*^2) + h_j , \beta^2 \sigma_*^2\Big), \\
      D_{ij} &= \alpha'(\beta w^{[k-1]}_{\{j\},i} + h_i , \beta^2 \sigma_*^2), \\
      B_j &= \alpha''(\beta w_j^{[k]} + h_j , \beta^2 \sigma_*^2)\\
      D_i &= \alpha'(\beta w^{[k-1]}_i + h_i , \beta^2 \sigma_*^2). \nonumber 
  \end{align*}
      
   The approximation \eqref{eq:approx_int1} now follows once we check that 
   \begin{align}
       \max_{i \in [n]} \mathbb{E}\Big[\Big(  \frac{1}{n} \sum_{j \neq i} g_{ij}^2 B_{ij} D_{ij} - \frac{1}{n} \sum_{j=1}^{n} B_j D_i  \Big)^2\Big] \lesssim \frac{1}{n}. \nonumber 
   \end{align}
   To bound follows exactly from the proof of \cite[Lemma 6]{chen2021convergence}---we omit the details, and refer the reader to \cite{chen2021convergence} for a detailed proof. 

   Since $\mathbf{u}^{[k]} \asymp_2 \bar{\mathbf{u}}^{[k]}$, using \eqref{eq:approx_int1}, we note that it suffices to prove 
   \begin{align}
       &\frac{1}{\sqrt{n}} \sum_{j \neq i} g_{ij} \alpha' \Big( \frac{\beta}{\sqrt{n}} \sum_{\ell \neq i,j} g_{j\ell} \alpha'(\beta w^{[k-1]}_{\{j\},\ell} + h_{\ell} , \beta^2 \sigma_*^2) + h_j, \beta^2 \sigma_*^2  \Big) \nonumber \\
       &\asymp_2 \frac{1}{\sqrt{n}} \sum_{j \neq i} g_{ij} \alpha' \Big( \frac{\beta}{\sqrt{n}} \sum_{\ell \neq i,j} g_{j\ell} \alpha'(\beta w^{[k-1]}_{\{i,j\},\ell} + h_{\ell} , \beta^2 \sigma_*^2) + h_j, \beta^2 \sigma_*^2  \Big):= w_i^{[k+1]}. 
   \end{align}

   We fix $i \in [n]$ and for $j \neq i$, set 
   \begin{align*}
       L_j &= \alpha'\Big( \frac{\beta}{\sqrt{n}} \sum_{\ell \neq i,j} g_{j\ell} \alpha'(\beta w^{[k-1]}_{\{j\},\ell} + h_{\ell}, \beta^2 \sigma_*^2) + h_j, \beta^2 \sigma_*^2  \Big), \\
       K_j &= \alpha'\Big( \frac{\beta}{\sqrt{n}} \sum_{\ell \neq i,j} g_{j\ell} \alpha'(\beta w^{[k-1]}_{\{i,j\},\ell} + h_{\ell}, \beta^2 \sigma_*^2) + h_j, \beta^2 \sigma_*^2  \Big)
   \end{align*}
      
    Proceeding as in the proof of \cite[Section 6]{chen2021convergence}, one can show that 
    \begin{align}
        &\mathbb{E}\Big[ \Big( \frac{1}{\sqrt{n}} \sum_{j \neq i} g_{ij} (L_j - K_j) \Big)^2\Big] \\
        &= \frac{1}{n} \sum_{\tau, \iota \neq i, \tau \neq \iota} \mathbb{E} \Big[ \mathbb{E}_{g_{i\tau}}[g_{i\tau} L_\iota] \mathbb{E}_{g_{i \iota}}[g_{i \iota} L_{\tau} ] \Big] + \frac{1}{n} \sum_{j \neq i} \mathbb{E}[g_{ij}^2  (L_j - K_j)^2]. \label{eq:squared_moment}
    \end{align}
  Following the proof of \cite[Section 6]{chen2021convergence}, one can show that the second term is of $O(1/n)$. We omit the details to avoid repetition. It remains to control the first term. We follow the second moment approach introduced in \cite{chen2021convergence}.

  Note that $L_{\tau} = \alpha'(\Delta_{\tau} + h_{\tau}, \beta^2 \sigma_*^2)$, where 
  \begin{align*}
      \Delta_{\tau} = \frac{\beta}{\sqrt{n}} \sum_{\tau_{k-1} \neq i, \tau} g_{ \tau \tau_{k-1}} \alpha' (\beta w^{[k-1]}_{\{\tau\}, \tau_{k-1}} + h_{\tau_{k-1}}, \beta^2 \sigma_*^2). \nonumber 
  \end{align*}
  Since $\iota \neq \tau, i$, $g_{i \iota} \neq g_{\tau \tau_{k-1}}$. Thus using gaussian integration by parts and chain rule, we have, 
  \begin{align}
      \mathbb{E}_{g_{i\iota}} [g_{i\iota} L_{\tau}] = \frac{\beta}{\sqrt{n}}\mathbb{E}_{g_{i\iota}}\Big[\alpha''(\Delta_{\tau} + h_{\tau},\beta^2 \sigma_*^2) \sum_{\tau_{k-1} \neq i, \tau} g_{\tau \tau_{k-1}} \frac{\partial}{\partial g_{i\iota}} \alpha'(\beta w^{[k-1]}_{\{\tau\}, \tau_{k-1}} + h_{\tau_{k-1}}, \beta^2 \sigma_*^2) \Big]. \nonumber  
  \end{align}
  We unroll the iterations till we encounter $g_{i \iota}$ or $g_{\iota i}$ for the first time. Assume that this occurs for $1\leq r \leq k-1$; note that if this occurs at iteration $r$, the subsequent iterations do not contain these terms due to the self-avoiding nature of the paths. Consequently, we obtain the following expression: 

  \begin{align*}
      \mathbb{E}_{g_{i\iota}}[g_{i\iota} L_{\tau}] = \sum_{r=1}^{k-1} \Big( \frac{\beta}{\sqrt{n}} \Big)^{(k-r+1)} \mathbb{E}_{g_{i\iota}}\Big[  \sum_{I_{\tau,r} \in \mathcal{I}_{\tau,r}} G_{I_{\tau,r}} F_{I_{\tau,r}}(G)  \Big] \mathbf{1}_{\{ \{\tau_r, \tau_{r-1}\} = \{i, \iota\} \}}
  \end{align*}
  where $\mathcal{I}_{\tau,r}$ is the set of self-avoiding paths $I_{\tau,r} = (\tau_k, \tau_{k-1}, \cdots, \tau_{r-1})$. In addition, 
  \begin{align}
      G_{I_{\tau,r}} &= \prod_{s=r}^{k-1} g_{\tau_{s+1} \tau_s} \nonumber \\
      F_{I_{\tau,r}} &= \alpha''(\Delta_{\tau} + h_{\tau}, \beta^2 \sigma_*^2) \Big( \prod_{s=r}^{k-1} \alpha''(\beta w^{[s]}_{\{\tau, \tau_{k-1}, \cdots, \tau_{s+1}\}, \tau_s} +h_{\tau_s} , \beta^2 \sigma_*^2)  \Big)  \times \nonumber \\
      & \alpha'(\beta w^{[r-1]}_{\{\tau, \tau_{k-1}, \cdots, \tau_r\}, \tau_{r-1}} + h_{\tau_{r-1}} , \beta^2 \sigma_*^2). \nonumber 
  \end{align}

  Thus we have, 
  \begin{align}
      &\mathbb{E}\Big[ \mathbb{E}_{g_{i \tau}} [g_{i \tau} L_{\iota}] \mathbb{E}_{g_{i \iota}}[g_{i \iota} L_{\tau}] \Big] = \sum_{r,r'=1}^{k-1} \Big(\frac{\beta}{\sqrt{n}} \Big)^{2k+2 - (r+r')} \cdot \nonumber \\
      & \sum_{I_{\tau,r} \in \mathcal{I}_{\tau,r}, I_{\iota,r'} \in \mathcal{I}_{\iota,r'}} \mathbb{E}\Big[ G_{I_{\tau,r}} G_{I_{\iota,r'}} F_{I_{\tau,r}} (G) F_{I_{\iota,r'}}(G)\Big] \mathbf{1}_{\{\tau_r, \tau_{r-1} \} = \{i, \iota\}, \{\iota_{r'}, \iota_{r'-1} \} = \{ i, \tau \} } \label{eq:cross_product_int1}
  \end{align}
   where the last equality uses  that $G_{I_{\tau,r}} F_{I_{\tau,r}}(G)$ is independent of $g_{i \tau}$ and vice versa. The rest of the proof proceeds along the same lines as the proof of  \cite[Theorem 2]{chen2021convergence}. For the sake of conciseness, we highlight the main steps in the remaining proof, and refer to \cite{chen2021convergence} for complete details. First, note that $I_{\tau,r}$, $I_{\iota, r'}$ are self avoiding paths which satisfy the constraints $(\tau_r, \tau_{r-1}) = (i, \iota)$ or $(\tau_r, \tau_{r-1}) = (\iota, i)$ and $(\iota_{r'}, \iota_{r'-1}) = (i, \tau)$ or $(\iota_{r'}, \iota_{r'-1}) = ( \tau, i)$. Consequently, the pairs of self-avoiding paths $(I_{\tau,r}, I_{\iota,r'})$ may be classified according to the number of common (undirected edges) $s$ and distinct vertices in the shared edges $t$. By definition, the last edge in $I_{\tau,r}$ i.e. $(\tau_r, \tau_{r-1})$ cannot be shared with $I_{\iota,r'}$. This leads to the a priori bounds on $s,t$: either $s=t=0$ or $1\leq s \leq \min\{ k -r, k-r'\}$ and $s+1 \leq t \leq \min\{ 2s, k-r + 1, k-r'+1\}$. For fixed values of $s$ and $t$ these self-avoiding path pairs may be further classified depending on how many of $\{\tau, \tau_r\}$ occur on the overlapping edges---note that this value $\ell \in \{0,1,2\}$. Denoting this collection of self-avoiding path pairs as $\mathcal{I}_{\tau,\iota, r,r'}(s,t,\ell)$, \cite[Lemma 7]{chen2021convergence} establishes that $|\mathcal{I}_{\tau,\iota, r,r'}(s,t,\ell)| \lesssim n^{2k - r -r' - t + \ell -2}$. 

   For a specific set of values of $(s,t,\ell)$, to evaluate $\mathbb{E}\Big[ G_{I_{\tau,r}} G_{I_{\iota,r'}} F_{I_{\tau,r}} (G) F_{I_{\iota,r'}}(G)\Big]$, there are $2k - r -r'-2s$ many non-overlapping edges in the pair $(I_{\tau,r}, I_{\iota,r'})$. The corresponding gaussian variables appear once in the product $G_{I_{\tau,r}} G_{I_{\iota,r'}}$---we can apply integration by parts to evaluate the derivative with respect to these gaussians. This implies 
   \begin{align*}
       \mathbb{E}\Big[ G_{I_{\tau,r}} G_{I_{\iota,r'}} F_{I_{\tau,r}} (G) F_{I_{\iota,r'}}(G)\Big] &= \mathbb{E}[S_{I_{\tau,r},I_{\iota,r'}} \partial_{P_{I_{\tau,r},I_{\iota,r'}}}F_{I_{\tau,r}} (G) F_{I_{\iota,r'}}(G)]\nonumber \\
       &= \sqrt{\mathbb{E}[(S_{I_{\tau,r},I_{\iota,r'}})^2] \mathbb{E}\Big[ \Big(\partial_{P_{I_{\tau,r},I_{\iota,r'}}}F_{I_{\tau,r}} (G) F_{I_{\iota,r'}}(G) \Big)^2 \Big]}
   \end{align*}
   In the display above, $S_{I_{\tau,r}, I_{\iota,r'}}$ denotes the gaussians corresponding to the shared edges, while $P_{I_{\tau,r}, I_{\iota,r'}}$ denotes the non-overlapping edges. Note that $\mathbb{E}[(S_{I_{\tau,r},I_{\iota,r'}})^2] \leq \mathbb{E}[|z|^{4s}]$ for $z \sim \mathcal{N}(0,1)$. Thus, it suffices to control the second term. We claim that 
   \begin{align}
       \mathbb{E}\Big[ \Big(\partial_{P_{I_{\tau,r},I_{\iota,r'}}}F_{I_{\tau,r}} (G) F_{I_{\iota,r'}}(G) \Big)^2 \Big] \lesssim \frac{1}{n^{2k - 2s - r-r'}}. 
   \end{align}
   The proof follows upon combining Holder's inequality with the boundedness of $\alpha'$ and $\alpha''$. We refer to \cite[Lemma 8, Proposition 4 and Proposition 5]{chen2021convergence} for additional details. 

Plugging these estimates back into \eqref{eq:cross_product_int1}, we obtain that 
\begin{align}
    \mathbb{E}\Big[ \mathbb{E}_{g_{i \tau}} [g_{i \tau} L_{\iota}] \mathbb{E}_{g_{i \iota}}[g_{i \iota} L_{\tau}] \Big]  \lesssim \frac{1}{n^{k+1-(r+r')/2}} n^{2k - r -r' - t + \ell - 2 } \frac{1}{n^{k-s- (r+r')/2}} = \Theta\Big( \frac{1}{n^{3+t-s - \ell}} \Big). 
\end{align}

Upon observing that if $s=t=0$, then $\ell=0$, otherwise $t \geq s+1$ and $\ell \leq 2$, it follows that LHS is $O(1/n^2)$. Plugging into \eqref{eq:squared_moment} completes the proof.   
 \end{proof}

Next, we control the covariance structure between the cavity iterates $w^{[k]}$. To this end, for $1 \le k \le n-1$ and $S \in [n]_k$, define $\nu^{[k]}_S= \nu^{[k]}_{S,i}$, $i \in [n]\setminus S$, by 
\begin{equation*}
    \nu^{[k]}_{S,i}= \alpha' (\beta w^{[k]}_{S,i}+h_i, \beta^2 \sigma^2_*),
\end{equation*}
and following our convention, we define $\nu^{[k]}=\nu^{[k]}_{\emptyset}$. Define the overlap between $\langle \by \rangle_S$ and $\nu^{[k]}_S$ by 
\begin{equation}\label{eq:define_rsk}
    R^k_S= \frac{1}{n} \sum_{j \notin S} \langle y_i \rangle_S \nu^{[k]}_{S,i}.
\end{equation}
We further define $D_S:= \frac{1}{n} \sum_{j \notin S} \langle y_i \rangle^2_S$ and $E^k_S= \frac{1}{n} \sum_{j \notin S} \big(\nu^{[k]}_{S,i}\big)^2$. Define the function 
\begin{align}\label{eq:aux_function}
    \Gamma(t, \gamma_1,\gamma_2)= \ee &\alpha' (\beta Z \sqrt{\gamma_1 |t|} + \beta Z_1 \sqrt{ \gamma_1 (1-|t|)}+h, \beta^2 \sigma^2_*) \nonumber \\ &\alpha' (\beta Z \text{ sign}(t) \sqrt{\gamma_2  |t|} + \beta Z_2 \sqrt{ \gamma_2 (1-|t|)}+h, \beta^2 \sigma^2_*),
\end{align}
where $Z,Z_1,Z_2$ i.i.d. standard Gaussian. The asymptotic behavior of the quantities defined above is given by the following proposition whose proof follows from \cite[Proposition 3]{chen2021convergence} and is thus omitted for brevity.

\begin{lemma}\label{lem:chen_prop_3}
    There exists $\beta_0>0$, such that for any $\beta<\beta_0$, we have,
    \begin{align*}
&\lim\limits_{n \rightarrow \infty} \sup_{|S|=l} \ee |D_S-q_*|^2=0\\
&\lim\limits_{n \rightarrow \infty} \sup_{|S|=l} \ee |E^{[k]}_S-q_*|^2\\
&\lim\limits_{n \rightarrow \infty} \sup_{|S|=l} \ee |R^{[k]}_S-\Delta^{\circ (k-1)}(Q_*)|^2,
    \end{align*}
    where $Q_*= \sqrt{q_*} \,\ee \alpha'(\beta \sqrt{q_*}Z+h, \beta^2 \sigma^2_*)$, $\Delta(x)=\Gamma(x/q_*,q_*,q_*)$ and $\Delta^{\circ (k-1)}$ denotes the composition of $\Delta$ for $(k-1)$ times.
\end{lemma}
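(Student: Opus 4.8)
The plan is to adapt the proof of \cite[Proposition 3]{chen2021convergence} to the soft-spin model \eqref{eq:SK_model} with i.i.d.\ random external fields, substituting the estimates developed earlier in this appendix for their hard-spin counterparts. We prove the three assertions (the last two for $k\ge2$, which is the regime relevant to Algorithm~\ref{alg:amp}) in increasing order of difficulty. \textbf{Self-overlap $D_S$:} fix $S$ with $|S|=l$ and note that the leave-$S$-out measure $\nu_S$ is again of the form \eqref{eq:SK_model}, on $m:=n-l$ spins, with i.i.d.\ external fields $\{h_i:i\notin S\}$ and inverse temperature $\beta_m:=\beta\sqrt{m/n}<\beta_0$. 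Since $D_S=\tfrac{m}{n}\,\nu_S\!\bigl(\tfrac1m\sum_{i\notin S}y_i^1y_i^2\bigr)$, Theorem~\ref{thm:overlap_conc} applied to $\nu_S$ yields $\mathbb{E}\bigl[\nu_S\bigl((\tfrac1m\sum_{i\notin S}y_i^1y_i^2-q_*(\beta_m))^2\bigr)\bigr]\lesssim 1/m$, with implied constant depending only on $\beta$ and $\mathrm{Supp}(\mu)$ and hence uniform over $|S|=l$; moreover $|q_*(\beta_m)-q_*|\lesssim|\beta_m-\beta|\lesssim l/n$ by Lipschitz stability of the fixed point, which follows from \eqref{eq:psi_lipschitz} exactly as in Lemma~\ref{lem:fixed_point_h_stability}. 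Combining, $\sup_{|S|=l}\mathbb{E}|D_S-q_*|^2\to0$.

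\textbf{Squared cavity average $E^{[k]}_S$:} by Proposition~\ref{prop:tang_prop_2}, $\max_{i\notin S}\mathbb{E}|w^{[k]}_{S,i}-w^{[k]}_i|^2\lesssim 1/n$ uniformly over $|S|=l$; since $x\mapsto\alpha'(\beta x+h,\beta^2\sigma_*^2)^2$ is bounded and Lipschitz uniformly in $h$, this reduces the claim to $\mathbb{E}\bigl|\tfrac1n\sum_{i=1}^n\alpha'(\beta w^{[k]}_i+h_i,\beta^2\sigma_*^2)^2-q_*\bigr|^2\to0$. By Lemma~\ref{prop:tang_thm_1} this average converges in $L^2$ to $\mathbb{E}[\alpha'(\beta W_k+h,\beta^2\sigma_*^2)^2]$, so it remains to identify this with $q_*$. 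From the covariance formula in Lemma~\ref{prop:tang_thm_1}, $\mathrm{Var}(W_2)=q_*$ (by the definition of $w^{[2]}$) and $\mathrm{Var}(W_{j+1})=\mathbb{E}[\alpha'(\beta W_j+h,\beta^2\sigma_*^2)^2]$, whence the second equation in \eqref{eq:fixed_pt} gives inductively $W_k\sim\mathcal{N}(0,q_*)$ for every $k\ge2$, and therefore $\mathbb{E}[\alpha'(\beta W_k+h,\beta^2\sigma_*^2)^2]=q_*$.

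\textbf{Cross-overlap $R^{[k]}_S$:} induct on $k\ge2$. Conditioning on the leave-$(S\cup\{i\})$-out system, both $\langle y_i\rangle_S$ and $w^{[k]}_{S,i}$ are, up to an $L^2$-error of order $1/\sqrt n$ uniform in $|S|=l$, functions of the single fresh Gaussian family $\{g_{ij}:j\notin S\cup\{i\}\}$: for $\langle y_i\rangle_S$ this is the cavity approximation $\langle y_i\rangle_S\approx\alpha'\!\bigl(\tfrac{\beta}{\sqrt n}\sum_{j\notin S\cup\{i\}}g_{ij}\langle y_j\rangle_{S\cup\{i\}}+h_i,\beta^2\sigma_*^2\bigr)$ proved earlier in Appendix~\ref{appendix: sk_model} (applied to $\nu_S$), while $w^{[k]}_{S,i}$ is given by \eqref{eq:cavity_iteration}. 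Conditionally, the two linear forms in the fresh Gaussians are jointly centered Gaussian with variances tending to $\beta^2q_*$ (by the first two steps) and covariance $\beta^2R^{[k-1]}_{S\cup\{i\}}$. Averaging $\langle y_i\rangle_S\,\nu^{[k]}_{S,i}$ over $i$, using site symmetry, controlling the approximation errors via the Gaussian-chaos bound of Theorem~\ref{thm:cavity_field_normality}, and invoking the continuity of $\Gamma$ in \eqref{eq:aux_function} together with the induction hypothesis $R^{[k-1]}_{S'}\to\Delta^{\circ(k-2)}(Q_*)$ (uniform over $|S'|=l+1$), one gets $R^{[k]}_S\to\Gamma\!\bigl(\tfrac1{q_*}\Delta^{\circ(k-2)}(Q_*),q_*,q_*\bigr)=\Delta^{\circ(k-1)}(Q_*)$ in $L^2$. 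The base case $k=2$ uses $\nu^{[2]}_{S,i}=\alpha'\!\bigl(\tfrac{\beta\sqrt{q_*}}{\sqrt n}\sum_j g_{ij}+h_i,\beta^2\sigma_*^2\bigr)$, for which the covariance of the two conditional cavity fields is $\beta^2\sqrt{q_*}\cdot\tfrac1n\sum_j\langle y_j\rangle_{S\cup\{i\}}\to\beta^2\sqrt{q_*}\,\mathbb{E}[\alpha'(\beta\sqrt{q_*}Z+h,\beta^2\sigma_*^2)]=\beta^2Q_*$ by Lemma~\ref{lem:avg_magnetization}, so $R^{[2]}_S\to\Gamma(Q_*/q_*,q_*,q_*)=\Delta(Q_*)=\Delta^{\circ1}(Q_*)$.

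\textbf{Main obstacle.} The hard part is the inductive step for $R^{[k]}_S$: one must make the conditional-Gaussian picture rigorous while keeping every error uniform over $|S|=l$ and propagating it through the recursion. This is exactly where the soft-spin/random-field analogues established above---Theorems~\ref{thm:overlap_conc}, \ref{thm:overlap_higher_moment} and \ref{thm:cavity_field_normality}, Proposition~\ref{prop:tang_prop_2}, and Lemma~\ref{lem:avg_magnetization}---replace the corresponding ingredients of \cite{chen2021convergence}. The genuinely new structural features relative to the Ising SK model, namely that the self-overlap $\tfrac1n\sum y_i^2$ is no longer frozen (handled by carrying $D_S$ as a separate quantity) and that the coordinates are exchangeable only in law given $\{h_i\}$ (handled by the conditioning-on-$h$ device already used for Lemma~\ref{prop:tang_thm_1}), are absorbed into the earlier steps, so that no mechanism beyond careful bookkeeping of the self-avoiding-walk expansion of \cite{chen2021convergence} is needed.
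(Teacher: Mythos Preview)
Your proposal is correct and follows essentially the same approach as the paper: the paper omits the proof entirely, stating that it ``follows from \cite[Proposition 3]{chen2021convergence}'', and your outline is precisely the soft-spin/random-field adaptation of that argument, substituting Theorems~\ref{thm:overlap_conc}, \ref{thm:cavity_field_normality}, Proposition~\ref{prop:tang_prop_2}, and Lemmas~\ref{prop:tang_thm_1} and~\ref{lem:avg_magnetization} for their Ising-spin counterparts. Your identification of the conditional covariance as $\beta^2 R^{[k-1]}_{S\cup\{i\}}$ and the base-case limit as $\Delta(Q_*)$ are both correct, so the inductive scheme closes as you describe.
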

Finally, we are in a position to state the result regarding convergence of AMP iterations to local magnetization. Recall that $q_*$ introduced in Definition~\ref{defn:fixed_points_sk} is a function of $\beta$. For the next definition, we will make this definition explicit, and refer to the corresponding fixed point as $q_*(\beta)$. 
\begin{thm}\label{thm:amp_good}
    Assume that $\beta>0$ satisfies that for some $\delta>0$, we have
    \begin{equation*}
\lim\limits_{n \rightarrow \infty} \sup_{\beta - \delta \leq \beta' \leq \beta}\mathbb{E}[\nu_{\beta'}((R_{1,2} - q_*(\beta'))^2)] =0.
    \end{equation*}
    Then we have 
    \begin{equation}
        \lim\limits_{k \rightarrow \infty} \lim\limits_{n \rightarrow \infty} \frac{1}{n}\ee \|\langle \by \rangle - \mathbf{m}^{[k]} \|^2=0,
    \end{equation}
where $\mathbf{m}^{[k]}$ is defined by Algorithm \ref{alg:amp}.
\end{thm}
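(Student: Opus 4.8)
The plan is to reduce the statement to a fact about the cavity iterates and their transforms, for which the limiting overlaps are already recorded in Lemma~\ref{lem:chen_prop_3}, and then to verify that the resulting deterministic recursion converges to $q_*$ at high temperature. First I would pass from $\mathbf{m}^{[k]}$ to the vector $\boldsymbol{\nu}^{[k]} := (\nu^{[k]}_i)_{i\in[n]}$ with $\nu^{[k]}_i = \nu^{[k]}_{\emptyset,i} = \alpha'(\beta w^{[k]}_i + h_i,\beta^2\sigma_*^2)$. Since $\mu$ is compactly supported, $|\alpha''|\le 1$, so $x\mapsto \alpha'(\beta x + h,\beta^2\sigma_*^2)$ is $\beta$-Lipschitz uniformly in $h$, and step~vii of Algorithm~\ref{alg:amp} gives $m_i^{[k]} = \alpha'(\beta u_i^{[k]} + h_i,\beta^2\sigma_*^2)$; hence $\|\mathbf{m}^{[k]}-\boldsymbol{\nu}^{[k]}\|^2 \le \beta^2\|\mathbf{u}^{[k]}-\mathbf{w}^{[k]}\|^2$, which is $O(1)$ in expectation by the AMP-tracks-cavity theorem of Appendix~\ref{appendix: amp+cavity}. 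Thus $\tfrac1n\mathbb{E}\|\mathbf{m}^{[k]}-\boldsymbol{\nu}^{[k]}\|^2\to 0$ for each fixed $k$, and by the triangle inequality it suffices to prove $\lim_{k\to\infty}\lim_{n\to\infty}\tfrac1n\mathbb{E}\|\langle\by\rangle - \boldsymbol{\nu}^{[k]}\|^2 = 0$.

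Next I would expand the squared error into overlaps,
\begin{align*}
\frac1n\|\langle\by\rangle - \boldsymbol{\nu}^{[k]}\|^2 = D - 2R^k + E^k,
\end{align*}
where $D = D_\emptyset = \tfrac1n\sum_i\langle y_i\rangle^2$, $R^k = R^k_\emptyset$ as in \eqref{eq:define_rsk}, and $E^k = E^k_\emptyset = \tfrac1n\sum_i(\nu^{[k]}_i)^2$. Here $D$ coincides with the annealed two-replica overlap $\nu(R_{1,2})$, so Jensen's inequality together with Theorem~\ref{thm:overlap_conc} gives $\mathbb{E}[(D-q_*)^2]\lesssim 1/n$; meanwhile Lemma~\ref{lem:chen_prop_3} (taken at $S=\emptyset$) gives $\mathbb{E}[(E^k-q_*)^2]\to 0$ and $\mathbb{E}[(R^k-\Delta^{\circ(k-1)}(Q_*))^2]\to 0$ as $n\to\infty$. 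Combining the three limits,
\begin{align*}
\lim_{n\to\infty}\mathbb{E}\Big[\tfrac1n\|\langle\by\rangle - \boldsymbol{\nu}^{[k]}\|^2\Big] = 2\big(q_* - \Delta^{\circ(k-1)}(Q_*)\big).
\end{align*}

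It then remains to show $\Delta^{\circ(k-1)}(Q_*)\to q_*$ as $k\to\infty$. I would check: (i) $q_*$ is a fixed point of $\Delta$, since by \eqref{eq:aux_function}, $\Delta(q_*) = \Gamma(1,q_*,q_*) = \mathbb{E}[\alpha'(\beta\sqrt{q_*}Z+h,\beta^2\sigma_*^2)^2] = q_*$ by the second equation of \eqref{eq:fixed_pt}; (ii) the initialization lies in the natural domain, $Q_*^2 = q_*\big(\mathbb{E}\alpha'(\beta\sqrt{q_*}Z+h,\beta^2\sigma_*^2)\big)^2\le q_*^2$ by Cauchy--Schwarz, and the same inequality shows $\Delta$ maps $[-q_*,q_*]$ into itself; (iii) differentiating $\Gamma$ in its first argument and using that every partial derivative of $\alpha'$ is bounded (compact support of $\mu$) yields $\sup_{[-q_*,q_*]}|\Delta'|\le C\beta^2$ for a universal $C$. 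Hence there is a threshold $\bar\beta_0\le\beta_0$ such that for $\beta<\bar\beta_0$ the map $\Delta$ is a strict contraction of $[-q_*,q_*]$ with fixed point $q_*$, so $\Delta^{\circ(k-1)}(Q_*)\to q_*$ geometrically; plugging this into the previous display and recalling the reduction of the first paragraph completes the proof.

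The main obstacle is not in this assembly but in the inputs it rests on — above all Lemma~\ref{lem:chen_prop_3}, where the identification $R^k\to\Delta^{\circ(k-1)}(Q_*)$ is established by a Gaussian-interpolation / leave-one-out computation that in effect varies the inverse temperature along the interpolation path; this is precisely the place where the hypothesis $\lim_n\sup_{\beta-\delta\le\beta'\le\beta}\mathbb{E}[\nu_{\beta'}((R_{1,2}-q_*(\beta'))^2)]=0$ is consumed, rather than the single-temperature concentration of Theorem~\ref{thm:overlap_conc}. The genuinely new features relative to the Ising SK analysis of \cite{chen2021convergence} — the self-overlap $R$ being non-constant and the random external fields $h_i$ breaking exact exchangeability among coordinates — are absorbed into the appendix results (Theorems~\ref{thm:overlap_conc}, \ref{thm:overlap_higher_moment}, \ref{thm:cavity_field_normality}, the leave-one-out lemma, and the tracking theorem), so that the proof of Theorem~\ref{thm:amp_good} itself is a short bookkeeping argument once those are in place.
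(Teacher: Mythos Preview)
Your proposal is correct and follows essentially the same route as the paper: the paper's proof explicitly defers the reduction (AMP $\to$ cavity, then the overlap decomposition $D - 2R^k + E^k$) to \cite[Theorem~3]{chen2021convergence} and focuses only on establishing $\Delta^{\circ k}(Q_*)\to q_*$, which is exactly the point you isolate. The one minor difference is in that last step: you argue via Banach contraction ($\sup|\Delta'|\le C\beta^2<1$ for $\beta$ small), whereas the paper uses the same derivative bound $|\Delta'(t)|\le\beta^2\|\alpha''\|_\infty^4<1$ to conclude $q_*$ is the \emph{unique} fixed point, then notes $\Delta(0)>0$ to deduce $\Delta(t)>t$ on $[-q_*,q_*)$ and obtains convergence by monotonicity; both arguments are valid under the high-temperature assumption. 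Your remark that the locally uniform overlap concentration hypothesis is consumed inside Lemma~\ref{lem:chen_prop_3} (rather than in this proof itself) is accurate and matches how the paper structures the dependencies.
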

\begin{proof}
    The proof follows the same way as \cite[Theorem 3]{chen2021convergence} once we can prove that 
    \begin{align}\label{eq:delta_fixed_point}
        \lim\limits_{k \rightarrow \infty} \Delta^{\circ k} (Q_*)= q_*
    \end{align}
Note that,
\begin{align*}
    \Delta(t)= \ee &\alpha' (\beta Z \sqrt{|t|} + \beta Z_1 \sqrt{(q_*-|t|)}+h, \beta^2 \sigma^2_*) \nonumber \\ &\alpha' (\beta Z \text{ sign}(t) \sqrt{  |t|} + \beta Z_2 \sqrt{ (q_*-|t|)}+h, \beta^2 \sigma^2_*),
\end{align*}
implying, by \eqref{eq:fixed_pt}, $\Delta(q_*)=q_*$. Note that, if $t \in [-q_*,q_*]$, we have, by Cauchy-Schwarz inequality,
\begin{align*}
    |\Delta(t)| \le \ee (\alpha' (\beta Z \sqrt{q_*}+h, \beta^2 \sigma^2_*))^2=q_*.
\end{align*}
Further, we have, $0<  |\Delta'(t)| \le \beta^2 \|\alpha ''\|^4_{\infty} < 1$ for $\beta<1$, since $\|\alpha ''\|_{\infty} \le 1$. Hence, if $\Delta(t)=t$ for any $t \in [-q,q)$,  by mean value theorem, there exists some $s \in (t,q)$ with $\Delta'(s)=1$ contradicting above. Since $\Delta(0)>0$, this yields $\Delta(t)>t$ for all $t \in [-q,q]$.

Therefore $\Delta(Q_*)> Q_*$ implying, $\Delta^{\circ (k)}(Q_*)> \Delta^{\circ (k-1)}(Q_*)$. Hence the limit of \eqref{eq:delta_fixed_point} exists and equals the fixed point of $\Delta$, which is $q_*$. This completes the proof of the result.
\end{proof}

Next, we show the stability of the AMP iterations under $h$.

\begin{lemma}
   Suppose $\mathbf{h}_0, \mathbf{h}_1$ be two independent random vectors such that $\|\mathbf{h}_0-\mathbf{h}_1\|_\infty \le \varepsilon$ deterministically. Suppose the corresponding AMP iterates are denoted by $\bu^{[k]}_i, \mathbf{m}^{[k]}_i$, $i=0,1$ respectively. Then there exists $\beta_0$ such that for any $\beta<\beta_0$, and any $k \in \mathbb N  \cup \{0\}$, almost surely,
    \begin{equation}\label{eq:b13}
        \lim\limits_{\varepsilon \rightarrow 0^+}\lim\limits_{n \rightarrow \infty} \Delta_k:= \frac{1}{n} \|\bu^{[k]}_1- \bu^{[k]}_0\|^2 =0.
    \end{equation}
Further, we have \begin{equation}\label{eq:b14}
        \lim\limits_{\varepsilon \rightarrow 0^+}\lim\limits_{n \rightarrow \infty} \frac{1}{n} \|\mathbf{m}^{[k]}_1- \mathbf{m}^{[k]}_0\|^2 =0.
    \end{equation}
\end{lemma}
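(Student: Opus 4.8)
The plan is to proceed by induction on $k$, mirroring the structure of the proof of Lemma~\ref{lem:fixed_point_h_stability} (stability of the fixed points) and the stability argument used in Lemma~\ref{lem:algo_stable} for Algorithm~\ref{algo:de_method}. First I would note that both AMP runs share the \emph{same} realization of the Gaussian disorder $\mathbf{G}_n$ (equivalently the $\{g_{ij}\}$); only the external fields $\mathbf{h}_0,\mathbf{h}_1$ and the associated fixed-point parameters differ. By Lemma~\ref{lem:fixed_point_h_stability}, since $\|\mathbf{h}_0-\mathbf{h}_1\|_\infty\le\varepsilon$, the two fixed points satisfy $\|(\sigma^2_{1,*},q_{1,*})-(\sigma^2_{0,*},q_{0,*})\|^2\lesssim\varepsilon^2$, so $|\sqrt{q_{1,*}}-\sqrt{q_{0,*}}|\lesssim\varepsilon$ and $|\beta^2\sigma^2_{1,*}-\beta^2\sigma^2_{0,*}|\lesssim\varepsilon$. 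The base cases $k=0,1$ are trivial since $\mathbf{u}^{[0]}=\mathbf{u}^{[1]}=\mathbf{0}$ for both runs, and $k=2$ follows because $u^{[2]}_{1,i}-u^{[2]}_{0,i}=\tfrac{1}{\sqrt n}\sum_{l}g_{il}(\sqrt{q_{1,*}}-\sqrt{q_{0,*}})$, whose normalized squared norm is $\tfrac{1}{n}(\sqrt{q_{1,*}}-\sqrt{q_{0,*}})^2\,\|\tfrac{1}{\sqrt n}\mathbf{G}_n\|^2\lesssim\varepsilon^2\|\mathbf{G}_n/\sqrt n\|^2$, which is $\lesssim\varepsilon^2$ almost surely since $\|\mathbf{G}_n\|/\sqrt n\to 2$.

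For the inductive step, I would write out the difference $u^{[k+1]}_{1,i}-u^{[k+1]}_{0,i}$ using the AMP recursion from Definition~\ref{app_B: AMP_description}. The difference decomposes into three types of terms: (a) the Onsager-free term $\tfrac{1}{\sqrt n}\sum_l g_{il}\big[\alpha'(\beta u^{[k]}_{1,l}+h_{1,l},\beta^2\sigma^2_{1,*})-\alpha'(\beta u^{[k]}_{0,l}+h_{0,l},\beta^2\sigma^2_{0,*})\big]$; (b) the Onsager correction terms involving $\tfrac{\beta}{n}\sum_j\alpha''(\cdots)$; and (c) cross terms. Using that $\alpha'$ and $\alpha''$ are bounded and Lipschitz in \emph{both} arguments (all partial derivatives of $\alpha$ are bounded because $\mu$ is compactly supported), the increments in each bracket are bounded by $C(|u^{[k]}_{1,l}-u^{[k]}_{0,l}|+|h_{1,l}-h_{0,l}|+|\beta^2\sigma^2_{1,*}-\beta^2\sigma^2_{0,*}|)\le C(|u^{[k]}_{1,l}-u^{[k]}_{0,l}|+C'\varepsilon)$. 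For term (a), applying $\|\mathbf{G}_n/\sqrt n\|^2\le C$ a.s. gives a contribution $\le C\|\mathbf{G}_n/\sqrt n\|^2\big(\tfrac{1}{n}\|\mathbf{u}^{[k]}_1-\mathbf{u}^{[k]}_0\|^2+C'\varepsilon^2\big)=C''(\Delta_k+C'\varepsilon^2)$. For term (b), the scalar Onsager coefficients $d^{(1)}_k,d^{(0)}_k$ differ by $\tfrac{\beta}{n}\sum_j|\alpha''(\cdots)-\alpha''(\cdots)|\lesssim \tfrac{1}{n}\|\mathbf{u}^{[k]}_1-\mathbf{u}^{[k]}_0\|_1+\varepsilon\lesssim\sqrt{\Delta_k}+\varepsilon$ (by Cauchy--Schwarz), and both coefficients are bounded, so multiplying against the bounded $\alpha'(\cdots)$ vector yields a contribution $\lesssim\Delta_{k-1}+\Delta_k+\varepsilon^2$. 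Collecting, $\Delta_{k+1}\le C(\Delta_k+\Delta_{k-1}+\varepsilon^2)$ a.s.\ for a constant $C$ independent of $n$. Taking $n\to\infty$ and then $\varepsilon\to 0^+$, and using the inductive hypothesis $\lim_{\varepsilon\to 0}\lim_n\Delta_k=\lim_{\varepsilon\to 0}\lim_n\Delta_{k-1}=0$, gives $\lim_{\varepsilon\to 0}\lim_n\Delta_{k+1}=0$, establishing \eqref{eq:b13}.

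Finally, \eqref{eq:b14} follows immediately from \eqref{eq:b13}: the mean estimates are $\mathbf{m}^{[k]}_i=\alpha'(\beta\mathbf{u}^{[k]}_i+\tau_0\bar\bT+\bar\bX\btheta_0,\beta^2\sigma^2_{i,*})$ (or with $-\tau_0\mathbf{1}$ for $\tilde{\mathbf{m}}$), so by the Lipschitz property of $\alpha'$ in both arguments, $\tfrac{1}{n}\|\mathbf{m}^{[k]}_1-\mathbf{m}^{[k]}_0\|^2\lesssim \Delta_k+|\beta^2\sigma^2_{1,*}-\beta^2\sigma^2_{0,*}|^2\lesssim\Delta_k+\varepsilon^2\to 0$. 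The main obstacle I anticipate is bookkeeping the Onsager-correction difference carefully: one must verify that the scalar coefficients $d_k$ in the two runs stay bounded \emph{uniformly in $n$} and that their difference is controlled by $\sqrt{\Delta_k}$ rather than something larger, which requires the Cauchy--Schwarz step $\|\cdot\|_1\le\sqrt n\|\cdot\|_2$ together with the $1/n$ normalization in $d_k$; this is where the soft-spin structure (non-constant self-overlap) could in principle cause trouble, but since the $\sigma^2_*$ values have already been pinned down by Lemma~\ref{lem:fixed_point_h_stability}, the argument goes through cleanly. A minor point is that the almost-sure statement is with respect to the disorder $\mathbf{G}_n$ and the fields; one uses $\|\mathbf{G}_n\|/\sqrt n\to 2$ a.s.\ to make all the operator-norm bounds hold on a single probability-one event, uniformly over the finitely many iteration indices up to $k$.
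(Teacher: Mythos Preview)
Your proposal is correct and follows essentially the same approach as the paper: invoke Lemma~\ref{lem:fixed_point_h_stability} to control the fixed-point perturbation, verify the base cases $k=0,1,2$ directly, and then run an induction on $k$ using the Lipschitz properties of $\alpha',\alpha''$ together with the almost-sure operator-norm bound $\|\mathbf{G}_n\|/\sqrt{n}\to 2$ to obtain a recursion $\Delta_{k+1}\lesssim \Delta_k+\Delta_{k-1}+\varepsilon^2$. The only cosmetic slip is that in the last paragraph you should write $\mathbf{m}^{[k]}_i=\alpha'(\beta\mathbf{u}^{[k]}_i+\mathbf{h}_i,\beta^2\sigma^2_{i,*})$ rather than the application-specific $\tau_0\bar\bT+\bar\bX\btheta_0$, which then also contributes an $\|\mathbf{h}_0-\mathbf{h}_1\|_\infty^2\le\varepsilon^2$ term; this does not affect the argument.
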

\begin{proof}
    Recall that under the hypothesis of the Lemma, we have, by Lemma \ref{lem:fixed_point_h_stability}, the unique fixed points \eqref{eq:fixed_pt} $(\sigma^2_i, q_i)$, $i=0,1$ satisfy  $\|(\sigma^2_1, q_1)- (\sigma^2_0, q_0)\|^2 \lesssim \varepsilon^2$. We will prove \eqref{eq:b13} using induction on $k$. 

    For $k=0,1$, $\bu^{[k]}_i=0$, $i=0,1$, implying \eqref{eq:b13} holds. For $k=2$, we have $\bu^{[2]}_i= \sqrt{\frac{q_i}{n}} \mathbf{G} \mathbf{1}$, $i=0,1$ yielding
    \begin{equation*}
        \frac{1}{n}\|\bu^{[2]}_1-\bu^{[2]}_0\|^2 \le \Big(\frac{1}{\sqrt{n}} \| \mathbf{G} \|\Big)^2(\sqrt{q_1}- \sqrt{q_0})^2 \rightarrow 0,
    \end{equation*}
    almost surely as $n \rightarrow \infty$, $\varepsilon \rightarrow 0+$, by \cite[Theorem 5.1]{bai2010spectral}.

    For the general case, suppose the hypothesis is true for $k \ge 2$. Then by definition,
    \begin{equation}\label{eq:amp_form}
        \bu^{[k+1]}_i= \frac{1}{\sqrt{n}} \mathbf{G} \alpha'(\beta \bu^{[k]}_i+\mathbf{h}_i, \beta^2 \sigma^2_i) -(\frac{\beta}{n} \sum_{j=1}^n \alpha ''(\beta u^{[k]}_{i,j}+h_{i,j}, \beta^2 \sigma^2_i))\alpha'(\beta \bu^{[k-1]}_i+\mathbf{h}_i, \beta^2 \sigma^2_i) 
    \end{equation}
To bound $\Delta_{k+1}$, note that,
\begin{align*}
    &\frac{1}{n} \|\frac{1}{\sqrt{n}} \mathbf{G} \alpha'(\beta \bu^{[k]}_1+\mathbf{h}_1, \beta^2 \sigma^2_1)- \frac{1}{\sqrt{n}} \mathbf{G} \alpha'(\beta \bu^{[k]}_0+\mathbf{h}_0, \beta^2 \sigma^2_0)\|^2\\
    &\lesssim \frac{1}{n} \Big( \frac{1}{\sqrt{n}} \| \mathbf{G}\| \Big)^2 \left(\beta^2 \|\bu^{[k]}_1- \bu^{[k]}_0\|^2 + n \varepsilon ^2 + \beta^2 (\sigma^2_1-\sigma^2_0)^2\right) \\
    &\lesssim \beta^2 \Delta_k + \varepsilon^2 (1+ \beta^2),
\end{align*}
since $(\sigma_1^2-\sigma^2_0)^2 \lesssim \varepsilon^2$ and $\alpha'$ is a uniformly Lipschitz function. Moreover, we have,
\begin{align*}
    &\left(\frac{\beta}{n} \sum_{j=1}^n \alpha ''(\beta u^{[k]}_{1,j}+h_{1,j}, \beta^2 \sigma^2_1)- \frac{\beta}{n} \sum_{j=1}^n \alpha ''(\beta u^{[k]}_{0,j}+h_{0,j}, \beta^2 \sigma^2_0) \right)^2 \\ & 
    \lesssim \beta^2 \Delta_k+ \beta^2 \varepsilon^2+  \beta^6 \varepsilon^2,
\end{align*}
since $\alpha''$ is uniformly Lipschitz. Also,
\begin{align*}
   &\frac{1}{n} \left\|\alpha'(\beta \bu^{[k-1]}_1+\mathbf{h}_1, \beta^2 \sigma^2_1) -\alpha'(\beta \bu^{[k-1]}_0+\mathbf{h}_0, \beta^2 \sigma^2_0) \right \|^2 \\
   & \lesssim \beta^2 \Delta_{k-1} + (1+\beta^2)\varepsilon^2,
\end{align*}
which finally implies, for some $C>0$,
\begin{align*}
    \Delta_{k+1} \le C\beta^2 \Delta_k+ C\beta^2 \Delta_{k-1} + C(1+\beta^2) \varepsilon^2.
\end{align*}
By induction hypothesis, $\Delta_k,\Delta_{k-1} \rightarrow 0$ as $n \rightarrow \infty$, implying  $\lim\limits_{\varepsilon \rightarrow 0^+}\lim\limits_{n \rightarrow \infty} \Delta_{k+1}=0$. Finally, since $\alpha'$ is Lipschitz, the limit \eqref{eq:b14} follows, thus completing the proof.
\end{proof}




\end{document}